\newif\ifdraft		
\else\renewcommand{\todo}[2][]{} \fi
 	\tikzset{>=stealth}
\definecolor{darkblue}{RGB}{0,35,102}
\definecolor{darkgreen}{RGB}{15,140,15}
\theoremstyle{plain} 
\newtheorem{theorem}{Theorem}[section]
\newtheorem{lemma}[theorem]{Lemma}
\newtheorem{proposition}[theorem]{Proposition}
\newtheorem{corollary}[theorem]{Corollary}
\newtheorem{remark}[theorem]{Remark}
\theoremstyle{definition} 
\newtheorem{definition}[theorem]{Definition}
\newtheorem{assumption}[theorem]{Assumption}
\newcommand{\stepemph}[1]{\textsc{#1}}
\newcommand{\dx}[1][x]{\mathrm{d}#1}
\DeclareMathOperator{\supp}{supp}
\DeclareMathOperator{\SO}{SO}
\DeclareMathOperator{\dist}{dist}
\DeclareMathOperator{\Cont}{C}
\DeclareMathOperator{\Gl}{Gl}
\DeclareMathOperator{\SobH}{H}
\DeclareMathOperator{\SobW}{W}
\DeclareMathOperator{\Leb}{L}
\DeclareMathOperator{\Oclass}{O}
\DeclareMathOperator{\oclass}{o}
\DeclareMathOperator{\sym}{sym}
\DeclareMathOperator{\skewMat}{skew}
\DeclareMathOperator{\divergence}{div}
\DeclareMathOperator{\trace}{tr}
\newcommand{\R}{\mathbb{R}}
\newcommand{\N}{\mathbb{N}}
\newcommand{\Z}{\mathbb{Z}}
\newcommand{\eps}{\varepsilon}
\newcommand{\abs}[1]{\left| #1 \right|}
\newcommand{\class}[2]{\left\lbrace #1 \,\middle|\, #2 \right\rbrace}
\newcommand{\set}[1]{\left\lbrace #1 \right\rbrace}
\newcommand{\norm}[1]{\left\Vert #1 \right\Vert}
\newcommand{\placeholder}{\makebox[1ex]{$\boldsymbol{\cdot}$}}
\newcommand{\dif}[1]{\mathrm{D} #1}
\newcommand{\wto}{\rightharpoonup}
\newcommand{\mean}[1]{\left\langle #1 \right\rangle}
\newcommand{\meanHarm}[1]{\left\langle #1 \right\rangle_{\mathrm{harm}}}
\newcommand{\EnergyEhe}[2][h]{\mathcal{E}^{#1}_{#2}}
\newcommand{\EnergyIhe}[2][h]{\mathcal{I}^{#1}_{#2}}
\newcommand{\EnergyIline}[1]{\mathcal{I}^\mathrm{lin}_{#1}}
\newcommand{\EnergyIhhom}[1][h]{\mathcal{I}^{#1}_\mathrm{hom}}
\newcommand{\EnergyIlinhom}[1][]{\mathcal{I}^{#1\mathrm{lin}}_\mathrm{hom}}
\newcommand{\EnergyIlints}{\mathcal{I}^\mathrm{lin}_\mathrm{ts}}
\newenvironment{mathcenter}{%
  \@fleqnfalse%
  }{%
  \@fleqntrue\ignorespacesafterend%
}
\newlength\normalparindent\setlength\normalparindent{\parindent}
\author{Stefan Neukamm, Kai Richter}
\title{Linearization and Homogenization of nonlinear elasticity close to stress-free joints}
\begin{document}
\selectlanguage{english}

\makeatletter
\renewcommand{\thefootnote}{\fnsymbol{footnote}}
\begin{center}
  \LARGE
  \@title
  \bigskip

  \normalsize
  Stefan Neukamm\footnote{stefan.neukamm@tu-dresden.de, Faculty of Mathematics, Technische Universit{\"a}t Dresden, 01062 Dresden, Germany} and
  Kai Richter\footnote{kai.richter@tu-dresden.de, Faculty of Mathematics, Technische Universit{\"a}t Dresden, 01062 Dresden, Germany}
  \par \bigskip

  \today
\end{center}
\setcounter{footnote}{0}
\renewcommand{\thefootnote}{\arabic{footnote}}
\makeatother


\begin{abstract}
In this paper, we study a hyperelastic composite material with a periodic microstructure and a prestrain close to a stress-free joint. We consider two limits associated with linearization and homogenization. Unlike previous studies that focus on composites with a stress-free reference configuration, the minimizers of the elastic energy functional in the prestrained case are not explicitly known. Consequently, it is initially unclear at which deformation to perform the linearization.	

Our main result shows that both the consecutive and simultaneous limits converge to a single homogenized model of linearized elasticity. This model features a homogenized prestrain and provides first-order information about the minimizers of the original nonlinear model. We find that the homogenization of the material and the homogenization of the prestrain are generally coupled and cannot be considered separately.

Additionally, we establish an asymptotic quadratic expansion of the homogenized stored energy function and present a detailed analysis of the effective model for laminate composite materials. A key analytical contribution of our paper is a new mixed-growth version of the geometric rigidity estimate for Jones domains. The proof of this result relies on the construction of an extension operator for Jones domains adapted to geometric rigidity.
\smallskip

\textbf{Keywords:} nonlinear elasticity, $\Gamma$-convergence, homogenization, linearization, prestrain, stress-free joint, two-scale convergence, geometric rigidity estimate, Jones domain, extension operator.

\textbf{MSC-2020:} 35B27; 49J45; 74-10; 74B20; 74E30; 74Q05; 74Q15.
\end{abstract}

\paragraph{Acknowledgement.}
The authors received support from the German Research Foundation (DFG) via the research unit FOR 3013, “Vector- and tensor-valued surface PDEs” (project number 417223351).

\ifdraft \listoftodos \fi

{\hypersetup{hidelinks} \tableofcontents}


\section{Introduction}

One way to deal with the difficulties of nonlinear elasticity arising from its non-convex nature, is the derivation of simpler, effective models that capture the behavior from a macroscopic point of view. In this context, linearization and homogenization are two important concepts. Both have already been discussed by many authors (e.g.\ \cite{DNP02, MN11, MPT19, Schm07, ADD12}) and are well understood, in particular, in the case when the reference configuration is stress-free. However, for composites, this is not always a natural assumption. Composites may naturally feature prestrain due to varying properties of the material, be they unwanted side-effects or even desirable behavior.
Examples include wood composites \cite{hassani2015rheological, MJ22} (where changes of moisture content lead to swelling and shrinkage), liquid crystal elastomers \cite{War03} (which undergo a shape change due an ordering of their long molecules in a nematic phase), or residual stresses in additively manufactured composites \cite{zhang2017characterization}. One promising application of prestrained composites is the design of active materials, which change their shape upon activation via external stimuli such as light, humidity, temperature, electric fields, etc., see \cite{MJZ18, KES07}.
\medskip

In this paper we study periodic, elastic composites with prestrain.
Our starting point is the  energy functional of nonlinear elasticity,
\begin{equation} \label{Eq:EnergyFunctionalOnDeformation}
  \EnergyEhe{\eps}(\phi) := \int_\Omega W^h \left(\tfrac{x}{\eps}, \dif\phi(x)\right) \,\dx, \quad \phi \in \SobH^1(\Omega, \R^d),
\end{equation}
where $\Omega \subset \R^d$ denotes the reference domain of the elastic body, $\phi$ a deformation, and $W^h(y,F)$ a stored energy function parametrized by some scaling parameter $0<h\ll 1$. More precisely, we assume that 
\begin{equation} \label{Eq:MultiplicativeDecomposition}
  W^h(y,F) := W\left(y, F A_h(y)^{-1}\right),
\end{equation}
where $W(y,F)$ denotes a standard stored energy function that is $Y:=[0,1)^d$-periodic in $y$, and minimized and non-degenerate for $F\in\SO(d)$, see~\cref{Ass:ElasticMaterialLaw} for details. The stored energy function describes an elastic composite with a prestrain that is modeled (following \cite{BNS20})
with help of a multiplicative decomposition of the deformation gradient $F$ into an elastic part and a prestrain tensor $A_h: \R^d \to \R^{d\times d}$. The latter is assumed to be $Y$-periodic and a perturbation of a stress-free joint. Roughly speaking this means that
\begin{equation}\label{definition_prestrain}
  A_h = (I + h \tilde{B}_h) A,
\end{equation}
where the stress-free joint $A=Da$ is a tensor field with a Bilipschitz potential $a\in W^{1,\infty}_{\rm loc}(\R^d;\R^d)$ (see \cref{Def:StressFreeJoint} below) and $\tilde{B}_h$ is a bounded perturbation.
Stress-free joints have been considered by R.D.~James in \cite{Jam86} to model composites with a prestrain that can be accommodated for by a piece-wise affine deformation. The notion that we consider in this paper is a slight generalization of it.
To understand deformations of prestrained composites with (nearly) zero elastic energy is important for many applications. Examples include 
ceramics \cite{DT68}, where stress introduced during the drying process can lead to cracks,
thermal expansion of bi-material joints \cite{PWHPG94},
and equilibrium configurations of crystals \cite{CK88}. We also note that the theory of stress-free joints is related to models for twinning in crystals \cite{Zan90} and phase transitions of multi-phase materials such as shape-memory alloys \cite{Rue22}.
\medskip

We are interested in minimizers of \eqref{Eq:EnergyFunctionalOnDeformation} when $0<\eps,h\ll 1$. 
Therefore we study the limits $h\to0$ (linearization) and $\eps\to 0$ (homogenization), both successively and simultaneously. With regard to the stored energy function the successive limit ``linearization after homogenization'' is especially interesting. The limit $\eps\to 0$ leads to a homogenized stored energy function given by the multi-cell homogenization formula of \cite{Muel87}:
\begin{equation} \label{Eq:Multicell_formula}
 W_{\hom}^h (F) = \inf_{k \in \N} \inf_{\varphi \in \SobW^{1,\infty}_\mathrm{per}(kY, \R^d)} \fint_{kY} W^h\big(y, F + \dif{\varphi}(y)\big) \,\dx[y].
\end{equation}
Unfortunately, this formula is of limited use in practice: In addition to the difficulties of computing the two infima, the dependence on the prestrain is implicit. In particular, the minimizers of $W_{\hom}^h$ are unknown for $h>0$ due to the presence of the prestrain. These difficulties can be overcome in the limit $h \to 0$ where we shall obtain a unique minimizer and explicit formulas to compute it. We observe that the infimum of $\EnergyEhe{\eps}$ scales like $h^2$, see \cref{Rem:EnergyScaling} where it is shown that this follows from \eqref{definition_prestrain} in combination with the assumption that $A$ is a stress-free joint. This motivates us to study the scaled energy $h^{-2}\EnergyEhe{\eps}$ and the corresponding stored energy function $h^{-2}W^h$.
As a main result we show in \cref{Thm:Expansion_Whom} that the homogenized stored energy function admits a quadratic Taylor expansion at $\bar{A} := \int_Y A\dx[y]$. The expansion is of the form
\begin{equation} \label{Eq:Qhom}
  \tfrac{1}{h^2} W^h_{\hom}(\bar A+G)\approx R^A(B) + Q^A_{\hom}(G - B_{\hom}\bar{A}).
\end{equation}
Above, $Q^A_{\hom}$ denotes a quadratic form obtained by homogenizing the quadratic form 
\begin{equation*}
  Q^A(y,G):=Q(y, GA^{-1}(y)),\qquad\text{where}\qquad Q(y,G):=\lim_{h \to 0} \tfrac{1}{h^2} W(y,I + hG).
\end{equation*}
Furthermore, $B_{\hom}$ is an effective incremental prestrain tensor obtained by a weighted average of the incremental prestrain tensor $B$, which we define as the limit of $\tilde{B}_h$ for $h\to 0$. The term $R^A(B)$ is a residual energy that is independent of the displacement $G$, but depends on the stress-free joint $A$, the stored energy function $W$ and $B$. We refer to \cref{Sec:AsyExpansion} for the details. We further prove that (almost) minimizers $F_h^*$ of $W_{\hom}^h$ admit an expansion that is explicit up to an error term of order $\oclass(h)$, see \cref{Cor:ExpansionMinimizer}. We note that this is a nontrivial result, since in our setting minizers of $W^h_{\hom}$ are not explicitly known or may even not  exist. We also establish a commutative diagram that shows that linearization and homogenization of the stored energy function commute.
\medskip

In \cref{Sec:LinAndHomResults} we lift this commutative diagram to the level of a $\Gamma$-convergence result for the associated energy functionals, and we investigate the asymptotics of (almost) minimizers $\phi_{\eps,h}^*$ of $\EnergyEhe{\eps}(\phi)$ subject to well-prepared boundary conditions of the form $\phi = a_\eps + hg$ on $\Gamma \subset \partial\Omega$, where $a_\eps$ denotes the potential of the stress-free joint $A(\frac{\placeholder}{\eps})$, i.e.\ $\dif{a_\eps}(x) = A(\frac{x}{\eps})$.
In particular, in \cref{Prop:strong_conv} we prove an expansion of the form $\phi_{\eps,h}^* = a_\eps + hu_{\eps,h}^*$ and show that the displacement $u_{\eps,h}^*$ satisfies the commutative convergence diagram
\begin{center}
\begin{tikzpicture}[baseline={([yshift=-.5ex]current bounding box.center)}]
	\node (1) at (0,0) {$u_{\eps,h}^*$};
	\node (2) at (2,0) {$u_\eps^*$};
	\node (3) at (0,-2) {$u_h^*$};
	\node (4) at (2,-2) {$u^*$,};
	\draw[->]  (1) -- (2) node[above, midway] {\footnotesize{$h\to 0$}};
	\draw[->]  (3) -- (4) node[above, midway] {\footnotesize{$h\to0$}};
	\draw[->]  (1) -- (3) node[left, midway] {\footnotesize{$\eps\to 0$}};
	\draw[->]  (2) -- (4) node[right, midway] {\footnotesize{$\eps\to 0$}};
	\draw[->]  (1) -- (4) node[above, midway] {};
\end{tikzpicture}
\end{center}
where the arrows stand for weak convergence in $\SobH^1_{\Gamma, g}(\Omega, \R^d)$.
We show that the displacement $u^*_{\eps,h}$ and the limits $u_\eps^*$, $u_h^*$, $u^*$ are (almost) minimizers of the functionals 
\begin{subequations}\label{D:rescaledfunctionals}
\begin{align}
	&\EnergyIhe{\eps}(u) := \frac{1}{h^2}\int_\Omega W^h\left(\tfrac{x}{\eps}, A(\tfrac{x}{\eps}) + h \dif{u}(x)\right) \,\dx,&& u \in \SobH^1_{\Gamma,g}(\Omega, \R^d),\\
	&\EnergyIhhom(u) := \frac{1}{h^2}\int_\Omega W^h_{\hom}\left(\bar{A} + h \dif{u}(x)\right) \,\dx, && u \in \SobH^1_{\Gamma, g}(\Omega, \R^d), \\
	&\EnergyIline{\eps}(u) := \int_\Omega Q^A\left(\tfrac{x}{\eps}, \dif{u}(x) + B(\tfrac{x}{\eps})A(\tfrac{x}{\eps}) \right)\,\dx, && u \in \SobH^1_{\Gamma, g}(\Omega, \R^d), \\
	&\EnergyIlinhom(u) := \int_\Omega Q^{A}_{\hom}\left(\dif{u}(x) + B_{\hom}\bar{A} \right) \,\dx + |\Omega|R^A(B), && u \in \SobH^1_{\Gamma, g}(\Omega, \R^d).
\end{align}
\end{subequations}
In fact,  the convergence results are obtained by proving the validity of the diagram

\begin{center}
\begin{tikzpicture}[baseline={([yshift=-.5ex]current bounding box.center)}]
	\draw[->]  (0.5,0) -- (1.5,0);		
	\draw[->]  (0.5,-2) -- (1.5,-2);
	\draw[->]  (0,-0.5) -- (0,-1.5);
	\draw[->]  (2,-0.5) -- (2,-1.5);
	\draw[->]  (0.5,-0.5) -- (1.5,-1.5);
	\node (1) at (0,0) {$\EnergyIhe{\eps}$};
	\node (2) at (2,0) {$\EnergyIline{\eps}$};
	\node (3) at (0,-2) {$\EnergyIhhom$};
	\node (4) at (2,-2) {$\EnergyIlinhom$,};
\end{tikzpicture}
\end{center}
where each arrow stands for $\Gamma$-convergence,  and the horizontal direction corresponds to linearization ($h \to 0$), and the vertical direction to homogenization ($\eps \to 0$). This is done in \cref{Thm:gamma_convergence}. The diagram shows that the successive limits of linearization and homogenization commute and lead to the limit obtained by the simultaneous limit $(h,\eps)\to 0$. The $\Gamma$-convergence results are w.r.t.\ weak convergence in $\SobH^1(\Omega, \R^d)$ and thus only yield weak convergence of the minimizers. A posteriori we upgrade some of them to a stronger topology by utilizing the quadratic form of the linearized limit, see \cref{Prop:strong_conv}.

\paragraph{Key tools for the proofs and survey of the literature.}

The linearization of elasticity using De~Giorgi's $\Gamma$-convergence (see \cite{DGF75, Dal93}) goes back to G.~Dal~Maso, M.~Negri and D.~Percivale \cite{DNP02} and uses the geometric rigidity estimate \cite{FJM02} as a key ingredient. The analysis has been extended to include homogenization, see \cite{NeuPHD,MN11,GN11}. In our work we extend this result to prestrained materials, where the prestrain is a perturbation of a stress-free joint in the sense of \cref{Ass:Prestrain} below. In a mathematical context, stress-free joints have been studied by Ericksen \cite{Eri83} and James \cite{Jam86}. Our main idea to deal with such a prestrain is to utilize the (piece-wise) Bilipschitz potential of the stress-free joint to go back and forth to a transformed reference domain, where the prestrain is small, i.e.\ a perturbation of the identity. To illustrate this, suppose that the potential $a: \Omega \to \R^d$ of a stress-free joint $A$ is Bilipschitz and consider some deformation $\phi \in \SobH^1(\Omega, \R^d)$. Then,
\begin{align*}
	\EnergyEhe{1}(\phi) &= \int_\Omega W\left(x, \dif{\phi}(x)\dif{a}(x)^{-1}(I + h\tilde{B}^h(x))^{-1}\right) \,\dx \\
	&= \int_{a(\Omega)} W\left(a^{-1}(z), \dif{(\phi \circ a^{-1})}(z)(I + h\tilde{B}^h\circ a^{-1}(z))^{-1}\right) \det\dif{a^{-1}}(z)\,\dx[z].
\end{align*}
However, multiple problems arise from this representation. In particular, we cannot directly apply the geometric rigidity estimate of \cite{FJM02} to the transformed domain, since $a(\Omega)$ is not necessarily a Lipschitz domain, even if $a$ is Bilipschitz (see \cite{Lic19} for counterexample). Thus, we show first that the rigidity estimate also holds on the more general class of Jones domains. We achieve this by providing an extension operator on Jones domains that allows to control the distance to the set of rotations. This extension operator is based on the constructions in \cite{Jon81,DM04}.
\smallskip

In this paper, we extend the commutativity of homogenization and linearization established in \cite{NeuPHD,MN11,GN11} to prestrained composites. The first homogenization results for elasticity in this direction are due to Marcellini~\cite{Mar78} for convex integrands and Braides~\cite{Bra85} and M\"uller~\cite{Muel87} for non-convex integrands, where the multi-cell homogenization formula is invoked. The main ingredient to establish this commutativity is a quantitative quadratic expansion of the homogenized stored energy function, as established in \cite{MN11} for materials without prestrain. In this work, the presence of a prestrain leads to additional difficulties. The main ingredients to overcome these, are a connection between the expansions of $W^h$ at $A$ and $W^h_{\hom}$ at $\bar{A} := \fint_Y A(y)\,\dx[y]$, see \cref{Lem:homandprestrain}, as well as again a reduction to a small prestrain. Both ingredients are obtained from the fact that any periodic stress-free joint admits a representation $A = \bar{A} + \dif{\varphi}$ for some $Y$-periodic map $\varphi \in \SobW^{1,\infty}_\mathrm{per}(Y, \R^d)$, see \cref{Lem:periodic_derivative}. 
\smallskip

One major point that allows us to establish compactness for the simultaneous linearization and homogenization is the fact that periodic stress-free joints admit a Bilipschitz potential, see \cref{Prop:SFJBilipschitz}. Note that in general we require stress-free joints only to admit a piece-wise Bilipschitz potential, see \cref{Def:StressFreeJoint}, to conform with the definition in \cite{Jam86} where piece-wise affine maps are considered. The fact that periodicity in this setting implies global injectivity is not trivial. Our proof relies on a general transformation rule for not necessarily injective maps, see \cite[Thm. 3.8]{EG15} and \cite[Thm. B.3.10]{KR19}, which allows us to measure the non-injectivity of a map in terms of the determinant of its derivative.

\subsection{Notation}

Throughout this paper we use the following notation.
\begin{itemize}
	\item $Y := [0,1)^d$ denotes the representative cell of periodicity;
	\item Given $\bar{A} \in \Gl_+(d)$, we say that a measurable map $u: \R^d \to \R^n$ is $\bar{A}Y$-periodic or $\bar{A}$-periodic, if $u(x + \bar{A}k) = u(x)$ for all $k \in \Z^d$ and a.e.\ $x \in \R^d$;
	\item We denote by $\Leb^p_\mathrm{per}(\bar{A}Y, \R^n)$, $\SobW^{1,p}_\mathrm{per}(\bar{A}Y, \R^n)$ and $\SobH^1_\mathrm{per}(\bar{A}Y, \R^n)$ the set of all  $\bar{A}Y$-periodic maps in $\Leb^p_\mathrm{loc}(\R^d, \R^n)$, $\SobW^{1,p}_\mathrm{loc}(\R^d, \R^n)$ and $\SobH^1_\mathrm{loc}(\R^d, \R^n)$, respectively;
	\item $I \in \R^{d\times d}$ denotes the identity matrix, and $\sym G := \frac{1}{2}(G + G^T)$ the symmetric part of $G \in \R^{d\times d}$; we denote the euclidean scalar product in $\R^{d\times d}$ by $\placeholder : \placeholder$;
	\item We call $U \subset \R^d$ a Lipschitz domain, if $U$ is open, bounded, connected and has a Lipschitz boundary, i.e., $\partial U$ is locally the graph of a Lipschitz continuous function, cf.\ \cite[§4.5]{Ada75}.
\end{itemize}

\section{Modeling of prestrained composites}
We model periodic composites with prestrain by means of the elastic energy functional $\EnergyEhe{\eps}$ defined in \eqref{Eq:EnergyFunctionalOnDeformation}. Throughout the paper we assume that the reference domain $\Omega \subset \R^d$ is a Lipschitz domain.
The stored energy function $W^h: \R^d \times \R^{d\times d} \to [0,\infty]$ in \eqref{Eq:EnergyFunctionalOnDeformation} is defined by the expression \eqref{Eq:MultiplicativeDecomposition} and invokes
\begin{itemize}
\item a reference stored energy function $W$ that describes the elastic properties of the components of the composite relative to a virtual stress-free reference configuration,
\item a prestrain tensor $A_h: \R^d \to \R^{d\times d}$, which we assume to be a perturbation of a stress-free joint $A$ of order $h$.
\end{itemize}
In the following we present the precise assumptions on these quantities. We start with the assumptions on the stored energy function and then discuss the assumptions for the prestrain tensor.
To this end we introduce a class of nonlinear material laws that we consider for the composite:
\begin{definition}[Material class $\mathcal W$, {cf.\ \cite[Def.~2.2]{BNPS22}}] \label{Def:MaterialClass_W}
Let $0 < \alpha \leq \beta$, $\rho > 0$. We denote by $\mathcal{W}(\alpha, \beta, \rho)$ the class of functions $W:\R^{d\times d} \to [0,\infty]$, which satisfy
\begin{enumerate}
\renewcommand{\labelenumi}{\theenumi}
\renewcommand{\theenumi}{(W\arabic{enumi})}
	\item \label{Property:W_FrameIndifference}(Frame indifference): $W(RF) = W(F)$ for all $F \in \R^{d \times d}, R \in \SO(d)$;
	
	\item \label{Property:W_NonDegeneracy}(Non-degeneracy):
	\begin{alignat*}{2}
		W(F) &\geq \alpha \dist^2(F, \SO(d)) &&\qquad \text{for all } F \in \R^{d\times d}, \\
		W(F) &\leq \beta \dist^2(F, \SO(d)) &&\qquad \text{for all } F \in \R^{d\times d} \text{ with } \dist^2(F,\SO(d)) \leq \rho;
	\end{alignat*}
	
	\item \label{Property:W_Expansion}(Quadratic expansion): There exists a quadratic form $Q: \R^{d\times d} \to \R$ and an increasing map $r: [0,\infty) \to [0,\infty]$ with $\lim_{\delta \to 0} r(\delta) = 0$, such that
	\begin{equation*}
		\abs{W(I+G) - Q(G)} \leq \abs{G}^2 r(\abs{G}) \qquad \text{for all } G \in \R^{d\times d}.
	\end{equation*}
\end{enumerate}
\end{definition}

\begin{assumption}[Periodic composite] \label{Ass:ElasticMaterialLaw}
We assume that the following statements hold:
\begin{enumerate}[(i)]
	\item (Measurability): $W$ is a Carathéodory function such that for a.e.\ $y \in \R^d$ the map $F \mapsto W(y,F)$ is continuous.
	
	\item (Material law): There exist $0 < \alpha_\mathrm{el} \leq \beta_\mathrm{el}$, $\rho_\mathrm{el} > 0$, such that for a.e.\ $y \in \R^d$, we have $W(y, \placeholder) \in \mathcal{W}(\alpha_\mathrm{el}, \beta_\mathrm{el}, \rho_\mathrm{el})$.
	
	\item (Periodicity): For all $F \in \R^{d\times d}$ the map $y \mapsto W(y,F)$ is $Y$-periodic.
\end{enumerate}
\end{assumption}

A stored energy function $W$ that satisfies \cref{Ass:ElasticMaterialLaw} describes a composite material with a common stress-free reference state, that is, at a.e.\ material point $y \in \R^d$, $W(y,R)$ is minimized exactly for all rotations $R\in\SO(d)$. To model a prestrained composite we appeal to a multiplicative decomposition of the deformation gradient and introduce the prestrain tensor $A_h$, see \eqref{Eq:MultiplicativeDecomposition}. We note that an arbitrary prestrain tensor may lead to non-trivial energy minimizing deformations (ground states) of the elastic energy functional, and in general it is impossible to explicitly understand the dependence of the ground states on the prestrain. The situation is different, if the prestrain tensor is the deformation gradient of a Bilipschitz potential $a$. In that case a ground state is given by the potential $a$ and has zero energy. Roughly speaking, a stress-free joint is a prestrain with such a potential:

\begin{definition}[Stress-free joints] \label{Def:StressFreeJoint}
Let $U \subset \R^d$ be open, bounded and connected. We denote by $\operatorname{SFJ}(U)$ the set of all maps $A: U \to \R^{d\times d}$ such that for some $L>0$ the following holds:
\begin{enumerate}
\renewcommand{\labelenumi}{\theenumi}
\renewcommand{\theenumi}{(SFJ\arabic{enumi})}
	\item \label{Property:SFJUniformBound} $\det A(x) > 0$ and $\max\set{\abs{A(x)}, \abs{A(x)^{-1}}} \leq L$ for a.e.\ $x \in U$;
	\item \label{Property:SFJisDerivative} There exists a continuous map $a \in \SobW^{1,\infty}_\mathrm{loc}(U,\R^d)$ (called a potential of $A$) such that $A = \dif{a}$ a.e.;
	\item \label{Property:SFJisPiecewiseBilipschitz} $a$ is Bilipschitz or $U$ admits a finite decomposition into Lipschitz domains%
	\footnote{This regularity assumption on the domain can be weakened and has mainly the purpose to impose regularity on the intersections $\partial U_i \cap \partial U_j$, cf.\ Proof of \cref{Thm:RigidityEstimateJonesDomain}.}
	where $a$ is Bilipschitz (in the sense that there exist pair-wise disjoint Lipschitz domains $U_i$, $i=1,\dots,n$ such that $\overline{U} = \bigcup_{i=1}^n \overline{U_i}$ and $a|_{U_i}$ is Bilipschitz).
\end{enumerate}
\end{definition}
This definition is a generalization of stress-free joints as considered in \cite{Jam86}. There, one may think of a stress-free joint as a composite consisting of firmly joint bodies, where each component features a prestrain, given by the prestrain tensors $A_i$, respectively, joined in such a way that the composite can be deformed into some configuration with vanishing prestrain, globally. Hence, there exists some continuous, piece-wise affine map $a:\Omega \to \R^d$ with $a(x) = A_i x + c_i$ if $x \in \Omega_i$ for some decomposition $(\Omega_i)_{i=1}^n$ of $\Omega$.
This yields the necessary condition that for each neighboring domains $\Omega_i$ and $\Omega_j$, the matrizes $A_i$ and $A_j$ must have a rank one difference. More precisely, if $\mathcal{H}^{d-1}(\partial \Omega_i \cap \partial \Omega_j) > 0$ and $\partial \Omega_i \cap \partial \Omega_j \subset \set{n}^\perp$, $n \neq 0$, then
\begin{equation} \label{Eq:Rank_one_compatiblity}
  A_i v = A_j v, \quad \text{for all } v \in \R^d \text{ with } v \cdot n = 0.
\end{equation}

Since we are interested in periodic composites, we shall consider the special case of periodic stress-free joints and define
\begin{definition}[Periodic stress-free joints] \label{Def:StressFreeJointPeriodic}
  We denote by $\operatorname{SFJ}_{\rm per}$ the set of all maps $A:\R^d\to \R^{d\times d}$ that are $Y$-periodic and satisfy \ref{Property:SFJUniformBound} and \ref{Property:SFJisDerivative} for $U=Y$.
\end{definition}
The following proposition shows that any $A\in\operatorname{SFJ}_{\rm per}$ has in fact a unique Bilipschitz potential $a$. In particular, the restriction of $A$ to any admissible set $U$ belongs to $\operatorname{SFJ}(U)$.
\begin{proposition}[Existence of a Bilipschitz potential]\label{Prop:SFJBilipschitz}
Let $A\in\operatorname{SFJ}_{\rm per}$. Then there exists a unique potential $a: \R^d \to \R^d$  that is globally Bilipschitz and onto, and satisfies $A=Da$ a.e.\ in $\R^d$ and $a(0)=0$.
(For the proof see \cref{Sec:ProofSFJ}.)
\end{proposition}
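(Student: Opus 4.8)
The plan is to first read off the affine‑plus‑periodic structure of the potential from the periodicity of $A$, then to establish injectivity by counting preimages via the area formula and the null‑Lagrangian property of the determinant, and finally to deduce the bi‑Lipschitz bound from the uniform control on $A^{-1}$.

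\emph{Structure of the potential.} By \ref{Property:SFJisDerivative} there is a continuous $a\in\SobW^{1,\infty}_{\mathrm{loc}}(\R^d;\R^d)$ with $Da=A$ a.e.; since $\R^d$ is connected, $a$ is unique up to an additive constant, and requiring $a(0)=0$ pins it down. From $|Da|=|A|\le L$ a.e.\ (by \ref{Property:SFJUniformBound}) and absolute continuity on segments, $a$ is globally Lipschitz. For $k\in\Z^d$ the map $x\mapsto a(x+k)-a(x)$ has a.e.\ gradient $A(x+k)-A(x)=0$ by $Y$-periodicity, hence is constant; evaluating it by integrating $A$ along coordinate directions gives $a(x+k)=a(x)+\bar A k$ with $\bar A:=\int_Y A\,\dx[y]$. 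Therefore $\varphi:=a-\bar A(\,\cdot\,)$ is $Y$-periodic and Lipschitz with $A=\bar A+D\varphi$. Since \ref{Property:SFJUniformBound} also gives $\det A(x)\ge c=c(d,L)>0$ a.e.\ and $\det$ is a null Lagrangian, $\det\bar A=\int_Y\det(\bar A+D\varphi)\,\dx[y]=\int_Y\det A\,\dx[y]\ge c>0$, so $\bar A\in\Gl_+(d)$; consequently $a$ descends to a Lipschitz map $b$ between the flat tori $T_1:=\R^d/\Z^d$ and $T_2:=\R^d/\bar A\Z^d$, where $|T_2|=\det\bar A$.

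\emph{Degree and multiplicity.} The linear homotopy $H_t(x):=(1-t)a(x)+t\bar A x$ satisfies $H_t(x+k)=H_t(x)+\bar A k$, hence descends to a homotopy on the tori from $b$ to the orientation‑preserving linear isomorphism induced by $\bar A$; thus $\deg b=1$, so $b$ (and hence $a$) is onto and the multiplicity $N_b(z):=\#\bigl(b^{-1}(z)\bigr)$ satisfies $N_b(z)\ge1$ for every $z\in T_2$. On the other hand, the area formula for Lipschitz maps (cf.\ \cite[Thm.~3.8]{EG15}) together with the null‑Lagrangian identity gives
\begin{equation*}
  \int_{T_2}N_b(z)\,\dx[z]=\int_{T_1}\abs{\det Db(x)}\,\dx=\int_Y\det A\,\dx[y]=\det\bar A=\abs{T_2}.
\end{equation*}
Combining the two, $N_b(z)=1$ for a.e.\ $z\in T_2$; equivalently, $a$ is injective outside a Lebesgue‑null set of values.

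\emph{Injectivity, surjectivity, the bi‑Lipschitz bound, and the main obstacle.} Since $\det\bar A\ne0$, injectivity of $a$ is equivalent to injectivity of $b$, which in turn follows from the openness of $b$: as a continuous map of manifolds that is locally $\SobW^{1,d}$ with $\det Db>0$ a.e.\ and satisfies Lusin's condition (N) (automatic for Lipschitz maps), $b$ is open and discrete; were $b$ not injective, the images under $b$ of small disjoint neighbourhoods of two distinct points with the same value would be open sets whose intersection is a nonempty open set of points with at least two preimages, contradicting $N_b\equiv1$ a.e. Hence $a$ is a continuous open bijection of $\R^d$, i.e.\ a homeomorphism. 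For the bi‑Lipschitz property it remains to control $a^{-1}$: by injectivity and the area formula, $|a(E)|=\int_E\det A\,\dx\ge c|E|$ for every measurable $E$, so $a^{-1}$ maps Lebesgue‑null sets to null sets; moreover at every $x$ where $a$ is differentiable with invertible derivative (a.e., since $\det A>0$ a.e.), the homeomorphism $a^{-1}$ is differentiable at $a(x)$ with $D(a^{-1})(a(x))=A(x)^{-1}$, whence $|D(a^{-1})|\le L$ a.e.\ by \ref{Property:SFJUniformBound}. Invoking the classical fact that the inverse of a $\SobW^{1,\infty}$-homeomorphism with a.e.-positive Jacobian is again Sobolev (and thus, together with $(Da)^{-1}\in\Leb^\infty$, locally Lipschitz), the global bound $|D(a^{-1})|\le L$ upgrades $a^{-1}$ to a globally Lipschitz map. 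The main obstacle is exactly the passage from a.e.-injectivity to everywhere‑injectivity (that is, the openness of $a$) and, relatedly, the Sobolev regularity of $a^{-1}$ behind the Lipschitz bound on the inverse; both rest on the regularity theory of Sobolev maps with positive Jacobian satisfying Lusin's condition (N), which is where the hypotheses \ref{Property:SFJUniformBound} and \ref{Property:SFJisDerivative} enter in an essential way.
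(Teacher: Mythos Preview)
Your argument is correct and takes a genuinely different route to almost-everywhere injectivity than the paper. You pass to the quotient tori and combine the degree identity $\deg b=1$ (obtained via the affine homotopy to $\bar A$) with the area formula and the null-Lagrangian identity $\int_Y\det A=\det\bar A$ to force $N_b=1$ a.e. The paper instead argues by contradiction using the rescalings $a_\eps=\eps a(\cdot/\eps)$: if $a$ were not a.e.\ injective, periodicity would spread the non-injectivity set over $\R^d$, and the area formula together with $\det Da_\eps\rightharpoonup\det\bar A$ and $a_\eps(Y)\to\bar A Y$ would give a positive lower bound on $|a_\eps(O[a_\eps,Y])|$ that contradicts $\int_Y\det Da_\eps-|a_\eps(Y)|\to 0$. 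Your approach is shorter and more conceptual but imports degree theory; the paper's is longer but entirely elementary, and its a.e.-injectivity step in fact works under the weaker hypothesis $a\in W^{1,p}_{\rm loc}$ with $p>d$ and $\det Da>0$ a.e., without any distortion bound.

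One imprecision worth flagging: your justification for openness (``locally $\SobW^{1,d}$ with $\det>0$ a.e.\ and Lusin's (N)'') is not a theorem in that generality. What you actually have, and what the paper invokes, is bounded distortion $|Da|^d\le K\det Da$, which is immediate from $|A|,|A^{-1}|\le L$; Reshetnyak's theorem then gives that $a$ is open and discrete (cf.\ \cite{Ric93}, \cite{HK14}). The upgrade from a.e.\ to everywhere injectivity via openness, the surjectivity (which you get for free from $\deg b=1$, whereas the paper argues that $a(\R^d)$ is open and closed), and the Sobolev regularity and Lipschitz bound for $a^{-1}$ then proceed as you indicate, in line with the paper's citations of \cite{HK14} and \cite{FG95}.
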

In our paper, we consider a situation where the prestrain tensor $A_h$ is a perturbation of a periodic stress-free joint $A$ in the \replaced[id=KR]{following sense.}{sense that $A_h\to A$ in $L^\infty$ with a first-order correction $h^{-1}(A_h-A)$ that converges in $L^2$:}
\begin{definition}[Perturbation of a periodic stress-free joint]\label{Ass:Prestrain}
  We say $(A_h)\subset L^\infty(\R^d,\R^{d\times d})$ is a perturbation of a periodic stress-free joint $A\in\operatorname{SFJ}_{\rm per}$, if $A_h$ is $Y$-periodic, and as $h\to 0$ we have
  \begin{equation*}
    A_h\to A\text{ in }L^\infty(\R^d,\R^{d\times d}) \quad\text{and}\quad h^{-1}(A_h-A)\text{ converges in }L^2_{\rm loc}(\R^d,\R^{d\times d}).
  \end{equation*}
\end{definition}
\begin{remark}[Definition of the incremental prestrain tensors $B_h$ and $B$]
  Let $(A_h)$ be a perturbation of a stress-free joint $A$ in the sense of \cref{Ass:Prestrain}. We may write the prestrain tensor as a product:
  \begin{equation}\label{def:tildeB}
    A_h = (I + h\tilde{B}_h)A,\qquad\text{where }	\tilde{B}_h:= \tfrac{1}{h} (A_hA^{-1} - I),
  \end{equation}
  and we may define
  \begin{equation}\label{def:B}
	B := \lim_{h \to 0} \tilde{B}_h\text{ (in $L^2_{\rm loc}(\R^d,\R^{d\times d})$)}.
  \end{equation}
  Furthermore, the Neumann series implies that for small $h \ll 1$,
  \begin{equation}\label{def:alternative1}
    A_h(y)^{-1} = A(y)^{-1}(I - h B_h(y)),
  \end{equation}
  where $B_h(y) := \sum_{k=0}^{\infty} (-h)^k \tilde{B}_h(y)^{k+1}$. From \cref{Ass:Prestrain} we conclude that
  \begin{equation}\label{def:alternative2}
    B_h\to B\text{ in }L^2_{\rm loc}(\R^d,\R^{d\times d}),\qquad \limsup_{h \to 0} h\norm{B_h}_{\Leb^\infty(\R^d)} = 0.
  \end{equation}
  In fact, \eqref{def:alternative1} together with \eqref{def:alternative2} are equivalent to the notion introduced in \cref{Ass:Prestrain}. In the paper we shall frequently work with this representation, since it eases the presentation.
\end{remark}

\begin{remark}[Energy scaling in the case of a perturbed stress-free joint.] \label{Rem:EnergyScaling}
Let $(A_h)$ be a perturbation of a stress-free joint $A$ in the sense of \cref{Ass:Prestrain}. Let $a_\eps$ be the potential of the stress-free joint $A(\tfrac{\placeholder}{\eps})$.
Then, using the ansatz $\phi = a_\eps + hu$ for some $u \in \SobW^{1,\infty}(\Omega, \R^d)$, we can show that the minimal energy scales like $h^2$. Indeed, by \eqref{Eq:EnergyFunctionalOnDeformation}, \eqref{Eq:MultiplicativeDecomposition} and \ref{Property:W_Expansion},
\begin{align*}
	\EnergyEhe{\eps}(\phi) &= \int_\Omega W \left( \tfrac{x}{\eps}, (I + h\dif{u}A(\tfrac{x}{\eps}))(I - h B_h(\tfrac{x}{\eps})) \right) \,\dx \\
	&= h^2 \int_\Omega Q\left( \tfrac{x}{\eps}, \dif{u}A(\tfrac{x}{\eps})^{-1} - B_h(\tfrac{x}{\eps})\right)\,\dx + \oclass(h^2).
\end{align*}
Especially, $a_\eps$ is a minimizer, if the prestrain is a pure stress-free joint and its energy scales like $h^2$ in the presence of a perturbation. On the other hand, \ref{Property:W_NonDegeneracy} and a suitable geometric rigidity estimate (see \cref{Thm:RigidityEstimateJonesDomain}) imply that any sequence $(\phi_{\eps,h}) \subset \SobH^1(\Omega, \R^d)$ satisfying $\EnergyEhe{\eps}(\phi_{\eps, h}) \leq ch^2$ is of the form $\phi_{\eps,h} = R_{\eps,h} a_\eps + c_{\eps,h} + \Oclass(h)$ for some constant $c_{\eps, h} \in \R^d$ and rotation $R_{\eps,h} \in \SO(d)$. This motivates us to linearize at the known low-energy state $a_\eps$ and study the minimizers of $u \mapsto h^{-2}\EnergyEhe{\eps}(a_\eps + hu) = \EnergyIhe{\eps}(u)$. 
\end{remark}

One can find a variety of non-trivial (periodic) stress-free joints. Some examples are presented in \cref{Fig:StressFreeJoints}. The simplest example is a laminate, which is depicted in \cref{Fig:StressFreeJoints:laminate}. There it is sufficient to satisfy \eqref{Eq:Rank_one_compatiblity}. We discuss laminates in greater detail in \cref{Sec:Example}. Also a simple example is shown in \cref{Fig:StressFreeJoints:2Dlaminate}, where the periodicity cell decomposes into a checkerboard. It is not hard to think of and draw stress-free joints that arise from this situation. However, these examples might not be to relevant in practice, since very specific kinds of materials would need to be joined for this to work. An interesting and complex stress-free joint relevant for practical purposes is given in \cref{Fig:StressFreeJoints:periodicSFJ}. This examples was found using the theories developed in \cite{Jam86} and \cite{Eri83}. It depicts the joint of blocks of a single material in various orientations. Finally, our theory also allows for situations as presented in \cref{Fig:StressFreeJoints:smoothSFJ} featuring a smooth course of the prestrain. The precise formulas used in \cref{Fig:StressFreeJoints} can be found in \cref{Sec:formulas-of-the-stress-free-joints}.
\smallskip

An important special case is the situation, when $A\equiv I$, i.e., when we have \emph{small prestrain} of order $h$. In fact, the essence of most proofs relies on reducing the situation to this case by transforming the reference domain with help of the Bilipschitz potential $a$ of \cref{Prop:SFJBilipschitz}. Let us anticipate that a technical difficulty, that emerges in this context, is that we need to show that certain functional inequalities for Sobolev functions (in particular, the geometric rigidity estimate) are stable w.r.t.\ Bilipschitz transformations. We discuss this in \cref{Sec:ProofSFJ,Sec:Proof:ExtensionOperatorRigidity}.

\begin{figure}
\centering
\begin{subfigure}[t]{0.3\linewidth}
	\centering
	\includegraphics[width=\textwidth]{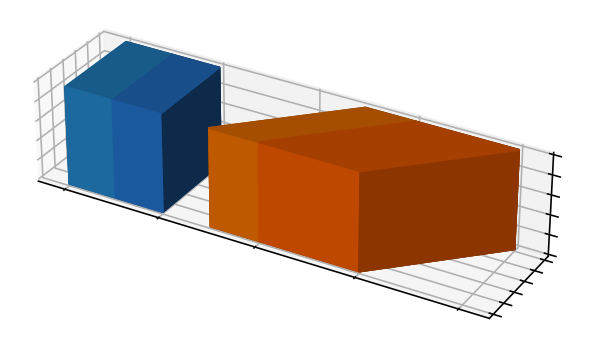}
	\caption{A laminate.}
	\label{Fig:StressFreeJoints:laminate}
\end{subfigure}
\begin{subfigure}[t]{0.3\linewidth}
	\centering
	\includegraphics[width=\textwidth]{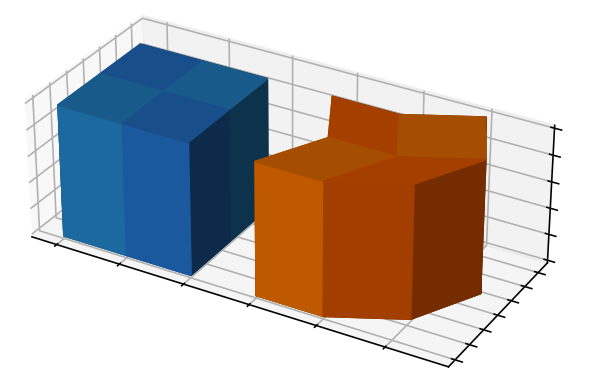}
	\caption{A \enquote{2D-laminate}.}
	\label{Fig:StressFreeJoints:2Dlaminate}
\end{subfigure}
\\
\begin{subfigure}[t]{0.3\linewidth}
	\centering
	\includegraphics[width=\textwidth]{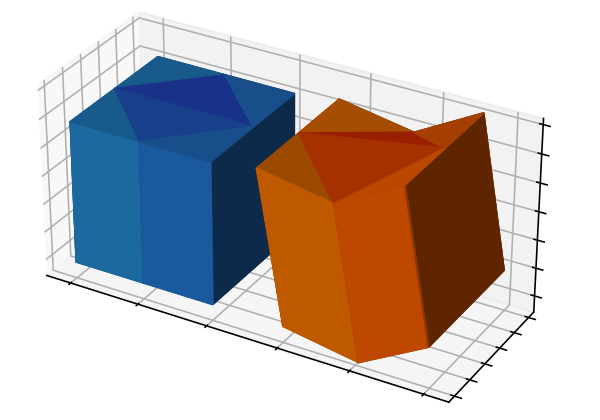}
	\caption{A single material stress-free joint.}
	\label{Fig:StressFreeJoints:periodicSFJ}
\end{subfigure}
\begin{subfigure}[t]{0.3\linewidth}
	\centering
	\includegraphics[width=\textwidth]{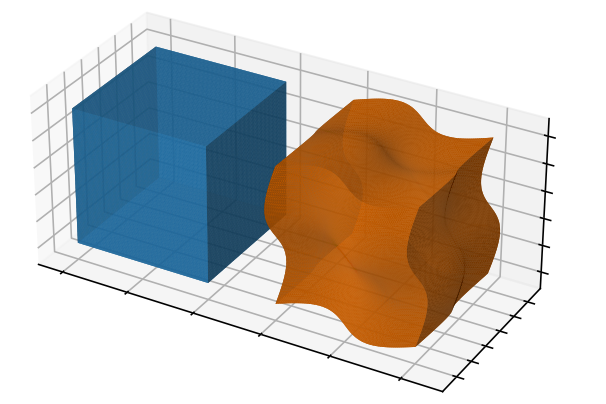}
	\caption{A smooth stress-free joint.}
	\label{Fig:StressFreeJoints:smoothSFJ}
\end{subfigure}
\caption{Images depicting periodic stress-free joints. The images always include the periodicity cell $Y$ on the left-hand side and the deformed cell, according to the deformation $a:Y \to \R^d$, on the right-hand side. The different shades depict the areas where the deformation is affine.}
\label{Fig:StressFreeJoints}
\end{figure}

\section{Main results}
We first discuss homogenization and linearization on the level of the stored energy function. Then, we introduce the effective quantities and study properties of the quadratic term $Q^A_{\hom}$ in the expansion of $W^h_{\hom}$. Finally, we discuss $\Gamma$-convergence of the energy functionals. 

\subsection{Homogenization and linearization of the stored energy function}\label{Sec:AsyExpansion}
In this section, we discuss homogenization and linearization of the stored energy function $W^h$, see \eqref{Eq:MultiplicativeDecomposition}. We assume that $W$ satisfies Assumption~\ref{Ass:ElasticMaterialLaw} and that $(A_h)$ is a perturbed stress-free joint in the sense of \cref{Def:StressFreeJoint}. We are especially interested in the energy well of the homogenized stored energy function $W^h_{\hom}$ (see \eqref{Eq:Multicell_formula}) and its dependence on the prestrain tensor $A_h$ and the material law $W$. To understand the latter is difficult, since $W$ is non-convex and since $A_h$ enters the definition of $W^h$ non-linearly. We therefore focus on the regime $0<h\ll 1$, i.e., when $A_h$ is close to a periodic, stress-free joint $A$.
\smallskip

For the presentation it is useful to view \textit{homogenization on the level of the energy density} as an operation $[\placeholder]_{\hom}$ that maps a measurable, $Y$-periodic function $V:\R^d\times\R^{d\times d}\to[0,\infty]$ to a function $[V]_{\hom}:\R^{d\times d}\to[0,\infty]$ by appealing to the so-called \textit{multi-cell homogenization formula}:
\begin{equation}\label{def:mchom}
  [V]_{\hom}(F):=\inf_{k \in \N}\inf_{\varphi \in \SobW^{1,\infty}_{\rm per}(kY,\R^d)} \fint_{kY} V\big(y, F + \dif{\varphi}(y)\big) \,\dx[y],\qquad F\in\R^{d\times d}.
\end{equation}
The motivation of this definition is the following:

\begin{remark}[Non-convex homogenization]
  Let $V:\R^d\times\R^{d\times d}\to[0,\infty]$ be $Y$-periodic, measurable and suppose that it satisfies the $p$-growth- and $p$-Lipschitz-condition
\begin{equation}\label{Eq:growth_cond}
  \begin{cases}
    \tfrac{1}{C} \abs{F}^p - C \leq V(y,F) \leq C (1 + \abs{F}^p), & F \in \R^{d\times d},\\ 
    \abs{V(y,F) - V(y,G)} \leq C(1 + \abs{F}^{p-1} + \abs{G}^{p-1}) \abs{F - G}, & F,G \in \R^{d\times d},
  \end{cases}
\end{equation}
for some $1<p<\infty$. Then the classical result of S.~M\"uller on non-convex homogenization implies that the integral functional $\varphi\mapsto\int_\Omega V(\tfrac{x}{\eps},D\varphi(x))\,\dx$ $\Gamma$-converges to the homogenized functional $\varphi\mapsto\int_\Omega [V]_{\hom}(D\varphi(x))\,\dx$. We note that in the definition of $[V]_{\hom}$ the space $W^{1,\infty}_\mathrm{per}(kY,\R^d)$ can be replaced by $\SobW^{1,p}_{\rm per}(kY,\R^d)$ \cite{Muel87} thanks to the $p$-growth- and $p$-Lipschitz-condition. We defined $[\cdot]_{\hom}$ with $\SobW^{1,\infty}$, since we want to highlight that the homogenization procedure is (to some extend) independent of the growth exponent of the integrand.
\end{remark}
\medskip

To determine what we can expect, we first review what is already known about the homogenized stored energy function in the simplest setting, namely, in the case without prestrain, i.e., $A_h=A=I$. In that case it was shown in \cite{MN11,GN11} that $[W]_{\hom}$ admits a quadratic Taylor expansion at identity:
\begin{equation}\label{eq:unprestrained:expansion}
  [W]_{\hom}(I+G)=[Q]_{\hom}(G)+o(|G|^2)
\end{equation}
where $Q$ is defined by
\begin{equation}\label{eq:def:Q}
    Q(y,G) := \lim_{h \to 0} \tfrac{1}{h^2} W(y,I + hG), \qquad G \in \R^{d\times d}.
\end{equation}
In a nutshell this means that homogenization and linearization commute: The quadratic term in the expansion of the homogenized stored energy function is given by the homogenization of the quadratic term in the expansion of $W$ at identity.
We note that the homogenization of $Q$ is much more simple than the one for $W$. Indeed, thanks to \ref{Property:W_FrameIndifference} -- \ref{Property:W_Expansion}, the limit in \eqref{eq:def:Q} exists and defines a positive quadratic form that satisfies the ellipticity condition
\begin{equation}\label{Eq:UniformBounds_Q}
  \alpha_{\rm el} \abs{\sym G}^2 \leq Q(y,G) \leq \beta_{\rm el}\abs{\sym G}^2, \qquad \text{for all } G \in \R^{d\times d},
\end{equation}
and a.e.\ $y\in\R^d$. In particular, $Q$ is convex and thus, as shown by \cite[Lem.~4.1]{Muel87}, the multi-cell homogenization formula reduces to a single-cell homogenization formula that can be represented with help of a corrector field: For all $G\in\R^{d\times d}$ we have
\begin{equation}\label{eq:quadratic-corrector}
  [Q]_{\hom}(G)=\min_{\varphi\in W^{1,\infty}_{\rm per}(Y,R^d)}\fint_Y Q\big(y,G+D\varphi(y)\big)\,dy=\fint_Y Q\big(y,G+D\varphi_G(y)\big)\,dy,
\end{equation}
where $\varphi_G\in H^1_{\rm per}(Y,\R^d)$ denotes the unique (up to an additive constant) solution to the periodic corrector equation
\begin{equation*}
 -\operatorname{Div}\big(\mathbb{L}_Q(G+D\varphi_G)\big) = 0 \qquad\text{in }\mathscr D'(\R^d),
\end{equation*}
where $\mathbb{L}_Q$ denotes the 4th-order tensor associated with $Q$ via the polarization identity
\begin{equation}
	F : \mathbb{L}_Q(y) G = \tfrac{1}{2} \Big( Q(y,F + G) - Q(y,F) - Q(y,G) \Big).
\end{equation}
In \cite*{NS18, NS19} it is shown that under additional regularity assumptions on $W$ (both w.r.t.\ $F$ and $y$), $[W]_{\hom}$ is of class $C^3$ and admits a representation via a single-cell homogenization~-- both in an open neighborhood of $\SO(d)$. On the other hand, since nonlinear laminates may buckle under compression (see \cite[Thm.~4.3]{Muel87}) it is clear that the validity of the single-cell formula and the commutativity property fail for deformations $F$ away from $\SO(d)$.
In view of this it is reasonable to focus on the case of deformations that are asymptotically close to a ground state and on prestrains that are asymptotically close to a stress-free joint.
It is instructive to first discuss a special case of our result, namely the stored energy function $W^0(y,F)=W(y,FA(y)^{-1})$ whose prestrain is of the form of an unperturbed stress-free joint $A\in \operatorname{SFJ}_{\rm per}$. In that case the ground states are explicitly known:
\begin{equation*}
  \operatorname{Arg\,min}[W^0]_{\hom}=\class{F=R\bar A}{R\in\SO(d)},\qquad  \bar A:=\fint_YA(y)\dx[y].
\end{equation*}
Indeed, as shown in \cref{Lem:periodic_derivative} below, this follows from the fact that $A \equiv \bar A+\dif{\varphi}$ for some $\varphi \in \SobW^{1,\infty}_\mathrm{per}(Y, \R^d)$ and the identity $W^0(y,A(y))=W(y,I)=0\leq \inf [W^0]_{\hom}$. Furthermore, we shall see that  $[W^0]_{\hom}$ admits a quadratic expansion at $\bar A$ of the form
\begin{equation}\label{eq:exp:stress-free-joint}
  [W^0]_{\hom}(\bar A+G)=[Q^A]_{\hom}(G)+o(|G|^2),
\end{equation}
where $Q^A$ denotes the quadratic form obtained by linearizing $W^0$ at $A$:
\begin{equation}
  Q^A(y,G) := \lim_{h \to 0} \tfrac{1}{h^2} W^0(y,A(y) + hG) = Q(y, GA(y)^{-1}).
\end{equation}
While the expansion \eqref{eq:exp:stress-free-joint} is rather similar to \eqref{eq:unprestrained:expansion}, the situation changes in the case of a perturbed stress-free joint $A_h=(I+h B_h)A$ as considered in the definition of $W^h$. The next theorem is the main result of this section. Roughly speaking it establishes the expansion
\begin{equation*}
  \tfrac{1}{h^2}[W^h]_{\hom}(\bar A+hG)= R^A(B)+ [Q^A]_{\hom}(G-B_{\hom}\bar A)+\oclass(\abs{G}^2),
\end{equation*}
which holds for $h\to 0$ in a quantitative sense that is made precise in the theorem. The quadratic form on the right-hand side of the expansion is the same as in \eqref{eq:exp:stress-free-joint}. However, in contrast to \eqref{eq:exp:stress-free-joint}, the quadratic term on the right-hand side features an effective incremental prestrain tensor $B_{\hom}$ and includes a residual energy $R^A(B)$. We define these quantities in \cref{Def:Formula_Bhom} with help of the homogenization correctors associated with $Q^A$. We note that in the special case $A \equiv I$, this form of the expansion already appeared implicitly in previous works in the context of simultaneous homogenization and dimension reduction (cf.\ \cite{BNS20,BNPS22,BGNPP23}).

\begin{theorem}[Non-degeneracy and quadratic expansion of $W^h_{\hom}$]\label{Thm:Expansion_Whom}
Let $W$ satisfy Assumption~\ref{Ass:ElasticMaterialLaw}, let $(A_h)$ be a perturbation of a periodic stress-free joint $A\in \operatorname{SFJ}_{\rm per}$ (see~Definition~\ref{Ass:Prestrain}), and define $W^h$ by \eqref{Eq:MultiplicativeDecomposition}. Set $W^h_{\hom}:=[W^h]_{\hom}$, $Q^A_{\hom}:=[Q^A]_{\hom}$, and $\bar A:=\int_Y A(y)\dx[y]$.  Then the following statements hold.
\begin{enumerate}[(a)]
	\item (Frame indifference) For all $F \in \R^{d \times d}$, $h > 0$ and $R \in \SO(d)$ we have
	\begin{equation} \label{Eq:FrameIndifference_Whom}
		W^h_{\hom}(RF) = W^h_{\hom}(F).
	\end{equation}
	\item (Non-degeneracy) There exists some $\alpha > 0$ and $h_0 > 0$ such that for all $F \in \R^{d \times d}$ and $0 < h \leq h_0$
	\begin{equation} \label{Eq:NonDegeneracy_Whom}
		W^h_{\hom}(F) \geq \tfrac{1}{\alpha} \dist^2(F\bar{A}^{-1}, \SO(d)) - \alpha h^2.
	\end{equation}
	\item (Asymptotic expansion) There exists a continuous, increasing map $\rho: [0,\infty) \to [0,\infty]$ with $\rho(0) = 0$, such that for all $h > 0$ and $G \in \R^{d \times d}$
	\begin{equation}\label{Eq:TaylorExpansion_Whom}
		\abs{\tfrac{1}{h^2} W^h_{\hom}(\bar{A} + hG) - \left(Q^A_{\hom}(G - B_{\hom}\bar{A}) + R^A(B)\right)} \leq (1 + \abs{G}^2) \rho(h + \abs{hG}),
  \end{equation}
  where $B_{\hom}$ and $R^A(B)$ are given by \cref{Def:Formula_Bhom}.
\end{enumerate}
(For the proof see \cref{Sec:Proof:AsyExpansionWhom2}.)
\end{theorem}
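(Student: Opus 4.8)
The plan is to reduce all three statements to the regime of a \emph{small} prestrain. By \cref{Lem:periodic_derivative}, $A=\bar A+\dif{\varphi_A}$ for some $\varphi_A\in\SobW^{1,\infty}_\mathrm{per}(Y,\R^d)$; since $\psi\mapsto\psi-\varphi_A$ is a bijection of $\SobW^{1,\infty}_\mathrm{per}(kY,\R^d)$, \cref{Lem:homandprestrain} rewrites the multi-cell formula \eqref{def:mchom} as
\begin{equation*}
  W^h_{\hom}(\bar A+hG)=\inf_{k\in\N}\ \inf_{\psi\in\SobW^{1,\infty}_\mathrm{per}(kY,\R^d)}\ \fint_{kY}W\big(y,(I+(hG+\dif{\psi}(y))A(y)^{-1})(I-hB_h(y))\big)\,\dx[y],
\end{equation*}
where I used \eqref{def:alternative1} to replace $A_h^{-1}$ by $A^{-1}(I-hB_h)$ and $(A+N)A^{-1}=I+NA^{-1}$. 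Part (a) is then immediate: $W^h(y,RF)=W(y,RFA_h(y)^{-1})=W(y,FA_h(y)^{-1})=W^h(y,F)$ by \ref{Property:W_FrameIndifference}, and in \eqref{def:mchom} the substitution $\varphi\mapsto R^{-1}\varphi$ together with $RF+\dif{\varphi}=R(F+\dif(R^{-1}\varphi))$ leaves the infimum unchanged.

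For part (b) I would combine the lower bound in \ref{Property:W_NonDegeneracy} with a geometric rigidity estimate on the cubes $kY$ that is uniform in $k$. From $W^h(y,F)\ge\alpha_\mathrm{el}\dist^2(FA_h(y)^{-1},\SO(d))$, writing $A_h^{-1}=A^{-1}(I-hB_h)$, and using $\abs{\dist(M_1,\SO(d))-\dist(M_2,\SO(d))}\le\abs{M_1-M_2}$, the bound $\abs{F+\dif{\varphi}}\le L(\dist((F+\dif{\varphi})A^{-1},\SO(d))+\sqrt d)$, and $\limsup_{h\to0}h\norm{B_h}_{\Leb^\infty}=0$ (cf.\ \eqref{def:alternative2}), one passes, up to an $\Oclass(h^2)$ error stemming from $\norm{A_hA^{-1}-I}_{\Leb^\infty}=\norm{h\tilde B_h}_{\Leb^\infty}$, to the unperturbed integrand $\dist^2((F+\dif{\varphi})A^{-1},\SO(d))$. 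Then, with the Bilipschitz potential $a$ of \cref{Prop:SFJBilipschitz}, setting $\phi(y):=Fy+\varphi(y)$ and $w:=\phi\circ a^{-1}$ on $a(kY)$, the chain rule gives $\dif{w}\circ a=(F+\dif{\varphi})A^{-1}$ and $\dif{w}=F\bar A^{-1}+\dif{\tilde w}$ with $\tilde w$ being $\bar A(k\Z^d)$-periodic (because $a(y+k)=a(y)+\bar Ak$). Since $z\mapsto\tfrac1k a(kz)$ is an $L$-Bilipschitz map of the fixed cube $Y$ (gradient $A(k\cdot)$) and the rigidity estimate is scale invariant, $a(kY)$ is a Jones domain whose rigidity constant, supplied by \cref{Thm:RigidityEstimateJonesDomain}, is uniform in $k$; hence there is $R\in\SO(d)$ with $\fint_{a(kY)}\abs{\dif{w}-R}^2\le C\fint_{a(kY)}\dist^2(\dif{w},\SO(d))$, and as $\fint_{a(kY)}\dif{\tilde w}=0$ the left side dominates $\abs{F\bar A^{-1}-R}^2\ge\dist^2(F\bar A^{-1},\SO(d))$. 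Undoing the change of variables (Jacobians in $[L^{-d},L^d]$, $\abs{a(kY)}=k^d\abs{a(Y)}$) yields \eqref{Eq:NonDegeneracy_Whom}.

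For part (c) I would prove matching one-sided bounds. \emph{Upper bound:} $Q^A(y,\cdot)$ is a nonnegative quadratic form with bounded measurable coefficients (by \eqref{Eq:UniformBounds_Q}), so the cell functional $\eta\mapsto\fint_Y Q^A(y,G-B(y)A(y)+\dif{\eta}(y))\,\dx[y]$ attains its infimum over $\SobH^1_\mathrm{per}(Y,\R^d)$, and by \cref{Def:Formula_Bhom} — splitting the corrector of the load $G-BA$ into the ($G$-linear) homogenization corrector of $G$ and the corrector of $-BA$ and completing the square via the Euler--Lagrange equations — this infimum equals $Q^A_{\hom}(G-B_{\hom}\bar A)+R^A(B)$. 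Picking, by density, $\eta_\delta\in\SobW^{1,\infty}_\mathrm{per}$ with cell energy $\le Q^A_{\hom}(G-B_{\hom}\bar A)+R^A(B)+\delta$ and using $(k,\psi)=(1,h\eta_\delta)$ as competitor, the argument of $W$ becomes $I+hM_h$ with $M_h=(G+\dif{\eta_\delta})A^{-1}-B_h-h(G+\dif{\eta_\delta})A^{-1}B_h$, which satisfies $\norm{hM_h}_{\Leb^\infty}\to0$ and $M_h\to(G+\dif{\eta_\delta})A^{-1}-B$ in $\Leb^2(Y)$ (cf.\ \eqref{def:alternative2}); \ref{Property:W_Expansion} then gives $\limsup_{h\to0}\tfrac1{h^2}W^h_{\hom}(\bar A+hG)\le\fint_Y Q^A(y,G-BA+\dif{\eta_\delta})\le Q^A_{\hom}(G-B_{\hom}\bar A)+R^A(B)+\delta$, and a diagonal choice $\delta=\delta(h)\to0$ with $r$- and $\Leb^2$-rate bookkeeping upgrades this to the upper half of \eqref{Eq:TaylorExpansion_Whom}. \emph{Lower bound:} for nearly optimal $(k_h,\psi_h)$ the upper bound forces $\tfrac1{h^2}\fint_{k_hY}W(\cdot,(A+hG+\dif{\psi_h})A_h^{-1})\le C(1+\abs{G}^2)$, hence (by \ref{Property:W_NonDegeneracy}) $\fint_{k_hY}\dist^2((A+hG+\dif{\psi_h})A_h^{-1},\SO(d))\le Ch^2(1+\abs{G}^2)$. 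Transforming by $a$ as in (b) and absorbing the factor $I-hB_h$, one obtains $\fint_{a(k_hY)}\dist^2(\dif{w_h},\SO(d))=\Oclass(h^2(1+\abs{G}^2))$ (up to lower-order prestrain terms), where $w_h$ is the transform of the deformation $x\mapsto a(x)+hGx+\psi_h(x)$. Applying the \emph{mixed-growth} rigidity estimate on the Jones domains $a(k_hY)$ — uniform in $k_h$ and robust under $B_h$ being only $\Leb^2$-close to $B$ — gives $R_h\in\SO(d)$ with $\dif{w_h}-R_h=\Oclass(h(1+\abs{G}))$ in $\Leb^2$, hence $\dif{\psi_h}=(R_h-I)A-hG+\Oclass(h(1+\abs{G}))$; averaging ($\fint_{k_hY}\dif{\psi_h}=0$, $\fint_{k_hY}A=\bar A$, $\fint_{k_hY}\dif{\varphi_A}=0$) forces $(R_h-I)\bar A=hG+\Oclass(h(1+\abs{G}))$, so $R_h-I=\Oclass(h(1+\abs{G}))$ and $\dif{\psi_h}=\Oclass(h(1+\abs{G}))$ in $\Leb^2(k_hY)$. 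Writing $\psi_h=h\xi_h$ with $\norm{\dif{\xi_h}}_{\Leb^2(k_hY)}\le C(1+\abs{G})$ and $\fint_{k_hY}\dif{\xi_h}=0$, the argument of $W$ is $I+hM_h$ with $M_h=(G+\dif{\xi_h})A^{-1}-B_h-h(G+\dif{\xi_h})A^{-1}B_h$, and the standard rigidity-based linearization scheme (truncate $\dif{\xi_h}$ at a level tending to $\infty$ with $h$-rate to $0$, use $W\ge0$ and $W(\cdot,I)=0$ off the good set and \ref{Property:W_Expansion} on it, then lower semicontinuity of the convex quadratic functional together with $B_h\to B$ in $\Leb^2$) yields $\liminf_{h\to0}\tfrac1{h^2}W^h_{\hom}(\bar A+hG)\ge\inf_\eta\fint_Y Q^A(y,G-BA+\dif{\eta})=Q^A_{\hom}(G-B_{\hom}\bar A)+R^A(B)$. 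Convexity of $Q^A$ reduces the multi-cell structure to a single cell (\cite[Lem.~4.1]{Muel87}) and the uniform-in-$k_h$ cell rigidity removes any obstruction from $k_h\to\infty$; quantifying the rates as before completes \eqref{Eq:TaylorExpansion_Whom}.

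The hard part is the compactness in the lower bound, i.e.\ upgrading the a priori energy bound $\fint_{k_hY}\dist^2((A+hG+\dif{\psi_h})A_h^{-1},\SO(d))\le Ch^2$ to $\norm{\dif{\psi_h}}_{\Leb^2}=\Oclass(h)$. This needs a geometric rigidity estimate that is simultaneously (i) uniform in $k$ on the non-Lipschitz but Jones domains $a(kY)$ produced by the Bilipschitz change of variables dictated by the stress-free joint, and (ii) stable under a prestrain perturbation controlled only in $\Leb^2$ rather than $\Leb^\infty$ — precisely the mixed-growth rigidity estimate for Jones domains developed in the paper, used together with the reduction to small prestrain afforded by $A=\bar A+\dif{\varphi_A}$. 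Once compactness is secured, matching the limit with $Q^A_{\hom}(G-B_{\hom}\bar A)+R^A(B)$ is routine homogenization of a convex quadratic integrand with a source term, read off from \cref{Def:Formula_Bhom}.
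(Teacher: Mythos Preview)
Your outline is largely the paper's own argument: the reduction via $A=\bar A+\dif{\varphi_A}$ and \cref{Lem:homandprestrain} to the auxiliary density $\widetilde W^h_{\hom}$ of \cref{Sec:Proof:AsyExpansionWhom1}, the Lipschitz competitor for the upper bound, and the uniform-in-$k$ rigidity estimate to bound $\fint_{k_hY}|\dif{\sigma_h}|^2$ followed by a good-set linearization (\cref{Lem:ExpansionPeriodicityCell}) for the lower bound all match \cref{Prop:AsyExpansion}. Two differences are worth flagging.

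For (b) the paper takes a shorter route than yours: after the change of variables by $a$ it does \emph{not} apply rigidity on $a(k_\eta Y)$ but uses that $\varphi_\eta\circ a^{-1}$ is periodic on the periodicity cell $a(k_\eta Y)$, that $\operatorname{Qdist}^2(\cdot,\SO(d))$ is quasiconvex, and then Zhang's inequality $\operatorname{Qdist}^2\gtrsim\dist^2$ (see \cref{Prop:NonDegeneracy}). Your Pythagoras-after-rigidity argument is also valid---the uniform-in-$k$ rigidity constant you invoke is established in \cref{Thm:RigidityEstimateJonesDomain} and is used anyway in the lower bound for (c)---so this is a genuine but harmless alternative.

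For (c), two overclaims. First, the \emph{mixed-growth} rigidity is not needed here; the paper uses only the $p=q=2$ case of \cref{Thm:RigidityEstimateJonesDomain}. The $\Leb^2$-only closeness of $B_h$ to $B$ is handled by first absorbing $I-hB_h$ into the distance via $h\norm{B_h}_{\Leb^\infty}\to 0$ and then by $B_h\to B$ in $\Leb^2$ inside the linearization lemma. Second, your ``lower semicontinuity'' step is too quick: the corrector $\xi_h$ is only $k_hY$-periodic with $k_h$ unbounded, and the linearization holds only on a good set $Y_h\subsetneq k_hY$, so one cannot pass directly to the single-cell infimum. The paper closes this by comparing with the fixed one-cell corrector $\varphi_G$ through $Q(G_1)-Q(G_2)\ge 2(G_1-G_2):\mathbb L_Q G_2$ and then killing the term carrying $\dif{\tilde\varphi_h}-\dif{\varphi_G}$ using the Euler--Lagrange equations of $\varphi_G$ and of $\operatorname P_{(\mathcal S+\R^{d\times d}_{\sym})^\perp}(\sym B)$, tested on $k_hY$-periodic functions (Step~3 of \cref{Prop:AsyExpansion}).
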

We note that the ground state of $W^h_{\hom}$ is not explicitly known. In fact, under the assumptions of the theorem (which does not impose growth conditions on $W$), it is even not clear that $W^h_{\hom}$ attains its minimum. For this reason, in the theorem we consider an expansion of $W^h_{\hom}$ around the deformation $\bar{A}$, which is an asymptotic ground state. This is made precise in the following corollary, which yields an asymptotic expansion for almost minimizers of $W^h_{\hom}$:

\begin{corollary}\label{Cor:ExpansionMinimizer}
  In the situation of \cref{Thm:Expansion_Whom} let $(F_h^*) \subset \R^{d\times d}$ denote a sequence of almost minimizers for $(W^h_{\hom})$ in the sense that
\begin{equation*}
	\limsup_{h \to 0} \tfrac{1}{h^2} \Big| W^h_{\hom}(F_h^*) - \inf_{F \in \R^{d\times d}} W^h_{\hom}(F) \Big| = 0.
\end{equation*}
Then there exist rotations $R_h \in \SO(d)$, such that
\begin{equation}
	F_h^* = R_h(I + h B_{\hom})\bar{A} + \oclass(h).
\end{equation}
(For the proof see \cref{Sec:Proof:AsyExpansionWhom2}.)
\end{corollary}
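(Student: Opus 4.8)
The plan is to deduce the corollary directly from \cref{Thm:Expansion_Whom}: first extract compactness of the rescaled almost minimizers from the non-degeneracy bound, then identify their limit through the quadratic expansion, and finally reabsorb a skew-symmetric correction into the rotations.

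\emph{Step 1 (asymptotic infimum and compactness).} Testing \eqref{Eq:TaylorExpansion_Whom} with $G=B_{\hom}\bar A$ and using $Q^A_{\hom}(0)=0$ gives $\tfrac{1}{h^2}W^h_{\hom}(\bar A+hB_{\hom}\bar A)\to R^A(B)$, whence $\limsup_{h\to0}\tfrac1{h^2}\inf_{F}W^h_{\hom}(F)\le R^A(B)$, and by almost minimality also $\limsup_{h\to0}\tfrac1{h^2}W^h_{\hom}(F_h^*)\le R^A(B)$. Inserting this into \eqref{Eq:NonDegeneracy_Whom} yields, for $h$ small, $\dist^2(F_h^*\bar A^{-1},\SO(d))\le\alpha h^2\big(\tfrac1{h^2}W^h_{\hom}(F_h^*)+\alpha\big)\le Ch^2$. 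Since $\SO(d)$ is compact and $\bar A\in\Gl_+(d)$, I may pick $R_h\in\SO(d)$ realizing this distance and write $F_h^*=R_h(\bar A+hG_h)$ with $G_h:=h^{-1}(R_h^{T}F_h^*-\bar A)$ bounded.

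\emph{Step 2 (identification of the limit).} It suffices to argue along an arbitrary subsequence along which $G_h\to G_*$ and $R_h\to R_*$. By frame indifference \eqref{Eq:FrameIndifference_Whom}, $W^h_{\hom}(F_h^*)=W^h_{\hom}(\bar A+hG_h)$, so \eqref{Eq:TaylorExpansion_Whom} (whose error term $(1+|G_h|^2)\rho(h+h|G_h|)$ vanishes because $(G_h)$ is bounded), together with continuity of the quadratic form $Q^A_{\hom}$ (recall $Q^A_{\hom}$ is a quadratic form, being the homogenization of the convex quadratic integrand $Q^A$), gives $\tfrac1{h^2}W^h_{\hom}(F_h^*)\to Q^A_{\hom}(G_*-B_{\hom}\bar A)+R^A(B)$. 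Comparing with Step 1 and using $Q^A_{\hom}\ge0$ (as $Q^A\ge0$) forces $Q^A_{\hom}(G_*-B_{\hom}\bar A)=0$. I would then upgrade \eqref{Eq:NonDegeneracy_Whom} to a coercivity estimate for $Q^A_{\hom}$: applying \eqref{Eq:NonDegeneracy_Whom} and \eqref{Eq:TaylorExpansion_Whom} to $F=\bar A+h(tH+B_{\hom}\bar A)$, dividing by $h^2$, letting $h\to0$ and then $t\to\infty$, and using $Q^A_{\hom}(tH)=t^2Q^A_{\hom}(H)$, one obtains $Q^A_{\hom}(H)\ge\tfrac1\alpha|\sym(H\bar A^{-1})|^2$ for all $H$. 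Hence $\sym\big((G_*-B_{\hom}\bar A)\bar A^{-1}\big)=0$, i.e.\ $G_*\bar A^{-1}=B_{\hom}+W_*$ with $W_*:=G_*\bar A^{-1}-B_{\hom}$ skew-symmetric.

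\emph{Step 3 (reabsorbing the skew correction) and conclusion.} Writing $\exp(hW_*)=I+hW_*+\Oclass(h^2)\in\SO(d)$ and $\tilde R_h:=R_h\exp(hW_*)\in\SO(d)$, one computes along the subsequence
\[
  F_h^*=R_h(\bar A+hG_h)=R_h\big(I+h(B_{\hom}+W_*)\big)\bar A+\oclass(h)=\tilde R_h(I+hB_{\hom})\bar A+\oclass(h),
\]
using $G_h\to G_*$, $|R_h|\le\sqrt d$ and $(I+hW_*)(I+hB_{\hom})=I+h(B_{\hom}+W_*)+\Oclass(h^2)$. Since $\delta_h:=\inf_{R\in\SO(d)}h^{-1}|F_h^*-R(I+hB_{\hom})\bar A|$ has a null subsequence inside every subsequence, $\delta_h\to0$ along the full sequence, which (choosing $R_h$ to realize the infimum, attained by compactness of $\SO(d)$) is exactly the claim. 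The only genuinely delicate point is the rigidity step: $Q^A_{\hom}$ is coercive only on the symmetric part of $G\bar A^{-1}$, so $G_*$ is pinned down by $B_{\hom}$ merely up to a skew conjugate, and one must check that precisely this skew part can be reabsorbed into the rotations — this is what yields the multiplicative normal form $(I+hB_{\hom})\bar A$ rather than the additive one $\bar A+hB_{\hom}\bar A$.
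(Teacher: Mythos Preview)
Your proof is correct, but the route differs from the paper's in one key design choice. The paper factors $F_h^*\bar A^{-1}$ by \emph{polar decomposition}, setting $R_h\in\SO(d)$ and $G_h^*:=h^{-1}\big(\sqrt{(F_h^*\bar A^{-1})^T F_h^*\bar A^{-1}}-I\big)$; the point is that $G_h^*$ is then automatically \emph{symmetric}. Since $B_{\hom}$ is the unique minimizer of $G\mapsto\widetilde Q_{\hom}(G-B_{\hom})+R^A(B)$ within $\R^{d\times d}_{\sym}$, a short $\Gamma$-convergence argument (based on the expansion) forces $G_h^*\to B_{\hom}$ for the full sequence with no skew ambiguity to resolve.

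You instead choose $R_h$ as a nearest rotation, which does not make $G_h$ symmetric; this is why you must pin down $G_*$ only up to a skew correction $W_*$ and then reabsorb it via $\tilde R_h=R_h\exp(hW_*)$. That reabsorption step is perfectly sound, and your subsequence-of-subsequence argument for the full sequence is clean. You also rederive the coercivity bound $Q^A_{\hom}(H)\ge\tfrac1\alpha|\sym(H\bar A^{-1})|^2$ from \eqref{Eq:NonDegeneracy_Whom} and \eqref{Eq:TaylorExpansion_Whom}; in the paper this is available directly as \cref{L:coercivity}, so you could simply cite it. What the polar-decomposition approach buys is brevity: the symmetry of $G_h^*$ eliminates Step~3 entirely. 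What your approach buys is that it avoids invoking $\Gamma$-convergence language and makes the role of the skew part of $G_*$ explicit, which matches your closing remark about why the multiplicative normal form $(I+hB_{\hom})\bar A$ is the natural target.
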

Furthermore, we observe that the quadratic term in the expansion (\ref{Eq:TaylorExpansion_Whom}) is in fact the homogenization of the quadratic term in the expansion of $W^h$ at $A(y)$:
\begin{lemma}\label{lemma:homogenizationaftelinearization}
  In the situation of \cref{Thm:Expansion_Whom} we have up to a subsequence
  \begin{align}
    \lim\limits_{h\to0}\tfrac{1}{h^2}W^h(A(y)+h\placeholder)=&\,Q^A(y,\placeholder-B(y)A(y)),
    \label{lemma:homogenizationaftelinearization:2}
   \intertext{and}
    \label{lemma:homogenizationaftelinearization:1}
    \big[(y,G) \mapsto Q^A\big(y,G-B(y)A(y)\big)\big]_{\hom}=&\,R^A(B)+Q^A_{\hom}(\placeholder-B_{\hom}\bar A),
  \end{align}
  where $B_{\hom}$ and $R^A(B)$ are given by \cref{Def:Formula_Bhom}.
  (For the proof see \cref{Sec:ProofHomogenPerturbation}.)
\end{lemma}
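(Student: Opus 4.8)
The plan is to prove the two displayed identities separately; the first is a pointwise linearization, the second a homogenization computation. For \eqref{lemma:homogenizationaftelinearization:2} I would combine the multiplicative structure \eqref{Eq:MultiplicativeDecomposition} with the Neumann-series identity \eqref{def:alternative1}: for a.e.\ $y$ and all $G\in\R^{d\times d}$ one gets $W^h(y,A(y)+hG)=W(y,I+hE_h(y,G))$ with $E_h(y,G):=GA(y)^{-1}-B_h(y)-h\,GA(y)^{-1}B_h(y)$. Passing to a subsequence along which $B_h\to B$ a.e.\ (available since $B_h\to B$ in $L^2_{\rm loc}$), the bounds \eqref{def:alternative2} give $E_h(y,G)\to GA(y)^{-1}-B(y)$ and $h\,|E_h(y,G)|\to 0$ pointwise, and the quadratic expansion \ref{Property:W_Expansion} of $W(y,\cdot)$ then yields $\tfrac1{h^2}W^h(y,A(y)+hG)\to Q(y,GA(y)^{-1}-B(y))=Q^A(y,G-B(y)A(y))$. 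This step is routine.

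For \eqref{lemma:homogenizationaftelinearization:1} set $V(y,G):=Q^A(y,G-B(y)A(y))=Q(y,GA(y)^{-1}-B(y))$. Since $Q(y,\cdot)$ is a non-negative quadratic form precomposed with an affine map, $V(y,\cdot)$ is convex, and by \eqref{Eq:UniformBounds_Q}, $|A^{-1}|\le L$ and $B\in\Leb^2_{\rm per}(Y,\R^{d\times d})$ it has quadratic growth with $L^1$ data. Hence the multi-cell formula \eqref{def:mchom} for $V$ collapses to the single-cell formula $[V]_{\hom}(F)=\min_{\varphi\in\SobH^1_\mathrm{per}(Y,\R^d)}\fint_Y Q^A(y,F+\dif{\varphi}-BA)\,\dx[y]$ (classical for convex integrands, cf.\ \cite[Lem.~4.1]{Muel87} and the remark after \eqref{def:mchom}), the minimum being attained thanks to the (weighted) Korn-type coercivity underlying \cref{Def:Formula_Bhom}.

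Then I would exploit linearity. The minimizer $\psi_F$ solves $-\operatorname{Div}(\mathbb{L}_{Q^A}(F+\dif{\psi_F}-BA))=0$ in $\mathscr D'(\R^d)$ and decomposes as $\psi_F=\chi_F+\chi_0$, where $\chi_F$ is the $Q^A$-corrector for $F$ (with $\fint_Y Q^A(y,F+\dif{\chi_F})\,\dx[y]=Q^A_{\hom}(F)$, as in \eqref{eq:quadratic-corrector} for $Q^A$) and $\chi_0\in\SobH^1_\mathrm{per}(Y,\R^d)$, independent of $F$, solves $-\operatorname{Div}(\mathbb{L}_{Q^A}(\dif{\chi_0}-BA))=0$. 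Expanding $Q^A(y,(F+\dif{\chi_F})+(\dif{\chi_0}-BA))$ by polarization, integrating over $Y$, and using the $\chi_0$-equation tested against $\chi_F$ to discard the $\dif{\chi_F}$-part of the cross term, I obtain
\[
  [V]_{\hom}(F)=Q^A_{\hom}(F)+2F:\Sigma+\fint_Y Q^A(y,\dif{\chi_0}-BA)\,\dx[y],\qquad \Sigma:=\fint_Y\mathbb{L}_{Q^A}(y)(\dif{\chi_0}(y)-B(y)A(y))\,\dx[y].
\]
It remains to identify the affine part. Using $A=\bar A+\dif{\varphi}$ from \cref{Lem:periodic_derivative}, for every skew $S$ one has $\mathbb{L}_{Q^A}(y)(S\bar A)=-\mathbb{L}_{Q^A}(y)\dif{(S\varphi)}(y)$ (because $SAA^{-1}=S$ is skew, so $\mathbb{L}_{Q^A}(y)(SA)=0$ by \eqref{Eq:UniformBounds_Q}), and testing the $\chi_0$-equation against $S\varphi$ then gives $\Sigma:(S\bar A)=0$. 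Since $\ker Q^A_{\hom}=\{S\bar A : S^T=-S\}$ (the inclusion $\supseteq$ being witnessed by the corrector $S\varphi$), $\Sigma$ lies in the range of the tensor $\mathbb{L}_{Q^A_{\hom}}$ associated with $Q^A_{\hom}$, and by the construction of $B_{\hom}$ and $R^A(B)$ in \cref{Def:Formula_Bhom} — which is tailored precisely so that $\mathbb{L}_{Q^A_{\hom}}(B_{\hom}\bar A)=-\Sigma$ and $R^A(B)=\fint_Y Q^A(y,\dif{\chi_0}-BA)\,\dx[y]-Q^A_{\hom}(B_{\hom}\bar A)$ — completing the square yields $[V]_{\hom}(F)=Q^A_{\hom}(F-B_{\hom}\bar A)+R^A(B)$, which is \eqref{lemma:homogenizationaftelinearization:1}.

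I expect the main obstacle to be the bookkeeping that reconciles this explicit computation with the a-priori definitions in \cref{Def:Formula_Bhom}: one must check that $\Sigma$ is a legitimate right-hand side for the degenerate homogenized operator (its orthogonality to $\ker Q^A_{\hom}=\{S\bar A\}$, which needs both inclusions of this kernel identity), and that the particular representative of $B_{\hom}$ fixed there reproduces exactly this $\Sigma$. A secondary, more technical point hidden behind "the minimum is attained" is a Korn-type inequality with the variable weight $A^{-1}$ — its kernel among $Y$-periodic fields is trivial since $a(\cdot+k)-a(\cdot)=\bar A k$ for $k\in\Z^d$ by \cref{Lem:periodic_derivative} — but this is already handled within \cref{Def:Formula_Bhom}.
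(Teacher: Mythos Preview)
Your argument for \eqref{lemma:homogenizationaftelinearization:2} coincides with the paper's. For \eqref{lemma:homogenizationaftelinearization:1} your approach is correct but genuinely different. The paper works entirely in the Hilbert space $(\Leb^2(Y,\R^{d\times d}_{\sym}),\norm{\placeholder}_Q)$ with its orthogonal decomposition $\mathcal S\oplus\mathcal O\oplus(\mathcal S+\R^{d\times d}_{\sym})^\perp$: it rewrites $[V]_{\hom}(G)$ as $\norm{\operatorname P_{\mathcal S^\perp}(\sym(GA(\cdot)^{-1}-B))}_Q^2$, then uses the observation from \eqref{eq:fromAtobarA} that $\sym(GA(\cdot)^{-1})-\sym(G\bar A^{-1})\in\mathcal S$ to replace the $y$-dependent $GA(\cdot)^{-1}$ by the constant $G\bar A^{-1}$, after which splitting $\operatorname P_{\mathcal S^\perp}$ into its $\mathcal O$- and $(\mathcal S+\R^{d\times d}_{\sym})^\perp$-parts yields \eqref{lemma:homogenizationaftelinearization:1} directly from \cref{Def:Formula_Bhom}. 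No bookkeeping is needed, because $B_{\hom}$ and $R^A(B)$ are \emph{defined} in this projection language.

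Your corrector route via $\psi_F=\chi_F+\chi_0$ and the affine expansion $Q^A_{\hom}(F)+2F{:}\Sigma+c$ is the same content in the Euler--Lagrange picture; it is closer in spirit to \cref{Prop:Bhom_algorithmic}. What your route buys is an explicit identification of the linear and constant parts via a single auxiliary corrector $\chi_0$. What the paper's route buys is that your flagged ``main obstacle'' --- matching $\Sigma=-\mathbb L_{Q^A_{\hom}}(B_{\hom}\bar A)$ and $R^A(B)=c-Q^A_{\hom}(B_{\hom}\bar A)$ to \cref{Def:Formula_Bhom} --- evaporates. Concretely, the link is that your $\chi_0$ satisfies $\sym(\dif\chi_0\,A(\cdot)^{-1})=\operatorname P_{\mathcal S}(\sym B)$, hence $c=\norm{\operatorname P_{\mathcal S^\perp}(\sym B)}_Q^2=Q^A_{\hom}(B_{\hom}\bar A)+R^A(B)$, and (using \eqref{eq:fromAtobarA} again) $G{:}\Sigma=-(\operatorname P_{\mathcal O}(\sym(G\bar A^{-1})),\operatorname P_{\mathcal O}(B_{\hom}))_Q=-G{:}\mathbb L_{Q^A_{\hom}}(B_{\hom}\bar A)$; so the reconciliation you anticipate is short once one passes through the projection formalism.
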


Finally, the following lemma establishes a rigorous connection between the expansion of $W^h$ at $A(y)$ and the expansion of $W^h_{\hom}$ at $\bar{A}$.

\begin{lemma}\label{Lem:homandprestrain}
  In the situation of Theorem~\ref{Thm:Expansion_Whom} we have
  \begin{equation*}
    \Big[(y,G) \mapsto W^h(y,A(y)+G)\Big]_{\hom} = \Big[W^h\Big]_{\hom}(\bar A+\placeholder).
  \end{equation*}
  (For the proof see \cref{Sec:Proof:AsyExpansionWhom1}.)
\end{lemma}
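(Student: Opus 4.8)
The plan is to unfold both sides of the asserted identity via the multi-cell homogenization formula \eqref{def:mchom} and to absorb the shift $A(y)-\bar A$ into the corrector field. Write $V^h(y,G):=W^h(y,A(y)+G)$, so that the left-hand side is $[V^h]_{\hom}$. The decisive structural input is \cref{Lem:periodic_derivative}, which, since $A\in\operatorname{SFJ}_{\rm per}$, provides a map $\varphi\in\SobW^{1,\infty}_{\rm per}(Y,\R^d)$ with $A=\bar A+\dif\varphi$ a.e.\ in $\R^d$. In particular $\varphi$ is $kY$-periodic for every $k\in\N$, hence $\varphi\in\SobW^{1,\infty}_{\rm per}(kY,\R^d)$ for all $k$.

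First I would fix $G\in\R^{d\times d}$ and $k\in\N$. For any competitor $\psi\in\SobW^{1,\infty}_{\rm per}(kY,\R^d)$ one has the pointwise identity
\begin{equation*}
  A(y)+G+\dif\psi(y)=(\bar A+G)+\dif(\varphi+\psi)(y)\qquad\text{for a.e.\ }y\in\R^d.
\end{equation*}
Since the affine shift $\psi\mapsto\varphi+\psi$ is a bijection of $\SobW^{1,\infty}_{\rm per}(kY,\R^d)$ onto itself, with inverse $\chi\mapsto\chi-\varphi$, this shows
\begin{equation*}
  \inf_{\psi}\fint_{kY}W^h\big(y,A(y)+G+\dif\psi(y)\big)\,\dx[y]=\inf_{\chi}\fint_{kY}W^h\big(y,(\bar A+G)+\dif\chi(y)\big)\,\dx[y],
\end{equation*}
where both infima range over $\SobW^{1,\infty}_{\rm per}(kY,\R^d)$. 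Taking the infimum over $k\in\N$, the left-hand side is $[V^h]_{\hom}(G)$ and the right-hand side is $[W^h]_{\hom}(\bar A+G)$ by \eqref{def:mchom}; since $G$ was arbitrary, this is precisely the claimed identity.

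I do not expect a genuine obstacle: the whole argument is a bijective change of the corrector variable. The only points deserving brief care are that $\varphi$ is an admissible $kY$-periodic competitor of class $\SobW^{1,\infty}$ (immediate from \cref{Lem:periodic_derivative}), and that $(y,G)\mapsto W^h(y,A(y)+G)$ is measurable so that $[\placeholder]_{\hom}$ is well defined, which follows from the Carathéodory property in \cref{Ass:ElasticMaterialLaw} together with measurability of $A$. In particular, no growth, frame-indifference, or non-degeneracy property of $W^h$ is used — the identity is purely a manifestation of the periodic-gradient structure of $A-\bar A$.
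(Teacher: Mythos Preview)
Your proof is correct and follows essentially the same approach as the paper: both arguments use \cref{Lem:periodic_derivative} to write $A-\bar A$ as the gradient of a $Y$-periodic $\SobW^{1,\infty}$ map and then absorb this gradient into the corrector in the multi-cell formula. The paper states the absorption step more tersely (``$\dif{\varphi_A}$ is not seen when applying $[\placeholder]_{\hom}$''), whereas you spell out the bijection $\psi\mapsto\varphi+\psi$ explicitly, but the content is the same.
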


We may summarize the structural implications of \cref{Thm:Expansion_Whom,lemma:homogenizationaftelinearization,Lem:homandprestrain} with help of the following commuting diagram:
\begin{equation}\label{Eq:Diagram_perturbation}
\begin{tikzpicture}[baseline={([yshift=-.5ex]current bounding box.center)}]
	\node (1) at (0,0) {$\tfrac{1}{h^2} W^h\left(y, A(y) + h\placeholder\right)$};
	\node (2) at (5,0) {$Q^A\left(y, \placeholder - B(y)A(y)\right)$};
	\node (3) at (0,-2) {$\tfrac{1}{h^2} W^h_{\hom}\left(\bar{A} + h\placeholder\right)$};
	\node (4) at (5,-2) {$R^A(B)+Q^A_{\hom}(\placeholder - B_{\hom}\bar{A})$.};
	\draw[->]  (1) -- (2) node[above, midway] {\footnotesize\upshape(1)};		
	\draw[->]  (1) -- (3) node[left, midway] {\footnotesize\upshape(3)};
	\draw[->]  (2) -- (4) node[right, midway] {\footnotesize\upshape(2)};
	\draw[->]  (3) -- (4) node[above, midway] {\footnotesize\upshape(4)};
\end{tikzpicture}
\end{equation}
(1) and (4) stand for linearization (i.e.\ taking the limit $h\to 0$), while (2) and (3) stand for homogenization (i.e., applying $[\placeholder]_{\hom}$). Linearization (4) is part of \cref{Thm:Expansion_Whom}. (1) and (2) are provided by \cref{lemma:homogenizationaftelinearization} and (3) is shown in \cref{Lem:homandprestrain}. In a nutshell (2) states that $\big[Q^A(y,\placeholder-BA)\big]_{\hom}$ can be decomposed into a residual energy term and a quadratic form. This result uses a corrector representation of the homogenized quadratic form similar to \eqref{eq:quadratic-corrector}. We explain this in detail in the next section. 
Thus, it is possible to extract the dependence on the incremental prestrain $B$ from the homogenized energy. The energy can however not be decoupled from the stress-free joint $A$, as an example in \cref{Sec:Example} shall reveal.

\subsection{Properties of \texorpdfstring{$Q^A_{\hom}$}{QAhom} and definition of the effective quantities}
\label{Sec:Characterization_Bhom}

In this section we present the definition of the effective quantities that appear in \cref{Thm:Expansion_Whom,lemma:homogenizationaftelinearization}. We recall that the definition of $Q^A$ invokes the quadratic term $Q$, which thanks to \hyperref[Def:MaterialClass_W]{(W1) - (W3)}, satisfies the ellipticity conditions \eqref{Eq:UniformBounds_Q}.
The homogenized quadratic form $Q^A_{\hom}$ inherits this non-degeneracy property in the following form:
\begin{lemma}[Non-degeneracy]\label{L:coercivity}
  There exists a constant $c>0$ (only depending on $\alpha_{\rm el}$ and $\beta_{\rm el}$, $\norm{A}_{\Leb^\infty(Y)}$ and $\norm{A(\placeholder)^{-1}}_{\Leb^\infty(Y)}$) such that for all $G \in \R^{d\times d}$,
  \begin{equation}\label{Eq:Qhom_properties}
    \tfrac{1}{c}\abs{\sym_{\bar{A}} G}^2 \leq Q^A_{\hom}(G) \leq c \abs{\sym_{\bar{A}} G}^2,
  \end{equation}
  where
  \begin{equation}\label{Eq:Def_symA}
    \sym_{\bar{A}}: \R^{d\times d} \to \R^{d\times d}_{\sym}\bar{A}, \quad \sym_{\bar{A}} := \sym(G\bar{A}^{-1})\bar{A}.
  \end{equation}
  \textit{(For the proof see \cref{Sec:Proof:AsyExpansionWhom1}.)}
\end{lemma}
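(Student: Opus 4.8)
The plan is to reduce the statement to the corrector representation of $Q^A_{\hom}$ together with the Bilipschitz potential of $A$ provided by \cref{Prop:SFJBilipschitz}. Since $Q^A(y,G)=Q(y,GA(y)^{-1})$ is, by the lower bound in \eqref{Eq:UniformBounds_Q}, a nonnegative --- hence convex --- quadratic form in $G$, the multi-cell formula \eqref{def:mchom} for $Q^A_{\hom}=[Q^A]_{\hom}$ reduces to the single-cell formula $Q^A_{\hom}(G)=\inf_{\varphi\in\SobW^{1,\infty}_{\mathrm{per}}(Y,\R^d)}\fint_Y Q\big(y,(G+\dif{\varphi}(y))A(y)^{-1}\big)\,\dx[y]$ (cf.\ the discussion around \eqref{eq:quadratic-corrector}). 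Let $a$ denote the Bilipschitz potential of $A$; combining \cref{Prop:SFJBilipschitz} with \cref{Lem:periodic_derivative}, I will use that $\bar A\in\Gl_+(d)$ (so that $\sym_{\bar A}$ is well-defined), $\det\bar A=\fint_Y\det A$, $a(x+k)=a(x)+\bar A k$ for $k\in\Z^d$, and $\varphi_A:=a-\bar A\,\id\in\SobW^{1,\infty}_{\mathrm{per}}(Y,\R^d)$ with $\dif{\varphi_A}=A-\bar A$. Elementary matrix estimates bound $\abs{\bar A}$, $\abs{\bar A^{-1}}$ and $(\det\bar A)^{\pm1}$ in terms of $\norm{A}_{\Leb^\infty(Y)}$ and $\norm{A^{-1}}_{\Leb^\infty(Y)}$; in particular $\abs{\sym_{\bar A}G}$ and $\abs{\sym(G\bar A^{-1})}$ are comparable with such constants, so it suffices to prove a two-sided bound with $\abs{\sym(G\bar A^{-1})}^2$ on the right (and in fact the clean expressions below come out directly).

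For the upper bound I would test the single-cell formula with the admissible corrector $\varphi:=W\varphi_A$, where $W:=\skewMat(G\bar A^{-1})$. Since $G-W\bar A=\sym_{\bar A}G$, one gets $G+\dif{\varphi}=\sym_{\bar A}G+WA$; and because $Q(y,\cdot)$ is invariant under adding skew matrices (by \eqref{Eq:UniformBounds_Q}, $W$ lies in the radical of $Q(y,\cdot)$), this yields $Q\big(y,(G+\dif{\varphi})A(y)^{-1}\big)=Q\big(y,\sym_{\bar A}G\cdot A(y)^{-1}\big)\le\beta_{\mathrm{el}}\norm{A^{-1}}_{\Leb^\infty(Y)}^2\abs{\sym_{\bar A}G}^2$. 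Averaging over $Y$ gives the upper estimate in \eqref{Eq:Qhom_properties}.

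For the lower bound, fix any admissible $\varphi$, set $P:=G+\dif{\varphi}$ and $\Phi(y):=Gy+\varphi(y)$, and change variables $z=a(y)$. As $a$ is globally Bilipschitz and onto, $\tilde\Phi:=\Phi\circ a^{-1}$ is Lipschitz with $\dif{\tilde\Phi}(z)=P(a^{-1}(z))A(a^{-1}(z))^{-1}$, and from $a(x+k)=a(x)+\bar A k$ one checks $\tilde\Phi(z+\bar A k)=\tilde\Phi(z)+Gk$; equivalently $\dif{\tilde\Phi}=G\bar A^{-1}+\dif{\tilde v}$ for the $\bar A Y$-periodic map $\tilde v:=\tilde\Phi-G\bar A^{-1}\,\id$. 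Since $\fint_{\bar A Y}\dif{\tilde v}\,\dx[z]=0$, expanding the square gives $\fint_{\bar A Y}\abs{\sym\dif{\tilde\Phi}}^2\,\dx[z]=\abs{\sym(G\bar A^{-1})}^2+\fint_{\bar A Y}\abs{\sym\dif{\tilde v}}^2\,\dx[z]\ge\abs{\sym(G\bar A^{-1})}^2$. Transforming back (the Jacobian of $a$ is $\det A$ and $\abs{\bar A Y}=\det\bar A$), one has $\det\bar A\cdot\fint_{\bar A Y}\abs{\sym\dif{\tilde\Phi}}^2\,\dx[z]=\fint_Y\abs{\sym(PA(y)^{-1})}^2\det A(y)\,\dx[y]\le\norm{A}_{\Leb^\infty(Y)}^d\fint_Y\abs{\sym(PA^{-1})}^2\,\dx[y]$, hence $\fint_Y\abs{\sym(PA^{-1})}^2\,\dx[y]\ge(\det\bar A/\norm{A}_{\Leb^\infty(Y)}^d)\abs{\sym(G\bar A^{-1})}^2$. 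The lower bound in \eqref{Eq:UniformBounds_Q} then gives $\fint_Y Q\big(y,(G+\dif{\varphi})A^{-1}\big)\,\dx[y]\ge\alpha_{\mathrm{el}}\fint_Y\abs{\sym(PA^{-1})}^2\,\dx[y]$; taking the infimum over $\varphi$ and using $\abs{\sym(G\bar A^{-1})}\ge\abs{\sym_{\bar A}G}/\abs{\bar A}$ proves the lower estimate in \eqref{Eq:Qhom_properties}, with a constant depending only on $\alpha_{\mathrm{el}}$, $\norm{A}_{\Leb^\infty(Y)}$ and $\norm{A^{-1}}_{\Leb^\infty(Y)}$. (Convexity of $Q^A$ is used only to pass to the single-cell formula; alternatively this lower estimate can be run cell-by-cell on each $kY$ in \eqref{def:mchom}.)

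I expect the main obstacle to be making the change of variables $z=a(y)$ rigorous: one has to verify that $\Phi\circ a^{-1}$ is quasi-periodic with affine part \emph{exactly} $G\bar A^{-1}$, that the Bilipschitz transformation rule for the $\Leb^2$-norm of $\sym\dif{\tilde\Phi}$ is legitimate (this is where the full content of \cref{Prop:SFJBilipschitz} --- global injectivity, surjectivity, and the fact that $a(Y)$ tiles $\R^d$ under the lattice $\bar A\Z^d$ --- enters), and that afterwards the elementary periodic inequality $\fint_{\bar A Y}\abs{\sym(C+\dif{\tilde v})}^2\ge\abs{\sym C}^2$ may be invoked on the parallelepiped $\bar A Y$. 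The remaining algebra (the identities involving $\sym_{\bar A}$ and the radical of $Q$, and the matrix-norm comparisons) is routine.
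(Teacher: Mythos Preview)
Your proof is correct and follows essentially the same strategy as the paper: both the upper and lower bounds rest on the Bilipschitz potential $a$ of $A$, with the lower bound obtained by the change of variables $z=a(y)$ and the orthogonality $\fint \dif{\tilde v}=0$ for the resulting $\bar A Y$-periodic corrector. The only organizational difference is that the paper first passes to the auxiliary quantity $\widetilde Q_{\hom}(G)=Q^A_{\hom}(G\bar A)$ (via the identity \eqref{eq:fromAtobarA}, which is precisely your $\varphi_A$ trick) and then tests with the trivial corrector $\varphi=0$ for the upper bound, whereas you stay in the $Q^A_{\hom}$-formulation and test with $W\varphi_A$; these are equivalent routes yielding slightly different but comparable constants.
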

Especially, \eqref{Eq:Qhom_properties} shows that the left-hand side of \eqref{lemma:homogenizationaftelinearization:1} admits a unique minimizer up to the symmetry $\sym_{\bar{A}}$, which is given by $B_{\hom}\bar{A}$.
We introduce the effective incremental prestrain $B_{\hom}$ following the approach in \cite[§3]{BNS20}. The main point of this construction is to obtain the decomposition \eqref{lemma:homogenizationaftelinearization:1} which establishes the direction (2) in the diagram \eqref{Eq:Diagram_perturbation}. The main idea is to utilize an orthogonal decomposition of the Hilbert space $\Leb^2(Y, \R^{d\times d}_{\sym})$ equipped with the scalar product
\begin{equation}
  \left( \Phi, \Psi \right)_Q := \int_Y \Phi(y) : \mathbb{L}_Q(y) \Psi(y) \,\dx[y].
\end{equation}
Thanks to \eqref{Eq:UniformBounds_Q}, $(\cdot,\cdot)_Q$ defines a scalar product that is equivalent to the standard one. We denote the associated norm by $\norm{\placeholder}_Q$ and write $\operatorname{P}_U$ for the orthogonal projection in this Hilbert space onto some closed, convex subset $U \subset \Leb^2(Y, \R^{d\times d}_{\sym})$.
We consider the subspace
\begin{equation}\label{def:S}
  \mathcal{S} := \class{\sym(\dif{\varphi}A(\placeholder)^{-1})}{\varphi \in \SobH^1_{\mathrm{per}}(Y,\R^d)},
\end{equation}
and define $\mathcal{O}$ as the orthogonal complement of $\mathcal S$ in $\mathcal S+\R^{d\times d}_{\sym}$ (which we consider as subspace of $(\Leb^2(Y,\R^{d\times d}_{\sym}), \norm{\placeholder}_Q)$). We thus obtain a decomposition of $\Leb^2(Y,\R^{d\times d}_{\sym})$ into three orthogonal subspaces:
\begin{equation*}
  \Leb^2(Y,\R^{d\times d}_{\sym})=\mathcal S+\mathcal O+(\mathcal S+\R^{d\times d}_{\sym})^\perp.
\end{equation*}
Note that $\mathcal S$ is closed as follows from Korn's inequality, see \cref{Cor:KornPeriodic} below.
With this at hand we are ready to define the residual energy and the effective incremental prestrain.
\begin{lemma}[Effective incremental prestrain $B_{\hom}$] \label{Thm:FormulaBhom}
  The projection $\R^{d\times d}_{\sym}\to\mathcal O$, $G\mapsto \operatorname{P}_\mathcal{O}(G)$ is an isomorphism. In particular, for every $B\in L^2(Y,\R^{d\times d})$ there exists a unique $B_{\hom}\in \R^{d\times d}_{\sym}$ such that
  \begin{equation}\label{Eq:DefinitionBhom}
    \operatorname{P}_\mathcal{O}(B_{\hom}) = \operatorname{P}_\mathcal{O}(\sym B).
  \end{equation}
  \textit{(For the proof see \cref{Sec:ProofHomogenPerturbation}.)}
\end{lemma}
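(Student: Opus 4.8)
The plan is to reduce the whole statement to one rigidity fact, namely that the only constant symmetric matrix lying in $\mathcal S$ is $0$, i.e.\ $\mathcal S\cap\R^{d\times d}_{\sym}=\{0\}$, and then to read off the isomorphism and the existence/uniqueness of $B_{\hom}$ from the orthogonal splitting $\mathcal S+\R^{d\times d}_{\sym}=\mathcal S\oplus\mathcal O$ that is built into the definition of $\mathcal O$. Before doing anything, I would record the soft facts: $\mathcal S$ is closed (Korn's inequality, \cref{Cor:KornPeriodic}) and $\R^{d\times d}_{\sym}$ is finite-dimensional, so $\mathcal S+\R^{d\times d}_{\sym}$ is a closed subspace of $(\Leb^2(Y,\R^{d\times d}_{\sym}),\norm{\placeholder}_Q)$, and $\dim\mathcal O\le\dim\R^{d\times d}_{\sym}<\infty$; hence $\mathcal O$ is closed and $\operatorname{P}_{\mathcal O}$ is a bona fide linear orthogonal projection defined on all of $\Leb^2(Y,\R^{d\times d}_{\sym})$, so in particular $\operatorname{P}_{\mathcal O}(\sym B)\in\mathcal O$ is meaningful for every $B\in\Leb^2(Y,\R^{d\times d})$.

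The hard part will be the rigidity claim $\mathcal S\cap\R^{d\times d}_{\sym}=\{0\}$, and I would prove it by straightening the prestrain with the Bilipschitz potential, in line with the paper's general strategy. Let $a$ be the global Bilipschitz potential of $A$ from \cref{Prop:SFJBilipschitz}; combining $\dif a=A$ with the representation $A=\bar A+\dif{\varphi_A}$ for periodic $\varphi_A$ (\cref{Lem:periodic_derivative}) and $a(0)=0$ gives $a(x+k)=a(x)+\bar Ak$ for all $k\in\Z^d$, and $a$ being an onto Bilipschitz map forces $\bar A$ to be invertible. Now take $G\in\R^{d\times d}_{\sym}$ and $\varphi\in\SobH^1_{\rm per}(Y,\R^d)$ with $\sym(\dif{\varphi}\,A(\placeholder)^{-1})\equiv G$, and set $\psi:=\varphi\circ a^{-1}\in\SobH^1_{\rm loc}(\R^d,\R^d)$. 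The chain rule for Sobolev functions under Bilipschitz changes of variables yields $(\dif\psi)\circ a=\dif{\varphi}\,A^{-1}$ a.e., hence $\sym\dif\psi\equiv G$, while the $\Z^d$-periodicity of $\varphi$ together with $a^{-1}(z+\bar Ak)=a^{-1}(z)+k$ shows that $\psi$ is $\bar AY$-periodic. Invoking the classical fact that the kernel of the symmetric gradient on $\R^d$ consists of infinitesimal rigid motions, I get $\psi(z)=(G+S)z+b$ with $S$ skew; $\bar AY$-periodicity forces $(G+S)\bar Ak=0$ for all $k\in\Z^d$, and since $\bar A\Z^d$ spans $\R^d$ and $G$ is symmetric while $S$ is skew, this gives $G=0$.

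With the rigidity in hand, the rest is bookkeeping with the orthogonal decomposition $\mathcal S+\R^{d\times d}_{\sym}=\mathcal S\oplus\mathcal O$. For injectivity of $G\mapsto\operatorname{P}_{\mathcal O}(G)$ on $\R^{d\times d}_{\sym}$: if $\operatorname{P}_{\mathcal O}(G)=0$ then decomposing $G=s+o$ with $s\in\mathcal S$, $o\in\mathcal O$ gives $o=0$, so $G=s\in\mathcal S\cap\R^{d\times d}_{\sym}=\{0\}$. For surjectivity onto $\mathcal O$: any $o\in\mathcal O\subset\mathcal S+\R^{d\times d}_{\sym}$ can be written $o=s+G$ with $s\in\mathcal S$ and $G\in\R^{d\times d}_{\sym}$, and since $\mathcal S\perp_Q\mathcal O$ we obtain $\operatorname{P}_{\mathcal O}(G)=\operatorname{P}_{\mathcal O}(o)-\operatorname{P}_{\mathcal O}(s)=o$. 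Thus $\operatorname{P}_{\mathcal O}|_{\R^{d\times d}_{\sym}}\colon\R^{d\times d}_{\sym}\to\mathcal O$ is a linear bijection, hence an isomorphism (bi-continuity being automatic between finite-dimensional spaces). Finally, given $B\in\Leb^2(Y,\R^{d\times d})$, applying the inverse isomorphism to $\operatorname{P}_{\mathcal O}(\sym B)\in\mathcal O$ produces the unique $B_{\hom}\in\R^{d\times d}_{\sym}$ with $\operatorname{P}_{\mathcal O}(B_{\hom})=\operatorname{P}_{\mathcal O}(\sym B)$. The only genuinely nontrivial ingredient is the straightening argument of the second paragraph; everything else is projection-theoretic routine.
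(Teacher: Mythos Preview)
Your proof is correct and follows essentially the same route as the paper: reduce to $\mathcal S\cap\R^{d\times d}_{\sym}=\{0\}$, straighten via $\psi=\varphi\circ a^{-1}$ so that $\sym\dif\psi\equiv G$, and exploit $\bar AY$-periodicity of $\psi$. The only cosmetic difference is in the last step: you invoke the rigid-motion characterization of the kernel of $\sym\dif$ and then kill $G+S$ by periodicity, whereas the paper simply integrates $\sym G=\sym\dif\psi$ over $a(Y)$ and uses \cref{Lem:periodic_derivative}(e) to get $\sym G\cdot|a(Y)|=0$ directly---shorter, but equivalent.
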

\begin{definition}[Definition of $B_{\hom}$ and $R^A(B)$]\label{Def:Formula_Bhom}
Let $B$ be given by \eqref{def:alternative2}.  We define the \emph{effective incremental prestrain} as the unique matrix $B_{\hom} \in \R^{d\times d}_{\sym}$ that satisfies \eqref{Eq:DefinitionBhom} and we define the \emph{residual energy} by
\begin{equation}
	R^A(B) := \norm{\operatorname{P}_{(\mathcal{S}+\R^{d\times d}_{\sym})^\perp}(\sym B)}_Q^2.
\end{equation}
\end{definition}

In \cref{Sec:ProofHomogenPerturbation} we show that with these definitions \cref{lemma:homogenizationaftelinearization} is satisfied.
The definition of $B_{\hom}$ and $R^A(B)$ is rather abstract. Using the method of correctors, we obtain an algorithmic characterization.

\newcommand{\Qhom}{\mathbf{Q}}

\begin{proposition}[Algorithmic characterization]  \label{Prop:Bhom_algorithmic}
Let $s := \frac{d(d+1)}{2}$, $\class{G_i}{i=1,\dots,s}$ denote a basis of $\R^{d\times d}_{\sym}$ and
\begin{equation*}
	\operatorname{emb}: \R^s \to \R^{d\times d}_{\sym},\quad \xi\mapsto \sum_{i=1}^s \xi_i G_i,
\end{equation*}
the linear isomorphism describing the vector representation of matrizes in $\R^{d\times d}_{\sym, \bar{A}}$ subject to this basis. For $G \in \R^{d \times d}$ we define the \emph{corrector} $\varphi_G \in \SobH^1_{\mathrm{per},0}(Y,\R^d)$ as the unique minimizer of
\begin{equation}\label{Eq:correctors}
	\varphi \in \SobH^1_{\mathrm{per},0}(Y,\R^d) \mapsto \int_Y Q\left(y, G + \dif{\varphi}(y)A(y)^{-1}\right) \,\dx[y].
\end{equation}
Furthermore, we define the symmetric and positive definite matrix $\Qhom \in \R^{s \times s}$ by
\begin{equation}\label{Eq:matrix_representing_Q}
  \Qhom_{ij} := \int_Y \big(G_i + \dif{\varphi_{G_i}}(y)A(y)^{-1}\big) : \mathbb{L}_Q(y) \big(G_j + \dif{\varphi_{G_j}}(y)A(y)^{-1}\big) \,\dx[y],
\end{equation}
and $b \in \R^s$ by
\begin{equation}\label{Eq:vector_representing_B}
	b_i := \int_Y \big(G_i + \dif{\varphi_{G_i}}(y)A(y)^{-1}\big) : \mathbb{L}_Q(y) B(y) \,\dx[y].
\end{equation}
Then
\begin{equation}\label{Eq:char_Bhom}
  \begin{aligned}
    &B_{\hom} = \operatorname{emb}\big(\Qhom^{-1}b\big)\quad\text{and}\\
    &\forall G\in\R^{d\times d}\,:\,Q^A_{\hom}(G\bar{A}) = \xi\cdot \Qhom\,\xi\text{ with }\xi=\operatorname{emb}^{-1}\big(\sym G\big).
  \end{aligned}
\end{equation}
(For the proof see \cref{Sec:ProofHomogenPerturbation}.)
\end{proposition}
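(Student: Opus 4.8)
The plan is to recast the finite-dimensional data $\Qhom$, $b$ from \eqref{Eq:matrix_representing_Q}--\eqref{Eq:vector_representing_B} in terms of the Hilbert-space structure of \cref{Sec:Characterization_Bhom}, the bridge being a corrector representation of the orthogonal projection $\operatorname{P}_\mathcal{O}$. As a first step I would check that the quadratic functional in \eqref{Eq:correctors} is coercive on $\SobH^1_{\mathrm{per},0}(Y,\R^d)$ -- this follows from \eqref{Eq:UniformBounds_Q} together with the periodic Korn inequality \cref{Cor:KornPeriodic}, applied after the Bilipschitz change of variables $y \mapsto a(y)$ with $a$ the potential of $A$ -- so that the corrector $\varphi_G$ exists, is unique, depends linearly on $G$, and satisfies $\varphi_{\operatorname{emb}(\xi)} = \sum_i \xi_i \varphi_{G_i}$. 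Its Euler--Lagrange equation reads
\[
  \int_Y \big(G + \dif{\varphi_G}(y)A(y)^{-1}\big) : \mathbb{L}_Q(y)\big(\dif{\psi}(y)A(y)^{-1}\big) \,\dx[y] = 0 \qquad \text{for all } \psi \in \SobH^1_{\mathrm{per}}(Y,\R^d),
\]
and since $\mathbb{L}_Q$ only involves symmetric parts (cf.\ \eqref{Eq:UniformBounds_Q}), this says precisely that $\sym\big(G + \dif{\varphi_G}A^{-1}\big)$ is $(\cdot,\cdot)_Q$-orthogonal to the subspace $\mathcal{S}$ of \eqref{def:S}.

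The key observation is then the identity
\[
  \sym\big(G + \dif{\varphi_G}(y)A(y)^{-1}\big) = \operatorname{P}_\mathcal{O}(\sym G) \qquad \text{for all } G \in \R^{d\times d}.
\]
Its proof is immediate: the left-hand side equals $\sym G + \sym(\dif{\varphi_G}A^{-1})$, hence lies in the affine subspace $\sym G + \mathcal{S}$; it is $(\cdot,\cdot)_Q$-orthogonal to $\mathcal{S}$ by the previous step; and since $\mathcal{S} + \R^{d\times d}_{\sym} = \mathcal{S} \oplus \mathcal{O}$ orthogonally, the unique element of $\sym G + \mathcal{S}$ orthogonal to $\mathcal{S}$ is $\operatorname{P}_\mathcal{O}(\sym G) = \sym G - \operatorname{P}_\mathcal{S}(\sym G)$. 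Inserting this into \eqref{Eq:matrix_representing_Q} and \eqref{Eq:vector_representing_B} -- and using that $\mathbb{L}_Q$ sees only symmetric parts, that $(\cdot,\cdot)_Q$ is the corresponding inner product, and that $\operatorname{P}_\mathcal{O} G_i \in \mathcal{O}$ is $(\cdot,\cdot)_Q$-orthogonal both to $\mathcal{S}$ and to $(\mathcal{S} + \R^{d\times d}_{\sym})^\perp$ -- yields
\[
  \Qhom_{ij} = \big(\operatorname{P}_\mathcal{O} G_i, \operatorname{P}_\mathcal{O} G_j\big)_Q, \qquad b_i = \big(\operatorname{P}_\mathcal{O} G_i, \sym B\big)_Q = \big(\operatorname{P}_\mathcal{O} G_i, \operatorname{P}_\mathcal{O}(\sym B)\big)_Q.
\]
By \cref{Thm:FormulaBhom} the map $G \mapsto \operatorname{P}_\mathcal{O}(G)$ is an isomorphism $\R^{d\times d}_{\sym} \to \mathcal{O}$, so $\{\operatorname{P}_\mathcal{O} G_i\}_{i=1}^s$ is a basis of $\mathcal{O}$ and $\Qhom$, being its Gram matrix, is symmetric positive definite and hence invertible.

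Both formulas in \eqref{Eq:char_Bhom} then follow. For $B_{\hom} = \operatorname{emb}(\Qhom^{-1}b)$: putting $\eta := \operatorname{emb}^{-1}(B_{\hom})$, so that $\operatorname{P}_\mathcal{O}(B_{\hom}) = \sum_j \eta_j \operatorname{P}_\mathcal{O}(G_j)$, the defining relation \eqref{Eq:DefinitionBhom} is an identity in $\mathcal{O}$; pairing it in $(\cdot,\cdot)_Q$ with each basis vector $\operatorname{P}_\mathcal{O}(G_i)$ and inserting the formulas above gives $\sum_j \Qhom_{ij}\eta_j = b_i$, i.e.\ $\eta = \Qhom^{-1}b$ (and $B_{\hom}$ is thereby determined, $\operatorname{P}_\mathcal{O}$ being injective on $\R^{d\times d}_{\sym}$). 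For the identity for $Q^A_{\hom}$: by \cref{Lem:periodic_derivative} we may write $A = \bar A + \dif{\varphi_0}$ with $\varphi_0 \in \SobW^{1,\infty}_{\mathrm{per}}(Y,\R^d)$, and I would use the single-cell corrector representation of $Q^A_{\hom} = [Q^A]_{\hom}$ (valid as for $Q$ since $Q^A(y,\cdot)$ is convex, cf.\ \eqref{eq:quadratic-corrector}). Since the substitution $\psi = \varphi + G\varphi_0$ is a bijection of $\SobH^1_{\mathrm{per}}(Y,\R^d)$ with $(G\bar A + \dif{\psi})A^{-1} = (GA + \dif{\varphi})A^{-1} = G + \dif{\varphi}A^{-1}$, we get, with all minima taken over $\SobH^1_{\mathrm{per}}(Y,\R^d)$,
\[
\begin{aligned}
  Q^A_{\hom}(G\bar A)
  &= \min_{\psi} \fint_Y Q\big(y,(G\bar A + \dif{\psi})A^{-1}\big)\,\dx[y]\\
  &= \min_{\varphi} \fint_Y Q\big(y, G + \dif{\varphi}A^{-1}\big)\,\dx[y]
   = \fint_Y Q\big(y, G + \dif{\varphi_G}A^{-1}\big)\,\dx[y],
\end{aligned}
\]
the last equality holding by definition of $\varphi_G$ (the zero-mean normalization being irrelevant). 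By the key identity the right-hand side equals $\norm{\operatorname{P}_\mathcal{O}(\sym G)}_Q^2$, so writing $\sym G = \operatorname{emb}(\xi)$ and expanding $\operatorname{P}_\mathcal{O}(\sym G) = \sum_i \xi_i \operatorname{P}_\mathcal{O}(G_i)$ gives $Q^A_{\hom}(G\bar A) = \xi \cdot \Qhom\,\xi$. That this holds for arbitrary (not necessarily symmetric) $G$ with $\xi = \operatorname{emb}^{-1}(\sym G)$ follows from \cref{L:coercivity}: $Q^A_{\hom}$ is a non-negative quadratic form that vanishes whenever $\sym_{\bar A}(\cdot) = 0$ and hence is insensitive to additive perturbations in its null space, while $\sym_{\bar A}$ of \eqref{Eq:Def_symA} is a linear projection with $\sym_{\bar A}(G\bar A) = \sym(G)\bar A$, so that $Q^A_{\hom}(G\bar A) = Q^A_{\hom}(\sym(G)\bar A)$.

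The step I expect to be the main obstacle is the single-cell corrector representation of $Q^A_{\hom}$ invoked above. Unlike $Q$, the integrand $Q^A$ is only $\sym_{\bar A}$-coercive, hence degenerate in the skew directions, so one must argue carefully -- via the Bilipschitz transformation $y \mapsto a(y)$, which conjugates $\mathcal{S}$ to a standard symmetrized-gradient space on the lattice $\bar A\Z^d$, together with \cref{Cor:KornPeriodic} -- both that the multi-cell infimum in \eqref{def:mchom} is attained at $k = 1$ by a corrector and that this single-cell problem coincides, under the substitution $\psi = \varphi + G\varphi_0$, with \eqref{Eq:correctors}. Once this and the framework of \cref{Sec:Characterization_Bhom} are in place, the remaining parts (the Euler--Lagrange computation and the orthogonal-decomposition bookkeeping) are routine.
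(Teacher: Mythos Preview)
Your proposal is correct and follows essentially the same route as the paper: the same corrector Euler--Lagrange argument, the same key identity $\operatorname{P}_{\mathcal O}(\sym G)=\sym G+\sym(\dif{\varphi_G}A^{-1})$, and the same pairing manipulation to extract $\Qhom$ and $b$. Your ``main obstacle'' is overstated: the paper packages the relation $Q^A_{\hom}(G\bar A)=\min_\varphi\int_Y Q(y,G+\dif{\varphi}A^{-1})$ via the auxiliary quantity $\widetilde Q_{\hom}$ and the substitution \eqref{eq:fromAtobarA} (which is exactly your $\psi=\varphi+G\varphi_0$), so that single-cell reduction is needed only for the non-degenerate integrand $Q$, not for $Q^A$ directly.
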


\subsection{Linearization and homogenization of the integral functionals}
\label{Sec:LinAndHomResults}

In this section, we upgrade the convergences of the diagram \eqref{Eq:Diagram_perturbation} to the level of $\Gamma$-convergence for the respective integral functionals. We are interested in the minimization of $\EnergyEhe{\eps}$, see \eqref{Eq:EnergyFunctionalOnDeformation}. To study this, we recall the functionals $\EnergyIhe{\eps}, \EnergyIline{\eps}, \EnergyIhhom$ and $\EnergyIlinhom$ defined in \eqref{D:rescaledfunctionals}. These functionals describe the elastic energy on the level of the scaled displacement $u$ which is associated with a deformation $\phi$ by means of the expansion $\phi = a_\eps + hu$, where $a_\eps$ denotes the asymptotic minimizer defined by $\dif{a_\eps} = A(\tfrac{\placeholder}{\eps})$. In particular, we have $\EnergyIhe{\eps}(u)=h^{-2}\EnergyEhe{\eps}(\varphi)$. In the case without prestrain (that is $A_h\equiv I$) it has already been shown in several contributions, e.g.\ \cite{DNP02,NeuPHD,MN11,ADD12}, that these functionals can be rigorously obtained in the sense of $\Gamma$-convergence. We extend these results to the case of a perturbed stress-free joint. 
To be compatible with the ansatz $\phi = a_\eps + hu$ in \cref{Rem:EnergyScaling}, we consider well-prepared boundary conditions. On the level of $u$ this can be expressed by fixing $u = g$ for some fixed $g \in \SobW^{1,\infty}(\Omega, \R^d)$ on some part of the boundary $\Gamma \subset \R^d$. We suppose that $\Gamma$ is closed with positive $(d-1)$-dimensional Hausdorff-measure.

\begin{definition}[Boundary conditions]\label{Def:boundary_conditions}
We denote the closure in $\SobH^1(\Omega,\R^d)$ of the set of functions $u \in \SobW^{1,\infty}(\Omega, \R^d)$ with $u = g$ on $\Gamma$ by $\SobH^1_{\Gamma,g}(\Omega, \R^d)$.
\end{definition}

Our main results can be displayed in the following diagram.

\begin{theorem}[$\Gamma$-convergence] \label{Thm:gamma_convergence}
We have
\begin{mathcenter}
\begin{equation}
\begin{tikzpicture}[baseline={([yshift=-.5ex]current bounding box.center)}]
	\node (1) at (0,0) {$\EnergyIhe{\eps}$};
	\node (2) at (2,0) {$\EnergyIline{\eps}$};
	\node (3) at (0,-2) {$\EnergyIhhom$};
	\node (4) at (2,-2) {$\EnergyIlinhom$,};
	\node (5) at (1.2,-0.8) {\footnotesize\upshape(5)};
	\draw[->]  (1) -- (2) node[above, midway] {\footnotesize\upshape(1)};
	\draw[->]  (3) -- (4) node[above, midway] {\footnotesize\upshape(4)};
	\draw[->]  (1) -- (3) node[left, midway] {\footnotesize\upshape(3)};
	\draw[->]  (2) -- (4) node[right, midway] {\footnotesize\upshape(2)};
	\draw[->]  (1) -- (4);
\end{tikzpicture}
\end{equation}
\end{mathcenter}
where {\upshape(1)} and {\upshape(4)} mean $\Gamma$-convergence w.r.t.\ weak convergence in $\SobH^1_{\Gamma, g}(\Omega, \R^d)$ as $h \to 0$, {\upshape(2)} as $\eps \to 0$ and {\upshape(5)} is the simultaneous limit as $(\eps,h) \to 0$. Finally, {\upshape(3)} means $\Gamma$-convergence w.r.t.\ weak convergence in $\SobW^{1,p}_{\Gamma, g}(\Omega, \R^d)$ as $\eps \to 0$, provided $W$ satisfies the additional growth and Lipschitz conditions \eqref{Eq:growth_cond}.
(For the proofs see \cref{Sec:ProofLinearization,Sec:ProofSimultaneous}.)
\end{theorem}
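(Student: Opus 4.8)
The plan is to establish each arrow of the diagram separately and then deduce the diagonal (5) from the combination, using the general principle that a diagonal $\Gamma$-limit is pinned down once one path of the square is known, together with a Moore--Osgood-type interchange argument that is facilitated here by the quantitative expansion of $W^h_{\hom}$ from \cref{Thm:Expansion_Whom}. I would treat the four outer arrows as follows. Arrow (2), homogenization of the linearized functionals $\EnergyIline{\eps}\to\EnergyIlinhom$, is the most classical: since $Q^A(y,\placeholder)$ is a measurable, $Y$-periodic family of positive quadratic forms with the uniform bounds inherited from \eqref{Eq:UniformBounds_Q}, this is standard quadratic/convex periodic homogenization (two-scale convergence or the oscillating test-function method), and the affine shift by $B(\tfrac{\placeholder}{\eps})A(\tfrac{\placeholder}{\eps})$ is handled by the orthogonal decomposition of $\Leb^2(Y,\R^{d\times d}_{\sym})$ introduced in \cref{Sec:Characterization_Bhom}; in fact \eqref{lemma:homogenizationaftelinearization:1} already identifies the homogenized integrand as $R^A(B)+Q^A_{\hom}(\placeholder-B_{\hom}\bar A)$, so only the standard $\Gamma$-convergence machinery (plus the boundary-condition compatibility via \cref{Cor:KornPeriodic}) needs to be invoked. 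Arrow (3), homogenization of the nonlinear functionals $\EnergyIhe{\eps}\to\EnergyIhhom$ at fixed $h$, is S.~Müller's non-convex homogenization theorem applied to the integrand $G\mapsto h^{-2}W^h(y,A(y)+hG)$ under the growth/Lipschitz hypothesis \eqref{Eq:growth_cond}; the identification of the $\Gamma$-limit density with $h^{-2}W^h_{\hom}(\bar A+h\placeholder)$ is precisely the content of \cref{Lem:homandprestrain}, so this arrow reduces to citing \cite{Muel87} and that lemma. The boundary values pass to the limit because affine-on-the-boundary constraints are stable under the recovery-sequence construction (cut-off near $\Gamma$ using the $p$-growth).

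Next, arrows (1) and (4), the linearization limits $h\to0$. Arrow (4), $\EnergyIhhom\to\EnergyIlinhom$, is immediate from the pointwise quantitative expansion \eqref{Eq:TaylorExpansion_Whom}: the density $h^{-2}W^h_{\hom}(\bar A+hG)$ converges to $Q^A_{\hom}(G-B_{\hom}\bar A)+R^A(B)$ uniformly on bounded sets of $G$, with an $(1+|G|^2)$-controlled error, which by the coercivity $h^{-2}W^h_{\hom}(\bar A+hG)\ge \tfrac1\alpha\dist^2((\,I+hG\bar A^{-1}),\SO(d))-\alpha h^2\gtrsim |\sym G|^2 - o(1)(1+|G|^2)$ from part (b) of \cref{Thm:Expansion_Whom} gives equicoercivity in $\SobH^1$; liminf and recovery sequences then follow from uniform convergence of the integrands plus Korn's inequality on $\Omega$ (to upgrade control of $\sym\dif u$ to control of $\dif u$ under the boundary condition on $\Gamma$). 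Arrow (1), $\EnergyIhe{\eps}\to\EnergyIline{\eps}$ at fixed $\eps$, is the Dal~Maso--Negri--Percivale linearization argument of \cite{DNP02}: the liminf inequality uses the geometric rigidity estimate — here in the mixed-growth Jones-domain version \cref{Thm:RigidityEstimateJonesDomain} applied after transforming by the Bilipschitz potential $a_\eps$ of $A(\tfrac{\placeholder}{\eps})$ (\cref{Prop:SFJBilipschitz}) — to show that a sequence with bounded $\EnergyIhe{\eps}$-energy has $h^{-1}\dist(\dif(a_\eps+hu),\SO(d)A(\tfrac{\placeholder}{\eps}))$ bounded in $\Leb^2$, hence $\sym(\dif u\,A(\tfrac{\placeholder}{\eps})^{-1})$ is bounded; one then extracts a rotation field, linearizes $W$ via \ref{Property:W_Expansion}, and passes to the limit using the frame-indifference trick $W(RF)=W(F)$ to replace $R$ by $I$. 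The recovery sequence is the explicit ansatz $\phi=a_\eps+hu$ with the error term from \ref{Property:W_Expansion} estimated via $\limsup_{h\to0}h\|B_h\|_{\Leb^\infty}=0$ from \eqref{def:alternative2}.

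Finally, the diagonal (5), simultaneous convergence as $(\eps,h)\to0$. The plan is to prove this directly rather than by abstract diagram-chasing, because $\Gamma$-convergence of a doubly-indexed family is not automatically the iterated limit. For the $\liminf$ inequality: given $u_{\eps,h}\wto u$ in $\SobH^1_{\Gamma,g}$, apply the Jones-domain geometric rigidity estimate after the Bilipschitz change of variables to get, uniformly in $\eps$, a bound $h^{-1}\dist(\dif\phi_{\eps,h},\SO(d)A(\tfrac{\placeholder}{\eps}))$ in $\Leb^2$; here the crucial point — and the main obstacle — is that the rigidity constant must be uniform in $\eps$, which is where I expect the real work to lie: one needs the geometric rigidity estimate on $\Omega$ with the highly oscillatory $A(\tfrac{\placeholder}{\eps})$-structure, obtained by covering $\Omega$ by $\eps$-cells, applying rigidity on each (rescaled) cell with a common constant, and patching — exactly the kind of argument where the Jones-domain extension operator adapted to rigidity is needed to handle cells cut by $\partial\Omega$ and the stress-free-joint interfaces, and where periodicity of $A$ (hence the \emph{global} Bilipschitz potential of \cref{Prop:SFJBilipschitz}) is essential to get a scale-independent constant. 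Once compactness is in hand, one introduces the two-scale limit of the scaled strain, writes it as $\sym(\dif u(x)A(y)^{-1}) + \sym(\dif_y\psi(x,y)A(y)^{-1})$ for some $\psi\in\Leb^2(\Omega,\SobH^1_{\rm per}(Y,\R^d))$, linearizes $W$ by \ref{Property:W_Expansion} with the $o(1)$ error absorbed using $h\|B_h\|_{\Leb^\infty}\to0$, and minimizes over the corrector $\psi$ to arrive at the density $Q^A_{\hom}(\dif u - B_{\hom}\bar A)+R^A(B)$ via \eqref{lemma:homogenizationaftelinearization:1} and \cref{Thm:FormulaBhom}. For the recovery sequence one takes $\phi_{\eps,h}=a_\eps + h\big(u + \eps\,\psi_G(\tfrac{\placeholder}{\eps})\big)$ with $\psi_G$ the appropriate (smoothed, boundary-corrected) corrector, and estimates the nonlinear remainder exactly as in the fixed-$\eps$ case, noting that the error is $o(1)$ uniformly because it only sees $h\|B_h\|_{\Leb^\infty}\to0$ and the $\Leb^\infty$-bound on the corrector. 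The commutativity claim — that (5) agrees with both iterated limits — then follows because all three limits have the same $\Gamma$-limit functional $\EnergyIlinhom$, which is uniquely determined; alternatively one can phrase it as: (1)+(2) and (3)+(4) each give $\EnergyIlinhom$ by the stability of $\Gamma$-limits under composition of the (now-established) separate convergences, matching (5).
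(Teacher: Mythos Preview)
Your proposal is essentially correct and follows the same architecture as the paper: arrows (1) and (4) by linearization via the quadratic expansion plus rigidity/Korn, arrow (2) by quadratic two-scale homogenization combined with \cref{lemma:homogenizationaftelinearization}, arrow (3) by citing \cite{Muel87} together with \cref{Lem:homandprestrain}, and arrow (5) directly by two-scale convergence with an $\eps$-uniform rigidity estimate. Two points of comparison are worth noting.

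First, the paper streamlines (1) and (4) by proving a single abstract linearization theorem (\cref{Thm:Linearization}) for functionals of the form $\int_\Omega \hat W^h(x,\hat A(x)+h\dif u)\,\dx$ under frame-indifference, non-degeneracy, and a quantitative quadratic expansion hypothesis; both $W^h$ with $A(\tfrac{\placeholder}{\eps})$ and $W^h_{\hom}$ with $\bar A$ satisfy these hypotheses (the latter precisely by \cref{Thm:Expansion_Whom}), so (1) and (4) fall out as instances. Your separate treatments are fine, but the abstraction avoids duplication and makes clear that no ``rotation field'' needs to be extracted in the liminf---one simply uses boundedness of $\dif u_h$ in $\Leb^2$ (from equi-coercivity via a \emph{single} rigidity rotation and the boundary condition), truncates to a good set $\Omega_h$ where the Taylor remainder is small, and invokes weak lower semicontinuity of the quadratic integral.

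Second, and more importantly, your mechanism for the $\eps$-uniform rigidity constant in (5) is more complicated than what the paper actually needs. You suggest covering $\Omega$ by $\eps$-cells and patching. The paper instead applies \cref{Thm:RigidityEstimateJonesDomain} \emph{once} on all of $\Omega$ with the oscillating stress-free joint $A(\tfrac{\placeholder}{\eps})$: the constant there depends on the domain only through $(e,\delta,\operatorname{diam})$ and on $A$ only through the Bilipschitz constant of its potential, and since $a_\eps(x)=\eps a(x/\eps)$ has the \emph{same} Bilipschitz constant as $a$ for every $\eps$ (by \cref{Prop:SFJBilipschitz}), uniformity is automatic---no cell decomposition, no patching near $\partial\Omega$. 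You do mention the global Bilipschitz potential, so you have the right ingredient; just note that it already does all the work.
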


Note that (3) has been shown in \cite{Muel87}. In \cref{Sec:ProofLinearization}, we show (1) and (4) as consequences of a more general statement. Convergence (2) is a standard result of homogenization of a quadratic functional. We sketch the proof in \cref{Sec:ProofSimultaneous} and use it to establish the simultaneous limit (5). One can still make sense of (3) and (4), if \eqref{Eq:growth_cond} is not satisfied. For the situation without prestrain, i.e.\ $A_h = A = I$, it was shown in \cite{NeuPHD} that (4) can still be shown with $\EnergyIhhom := \Gamma\text{-}\liminf_{\eps \to 0} \EnergyIhe{\eps}$. We propose that the arguments can be adapted to the more general situation considered in this paper. However, we omit proof for this claim in this work.
Furthermore, we establish the following equi-coercivity estimates.

\begin{theorem}[Equi-coercivity]\label{Thm:equi-coercivity}
There exists a constant $C > 0$ such that for all small $\eps, h > 0$ and $u \in \SobH^1_{\Gamma,g}(\Omega, \R^d)$ we have
\begin{subequations}
\begin{align}
\label{Eq:equi_coercivity_Ihe}
	\norm{u}^2_{\SobH^1(\Omega)} &\leq C \left(\EnergyIhe{\eps}(u) + 1 \right), \\
\label{Eq:equi_coercivity_Iline}
	\norm{u}^2_{\SobH^1(\Omega)} &\leq C \left(\EnergyIline{\eps}(u) + 1 \right), \\
\label{Eq:equi_coercivity_Ihhom}
	\norm{u}^2_{\SobH^1(\Omega)} &\leq C \left(\EnergyIhhom(u) + 1 \right).
\end{align}
\end{subequations}
(For the proof see \cref{Sec:ProofLinearization,Sec:ProofSimultaneous}.)
\end{theorem}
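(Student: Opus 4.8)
The plan is to prove all three estimates by reducing them to Korn's inequality, using the non-degeneracy properties of $W$, $Q^A$ and $W^h_{\hom}$ together with the geometric rigidity estimate on Jones domains. I would treat \eqref{Eq:equi_coercivity_Iline} and \eqref{Eq:equi_coercivity_Ihhom} first, since they are the ``linear'' estimates and are essentially immediate, and then deal with \eqref{Eq:equi_coercivity_Ihe}, which is the one requiring real work.

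For \eqref{Eq:equi_coercivity_Iline}: by the ellipticity bound \eqref{Eq:UniformBounds_Q} on $Q$ (inherited from \ref{Property:W_FrameIndifference}--\ref{Property:W_Expansion}) we have $Q^A(y,G) = Q(y,GA(y)^{-1}) \geq \alpha_{\rm el}|\sym(GA(y)^{-1})|^2$, so that pointwise $Q^A(\tfrac{x}{\eps}, \dif u + BA) \gtrsim |\sym(\dif u \, A^{-1}(\tfrac{x}{\eps})) + \sym(B(\tfrac{x}{\eps}))|^2$. Using $|\sym(\dif u \, A^{-1})|^2 \geq \tfrac12 |\sym(\dif u \, A^{-1} + B)|^2 - |\sym B|^2$ (a triangle inequality) together with the uniform bounds $|A|, |A^{-1}| \leq L$ from \ref{Property:SFJUniformBound} and the boundedness of $B$ in $L^\infty$ (which follows from \eqref{def:alternative2}), we obtain $\EnergyIline{\eps}(u) + C \gtrsim \int_\Omega |\sym \dif u|^2\,\dx$, and then Korn's inequality on $\SobH^1_{\Gamma,g}(\Omega,\R^d)$ (valid since $\Gamma$ has positive $\mathcal H^{d-1}$-measure) gives $\|u\|_{\SobH^1(\Omega)}^2 \lesssim \EnergyIline{\eps}(u) + 1$. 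For \eqref{Eq:equi_coercivity_Ihhom}, I would use the non-degeneracy \eqref{Eq:NonDegeneracy_Whom} of $W^h_{\hom}$: $\tfrac1{h^2}W^h_{\hom}(\bar A + h\dif u) \geq \tfrac1{\alpha h^2}\dist^2((\bar A + h\dif u)\bar A^{-1}, \SO(d)) - \tfrac\alpha{h^2}h^2$. Since $(\bar A + h\dif u)\bar A^{-1} = I + h\,\dif u\,\bar A^{-1}$, for small $h$ the distance to $\SO(d)$ is comparable to $h|\sym(\dif u\,\bar A^{-1})|$ up to higher-order terms, so $\EnergyIhhom(u) + C \gtrsim \int_\Omega |\sym \dif u|^2\,\dx$, and Korn's inequality finishes it. (One has to be slightly careful: $\dist^2(I + hM, \SO(d)) \geq \tfrac12 h^2|\sym M|^2$ once $h|M|$ is small, but since $u$ ranges over all of $\SobH^1_{\Gamma,g}$ one splits into the region where $h|\dif u|$ is small — handled by this expansion — and the region where $h|\dif u|$ is large — handled directly since there $\dist^2(I+h\dif u,\SO(d)) \gtrsim h^2|\dif u|^2$ as well; in both cases $|\dif u\,\bar A^{-1}|$ and $|\dif u|$ are comparable.)

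The main obstacle is \eqref{Eq:equi_coercivity_Ihe}, since $W^h$ is genuinely nonlinear and we only control $\EnergyIhe{\eps}(u) = h^{-2}\EnergyEhe{\eps}(a_\eps + hu)$. Here I would argue as in \cref{Rem:EnergyScaling}: by \ref{Property:W_NonDegeneracy}, $\EnergyEhe{\eps}(a_\eps + hu) = \int_\Omega W(\tfrac{x}{\eps}, (I + h\dif u\,A(\tfrac{x}{\eps})^{-1})(I - hB_h(\tfrac{x}{\eps}))A(\tfrac{x}{\eps})\cdot A(\tfrac{x}{\eps})^{-1})\,\dx \geq \alpha_{\rm el}\int_\Omega \dist^2\big((I + h\dif u\, A(\tfrac{x}{\eps})^{-1})(I - hB_h(\tfrac{x}{\eps})), \SO(d)\big)\,\dx$ — wait, more carefully, $W^h(\tfrac{x}{\eps}, A(\tfrac{x}{\eps}) + h\dif u) = W(\tfrac{x}{\eps}, (A(\tfrac{x}{\eps}) + h\dif u)A_h(\tfrac{x}{\eps})^{-1})$ and using \eqref{def:alternative1} this argument equals $(A(\tfrac{x}{\eps}) + h\dif u)A(\tfrac{x}{\eps})^{-1}(I - hB_h(\tfrac{x}{\eps})) = (I + h\dif u\,A(\tfrac{x}{\eps})^{-1})(I - hB_h(\tfrac{x}{\eps}))$. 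Then the key step is the geometric rigidity estimate for Jones domains, \cref{Thm:RigidityEstimateJonesDomain}: applied on $\Omega$ (a Lipschitz, hence Jones, domain), it provides a single rotation $R_{\eps,h} \in \SO(d)$ with $\int_\Omega |\nabla(a_\eps + hu) A_h(\tfrac{x}{\eps})^{-1} - R_{\eps,h}|^2\,\dx \lesssim \EnergyEhe{\eps}(a_\eps+hu) \leq h^2(\EnergyIhe{\eps}(u)+1) \cdot C$; combined with $\|A_h(\tfrac{\placeholder}{\eps})\|_{L^\infty} \lesssim 1$ and $\limsup_{h\to0} h\|B_h\|_{L^\infty} = 0$ from \eqref{def:alternative2}, this yields $\int_\Omega |\nabla(a_\eps + hu) - R_{\eps,h}A(\tfrac{x}{\eps})|^2\,\dx \lesssim h^2(\EnergyIhe{\eps}(u)+1)$, i.e.\ $\int_\Omega |h\dif u - (R_{\eps,h} - I)A(\tfrac{x}{\eps})|^2\,\dx \lesssim h^2(\EnergyIhe{\eps}(u)+1)$. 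Two things remain: (i) control the rotation $R_{\eps,h}$ — here one uses the boundary condition $u = g$ on $\Gamma$ with $g$ fixed in $\SobW^{1,\infty}$, i.e.\ a trace/Poincaré-type argument on $\Gamma$ forces $|R_{\eps,h} - I| \lesssim h(\EnergyIhe{\eps}(u)+1)^{1/2} + $ (lower order), using that $\fint_\Gamma A(\tfrac{x}{\eps})$ stays uniformly invertible (the harmonic/arithmetic mean of $A$ is bounded away from singular by \ref{Property:SFJUniformBound}, and $A(\tfrac{\placeholder}{\eps}) \rightharpoonup \bar A$ which is invertible); and (ii) once $|R_{\eps,h} - I| \lesssim h$, substitute back to get $\int_\Omega |\dif u|^2\,\dx \lesssim \EnergyIhe{\eps}(u) + 1$, and again invoke Korn (or here, directly the Poincaré inequality on $\SobH^1_{\Gamma,g}$) to promote this to the full $\SobH^1$-norm. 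The delicate point throughout is that all implicit constants must be uniform in both $\eps$ and $h$: uniformity in $\eps$ comes from the fact that the rigidity constant depends only on $\Omega$ (not on the oscillation of $A(\tfrac{\placeholder}{\eps})$, since $A$ is merged into the competitor $a_\eps$), and uniformity in the rotation-control step comes from the uniform invertibility of $\bar A$ and the $L^\infty$-bounds on $A, A^{-1}$.
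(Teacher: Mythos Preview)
Your overall strategy for \eqref{Eq:equi_coercivity_Ihe} (rigidity estimate followed by controlling the rotation via the boundary condition) matches the paper's, but there is a genuine gap in your treatment of \eqref{Eq:equi_coercivity_Iline}, and your rotation-control step for \eqref{Eq:equi_coercivity_Ihe} invokes the wrong mechanism.

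\textbf{The gap in \eqref{Eq:equi_coercivity_Iline}.} You pass from $\int_\Omega |\sym(\dif u\,A(\tfrac{x}{\eps})^{-1})|^2\,\dx$ to $\int_\Omega |\sym \dif u|^2\,\dx$ by quoting the pointwise bounds $|A|,|A^{-1}|\leq L$. This step is false: for instance, with $A^{-1}$ a $90^\circ$ rotation and $G = I$, one has $\sym(GA^{-1}) = 0$ while $\sym G = I$. The passage from $\sym(\dif u\,A^{-1})$ to control of $\dif u$ genuinely requires the structure of $\dif u$ being a gradient together with the boundary condition --- i.e.\ exactly \cref{Cor:KornWithBoundaryValue} (Korn with a stress-free joint), whose constant is uniform in $\eps$ because the Bilipschitz constant of $a_\eps$ is. The paper avoids this altogether and derives \eqref{Eq:equi_coercivity_Iline} from \eqref{Eq:equi_coercivity_Ihe} via the $\Gamma$-convergence identity $\EnergyIline{\eps}(u)=\inf\{\liminf_h \EnergyIhe{\eps}(u_h):u_h\rightharpoonup u\}$ and weak lower semicontinuity of the norm. (A minor related issue: \eqref{def:alternative2} only gives $B\in L^2$, not $L^\infty$; this does not break the argument since $\|B\|_{L^2}^2$ suffices as the additive constant, but your claim is incorrect as stated.)

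\textbf{The rotation control in \eqref{Eq:equi_coercivity_Ihe}.} Your proposed mechanism --- ``$\fint_\Gamma A(\tfrac{x}{\eps})$ stays uniformly invertible'' --- does not work, if only because traces of $\dif u$ on $\Gamma$ are not available in $H^1$. The paper instead transforms via the Bilipschitz potential $a_\eps$ (uniform Bilipschitz constant in $\eps$), so that on $a_\eps(\Omega)$ the map $\phi\circ a_\eps^{-1}(z)=z+h\,(u\circ a_\eps^{-1})(z)$ has trace $g\circ a_\eps^{-1}$ on $a_\eps(\Gamma)$; then Poincar\'e--Wirtinger plus trace continuity and the matrix-norm \cref{Cor:MatrixNorm} give $|I-R|^2 \lesssim \int_\Omega |I+h\dif u\,A(\tfrac{x}{\eps})^{-1}-R|^2 + h^2\|g\|_{L^2(\Gamma)}^2$, with constants independent of $\eps$ precisely because \cref{Cor:MatrixNorm} is uniform over $\operatorname{Bil}_L$. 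Also note that \cref{Thm:RigidityEstimateJonesDomain} requires $A\in\operatorname{SFJ}$, so you must first strip off the factor $(I-hB_h)$ (as in \cref{Prop:NonDegeneracy}) before applying rigidity with $A(\tfrac{\cdot}{\eps})$, not with $A_h(\tfrac{\cdot}{\eps})$.

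\textbf{On \eqref{Eq:equi_coercivity_Ihhom}.} The paper does not use your pointwise splitting of $\dist^2(I+hM,\SO(d))$; instead it proves a general linearization coercivity statement (\cref{Thm:Equi-coercivityLinearization}) via the rigidity estimate \cref{Thm:RigidityEstimateJonesDomain} applied with $\hat A=\bar A$, followed by the same boundary control of the rotation as above. This handles the ``large $h|\dif u|$'' region automatically and avoids the delicate pointwise analysis you sketch.
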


A direct consequence of the $\Gamma$-convergences are the convergences of infima and (almost) mini\-mizers of the functional sequences towards minima and minimizers of the limits (see \cite[§7]{Dal93}).
Since we establish $\Gamma$-convergences w.r.t.\ weak convergence in $\SobH^1_{\Gamma, g}(\Omega, \R^d)$, the sequences of (almost) minimizers a priori only converge weakly in $\SobH^1$. We prove, that some of these convergences can be improved to strong convergence a posteriori. For the homogenization limits we consider the notion of strong two-scale convergence, which we state with help of the periodic unfolding operator $\mathcal{T}_\eps$ (see \cref{Sec:ProofSimultaneous} for its precise definition).

\begin{proposition}\label{Prop:strong_conv}
Let $u \in \SobH^1(\Omega, \R^{d\times d})$ and $\varphi$ denote the unique minimizer of \eqref{Eq:homogenized_limit} for $u$. The following statements hold.
\begin{enumerate}[(a)]
	\item Let $u_h \wto u$ weakly in $\SobH^1(\Omega, \R^d)$, $\eps > 0$ be fixed and assume $\EnergyIhe{\eps}(u_h) \to \EnergyIline{\eps}(u)$ as $h \to 0$. Then, $\dif{u_h} \to \dif{u}$ strongly in $\Leb^2(\Omega, \R^{d\times d})$.
	\item Let $u_h \wto u$ weakly in $\SobH^1(\Omega, \R^d)$ and assume $\EnergyIhhom(u_h) \to \EnergyIlinhom(u)$. Then, $\dif{u_h} \to \dif{u}$ strongly in $\Leb^2(\Omega, \R^{d\times d})$.
	\item Let $u_\eps \wto u$ weakly in $\SobH^1(\Omega, \R^d)$ and assume $\EnergyIline{\eps}(u_\eps) \to \EnergyIlinhom(u)$. Then, $\mathcal{T}_\eps\dif{u_\eps} \to \dif{u} + \dif{_y \varphi}$ strongly in $\Leb^2(\Omega \times Y, \R^{d\times d})$.
	\item Let $u_{\eps,h} \wto u$ weakly in $\SobH^1(\Omega, \R^d)$ and assume $\EnergyIhe{\eps}(u_{\eps,h}) \to \EnergyIlinhom(u)$ as $(\eps,h) \to 0$. Then, $\mathcal{T}_\eps\dif{u_{\eps,h}} \to \dif{u} + \dif{_y \varphi}$ strongly in $\Leb^2(\Omega \times Y, \R^{d\times d})$ as $(\eps, h) \to 0$.
\end{enumerate}
(For the proof see \cref{Sec:ProofLinearization,Sec:ProofSimultaneous}.)
\end{proposition}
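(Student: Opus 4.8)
The plan is to exploit the uniform convexity of the limit quadratic forms together with the standard ``\emph{liminf inequality plus convergence of energies forces convergence of the quadratic part}'' argument. I will treat the four cases in increasing order of complexity, reusing the ideas each time. For part (a), fix $\eps>0$. From the equi-coercivity estimate \eqref{Eq:equi_coercivity_Ihe} and $\EnergyIhe{\eps}(u_h)\to\EnergyIline{\eps}(u)$ we already know $(u_h)$ is bounded in $\SobH^1$ and converges weakly to $u$; it remains to upgrade the convergence of the gradients. The key device is a ``recovery-type'' comparison: using \ref{Property:W_Expansion} and the representation $W^h(\tfrac{x}\eps, A(\tfrac x\eps)+h\dif{u_h}) = h^2 Q^A(\tfrac x\eps, \dif{u_h}-B_h(\tfrac x\eps)A(\tfrac x\eps)) + h^2\,o(1)$ (with the $o(1)$ controlled by the modulus $r$ together with $\limsup_h h\|B_h\|_{L^\infty}=0$ from \eqref{def:alternative2}, exactly as in \cref{Rem:EnergyScaling}), the hypothesis becomes
\[
  \int_\Omega Q^A\!\Big(\tfrac x\eps,\dif{u_h}(x)-B_h(\tfrac x\eps)A(\tfrac x\eps)\Big)\dx
  \;\longrightarrow\;
  \int_\Omega Q^A\!\Big(\tfrac x\eps,\dif u(x)+B(\tfrac x\eps)A(\tfrac x\eps)\Big)\dx .
\]
Now write $v_h:=\dif{u_h}-\dif u-(B_h+B)(\tfrac\cdot\eps)A(\tfrac\cdot\eps)$, which converges weakly to $0$ in $L^2$ (using $B_h\to B$ strongly). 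Expanding the quadratic form via its bilinear form $\mathbb L_Q$ and using weak convergence to kill the cross term, one obtains $\int_\Omega Q^A(\tfrac x\eps, v_h)\dx\to 0$. The ellipticity \eqref{Eq:UniformBounds_Q} (giving $Q^A(y,G)\geq \alpha_{\rm el}|\sym(GA^{-1})|^2$) then controls $\sym(v_h A^{-1})\to 0$ in $L^2$; a periodic Korn-type inequality on $\Omega$ (a fixed-$\eps$ Korn inequality applied after the Bilipschitz change of variables, cf.\ \cref{Cor:KornPeriodic}) — together with the boundary condition $v_h=0$ on $\Gamma$ in the appropriate transformed sense — upgrades this to $v_h\to 0$ strongly in $L^2$, hence $\dif{u_h}\to\dif u$ strongly.

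Part (b) is completely analogous, replacing $Q^A(\tfrac x\eps,\cdot)$ by the homogenized form $Q^A_{\hom}$ and the oscillating shift by the constant $B_{\hom}\bar A$; here one uses \cref{Thm:Expansion_Whom}(c) to pass from $h^{-2}W^h_{\hom}(\bar A+h\dif{u_h})$ to $Q^A_{\hom}(\dif{u_h}-B_{\hom}\bar A)+R^A(B)$ uniformly, and then the non-degeneracy \cref{L:coercivity}, $Q^A_{\hom}(G)\geq \tfrac1c|\sym_{\bar A}G|^2$, in place of \eqref{Eq:UniformBounds_Q}; the Korn inequality used is the classical one since $\bar A$ is constant. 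Parts (c) and (d) are the genuinely two-scale statements. For (c), I would first invoke the compactness of two-scale convergence: from equi-coercivity \eqref{Eq:equi_coercivity_Iline} the sequence $(u_\eps)$ is bounded in $\SobH^1$, so along a subsequence $\mathcal T_\eps\dif{u_\eps}\wto \dif u + \dif_y\psi$ weakly in $L^2(\Omega\times Y)$ for some $\psi\in L^2(\Omega;\SobH^1_{\rm per}(Y,\R^d))$. Unfolding the functional, $\EnergyIline\eps(u_\eps)=\int_{\Omega\times Y}Q^A(y,\mathcal T_\eps\dif{u_\eps}+(\mathcal T_\eps B)(A\circ\ldots))\dx\dx[y]+o(1)$, weak lower semicontinuity of the convex integrand gives $\EnergyIlinhom(u)\leq\int_{\Omega\times Y}Q^A(y,\dif u+\dif_y\psi+BA)$, and minimality of the cell problem forces $\psi=\varphi$ (the minimizer of \eqref{Eq:homogenized_limit}) and identifies the limit. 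Then the same bilinear-form expansion as in (a), now on $\Omega\times Y$ and using the strong two-scale convergence $\mathcal T_\eps B\to B$, yields $\int_{\Omega\times Y}Q^A(y, w_\eps)\to 0$ with $w_\eps:=\mathcal T_\eps\dif{u_\eps}-\dif u-\dif_y\varphi-(\mathcal T_\eps B+B)(A\circ\ldots)$; ellipticity plus a two-scale Korn inequality (Korn on $Y$ uniformly in $x$, plus Korn on $\Omega$ for the macroscopic part, cf.\ again \cref{Cor:KornPeriodic}) gives $w_\eps\to 0$ strongly in $L^2(\Omega\times Y)$, which is the claim. Part (d) combines the linearization estimate of (a)/(b) with the two-scale machinery of (c): one uses \cref{Thm:Expansion_Whom}(c) (valid uniformly in $\eps$ since it is a statement about the energy density) to replace $\EnergyIhe\eps(u_{\eps,h})$ by the quadratic functional up to $o(1)$ as $(\eps,h)\to 0$, then runs the (c)-argument; the simultaneous limit requires only that the error terms are controlled uniformly, which is guaranteed by $\limsup_h h\|B_h\|_{L^\infty}=0$ and the $\eps$-independence of $\rho$ in \eqref{Eq:TaylorExpansion_Whom}.

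The main obstacle I anticipate is the Korn-type step in parts (a), (c), (d): the natural coercive quantity produced by the quadratic form is not $\sym\dif u$ but the $A$-twisted symmetric gradient $\sym(\dif u\, A^{-1})A$, and one must show that controlling this in $L^2$ (together with the boundary trace on $\Gamma$) controls $\dif u$ itself. The clean way is to transport the problem via the Bilipschitz potential $a$ of \cref{Prop:SFJBilipschitz} to the domain $a(\Omega)$ (a Jones domain), where the twisted symmetric gradient becomes the ordinary one and a genuine Korn inequality applies — but one then needs Korn to be stable under this Bilipschitz change of variables, which is precisely the kind of statement the paper develops alongside the rigidity estimate on Jones domains. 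In the two-scale setting (c)/(d) an additional subtlety is that the cell-problem corrector $\varphi$ must be identified as the \emph{unique} minimizer, which uses the non-degeneracy of $Q^A_{\hom}$ from \cref{L:coercivity} to rule out spurious limits; once uniqueness holds, the full-sequence (not just subsequential) convergence follows by the usual Urysohn argument.
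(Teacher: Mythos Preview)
Your quadratic-trick strategy matches the paper's core mechanism, but there is a real gap in the linearization step that affects (a), (b) and (d). You replace $\frac{1}{h^2}W^h$ (resp.\ $\frac{1}{h^2}W^h_{\hom}$) by the quadratic form ``up to $o(1)$''; however the remainder in \ref{Property:W_Expansion} is $|G|^2 r(|hG|)$ with $G$ containing $\dif u_h$, and since $(\dif u_h)$ is only bounded in $L^2$, not $L^\infty$, the integral $\int_\Omega |\dif u_h|^2\, r(|h\dif u_h|)\,\dx$ need not vanish. (Your appeal to \cref{Rem:EnergyScaling} is misplaced: that remark explicitly assumes $u\in W^{1,\infty}$.) The paper resolves this by truncating to good sets $\Omega_h\supset\{|\dif u_h|\leq h^{-1/2}\}$ where the expansion is valid (\cref{Lem:ExpansionLinearization}, \cref{Lem:ExpansionTwoScale}), runs the quadratic trick there to obtain $\sym\big((\mathds{1}_{\Omega_h}\dif u_h-\dif u^*)A^{-1}\big)\to 0$ in $L^2$, and then closes the argument with an ingredient missing from your sketch: uniform integrability of $(|\dif u_h|^2)$, deduced from the mixed-growth rigidity estimate via \cref{Prop:uniform_integrability}, so that Vitali upgrades the resulting convergence in measure (or in $L^p$, $p<2$) to $L^2$. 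In (d) there is an additional error: \cref{Thm:Expansion_Whom}(c) is the expansion of $W^h_{\hom}$, not of the oscillating integrand $W^h(\tfrac\cdot\eps,\cdot)$ that appears in $\EnergyIhe\eps$; the correct tool is again the pointwise \ref{Property:W_Expansion} plus truncation.

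For (c) and (d) there is also a structural problem with your Korn step. You work on $\Omega\times Y$ and appeal to a ``two-scale Korn inequality'' to pass from $\sym(w_\eps A^{-1})\to 0$ to $w_\eps\to 0$, but the unfolded field $\mathcal T_\eps\dif u_\eps$ is not a gradient on $\Omega\times Y$ in any useful sense, and \cref{Cor:KornPeriodic} supplies no such inequality. The paper avoids this by staying on $\Omega$: it constructs a recovery sequence $u^*_\eps$ whose gradients converge \emph{strongly} two-scale, applies the quadratic identity \eqref{Eq:QuadraticTrick} to $\EnergyIline\eps(u_\eps)-\EnergyIline\eps(u^*_\eps)$ on $\Omega$ (the cross term vanishes as a weak--strong two-scale product), obtains $\sym\big((\dif u_\eps-\dif u^*_\eps)A(\tfrac\cdot\eps)^{-1}\big)\to 0$ in $L^2(\Omega)$, and then applies the ordinary $A$-twisted Korn inequality \cref{Cor:KornWithLpNormOfu} on $\Omega$. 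Strong two-scale convergence of $\dif u_\eps$ then follows from that of $\dif u^*_\eps$.
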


\subsection{Geometric rigidity estimate and Korn's inequality on Jones domains} \label{Sec:GeometricRigidity}
An essential ingredient to establish the equi-coercivity estimates above is the geometric rigidity estimate due to Friesecke, James and M\"uller \cite{FJM02}. However, in this paper we are faced with some additional complexity. The argument for the proofs depend heavily on the fact that $A$ is the gradient of a Bilipschitz map $a$ and an application of the geometric rigidity estimate on $a(\Omega)$. One complexity here is that $a(\Omega)$ is not necessarily a Lipschitz domain, see \cite{Lic19} for a counter example. The proof given in \cite{FJM02} does not extend easily to such domains. We show that the rigidity estimate does indeed hold on domains like this with controlled constants and does even hold on more general domains. For this, we introduce the class of Jones domains.

\begin{definition}[Jones domain, cf.\ {\cite{Jon81}}]
Let $U \subset \R^d$, $\delta > 0$, $e \in (0,1]$. We say $U$ is a Jones domain or more precisely an $(e,\delta)$-domain\footnote{We use $e$ here instead of the standard notation $\eps$, since $\eps$ is already reserved for the periodicity.}, if for all $x,y \in U$ with $\abs{x-y} < \delta$, there exists a rectifiable curve $\gamma: [0,1] \to U$ with $\gamma(0) = x$, $\gamma(1) = y$ and
\begin{equation}
	\operatorname{len}(\gamma) \leq \tfrac{1}{e} \abs{x - y}, \qquad
	\dist(z, \partial U) \geq \frac{e \abs{x - z}\abs{y - z}}{\abs{x - y}} \text{ for all } z \in \gamma([0,1]).
\end{equation}
\end{definition}

Note that Lipschitz domains are Jones domains and $e$ and $\delta$ are controlled by transformation of the domain by a Bilipschitz map. Furthermore, as shown in \cite{ADD12}, for the strong convergence of (almost) minimizers in $\SobH^1$ we require a slightly more general version of the rigidity estimate with mixed growth conditions (see \cite{CDM14}). This version provides estimates for decompositions into parts with lower and higher integrability. For this we introduce the notation of decompositions $V = F + G$ in $\Leb^p+\Leb^q(U,\R^m)$, which is shorthand for
\begin{equation} \label{Eq:NotationMixedGrowthDecompositions}
	V = F + G \text{ a.e.\ in } U, \qquad F \in \Leb^p(U, \R^m), G \in \Leb^q(U, \R^m)
\end{equation}
for $U \subset \R^d$ and $V: U \to \R^m$ measurable. We use analogous notation for more decompositions into more than two terms. A suitable application is usually a decomposition like $V = \mathds{1}_{\set{V > c}}V + \mathds{1}_{\set{V \leq c}}V$. Such a decomposition is applied  e.g.\ in \cite{CDM14} to show a uniform integrability statement related to the rigidity estimate, see \cref{Prop:uniform_integrability}.

The following theorem extends the geometric rigidity estimate and Korn's inequality to Jones domains with mixed growth conditions and prestrained deformations:

\begin{theorem} \label{Thm:RigidityEstimateJonesDomain}
Let $U \subset \R^d$ a bounded, connected $(e,\delta)$-domain, $1 < p \leq q < \infty$ and $A \in \operatorname{SFJ}(U)$. Then, there exists a constant $C = C(U,A,p,q) > 0$, such that for all $u \in \SobW^{1,1}(U, \R^d)$ the following statements hold.
\begin{enumerate}[(a)]
	\item (Geometric rigidity) Given a decomposition $\dist(\dif{u}A(\placeholder)^{-1}, \SO(d)) = F_{\dist(\dif{u}A(\placeholder)^{-1}, \SO(d))} + G_{\dist(\dif{u}A(\placeholder)^{-1}, \SO(d))}$ in $\Leb^p+\Leb^q(U)$, there exist $R \in \SO(d)$ and a decomposition $\dif{u}A(\placeholder)^{-1} - R = F_{\dif{u}A(\placeholder)^{-1} - R} + G_{\dif{u}A(\placeholder)^{-1} - R}$ in $\Leb^p+\Leb^q(U, \R^{d\times d})$, such that
	\begin{equation}
	\begin{aligned}
		\norm{F_{\dif{u}A(\placeholder)^{-1} - R}}_{\Leb^p(U)} &\leq C \norm{F_{\dist(\dif{u}A(\placeholder)^{-1}, \SO(d))}}_{\Leb^p(U)}, \\
		\norm{G_{\dif{u}A(\placeholder)^{-1} - R}}_{\Leb^q(U)} &\leq C \norm{G_{\dist(\dif{u}A(\placeholder)^{-1}, \SO(d))}}_{\Leb^q(U)}.
	\end{aligned}
	\end{equation}
	
	\item (Korn's inequality) Given a decomposition $\sym(\dif{u}A(\placeholder)^{-1}) = F_{\sym(\dif{u}A(\placeholder)^{-1})} + G_{\sym(\dif{u}A(\placeholder)^{-1})}$ in $\Leb^p+\Leb^q(U, \R^{d\times d})$, there exist $S \in \R^{d\times d}_{\skewMat}$ and a decomposition $\dif{u}A(\placeholder)^{-1} - S = F_{\dif{u}A(\placeholder)^{-1} - S} + G_{\dif{u}A(\placeholder)^{-1} - S}$ in $\Leb^p+\Leb^q(U, \R^{d\times d})$, such that
	\begin{equation}
	\begin{aligned}
		\norm{F_{\dif{u}A(\placeholder)^{-1} - S}}_{\Leb^p(U)} &\leq C \norm{F_{\sym(\dif{u}A(\placeholder)^{-1})}}_{\Leb^p(U)}, \\
		\norm{G_{\dif{u}A(\placeholder)^{-1} - S}}_{\Leb^q(U)} &\leq C \norm{G_{\sym(\dif{u}A(\placeholder)^{-1})}}_{\Leb^q(U)}.
	\end{aligned}
	\end{equation}
\end{enumerate}
Moreover, let $L \geq 1$, $r := \operatorname{diam}(U) := \sup_{x,y \in U} \abs{x - y}$ and $\rho := \min\set{\frac{r}{2}, \delta}$. The constant $C$ can be chosen to be of the form $C = (\frac{r}{\rho})^d c$ for some $c = c(d,e,p,q,L)$ uniformly for all $A$ that admit a Bilipschitz potential with Bilipschitz constant not greater than $L$. 
(For the proof see \cref{Sec:Proof:ExtensionOperatorRigidity}.)
\end{theorem}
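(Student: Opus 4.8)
The plan is to reduce everything to the classical mixed-growth rigidity estimate on Lipschitz domains (as in \cite{CDM14}), at the price of constructing an extension operator adapted to the geometry of rotations. I would proceed in four stages. \emph{Stage 1 (reduction to $A=I$).} Let $a$ be the (piece-wise) Bilipschitz potential of $A$ from \cref{Def:StressFreeJoint}, so $A=\dif a$ a.e. Given $u\in\SobW^{1,1}(U,\R^d)$, the chain rule gives $\dif u A(\placeholder)^{-1}=\dif(u\circ a^{-1})\circ a$ (on each piece $U_i$, glued across the interfaces $\partial U_i\cap\partial U_j$ using the rank-one compatibility \eqref{Eq:Rank_one_compatiblity}, which makes $u\circ a^{-1}$ a genuine Sobolev function on $a(U)$). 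Thus it suffices to prove the rigidity/Korn estimate for $\dif v$ on the transformed domain $\tilde U:=a(U)$ and then transport the resulting decomposition back through $a$, which only changes constants by factors depending on $L=\operatorname{Lip}(a)+\operatorname{Lip}(a^{-1})$ and distorts volumes by $\det\dif a\in[L^{-d},L^d]$. Since Bilipschitz images of $(e,\delta)$-domains are again $(e',\delta')$-domains with $e',\delta'$ controlled by $e,\delta,L$, the problem is reduced to: prove mixed-growth rigidity and Korn on a bounded connected Jones domain with $A=I$.

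\emph{Stage 2 (extension operator for Jones domains adapted to rigidity).} This is the technical heart. Following Jones \cite{Jon81} and the refinement in \cite{DM04}, I construct a bounded linear extension operator $E:\SobW^{1,1}(\tilde U,\R^d)\to\SobW^{1,1}(\R^d,\R^d)$ that is \emph{simultaneously} bounded $\SobW^{1,p}\to\SobW^{1,p}$ and $\SobW^{1,q}\to\SobW^{1,q}$ (a Whitney-cube reflection construction: cover $\R^d\setminus\overline{\tilde U}$ near $\partial\tilde U$ by Whitney cubes, reflect each to a nearby interior cube of comparable size using the twisting-cone/chain property, and patch with a partition of unity). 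The crucial extra feature — the one that makes this different from the standard Sobolev extension — is a pointwise-type bound on $\dist(\dif(Ev),\SO(d))$: on each exterior Whitney cube $Q$, $\dif(Ev)$ is, up to the partition-of-unity corrections, an average of $\dif v$ over the associated interior cube $Q^\ast$ plus a remainder controlled by oscillation; since $\SO(d)$ is a $C^\infty$ manifold and averaging/Poincaré arguments on $Q^\ast$ control $\dist(\dif(Ev)(x),\SO(d))$ by the mean of $\dist(\dif v,\SO(d))$ on $Q^\ast$ plus a higher-order Poincaré term, one obtains
\[
 \norm{\dist(\dif(Ev),\SO(d))}_{\Leb^r(\R^d)}\le C\,\norm{\dist(\dif v,\SO(d))}_{\Leb^r(\tilde U)}\qquad(r=p,q),
\]
with $C$ of the announced form $(r/\rho)^d c(d,e,p,q)$ after tracking the scaling of the Whitney decomposition. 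The analogous statement with $\sym$ in place of $\dist(\placeholder,\SO(d))$ is proved the same way and yields the Korn version. Splitting the given decomposition $F+G$ cube-by-cube and applying the two continuity bounds separately gives a mixed-growth extension.

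\emph{Stage 3 (apply classical rigidity on a ball).} Put $v_{\mathrm{ext}}:=Ev$; it lives on $\R^d$, so restrict to a ball $B\supset\tilde U$ with $\operatorname{diam}B\simeq r$. Apply the mixed-growth geometric rigidity estimate of \cite{CDM14} (and the mixed-growth Korn inequality) on $B$: there is $R\in\SO(d)$ (resp.\ $S\in\R^{d\times d}_{\skewMat}$) and a decomposition $\dif v_{\mathrm{ext}}-R=\tilde F+\tilde G$ in $\Leb^p+\Leb^q(B)$ with $\|\tilde F\|_{\Leb^p(B)}\le C\|F_{\dist(\dif v_{\mathrm{ext}},\SO(d))}\|_{\Leb^p(B)}$ and likewise for $\tilde G$. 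Restrict the decomposition back to $\tilde U$, then pull back through $a$ as in Stage 1, absorbing the extension constants from Stage 2 and the Bilipschitz constants; the resulting $R$ (resp.\ $S$) and decomposition of $\dif u A(\placeholder)^{-1}-R$ satisfy the claimed bounds. \emph{Stage 4 (constants).} Finally I track the dependence: the scaling $C=(r/\rho)^d c(d,e,p,q,L)$ follows because the only non-scale-invariant input is the number of Whitney cubes needed to chain two points at distance $\ge\delta$, which is $\lesssim(r/\rho)^d$; the rigidity constant on a ball is scale-invariant, and the Bilipschitz pull-back contributes only $L$-dependent factors. Uniformity over all $A$ with Bilipschitz constant $\le L$ is then immediate.

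The main obstacle I expect is Stage 2: ensuring that the Jones extension can be chosen to control \emph{both} the distance to $\SO(d)$ (a nonlinear, non-convex constraint) and the mixed $\Leb^p/\Leb^q$ norms with the right scaling, and — for the piece-wise Bilipschitz case — verifying that the interfaces $\partial U_i\cap\partial U_j$ are regular enough that $u\circ a^{-1}$ is Sobolev across them and that $a(U)$ still satisfies the Jones condition (this is exactly where the footnote in \cref{Def:StressFreeJoint} about decomposing $U$ into Lipschitz domains is used). The rest is bookkeeping of constants.
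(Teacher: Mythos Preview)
Your overall architecture (Bilipschitz reduction + Jones extension controlling $\dist(\cdot,\SO(d))$ + classical rigidity) matches the paper, but Stage~3 as written does not go through, and the piecewise case in Stage~1 is handled differently.

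\textbf{The gap in Stage 3.} The extension operator (both yours and the paper's, \cref{Thm:Extenion_operator}) controls $\dist(\dif(Ev),\SO(d))$ only inside a $\rho$-neighbourhood $\tilde U^+_{\rho\alpha'}$ of $\tilde U$; beyond that, $Ev$ is cut off to zero, and $\dist(0,\SO(d))=\sqrt d$ is not small. When $\delta\ll r$ (so $\rho=\delta$), any ball $B\supset\tilde U$ with $\operatorname{diam}B\simeq r$ contains a region of volume $\sim r^d$ where $\dist(\dif(Ev),\SO(d))$ is uncontrolled, so you cannot apply \cite{CDM14} on $B$. Nor can you apply it on $\tilde U^+_{\rho\alpha'}$ directly, since that set is in general no better than a Jones domain itself. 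The paper's substitute for Stage~3 is a cube-covering argument (its ``Step~0''): cover $U$ by $m\lesssim(r/\rho)^d$ overlapping cubes $Q_i$ of side $\sim\rho$ with $\tfrac{33}{32}Q_i\subset U^+_{\rho\alpha'}$; apply the cube case of \cite{CDM14} on each $Q_i$ to get $R_i\in\SO(d)$; then estimate $|R_i-R_1|$ along overlapping chains to replace all $R_i$ by a single $R$. The factor $(r/\rho)^d$ is precisely $m$, the number of covering cubes, not a Whitney-chain count as you write.

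\textbf{The piecewise Bilipschitz case.} Your plan to glue $u\circ a^{-1}$ across the interfaces into one Sobolev function on $a(U)$ runs into the problem that $a$ is only \emph{piecewise} Bilipschitz and need not be globally injective; the images $a(U_i)$ may overlap, so $a(U)$ is not a domain and $u\circ a^{-1}$ is not single-valued. The paper instead applies the Bilipschitz case on each piece $U_i$ separately (this is legitimate: $a(U_i)$ is a Jones domain), obtaining rotations $R_i$, and then shows $|R_i-R_j|$ is controlled by the data via a trace argument on $a(\Gamma_{ij})$ with $\Gamma_{ij}=\partial U_i\cap\partial U_j$ (using \cref{Cor:MatrixNorm} and the Poincar\'e--Wirtinger inequality). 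This is where the Lipschitz regularity of the pieces $U_i$ from \ref{Property:SFJisPiecewiseBilipschitz} is actually used, not to make $a(U)$ a Jones domain.
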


Recall that especially all periodic stress-free joints $A \in \operatorname{SFJ}_\mathrm{per}$ admit a Bilipschitz potential by \cref{Prop:SFJBilipschitz} and the Bilipschitz constants of the potentials of $A(\tfrac{\placeholder}{\eps})$ coincide for all $\eps > 0$. Moreover, since $e$ and $\frac{r}{\rho}$ are invariant under scaling of the domain, so is the constant $C$. We obtain the following versions of Korn's inequality as corollaries.

\begin{corollary}[Korn's inequality] \label{Cor:KornWithBoundaryValue}
Let $U \subset \R^d$ a Lipschitz domain, $\Gamma \subset \partial U$ with $\mathcal{H}^{d-1}(\Gamma) > 0$, $1 < p < \infty$ and $A \in \operatorname{SFJ}(U)$. Then, there exists a constant $C = C(U,\Gamma,p,A) > 0$, such that for all $u \in \SobW^{1,1}_{\Gamma,0}(U, \R^d)$,
\begin{equation} \label{Eq:KornWithBoundaryValue}
	\norm{u}_{\SobW^{1,p}(U)} \leq C \norm{\sym(\dif{u}A(\placeholder)^{-1})}_{\Leb^p(U)}.
\end{equation}
Moreover, given $L \geq 1$, the constant $C$ can be chosen uniformly for all $A$ that admit a Bilipschitz potential with Bilipschitz constant not greater than $L$. 
(For the proof see \cref{Sec:Proof:ExtensionOperatorRigidity}.)
\end{corollary}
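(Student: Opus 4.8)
The plan is to derive \eqref{Eq:KornWithBoundaryValue} from the Korn estimate on Jones domains, \cref{Thm:RigidityEstimateJonesDomain}(b), and to use the Dirichlet condition $u=0$ on $\Gamma$ to pin down the skew matrix it produces. We may assume $N(u):=\norm{\sym(\dif{u}A(\placeholder)^{-1})}_{\Leb^p(U)}<\infty$, since otherwise there is nothing to prove. As every Lipschitz domain is an $(e,\delta)$-domain, \cref{Thm:RigidityEstimateJonesDomain}(b) applies with $q=p$ and the trivial decomposition (vanishing $\Leb^q$-part); it yields some $S\in\R^{d\times d}_{\skewMat}$ with
\begin{equation*}
  \norm{\dif{u}A(\placeholder)^{-1}-S}_{\Leb^p(U)}\leq C\,N(u),
\end{equation*}
where, by the last assertion of that theorem, $C$ is uniform over all $A$ admitting an $L$-Bilipschitz potential. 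In particular $\dif{u}A(\placeholder)^{-1}\in\Leb^p(U)$, hence $\dif{u}=(\dif{u}A(\placeholder)^{-1})A\in\Leb^p(U)$ since $\abs{A}\leq L$, and a Poincaré--Friedrichs argument (recall $u$ has zero trace on $\Gamma$ and $\mathcal{H}^{d-1}(\Gamma)>0$) then shows $u\in\SobW^{1,p}(U,\R^d)$. So it suffices to bound $\norm{\dif{u}}_{\Leb^p(U)}$ by a constant times $N(u)$.

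The core of the argument is to bound $\abs{S}$. Let $a$ be a potential of $A$ (Bilipschitz, resp.\ piecewise Bilipschitz on the decomposition pieces $U_i$), normalized by $a(x_0)=0$ for a fixed $x_0\in\overline{U}$; since $U$ is Lipschitz (hence quasiconvex), $a\in\SobW^{1,\infty}(U,\R^d)\cap\Cont(\overline{U},\R^d)$ with $A=\dif{a}$. Put $v:=u-Sa\in\SobW^{1,p}(U,\R^d)$, so $\dif{v}=(\dif{u}A(\placeholder)^{-1}-S)A$, whence $\norm{\dif{v}}_{\Leb^p(U)}\leq\norm{A}_{\Leb^\infty}\,C\,N(u)$, while the trace of $v$ on $\Gamma$ equals $-Sa|_\Gamma$ because $u$ vanishes there. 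Combining the Poincaré--Wirtinger inequality with the trace theorem on the Lipschitz domain $U$,
\begin{equation*}
  \norm{Sa+\fint_U v}_{\Leb^p(\Gamma)}=\norm{\operatorname{tr}\big(v-\fint_U v\big)}_{\Leb^p(\Gamma)}\leq c_1\,\norm{\dif{v}}_{\Leb^p(U)}\leq c_1\,\norm{A}_{\Leb^\infty}\,C\,N(u).
\end{equation*}
Now I would invoke a purely algebraic fact that rests only on $\mathcal{H}^{d-1}(a(\Gamma))>0$ (which holds because $a$ is Bilipschitz): the linear map $\Phi_a\colon\R^{d\times d}_{\skewMat}\times\R^d\to\Leb^p(\Gamma,\R^d)$, $\Phi_a(S',m):=(S'a+m)|_\Gamma$, is injective. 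Indeed, if $S'a+m=0$ $\mathcal{H}^{d-1}$-a.e.\ on $\Gamma$, then $a(\Gamma)$ lies, up to an $\mathcal{H}^{d-1}$-null set, in an affine subspace of dimension $d-\operatorname{rank}S'$; a set of positive $\mathcal{H}^{d-1}$-measure cannot lie in an affine subspace of dimension $\leq d-2$, so $\operatorname{rank}S'\leq1$, and since skew matrices have even rank, $S'=0$, and then $m=0$. (If $a$ is only piecewise Bilipschitz, replace $\Gamma$ by $\Gamma\cap\overline{U_i}$ for a piece with $\mathcal{H}^{d-1}(\Gamma\cap\overline{U_i})>0$.) An injective linear map on a finite-dimensional space is bounded below, hence $\abs{S}\leq c_2\,\norm{\Phi_a(S,\fint_U v)}_{\Leb^p(\Gamma)}$, and combining with the previous display gives $\abs{S}\leq c_3\,N(u)$.

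To conclude, $\norm{\dif{u}}_{\Leb^p(U)}\leq\norm{\big(\dif{u}A(\placeholder)^{-1}-S\big)A}_{\Leb^p(U)}+\norm{SA}_{\Leb^p(U)}\leq\norm{A}_{\Leb^\infty}\big(C\,N(u)+\abs{S}\,\abs{U}^{1/p}\big)\leq c_4\,N(u)$, and together with $\norm{u}_{\Leb^p(U)}\leq c_5\,\norm{\dif{u}}_{\Leb^p(U)}$ from Poincaré--Friedrichs this yields \eqref{Eq:KornWithBoundaryValue}. For the uniformity statement, all constants above are already uniform over $A$ with an $L$-Bilipschitz potential, except the lower bound $c_2$ for $\Phi_a$; this one I would obtain by compactness. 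After normalizing $a(x_0)=0$, the $L$-Bilipschitz potentials form an equi-Lipschitz, equi-bounded family on $\overline{U}$, hence precompact in $\Cont(\overline{U},\R^d)$ by Arzelà--Ascoli, with $L$-Bilipschitz limits; were $\inf\{\,\norm{\Phi_a(S',m)}_{\Leb^p(\Gamma)} : \abs{S'}+\abs{m}=1,\ a\text{ as above}\,\}=0$, passing to a limit would produce an $L$-Bilipschitz $a$ and a pair $(S',m)\neq0$ with $\Phi_a(S',m)=0$, contradicting the injectivity just established.

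The only genuinely non-routine step, and the main obstacle, is precisely this conversion of the Dirichlet condition into a quantitative, potential-uniform bound on $S$ via the injectivity of $\Phi_a$ and the control of its invertibility constant; the remainder is a bookkeeping combination of \cref{Thm:RigidityEstimateJonesDomain} with the classical Poincaré, Poincaré--Wirtinger and trace inequalities on Lipschitz domains.
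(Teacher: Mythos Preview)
Your proof is correct and follows essentially the same route as the paper: apply \cref{Thm:RigidityEstimateJonesDomain}(b) to produce the skew matrix $S$, then exploit the Dirichlet condition together with the fact that nonzero skew matrices have rank $\geq 2$ (hence kernel of dimension $\leq d-2$) to bound $\abs{S}$, and finish with Poincar\'e--Friedrichs. The only cosmetic difference is that the paper transforms to the image domain $a(\tilde U)$ and invokes its prepared \cref{Cor:MatrixNorm} (proved via the same compactness-in-$\operatorname{Bil}_L$ argument you give for the uniformity), whereas you stay on $U$ and phrase the same nondegeneracy as injectivity of $\Phi_a$; the substance is identical.
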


\begin{corollary}[Korn's second inequality] \label{Cor:KornWithLpNormOfu}
Let $U \subset \R^d$ a Lipschitz domain, $1 < p < \infty$ and $A \in \operatorname{SFJ}(U)$. Then, there exists a constant $C = C(U,A,p) > 0$, such that for all $u \in \SobW^{1,1}(U, \R^d)$,
\begin{equation} \label{Eq:KornWithLpNormOfu}
	\norm{u}_{\SobW^{1,p}(U)} \leq C \left(\norm{u}_{\Leb^p(U)} + \norm{\sym(\dif{u}A(\placeholder)^{-1})}_{\Leb^p(U)}\right).
\end{equation}
Moreover, given $L \geq 1$, the constant $C$ can be chosen uniformly for all $A$ that admit a Bilipschitz potential with Bilipschitz constant not greater than $L$. 
(For the proof see \cref{Sec:Proof:ExtensionOperatorRigidity}.)
\end{corollary}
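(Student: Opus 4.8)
The plan is to read off \eqref{Eq:KornWithLpNormOfu} from the geometric Korn inequality in \cref{Thm:RigidityEstimateJonesDomain}(b), used in the diagonal case $p=q$ where the mixed--growth decomposition is trivial, by additionally controlling the infinitesimal rigid motion it produces. Since a bounded Lipschitz domain is a bounded, connected Jones domain, that theorem (applied with $q=p$ and $G\equiv0$) supplies $S\in\R^{d\times d}_{\skewMat}$ and a constant $C_1=C_1(U,A,p)>0$, which by the last assertion of the theorem can be chosen uniformly over all $A$ whose potential has Bilipschitz constant at most $L$, such that $\norm{\dif{u}A(\placeholder)^{-1}-S}_{\Leb^p(U)}\le C_1K$, where $K:=\norm{\sym(\dif{u}A(\placeholder)^{-1})}_{\Leb^p(U)}$. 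Using $\abs{A},\abs{A(\placeholder)^{-1}}\le L$ a.e., this immediately gives $\norm{\dif{u}}_{\Leb^p(U)}\le L\bigl(C_1K+\abs{S}\,\abs{U}^{1/p}\bigr)$ and $\norm{\dif{u}-SA}_{\Leb^p(U)}\le LC_1K$; in particular $\dif{u}\in\Leb^p(U)$, so $u\in\SobW^{1,p}(U)$. It remains only to bound $\abs{S}$ by $\norm{u}_{\Leb^p(U)}+K$.

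For this I would use the potential $a$ of $A$ (continuous, with $\dif{a}=A$ a.e.). The vector field $x\mapsto Sa(x)$ then has $\dif(Sa)=SA$, so choosing $c:=\fint_U(u-Sa)$ the Poincar\'e--Wirtinger inequality on the bounded, connected Lipschitz domain $U$ yields $C_P=C_P(U,p)$ with $\norm{u-Sa-c}_{\Leb^p(U)}\le C_P\norm{\dif{u}-SA}_{\Leb^p(U)}\le C_PLC_1K$. On the other hand $S\mapsto\mathcal{N}(S):=\inf_{c\in\R^d}\norm{Sa+c}_{\Leb^p(U)}$ is a seminorm on the finite--dimensional space $\R^{d\times d}_{\skewMat}$, and in fact a norm: the infimum over $c$ is attained, so $\mathcal{N}(S)=0$ forces $Sa$ to be a.e.\ constant, hence $SA=\dif(Sa)=0$ a.e., hence $S=0$ because $\det A>0$ a.e.\ by \ref{Property:SFJUniformBound}. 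Therefore $\mathcal{N}(S)\ge\kappa\abs{S}$ for some $\kappa=\kappa(U,a,p)>0$, whence
\begin{equation*}
  \kappa\abs{S}\le\norm{Sa+c}_{\Leb^p(U)}\le\norm{u}_{\Leb^p(U)}+\norm{u-Sa-c}_{\Leb^p(U)}\le\norm{u}_{\Leb^p(U)}+C_PLC_1K.
\end{equation*}
Substituting this into the bound for $\norm{\dif{u}}_{\Leb^p(U)}$ and adding $\norm{u}_{\Leb^p(U)}$ on both sides gives \eqref{Eq:KornWithLpNormOfu} with $C=C(U,A,p)$.

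For the uniform version only $\kappa$ is left to handle. After subtracting from $a$ its value at a fixed point of $U$ --- which changes neither $A$ nor $\mathcal{N}$ --- the family of potentials with Bilipschitz constant at most $L$ is relatively compact in $\Cont(\overline{U},\R^d)$ by Arzel\`a--Ascoli, and every limit is again Bilipschitz with constant at most $L$, hence non-degenerate. The map $(S,a)\mapsto\inf_{c\in\R^d}\norm{Sa+c}_{\Leb^p(U)}$ is continuous on $\set{\abs{S}=1}$ times this compact set and strictly positive there, so it is bounded below by some $\kappa=\kappa(U,p,L)>0$; since $C_1$ is already uniform and $C_P,L$ depend only on $U,p,L$, so is $C$. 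I expect the only genuinely non-routine points to be this compactness argument and the non-degeneracy of $\mathcal{N}$ --- exactly the step where $\det A>0$ enters; everything else reduces to \cref{Thm:RigidityEstimateJonesDomain}(b) and Poincar\'e--Wirtinger.
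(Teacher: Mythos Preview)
Your proof is correct and follows the same overall strategy as the paper: apply \cref{Thm:RigidityEstimateJonesDomain}(b) with $p=q$ to obtain $S\in\R^{d\times d}_{\skewMat}$, then control $\abs{S}$ by $\norm{u}_{\Leb^p}+K$ via Poincar\'e--Wirtinger and a finite-dimensional norm equivalence, with a compactness argument for the uniformity in $L$. The execution of the second step differs slightly. The paper selects a Lipschitz sub-domain $\tilde U\subset U$ on which the potential $a$ is Bilipschitz, transforms to the image $a(\tilde U)$ via $\tilde u=u\circ a^{-1}$, and then invokes \cref{Cor:MatrixNormOmega} (the inequality $\abs{S}\le C\abs{S}_{a(\tilde U),p}$) combined with Poincar\'e--Wirtinger on the transformed domain. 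You instead stay on $U$ and work directly with $\mathcal N(S)=\inf_c\norm{Sa+c}_{\Leb^p(U)}$, which is precisely the seminorm $\abs{S}_{U,a,p}$ of \cref{Lem:MatrixNorm}; your non-degeneracy argument ($Sa$ constant $\Rightarrow SA=0\Rightarrow S=0$) and your Arzel\`a--Ascoli compactness step re-prove the relevant special cases of \cref{Lem:MatrixNorm} and \cref{Lem:Bil_compact_in_W1infty}. The two routes are equivalent up to the change of variables; yours is marginally more direct since it avoids both the sub-domain $\tilde U$ and the passage to $a(\tilde U)$.
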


\begin{corollary}[Periodic Korn's inequality] \label{Cor:KornPeriodic}
Let $1 < p < \infty$ and $A \in \operatorname{SFJ}_\mathrm{per}$. There exists a constant $C=C(p, A) > 0$ such that for all $\varphi \in \SobW^{1,p}_{\mathrm{per},0}(Y, \R^d)$,
\begin{equation}\label{Eq:korn_periodic}
	\norm{\varphi}_{\SobW^{1,p}(Y)} 
	\leq C \norm{\sym(\dif{\varphi}A(\placeholder)^{-1})}_{\Leb^p(Y)}.
\end{equation}
(For the proof see \cref{Sec:Proof:ExtensionOperatorRigidity}.)
\end{corollary}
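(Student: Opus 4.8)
The plan is to derive \eqref{Eq:korn_periodic} from Korn's second inequality on the unit cell (\cref{Cor:KornWithLpNormOfu}) by a standard compactness argument (equivalently, the Peetre--Tartar lemma), which reduces the whole matter to showing that the linear map $\varphi\mapsto\sym(\dif{\varphi}A(\placeholder)^{-1})$ is injective on $\SobW^{1,p}_{\mathrm{per},0}(Y,\R^d)$. The latter injectivity is the geometric heart of the statement and is where the structure of periodic stress-free joints (a global Bilipschitz potential, and the representation $A=\bar A+\dif\chi$) enters.

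\textbf{The compactness argument.} Suppose \eqref{Eq:korn_periodic} fails. Then there is a sequence $(\varphi_n)\subset\SobW^{1,p}_{\mathrm{per},0}(Y,\R^d)$ with $\norm{\varphi_n}_{\SobW^{1,p}(Y)}=1$ and $\norm{\sym(\dif{\varphi_n}A(\placeholder)^{-1})}_{\Leb^p(Y)}\to 0$. Since the unit cell is a Lipschitz domain and $A$ restricted to it belongs to $\operatorname{SFJ}$ of that cell (by \cref{Prop:SFJBilipschitz}, $A=\dif a$ for a globally Bilipschitz $a$; cf.\ the remark after \cref{Prop:SFJBilipschitz}), \cref{Cor:KornWithLpNormOfu} applies and yields $1=\norm{\varphi_n}_{\SobW^{1,p}(Y)}\le C\big(\norm{\varphi_n}_{\Leb^p(Y)}+\norm{\sym(\dif{\varphi_n}A(\placeholder)^{-1})}_{\Leb^p(Y)}\big)$, so $\liminf_n\norm{\varphi_n}_{\Leb^p(Y)}\ge 1/C>0$. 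By reflexivity and the (periodic) Rellich--Kondrachov theorem, after passing to a subsequence $\varphi_n\wto\varphi$ in $\SobW^{1,p}_{\mathrm{per}}(Y,\R^d)$ and $\varphi_n\to\varphi$ in $\Leb^p$; the closed linear constraints of periodicity and vanishing mean pass to the weak limit, so $\varphi\in\SobW^{1,p}_{\mathrm{per},0}(Y,\R^d)$ with $\norm{\varphi}_{\Leb^p(Y)}>0$, hence $\varphi\neq 0$. Since $V\mapsto\sym(V A(\placeholder)^{-1})$ is bounded on $\Leb^p$ (as $A^{-1}\in\Leb^\infty$), it is weakly continuous, so $\sym(\dif{\varphi}A(\placeholder)^{-1})=0$ a.e. Thus $\varphi$ is a nonzero element of the kernel, and deriving $\varphi=0$ gives the contradiction.

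\textbf{The kernel step.} Let $\varphi\in\SobW^{1,p}_{\mathrm{per},0}(Y,\R^d)$ with $\sym(\dif{\varphi}A(\placeholder)^{-1})=0$ a.e. Let $a$ be the global Bilipschitz potential of $A$ from \cref{Prop:SFJBilipschitz} with $a(0)=0$, and set $\psi:=\varphi\circ a^{-1}\in\SobW^{1,p}_{\mathrm{loc}}(\R^d,\R^d)$ (composition with a Bilipschitz map preserves local Sobolev regularity, with $\dif\psi(z)=\dif{\varphi}(a^{-1}(z))\,A(a^{-1}(z))^{-1}$ a.e.). Then $\sym\dif\psi=0$ a.e., so $\psi$ is an infinitesimal rigid motion, $\psi(z)=Sz+c$ with $S\in\R^{d\times d}_{\skewMat}$, $c\in\R^d$; hence $\varphi(x)=Sa(x)+c$ a.e. By \cref{Lem:periodic_derivative} we may write $A=\bar A+\dif\chi$ with $\chi\in\SobW^{1,\infty}_{\mathrm{per}}(Y,\R^d)$, so (after absorbing a constant into $\chi$) $a(x)=\bar A x+\chi(x)$, and in particular $a(x+k)-a(x)=\bar A k$ for all $k\in\Z^d$ and a.e.\ $x$. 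Periodicity of $\varphi$ then forces $0=\varphi(x+k)-\varphi(x)=S\bar A k$ for all $k\in\Z^d$, i.e.\ $S\bar A=0$. Moreover $\bar A$ is invertible: if $\bar A v=0$ for a unit vector $v$, then $\abs{a(tv)-a(0)}=\abs{\chi(tv)-\chi(0)}\le 2\norm{\chi}_{\Leb^\infty}$ for all $t\in\R$, contradicting the Bilipschitz lower bound $\abs{a(tv)-a(0)}\ge L^{-1}\abs{t}\to\infty$. Hence $S=0$, so $\varphi\equiv c$, and the zero-mean condition gives $c=0$, i.e.\ $\varphi=0$.

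\textbf{Main obstacle.} Everything except the kernel step is routine (Korn's second inequality on the cell is already available as \cref{Cor:KornWithLpNormOfu}, and the rest is Rellich plus weak lower semicontinuity). The real content is the injectivity: one must exploit that periodicity leaves no room for nontrivial infinitesimal rigid motions of the transformed displacement $\varphi\circ a^{-1}$, which in turn relies on the existence of a \emph{global} Bilipschitz potential (\cref{Prop:SFJBilipschitz}) — used both to convert $\sym(\dif{\varphi}A(\placeholder)^{-1})$ into an ordinary symmetrized gradient and to force $\bar A\in\Gl(d)$ — together with the representation $A=\bar A+\dif\chi$ from \cref{Lem:periodic_derivative}.
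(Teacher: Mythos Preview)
Your proof is correct and follows essentially the same two-step strategy as the paper: a compactness argument based on \cref{Cor:KornWithLpNormOfu} reduces the question to injectivity of $\varphi\mapsto\sym(\dif\varphi A(\placeholder)^{-1})$ on $\SobW^{1,p}_{\mathrm{per},0}(Y,\R^d)$, and the kernel is identified by passing to $\psi=\varphi\circ a^{-1}$, recognizing it as an infinitesimal rigid motion, and using periodicity together with invertibility of $\bar A$ to force $S=0$. The only minor difference is that the paper applies \cref{Cor:KornWithLpNormOfu} to differences $\varphi_k-\varphi_l$ to obtain a Cauchy sequence and hence strong $\SobW^{1,p}$ convergence (so that $\norm{\varphi}_{\SobW^{1,p}(Y)}=1$ is inherited directly), whereas you argue via Rellich and weak lower semicontinuity; both routes are standard and equally valid.
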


\begin{remark}
The mixed growth versions for the geometric rigidity estimate and Korn's inequality have been proven in \cite{CDM14} for the case of Lipschitz domains and Korn's inequality has been shown in \cite{DM04} to hold on Jones domains. However, up to our knowledge, the result for Korn's inequality on Jones domains is novel in the mixed growth version and the geometric rigidity estimate on Jones domains even in the standard case without mixed growth ($p=q$). Furthermore, as an application of the mixed growth versions, we obtain Korn's inequality and the geometric rigidity estimate in the Lorentz space $\Leb^{p,q}(U)$ for $1 < p < \infty$ and $1 \leq q \leq \infty$, see \cite[Cor.~4.1]{CDM14}.
\end{remark}

The proof of \cref{Thm:RigidityEstimateJonesDomain} relies on the construction of an extension operator $E$ that allows us to control $\dist(\dif{Eu}, \SO(d))$ by $\dist(\dif{u}, \SO(d))$ (resp.\ distance to $\R^{d\times d}_{\skewMat}$). The construction is adapted from \cite{Jon81,DM04} and presented together with the proofs for \cref{Thm:RigidityEstimateJonesDomain,Cor:KornWithBoundaryValue,Cor:KornWithLpNormOfu,Cor:KornPeriodic} in \cref{Sec:Proof:ExtensionOperatorRigidity}. This extension operator is interesting in its own right. In the proof we only require the rigidity estimate to hold on cubes. Thus, the procedure provides an alternative to the second part of the proof of \cite[Thm.~3.1]{FJM02}, where the rigidity estimate is lifted from cubes to arbitrary Lipschitz domains. Moreover, as presented in \cref{Thm:RigidityEstimateJonesDomain}, from our procedure we obtain fairly good control over the constant depending on the regularity of the domain.

\section{Example: Isotropic Laminates}
\label{Sec:Example}

In this section we study the linearized and homogenized energy density $Q^A_{\hom}$ and the homogenized perturbation $B_{\hom}$ in dependence of the microstructure for the example of an isotropic laminate in $\R^3$. The symmetries present in this case reduce the complexity tremendously, so that we are able to compute the quantities by hand.

\subsection{Formulas for three dimensional isotropic laminates}

Let $d = 3$. We suppose
\begin{equation}
	Q(y,G) = \lambda(y_1) \trace(G)^2 + 2\mu(y_1) \abs{\sym G}^2,
\end{equation}
with Lamé constants $\lambda, \mu \in \Leb^\infty_\mathrm{per}([0,1))$ satisfying  $\operatorname{ess\,inf}_{[0,1)} \mu > 0$ and $\operatorname{ess\,inf}_{[0,1)} (\lambda + \frac{2\mu}{3}) > 0$. We also suppose that the stress-free joint $A$ only depends on $y_1$. We denote by $a$ the Bilipschitz potential of $A$. It is not hard to show that then necessarily there exists a map $\bar{a} \in \SobW^{1,\infty}_\mathrm{per}([0,1), \R^3)$, such that (cf.\ \cref{Lem:periodic_derivative})
\begin{equation}
	a(y) = \bar{a}(y_1) + \bar{A}y.
\end{equation}
We want to compute $B_{\hom}$ and $Q^A_{\hom}$ for this situation using \cref{Prop:Bhom_algorithmic}. Thus, our first goal is to give explicit formulas for the correctors. Here, it is convenient to change variables first and compute different correctors than proposed in \cref{Prop:Bhom_algorithmic}. The procedure is summarized in the following remark.

\begin{remark} \label{Rem:AlgoCharacterization_alternatives}
By changing variables and applying \cref{Lem:periodic_derivative}, we can also represent $Q^A_{\hom}$ by
\begin{equation*}
	Q^A_{\hom}(G) = \int_{\bar{A}Y} \tilde{Q}\left(z, G\bar{A}^{-1} + \dif{\tilde{\varphi}_{G\bar{A}^{-1}}}(z)\right) \,\dx[z]
	= \int_Y \hat{Q}\left(y, (G + \dif{\hat{\varphi}_{G\bar{A}^{-1}}}(y))\bar{A}^{-1}\right) \,\dx[y],
\end{equation*}
where
\begin{subequations}
\begin{align}
	&\tilde{Q}(z,G) := Q\big(a^{-1}(z), G\big) \det A(a^{-1}(z))^{-1}, && z \in \R^d, &&\\
	&\hat{Q}(y,G) := Q\big(a^{-1}(\bar{A}y), G\big) \det A(a^{-1}(\bar{A}y))^{-1} \det\bar{A}, && y \in \R^d.&&
\end{align}
\end{subequations}
and the correctors are defined as
\begin{subequations}
\begin{gather}
	\tilde{\varphi}_G := \operatorname{argmin} \class{\int_{\bar{A}Y} \tilde{Q}\big(z, G+\dif{\varphi}(z)\big)\,\dx[y]}{\varphi \in \SobH^1_{\mathrm{per},0}(\bar{A}Y,\R^d)}, \\
	\hat{\varphi}_G := \operatorname{argmin} \class{\int_Y \hat{Q}\big(y, G+\dif{\varphi}(y)\bar{A}^{-1}\big)\,\dx[y]}{\varphi \in \SobH^1_{\mathrm{per},0}(Y,\R^d)}.
\end{gather}
\end{subequations}
Indeed, we get
\begin{subequations}
\begin{align}
	(\Qhom)_{ij} &= \int_{\bar{A}Y} \big(G_i + \dif{\tilde{\varphi}_{G_i}}(z)\big) : \mathbb{L}_{\tilde{Q}}(z) \big(G_j + \dif{\tilde{\varphi}_{G_j}}(z)\big) \,\dx[z] \\
	&= \int_Y \big(G_i + \dif{\hat{\varphi}_{G_i}}(y)\bar{A}^{-1}\big) : \mathbb{L}_{\hat{Q}}(y) \big(G_j + \dif{\hat{\varphi}_{G_j}}(y)\bar{A}^{-1}\big) \,\dx[y],
\end{align}
\end{subequations}
and
\begin{subequations}
\begin{align}
	b_i &= \int_{\bar{A}Y} \big(G_i + \dif{\tilde{\varphi}_{G_i}}(z)\big) : \mathbb{L}_{\tilde{Q}}(z) B(a^{-1}(z)) \,\dx[z] \\
	&= \int_Y \big(G_i + \dif{\hat{\varphi}_{G_i}}(y)\bar{A}^{-1}\big) : \mathbb{L}_{\hat{Q}}(y) B(a^{-1}(\bar{A}y)) \,\dx[y].
\end{align}
\end{subequations}
Note that the different correctors are connected via the formulas
\begin{equation}
	\tilde{\varphi}_G = \varphi_G \circ a^{-1}, \quad \hat{\varphi}_G = \varphi_G \circ a^{-1} \circ \bar{A}\placeholder.
\end{equation}
\end{remark}

One version or another may be more useful for a certain purpose. We shall see that here, it is convenient to use the version b). One advantage of b) is that it basically reduces the problem to the case where $\dif{a} \equiv \bar{A}$ which helps us later to compute the correctors. First, note that $\hat{Q}$ is again an isotropic, linearized elastic energy density with Lamé constants
\begin{gather*}
	\hat{\lambda}(y) := \lambda([a^{-1}(\bar{A}y)]_1) \det A([a^{-1}(\bar{A}y)]_1)^{-1}\det\bar{A}, \\ \hat{\mu}(y) := \mu([a^{-1}(\bar{A}y)]_1) \det A([a^{-1}(\bar{A}y)]_1)^{-1} \det \bar{A}.
\end{gather*}
Moreover, $\hat{Q}$ is still a laminate, since $\hat{\lambda}$ and $\hat{\mu}$ only depend on $y_1$, in view of
\begin{equation}
	a^{-1}(\bar{A}y) = a^{-1}(y_1 \bar{A}e_1) + (0,y_2, y_3)^T.
\end{equation}
Indeed, let $z := a^{-1}(y_1 \bar{A}e_1)$. Then,
\begin{equation*}
	a(z + (0,y_2, y_3)^T) = \bar{a}(z_1) + \bar{A}z + \bar{A}(0,y_2, y_3)^T = a(z) + \bar{A}(0,y_2, y_3)^T = \bar{A}y.
\end{equation*}
We denote the mean over $Y$ (resp. $[0,1)$) of some map $f \in \Leb^1(Y, \R^m)$, $m \in \N$ (resp. $f \in \Leb^1([0,1),\R^m$) with $\mean{f}$ and the harmonic mean with $\meanHarm{f}$. Recall the following standard moduli of elastic, isotropic materials:
\begin{align*}
	&K := \lambda + \tfrac{2}{3}\mu && \text{(Bulk modulus)}, &&\\
	&M := K + \tfrac{4}{3}\mu = \lambda + 2\mu && \text{(P-wave modulus)}, &&
\end{align*}
and analogously $\hat{K}, \hat{M}$. As a basis for $\R^{d\times d}_{\sym}$, we consider
\begin{align*}
	G_1 &= \begin{pmatrix} \tfrac{1}{3} & 0 & 0 \\ 0 & \tfrac{1}{3} & 0 \\ 0 & 0 & \tfrac{1}{3} \end{pmatrix}, &
	G_2 &= \begin{pmatrix} \tfrac{2}{3} & 0 & 0 \\ 0 & -\tfrac{1}{3} & 0 \\ 0 & 0 & -\tfrac{1}{3} \end{pmatrix}, &
	G_3 &= \begin{pmatrix} 0 & 0 & 0 \\ 0 & -\tfrac{1}{2} & 0 \\ 0 & 0 & \tfrac{1}{2} \end{pmatrix}, &&&& \\
	G_4 &= \begin{pmatrix} 0 & \tfrac{1}{2} & 0 \\ \tfrac{1}{2} & 0 & 0 \\ 0 & 0 & 0 \end{pmatrix}, &
	G_5 &= \begin{pmatrix} 0 & 0 & \tfrac{1}{2} \\ 0 & 0 & 0 \\ \tfrac{1}{2} & 0 & 0 \end{pmatrix}, &
	G_6 &= \begin{pmatrix} 0 & 0 & 0 \\ 0 & 0 & \tfrac{1}{2} \\ 0 & \tfrac{1}{2} & 0 \end{pmatrix}.
\end{align*}
Then, for every matrix $F \in \R^{d\times d}$, we get the decomposition $\sym F = \sum_{i=1}^6 a_i G_i$, where
\begin{align*}
	a_1 &= F_{11} + F_{22} + F_{33},& a_2 &= F_{11} - \tfrac{1}{2} F_{22} - \tfrac{1}{2} F_{33},& a_3 &= F_{33} - F_{22}, &&&&\\
	a_4 &= F_{12} + F_{21},& a_5 &= F_{13} + F_{31},& a_6 &= F_{23} + F_{32}.
\end{align*}
Moreover, we get
\begin{align*}
	\hat{Q}(y, G_1) &= \hat{\lambda}(y_1) + \tfrac{2}{3} \hat{\mu}(y_1) = \hat{K}(y_1), & \hat{Q}(y, G_2) &= \tfrac{4}{3} \hat{\mu}(y_1), &&&&\\
	\hat{Q}(y, G_3) &= \hat{Q}(y, G_4) = \hat{Q}(y,G_5) = \hat{Q}(y,G_6) = \hat{\mu}(y_1).
\end{align*}

\begin{proposition} \label{Prop:CorrectorsIsotropicLaminate}
In the situation as above, the correctors $\hat{\varphi}_{G_i}$ (cf.~\cref{Rem:AlgoCharacterization_alternatives}) depend only on $y_1$ and satisfy
\begin{align*}
	\dif{\hat{\varphi}_{G_1}} &= \frac{\meanHarm{M} \mean{\tfrac{K}{M}} - K}{\abs{\alpha}^2 M} \alpha e_1^T, &
	\dif{\hat{\varphi}_{G_2}} &= \frac{4\beta_1}{3\abs{\alpha}^2} \begin{pmatrix} \alpha_1 \\ -\tfrac{1}{2}\alpha_2 \\ -\tfrac{1}{2}\alpha_3\end{pmatrix}e_1^T + \frac{4\alpha_1^2 - 2\alpha_2^2 - 2\alpha_3^2}{3\abs{\alpha}^4} \beta_2 \alpha e_1^T,  \\
	\dif{\hat{\varphi}_{G_3}} &= \frac{\beta_1}{\abs{\alpha}^2} \begin{pmatrix} 0 \\ -\alpha_2 \\ \alpha_3\end{pmatrix}e_1^T + \frac{\alpha_3^2 - \alpha_2^2}{\abs{\alpha}^4} \beta_2 \alpha e_1^T, &
	\dif{\hat{\varphi}_{G_4}} &= \frac{\beta_1}{\abs{\alpha}^2} \begin{pmatrix} \alpha_2 \\ \alpha_1 \\ 0 \end{pmatrix}e_1^T + \frac{2\alpha_1\alpha_2}{\abs{\alpha}^4} \beta_2 \alpha e_1^T, \\
	\dif{\hat{\varphi}_{G_5}} &= \frac{\beta_1}{\abs{\alpha}^2} \begin{pmatrix} \alpha_3 \\ 0 \\ \alpha_1 \end{pmatrix}e_1^T + \frac{2\alpha_1\alpha_3}{\abs{\alpha}^4} \beta_2 \alpha e_1^T, &
	\dif{\hat{\varphi}_{G_6}} &= \frac{\beta_1}{\abs{\alpha}^2} \begin{pmatrix} 0 \\ \alpha_3 \\ \alpha_2 \end{pmatrix}e_1^T + \frac{2\alpha_2\alpha_3}{\abs{\alpha}^4} \beta_2 \alpha e_1^T,
\end{align*}
where $\alpha := \bar{A}^{-T}e_1$ and
\begin{equation*}
	\beta_1 := \frac{\meanHarm{\mu}}{\mu} - 1, \quad\beta_2 := \frac{\meanHarm{M}\mean{\tfrac{\mu}{M}}}{M} - \frac{\meanHarm{\mu}}{\mu} + 1 - \frac{\mu}{M}.
\end{equation*}
(For the proof see \cref{Sec:correctors-for-isotropic-laminates}.)
\end{proposition}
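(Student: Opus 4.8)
The plan is to solve the cell problem for each $\hat Q(y,\cdot)$ explicitly, exploiting that the microstructure is a laminate (everything depends only on $y_1$), so that the corrector equation reduces to a one-dimensional ODE whose solution is a piecewise-affine-in-$y_1$ vector field. Concretely, the Euler--Lagrange equation for the minimizer $\hat\varphi_{G}$ of $\varphi\mapsto\int_Y\hat Q\big(y,(G+\dif\varphi\,)\bar A^{-1}\big)\dx[y]$ among $\varphi\in\SobH^1_{\mathrm{per},0}(Y,\R^d)$ is $\divergence\big(\mathbb L_{\hat Q}(y)\,(G+\dif\varphi)\bar A^{-1}\big)=0$. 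First I would make the ansatz $\hat\varphi_G(y)=\psi(y_1)$ with $\psi\in\SobW^{1,\infty}_{\mathrm{per},0}([0,1),\R^3)$, so that $\dif{\hat\varphi_G}(y)=\psi'(y_1)e_1^T$. Since $\hat Q$ is isotropic, $\mathbb L_{\hat Q}(y)H=\hat\lambda(y_1)\trace(H)I+2\hat\mu(y_1)\sym H$; the only surviving component of the divergence is $\partial_{y_1}$ of the first column (in the $e_1$-direction) of the stress $\sigma(y):=\mathbb L_{\hat Q}(y)\,(G+\psi'(y_1)e_1^T)\bar A^{-1}$. Thus the EL equation is equivalent to $\sigma(y)\bar A^{-T}e_1=:\sigma(y)\alpha$ being independent of $y_1$ (here $\alpha=\bar A^{-T}e_1$, the normal to the laminate layers in the transformed geometry), i.e.\ the traction across the layers is constant — the classical lamination condition.

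The second step is to solve the resulting finite-dimensional linear system. Writing $H(y_1):=(G+\psi'(y_1)e_1^T)\bar A^{-1}$, the condition is: (i) $\sym H(y_1)-\frac{1}{2}\mu(y_1)^{-1}\cdot(\text{stuff})$… more precisely, decompose the constant vector $t:=\sigma(y)\alpha$ into its component along $\alpha$ and its two components orthogonal to $\alpha$. The orthogonal (shear) components couple only to $\hat\mu$ and give $\psi'$'s shear part proportional to $(\meanHarm{\hat\mu}/\hat\mu-1)$ times a fixed vector; the normal component couples to $\hat M=\hat\lambda+2\hat\mu$ and produces the extra $\beta_2$-type term. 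Imposing periodicity $\fint_0^1\psi'(y_1)\dx[y_1]=0$ fixes the constant traction $t$ in terms of harmonic means of $\hat\mu$ and $\hat M$ weighted against the components of $G\bar A^{-1}$ along and orthogonal to $\alpha$. Evaluating this for $G=G_i$, $i=1,\dots,6$, and then translating the hat-quantities back via $\hat\mu(y_1)\det A([a^{-1}(\bar Ay)]_1)^{-1}\det\bar A$ — using that $\det\dif a=\det A$ transforms the means so that $\meanHarm{\hat\mu}$ reduces to the stated $\meanHarm{\mu}$ etc.\ (a change of variables $y_1\mapsto[a^{-1}(y_1\bar Ae_1)]_1$, for which I would invoke \cref{Lem:periodic_derivative} and the one-dimensional structure) — yields exactly the six formulas, with the numerology $\alpha_1^2$, $\alpha_2^2$, $\alpha_1\alpha_2$, $|\alpha|^2$, $|\alpha|^4$ coming from projecting $G_i\bar A^{-1}$ (equivalently $G_i\alpha$) onto the $\alpha$-frame.

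The main obstacle I anticipate is purely bookkeeping rather than conceptual: carefully tracking how the three pieces of $G_i\bar A^{-1}$ (the $\alpha$-component of the symmetric part, the shear part, and the trace) enter the constant traction, and checking that the change of variables correctly turns the determinant-weighted averages $\mean{\hat\lambda},\mean{\hat\mu},\meanHarm{\hat M}$ into the clean expressions $\meanHarm{\mu}$, $\meanHarm{M}\mean{\mu/M}$, $\meanHarm{M}\mean{K/M}$ appearing in $\beta_1,\beta_2$ and in $\dif{\hat\varphi_{G_1}}$. A secondary point to verify is that the ansatz $\hat\varphi_G=\psi(y_1)$ is admissible, i.e.\ that the unique minimizer really is of this form — this follows from uniqueness of the corrector (strict convexity of $\hat Q$, cf.\ \eqref{Eq:UniformBounds_Q}) together with the translation invariance of the problem in the $y_2,y_3$ directions. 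Once the ansatz is justified and the linear algebra in the $\{\alpha/|\alpha|\}\cup\alpha^\perp$ frame is carried out, the formulas follow by direct substitution, and I would relegate the explicit computation to \cref{Sec:correctors-for-isotropic-laminates}.
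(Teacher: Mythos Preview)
Your proposal is correct and takes essentially the same approach as the paper. Both arguments make the ansatz $\hat\varphi_{G_i}(y)=\phi(y_1)$, reduce the Euler--Lagrange equation to the statement that a certain vector is constant in $y_1$, determine that constant from the periodicity condition $\fint_0^1\phi'=0$, and solve the resulting $3\times 3$ linear system. Your phrasing via the lamination condition ``$\sigma(y)\alpha$ is constant'' and the decomposition in the frame $\{\alpha/|\alpha|\}\cup\alpha^\perp$ is a more geometric packaging of exactly what the paper does by writing out the weak Euler--Lagrange equation in the basis $G_1,\dots,G_6$ and observing (via the fundamental lemma) that the three factors multiplying $\delta\phi_j'$ must be constant; the paper then inverts the coefficient matrix $C=(\hat\lambda+\hat\mu)\alpha\alpha^T+|\alpha|^2\hat\mu I$ explicitly using a rank-one perturbation formula and computes $c=\mean{C^{-1}}^{-1}\mean{C^{-1}r}$, which is the algebraic counterpart of your ``decompose the traction along and orthogonal to $\alpha$''. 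The justification of the ansatz is also handled slightly differently---you argue by uniqueness plus translation invariance in $y_2,y_3$, the paper argues that the Euler--Lagrange equation restricted to laminar test functions is equivalent to the full one---but both are valid and lead to the same place.
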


With these formulas it is straight-forward to calculate $\Qhom$ and $B_{\hom}$ using \cref{Rem:AlgoCharacterization_alternatives}. We omit the calculations and state the result for the special case $\alpha = \abs{\alpha}e_1$.

\begin{proposition} \label{Prop:Laminate_homogenized_values}
Consider the situation as above, where $a$ is such that $\alpha = \abs{\alpha}e_1$, $\alpha := \bar{A}^{-T}e_1$. Then the matrix $\Qhom$ from \cref{Prop:Bhom_algorithmic} is the symmetric block-diagonal matrix given by
\begin{equation*}
	\Qhom = \begin{pmatrix} A_1 & 0 \\ 0 & A_2 \end{pmatrix},
\end{equation*}
where
\begin{align*}
	A_1 &:= \begin{pmatrix}
		\mean{\tfrac{\hat{K}}{\hat{M}}}\meanHarm{\hat{M}} - \tfrac{4}{3}\mean{\gamma \hat{\mu}} & \tfrac{4}{3}\mean{\gamma \hat{\mu}} \\
		\tfrac{4}{3}\mean{\gamma \hat{\mu}} & \tfrac{4}{3}\mean{\tfrac{\hat{\mu}}{\hat{M}}}\meanHarm{\hat{M}} - \tfrac{4}{3}\mean{\gamma \hat{\mu}}
	\end{pmatrix}, \\
	A_2 &:= \operatorname{diag}\Big(\mean{\hat{\mu}}, \meanHarm{\hat{\mu}}, \meanHarm{\hat{\mu}}, \mean{\hat{\mu}}\Big).
\end{align*}
Moreover, $B_{\hom} := \sum_{i=1}^{6} \mathbf{B}_i G_i$ with coefficients
\begin{alignat*}{3}
	&\mathrlap {\mathbf{B}_1 = \mean{\hat{B}_{11} + \tfrac{\hat{\lambda}}{\hat{M}}\hat{B}_{22} + \tfrac{\hat{\lambda}}{\hat{M}}\hat{B}_{33}} + 2 \mean{(\hat{B}_{22} + \hat{B}_{33})\tfrac{\hat{K}\hat{\mu}}{\hat{M}}} \frac{\mean{\tfrac{\hat{\mu}}{\hat{M}}}}{\mean{\tfrac{\hat{K}\hat{\mu}}{\hat{M}}}},} \\
	&\mathrlap {\mathbf{B}_2 = \mean{\hat{B}_{11} + \tfrac{\hat{\lambda}}{\hat{M}}\hat{B}_{22} + \tfrac{\hat{\lambda}}{\hat{M}}\hat{B}_{33}} - \tfrac{3}{2} \mean{(\hat{B}_{22} + \hat{B}_{33})\tfrac{\hat{K}\hat{\mu}}{\hat{M}}} \frac{\mean{\tfrac{\hat{K}}{\hat{M}}}}{\mean{\tfrac{\hat{K}\hat{\mu}}{\hat{M}}}},} \\
	&\mathbf{B}_3 = \frac{\mean{\hat{\mu} (\hat{B}_{33} - \hat{B}_{22})}}{\mean{\hat{\mu}}}, &&\qquad
	\mathbf{B}_4 = \mean{\hat{B}_{12} + \hat{B}_{21}}, \\
	&\mathbf{B}_5 = \mean{\hat{B}_{13} + \hat{B}_{31}}, &&\qquad
	\mathbf{B}_6 = \frac{\mean{\hat{\mu} (\hat{B}_{23} + \hat{B}_{32})}}{\mean{\hat{\mu}}}.
\end{alignat*}
Here $\hat{B}(y) = B(a^{-1}(\bar{A}y))$ and $\gamma := \frac{\mean{\tfrac{\hat{K}}{\hat{M}}}\meanHarm{\hat{M}} - \hat{K}}{\hat{M}}$.
\end{proposition}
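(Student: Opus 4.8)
The proposition is a direct evaluation of the algorithmic characterization of $\Qhom$ and $B_{\hom}$ from \cref{Prop:Bhom_algorithmic}, in the reformulation of \cref{Rem:AlgoCharacterization_alternatives} (version~b)), combined with the explicit corrector formulas of \cref{Prop:CorrectorsIsotropicLaminate}. Thus the proof is purely computational, and I only outline its structure. The advantage of version~b) is that the integrals are over $Y$ against the laminate density $\hat Q$ (which is again isotropic with $y_1$-dependent moduli $\hat\lambda,\hat\mu$, hence $\hat K,\hat M$), so the problem is effectively reduced to the case $\dif{a}\equiv\bar A$. Throughout I abbreviate $F_i := G_i + \dif{\hat\varphi_{G_i}}(y)\bar A^{-1}$, so that $\Qhom_{ij} = \int_Y F_i:\mathbb{L}_{\hat Q}(y)F_j\,\dx[y]$ and $b_i = \int_Y F_i:\mathbb{L}_{\hat Q}(y)\hat B(y)\,\dx[y]$ with $\hat B(y) = B(a^{-1}(\bar Ay))$.

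\textbf{Step 1: simplifying the correctors.} I first specialize \cref{Prop:CorrectorsIsotropicLaminate} to the geometry $\alpha = \abs{\alpha}e_1$, i.e.\ $\alpha_2 = \alpha_3 = 0$. Then $\dif{\hat\varphi_{G_3}} = \dif{\hat\varphi_{G_6}} = 0$, while $\dif{\hat\varphi_{G_1}}$ and $\dif{\hat\varphi_{G_2}}$ become scalar multiples of $e_1 e_1^T$, and $\dif{\hat\varphi_{G_4}}$, $\dif{\hat\varphi_{G_5}}$ scalar multiples of $e_2 e_1^T$, $e_3 e_1^T$. Since $e_1^T\bar A^{-1} = (\bar A^{-T}e_1)^T = \alpha^T = \abs{\alpha}e_1^T$ in this geometry, right-multiplication by $\bar A^{-1}$ only contributes the factor $\abs{\alpha}$, which cancels the $1/\abs{\alpha}$ carried by the corrector coefficients; consequently each $F_i$ is an explicit, $\abs{\alpha}$-independent matrix supported on at most two entries. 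In particular $F_1 = \tfrac13 I + t_1 e_1 e_1^T$, $F_2$ is diagonal with a modification only in the $(1,1)$-entry, $F_3 = G_3$, $F_6 = G_6$, and $F_4, F_5$ are simple perturbations of $G_4, G_5$ in the $e_2 e_1^T$, $e_3 e_1^T$ directions.

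\textbf{Step 2: evaluating $\Qhom$.} Since $\hat Q(y,\placeholder)$ is isotropic, the polarization identity gives $F:\mathbb{L}_{\hat Q}(y)G = \hat\lambda(y_1)\trace F\,\trace G + 2\hat\mu(y_1)\,\sym F:\sym G$. Substituting the $F_i$ from Step~1, the contractions $\trace F_i\trace F_j$ and $\sym F_i:\sym F_j$ are elementary; as all integrands depend on $y_1$ only, the integral over $Y$ collapses to an integral over $[0,1)$, and together with the scalar coefficients produced by the correctors one recovers precisely the arithmetic means $\mean{\placeholder}$ and harmonic means $\meanHarm{\placeholder}$ of $\hat K,\hat M,\hat\mu$. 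The block structure is then transparent: $G_3$ and $G_6$ have vanishing corrector and decouple from everything, yielding the two entries $\mean{\hat\mu}$ of $A_2$; $G_4$ and $G_5$ couple only to themselves and the corrector converts $\mean{\hat\mu}$ into $\meanHarm{\hat\mu}$ (the classical effective shear modulus across the laminae); and the two axial directions $G_1, G_2$ mix, producing the symmetric $2\times2$ block $A_1$ with the stated combinations of $\mean{\hat K/\hat M}\meanHarm{\hat M}$ and $\mean{\gamma\hat\mu}$.

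\textbf{Step 3: evaluating $B_{\hom}$, and the main obstacle.} With the same $F_i$ and the isotropic form of $\mathbb{L}_{\hat Q}$, the entries $b_i = \int_Y F_i:\mathbb{L}_{\hat Q}(y)\hat B(y)\,\dx[y]$ are linear in the components $\hat B_{kl}$ and again reduce to one-dimensional mean integrals. As $\Qhom = \operatorname{diag}(A_1, A_2)$ is block diagonal with $A_1$ an explicit $2\times2$ matrix and $A_2$ diagonal, $\Qhom^{-1}b$ is computed directly, and $B_{\hom} = \operatorname{emb}(\Qhom^{-1}b) = \sum_{i=1}^{6}(\Qhom^{-1}b)_i G_i$ produces the coefficients $\mathbf B_1,\dots,\mathbf B_6$. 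There is no conceptual difficulty; the whole argument is bookkeeping. The one point requiring genuine care is tracking, at every step, the Jacobian factor $\det A(a^{-1}(\bar Ay))^{-1}\det\bar A$ hidden inside $\hat\lambda,\hat\mu$ (hence inside $\hat K,\hat M,\gamma$), the rank-one matrix algebra after multiplication by $\bar A^{-1}$, and the precise way the corrector coefficients of \cref{Prop:CorrectorsIsotropicLaminate} combine with the $1$D integrals to produce harmonic rather than arithmetic means — exactly the computation the statement announces it omits.
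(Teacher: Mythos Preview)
Your proposal is correct and follows exactly the route the paper indicates: the text immediately preceding the proposition states that with the corrector formulas of \cref{Prop:CorrectorsIsotropicLaminate} it is ``straight-forward to calculate $\Qhom$ and $B_{\hom}$ using \cref{Rem:AlgoCharacterization_alternatives}'' and then explicitly omits the calculation. Your outline---specialize the correctors to $\alpha_2=\alpha_3=0$, pass to the version~b) integrals against $\hat Q$, and reduce everything to one-dimensional means---is precisely that omitted computation.
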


\begin{remark}
By means of the transformation rule and \cref{Lem:periodic_derivative}, we have $\mean{\hat{\mu}} = \mean{\mu}$ and analogously for $\lambda$, $K$, $M$ and $\frac{K\mu}{M}$. But the harmonic mean, as well as other entities, in generality depend on $A$.
\end{remark}

\subsection{Isotropic bilayers with bilayered prestrain}

In this last section we want to visualize the dependence of the prestrain on the microstructure for the special case of an isotropic bilayer with bilayered prestrain. This means, we consider a laminate consisting of two homogeneous, isotropic materials that on each phase feature a homogeneous prestrain, i.e.,
\begin{alignat*}{3}
	&\lambda(y) = \begin{cases} \lambda_1 & y_1 \in [0,\theta), \\ \lambda_2 & y_1 \in [\theta,1),\end{cases} &&\qquad
	\mu(y) = \begin{cases} \mu_1 & y_1 \in [0,\theta), \\ \mu_2 & y_1 \in [\theta,1),\end{cases} && \\
	& A(y) = \begin{cases} A_1 & y_1 \in [0,\theta), \\ A_2 & y_1 \in [\theta,1),\end{cases} &&\qquad
	B(y) = \begin{cases} B_1 & y_1 \in [0,\theta), \\ B_2 & y_1 \in [\theta,1),\end{cases}
\end{alignat*}
where $\theta \in [0,1]$ is the volume fraction of the first material. With this definition $A$ is a stress-free joint, if and only if $A_1 = A_2 + c e_1^T$ for some $c \in \R^3$, such that $e_1^T A_2^{-1} c > 0$. Indeed, in view of \eqref{Eq:Rank_one_compatiblity}, $A_1 = A_2 + c e_1^T$ is required to ensure that $A$ is a gradient and $e_1^T A_2^{-1} c > 0$ is equivalent to $\det A_1, \det A_2 > 0$. We can then define the potential $a$ by
\begin{equation*}
	a(y) := \begin{cases} A_1 y & y_1 \in [0,\theta), \\ A_2 y + \theta c & y_1 \in [\theta,1).\end{cases}
\end{equation*}
By applying the inversion formula in \cite{Mil81}, we obtain $A_1^{-1} = A_2^{-1} - \frac{1}{1 + e_1^T A_2^{-1} c} A_2^{-1}ce_1^TA_2^{-1}$. Using this, we can explicitly calculate $a^{-1}(\bar{A}y)$, $y \in Y$. We get
\begin{equation*}
	a^{-1}(\bar{A}y) = \begin{cases}
		A_1^{-1}\bar{A}y & y_1 \in [0,\hat{\theta}), \\
		A_2^{-1}(\bar{A}y - \theta c) & y_1 \in [\hat{\theta}, 1),
	\end{cases}
\end{equation*}
with the distorted volume fraction
\begin{equation*}
	\hat{\theta} := \frac{\theta}{1-(1-\theta)e_1^T A_1^{-1}c} = \frac{\theta (1 + e_1^T A_2^{-1}c)}{1 + \theta e_1^T A_2^{-1}c}.
\end{equation*}
Note that $\hat{\theta}$ is exactly defined, such that
\begin{align*}
	&\frac{\theta}{\hat{\theta}} = 1-(1-\theta)e_1^T A_1^{-1}c = \det A_1^{-1}\det\bar{A}, &
	&\frac{1-\theta}{1-\hat{\theta}} = 1 +\theta e_1^T A_2^{-1}c = \det A_2^{-1}\det\bar{A},
\intertext{and hence}
	&e_1^T A_1^{-1}\bar{A}y \in [0,\theta) \Leftrightarrow y_1 \in [0,\hat{\theta}), &
	&e_1^T A_2^{-1}(\bar{A}y - \theta c) \in [\theta, 1) \Leftrightarrow y_1 \in [\hat{\theta}, 1).
\end{align*}
Using this, the mean values in \cref{Prop:Laminate_homogenized_values} can be explicitly calculated. For the readers convenience we give an example for each case:
\begin{gather*}
	\mean{\hat{\mu}} = \mean{\mu} = \theta \mu_1 + (1-\theta)\mu_2, \qquad \mean{\tfrac{\hat{K}}{\hat{M}}} = \hat{\theta} \tfrac{K_1}{M_1} + (1 - \hat{\theta})\tfrac{K_2}{M_2}, \\
	\meanHarm{\hat{\mu}} = \left( \frac{\hat{\theta}^2}{\theta\mu_1} + \frac{(1 - \hat{\theta})^2}{(1-\theta)\mu_2} \right)^{-1} = \left( \frac{\theta}{\mu_1(1 - (1-\theta)e_1^TA_1^{-1}c)^2} + \frac{1 - \theta}{\mu_2 (1 + \theta e_1^T A_2^{-1}c)^2} \right)^{-1}.
\end{gather*}

\paragraph{Dependence of $Q^A_{\hom}$ and $B_{\hom}$ on the microstructure.}
We want to study the dependence of $\Qhom$ and $\mathbf{B}$ on the microstructure. For this we look at the special case $A \equiv I$. Our results from \cref{Sec:LinAndHomResults} state that the deformation away from $\Gamma$ is given up to order $\oclass(h)$ by
\begin{equation*}
	\phi(x) := x + hB_{\hom}x.
\end{equation*}
We can easily calculate the coefficients using \cref{Prop:Laminate_homogenized_values}. Note that $\hat{\theta} = \theta$, $\hat{\lambda} = \lambda$, $\hat{\mu} = \mu$. \cref{Fig:ExampleDependenceOnMicrostructure} displays these coefficients in dependence of the volume fraction $\theta$ and on the Lamé constant $\mu_2$ for fixed volume fraction $\theta = \frac{1}{2}$. These graphs show that $Q^A_{\hom}$ and $B_{\hom}$ depend non-linearly on $\theta$ for heterogeneous materials. Especially does the homogenized prestrain differ from just taking the mean value in favor of the stronger material.

\begin{figure}[htp]
\centering
\includegraphics[width=0.72\linewidth]{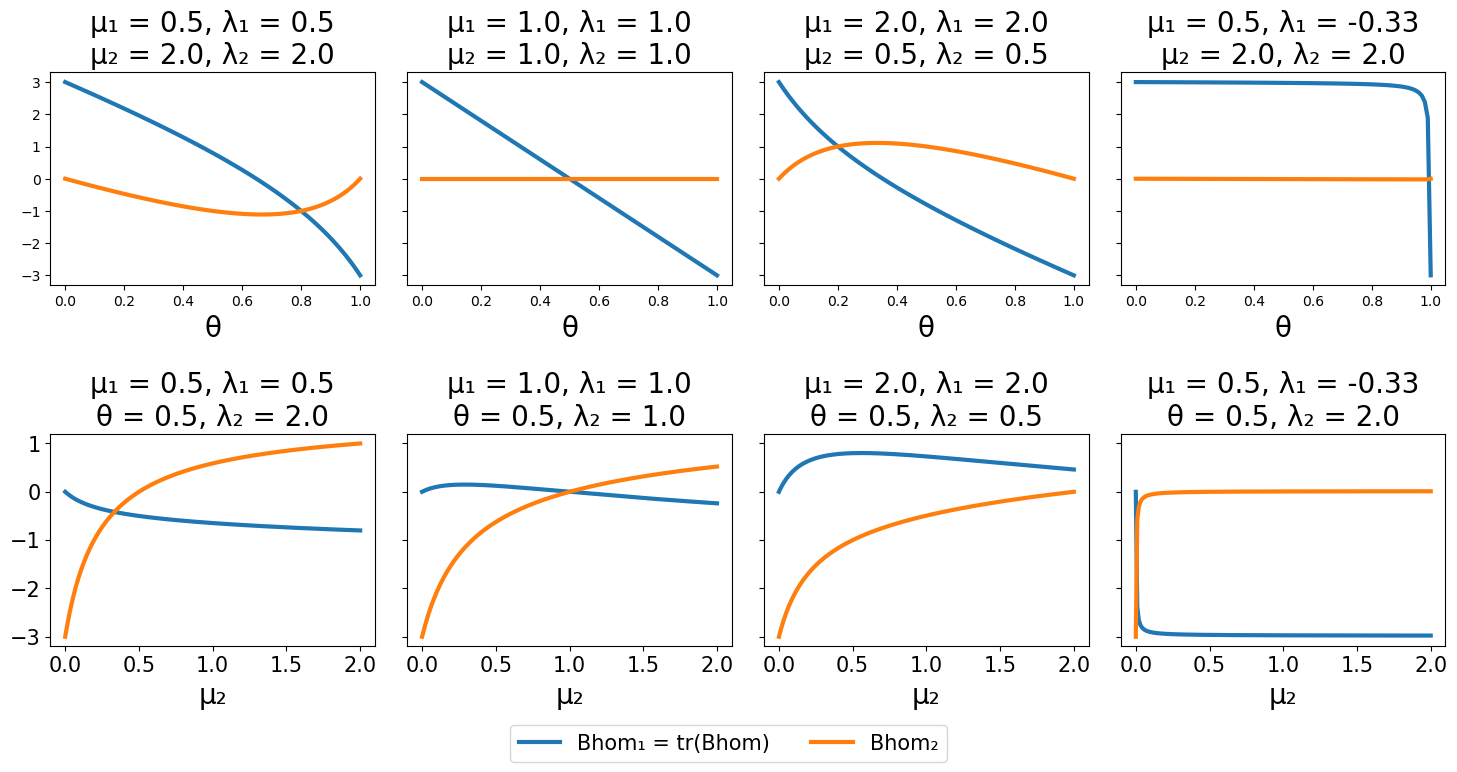}
\includegraphics[width=0.72\linewidth]{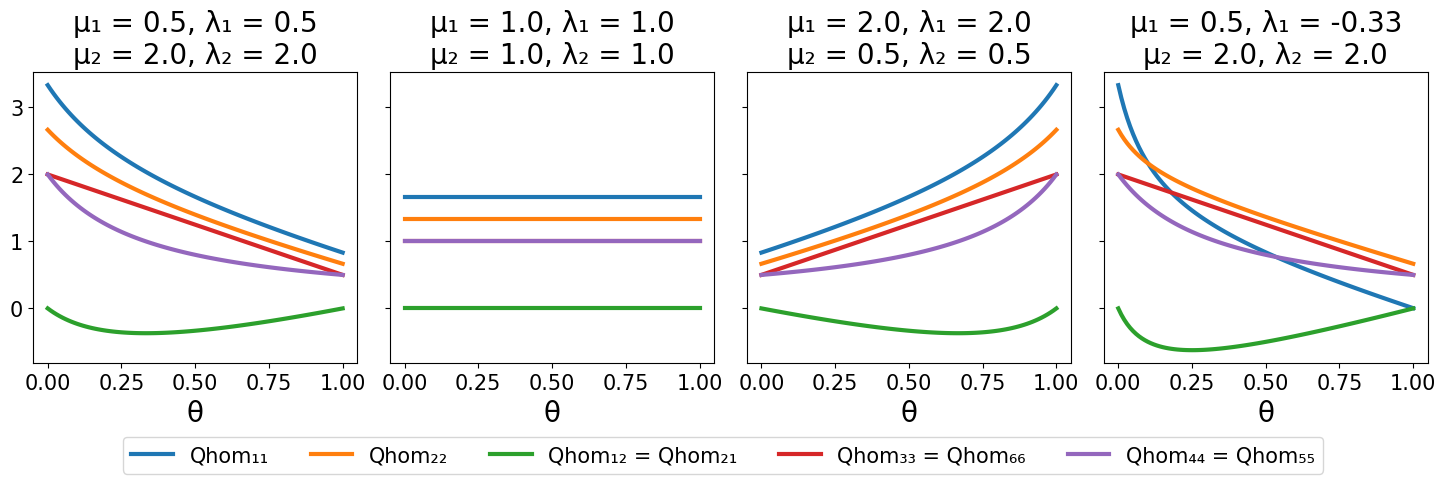}
\caption{The graphs depict the coefficients of $\mathbf{B} \in \R^6$ and $\Qhom \in \R^{6\times 6}$ for an isotropic laminate with $B_1 = -I$, $B_2 = I$ and $A \equiv I$. We display the dependences on the volume fraction $\theta$ and the Lamé constant $\mu_2$. In this situation only the coefficients $\mathbf{B}_1$ and $\mathbf{B}_2$ are non-zero. The blue curve for $\mathbf{B}$ is a measure for the volume expansion, since the trace is the first order term in the expansion of the determinant. The second row of graphs show a non-linear influence of the material law on $B_{\hom}$.}
\label{Fig:ExampleDependenceOnMicrostructure}
\end{figure}

\paragraph{Dependence of $Q^A_{\hom}$ and $B_{\hom}$ on the stress-free joint.}
We are interested in the dependencies which don't come from the mean matrix $\bar{A}$. For this, we consider the following one-parameter-family:
\begin{equation*}
	A_{\beta}(y) := \begin{cases} \operatorname{diag}(\beta, 1, 1) & y_1 \in [0, \tfrac12) \\
	\operatorname{diag}(2 - \beta, 1, 1) & y_1 \in [\tfrac12,1) \end{cases}, \quad 0 < \beta < 2.
\end{equation*}
Thus, $\theta = \frac{1}{2}$, $\bar{A}_\beta \equiv I$, $\hat{\theta} = \frac{\beta}{2}$. We can easily calculate the perturbation $B_{\hom}$ and the homogenized energy $Q^A_{\hom}$ in dependence of $\beta$. The results are displayed in \cref{Fig:ExampleDependenceOnSFJ} shows that the homogenization of the perturbation and the energy cannot be decoupled from the homogenization of the stress-free joint.

\begin{figure}[htp]
\centering
\includegraphics[width=0.72\linewidth]{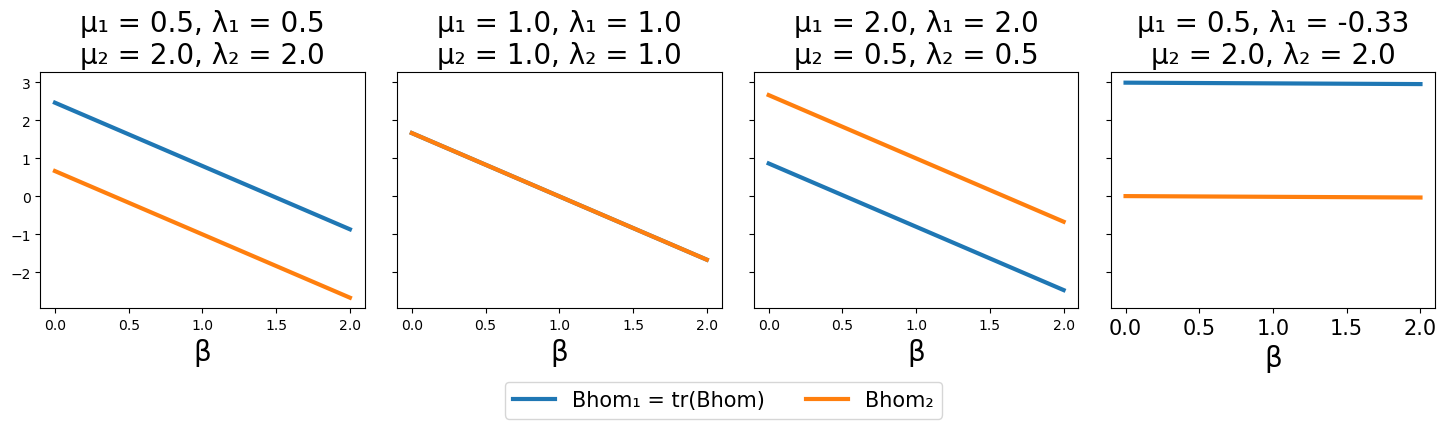}
\includegraphics[width=0.72\linewidth]{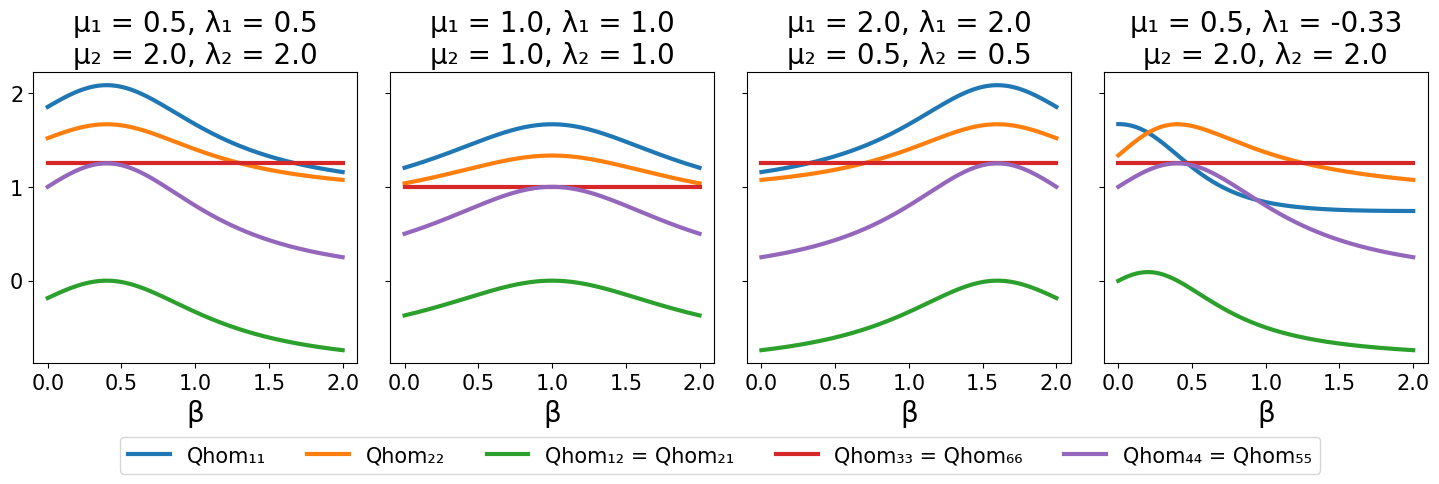}
\caption{The graphs display the dependence of $\mathbf{B} \in \R^6$ and $\Qhom \in \R^{6\times 6}$ on the parameter $\beta$ for the family $A_\beta$ and $B_1 = -I$, $B_2 = I$. We see a non-constant dependence, which especially shows that also the homogenization of the energy and the perturbation cannot be decoupled from the homogenization of the stress-free joint. The dependence of $\mathbf{B}$ on $\beta$ is linear.}
\label{Fig:ExampleDependenceOnSFJ}
\end{figure}

\FloatBarrier

\section{Proofs}

\subsection{Properties of stress-free joints and proof of Proposition~\ref{Prop:SFJBilipschitz}} \label{Sec:ProofSFJ}

In this section, we provide some properties of periodic maps, maps with periodic derivative and stress-free joints, as defined in \cref{Def:StressFreeJoint}, that we require later. Especially, we establish \cref{Prop:SFJBilipschitz}. We start by collecting some basic properties of maps with periodic derivative. Since they are standard statements, we only sketch the proof.

\begin{lemma}\label{Lem:periodic_derivative}
Let $1 \leq p \leq \infty$ and $a \in \SobW^{1,p}_\mathrm{loc}(\R^d, \R^d)$ be continuous with $a(0) = 0$ and $Y$-periodic derivative. Set $a_\eps(x) := \eps a(\frac{x}{\eps})$ and $\bar{A} := \fint_{Y} \dif{a}(y) \,\dx[y]$. Then
\begin{enumerate}[(a)]
	\item $a - \bar{A}\placeholder \in \SobW^{1,p}_\mathrm{per}(Y, \R^d)$, i.e., $a(y+k) = a(y) + \bar{A}k$ for all $k \in \Z^d$ and $y \in \overline{Y}$.
	
	\item $a_\eps \wto \bar{A}\placeholder$ weakly in $\SobW^{1,p}_\mathrm{loc}(\R^d,\R^d)$ (or weakly-$*$ for $p=\infty$). Especially, if $p > d$, then $\sup_{x \in \R^d} \abs{a_\eps(x) - \bar{A}x} \to 0$ as $\eps \to 0$.
	
	\item If $p > d$ and $\det \dif{a} > 0$ a.e.~in $Y$, then $\det \bar{A} = \fint_Y \det\dif{a}(y) \,\dx[y] > 0$.
\end{enumerate}
Suppose additionally, $a: \R^d \to \R^d$ is a homeomorphism and onto, $p > d$ and $\det \dif{a} > 0$ a.e.\ in $Y$. Then
\begin{enumerate}[(a)] \setcounter{enumi}{3}
	\item A measurable map $\varphi: \R^d \to \R$ is $\bar{A}$-periodic, if and only if $\varphi \circ a$ is $Y$-periodic. Moreover, in this case
	\begin{equation}\label{Eq:Integrals_coincide_periodic}
		\int_{a(Y)} \varphi(z) \,\dx[z] = \int_{\bar{A}Y} \varphi(z) \,\dx[z].
	\end{equation}
	
	\item Let $\varphi \in \SobW^{1,1}_\text{per}(\bar{A}Y)$. Then,
	\begin{equation}
		\fint_{a(Y)} \dif{\varphi}(z) \,\dx[z] = \fint_{\bar{A}Y} \dif{\varphi}(z) \,\dx[z] = 0.
	\end{equation}
	
	\item $a^{-1} - \bar{A}^{-1}\placeholder$ is $\bar{A}$-periodic, i.e., $a^{-1}(z + \bar{A}k) = a^{-1}(z) + k$ for all $z \in \R^d$, $k \in \Z^d$.
	
	\item If $a^{-1} \in \SobW^{1,1}_\mathrm{loc}(\R^d,\R^d)$, then $\bar{A}^{-1} = \fint_{a(Y)} \dif{a^{-1}}(z) \,\dx[z] = \fint_{\bar{A}Y} \dif{a^{-1}}(z) \,\dx[z]$.
	
	\item If $a^{-1} \in \SobW^{1,q}_\mathrm{loc}(\R^d,\R^d)$ for some $q > d$, then $\det \bar{A}^{-1} = \fint_{a(Y)} \det\dif{a^{-1}}(z) \,\dx[z]$\\ = $\fint_{\bar{A}Y} \det\dif{a^{-1}}(z) \,\dx[z]$.
\end{enumerate}
\end{lemma}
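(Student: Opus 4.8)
The plan is to reduce everything to the structural identity in (a) and then derive (b)--(h) by elementary manipulations with periodicity, the only genuinely non-elementary ingredient being the weak continuity of the Jacobian determinant, which enters (c) and (h).

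\emph{Part (a).} Set $b:=a-\bar A\placeholder$. By the definition of $\bar A$ its weak derivative $\dif{b}=\dif{a}-\bar A$ is $Y$-periodic with $\fint_Y\dif{b}=0$. For $k\in\Z^d$ the map $g_k:=b(\placeholder+k)-b$ has $\dif{g_k}=\dif{b}(\placeholder+k)-\dif{b}=0$ a.e., hence equals a constant $c_k$ since $\R^d$ is connected. The decomposition $b(\placeholder+k+l)-b=(b(\placeholder+k+l)-b(\placeholder+l))+(b(\placeholder+l)-b)$ yields the cocycle relation $c_{k+l}=c_k+c_l$, so $c_k=Ck$ with $C$ the matrix whose columns are $c_{e_1},\dots,c_{e_d}$. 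Then $b-C\placeholder$ has the same ($Y$-periodic) derivative $\dif{b}$ and is genuinely $Y$-periodic, so the mean of its gradient over $Y$ vanishes; since that mean equals $\fint_Y\dif{b}-C=-C$, we get $C=0$, i.e.\ $b$ is $Y$-periodic. As $a$, hence $b$, is continuous with $b(0)=a(0)=0$, this upgrades to $a(y+k)=a(y)+\bar Ak$ for all $y\in\overline Y$ and $k\in\Z^d$, and $b\in\SobW^{1,p}_{\mathrm{per}}(Y,\R^d)$.

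\emph{Parts (b) and (c).} From $a_\eps(x)=\eps\,b(x/\eps)+\bar Ax$ and the fact that $b$ is continuous and $Y$-periodic, hence bounded, one gets $\sup_{\R^d}\abs{a_\eps-\bar A\placeholder}=\eps\norm{b}_{\Leb^\infty}\to0$ and $\eps\,b(\placeholder/\eps)\to0$ in $\Leb^p_{\mathrm{loc}}$, while $\dif{a_\eps}=\dif{b}(\placeholder/\eps)+\bar A\wto\bar A$ in $\Leb^p_{\mathrm{loc}}$ (weak-$*$ for $p=\infty$) by the Riemann--Lebesgue lemma for periodic maps; this proves (b). For (c) I would combine (b) with the weak continuity of the Jacobian determinant: since $a_\eps\wto\bar A\placeholder$ in $\SobW^{1,p}_{\mathrm{loc}}$ and $p>d$, we have $\det\dif{a}(\placeholder/\eps)=\det\dif{a_\eps}\wto\det\bar A$, while Riemann--Lebesgue also gives $\det\dif{a}(\placeholder/\eps)\wto\fint_Y\det\dif{a}$; comparing limits yields $\det\bar A=\fint_Y\det\dif{a}$, which is $>0$ since $\det\dif{a}>0$ a.e. This is the step I expect to be the only one requiring a non-trivial tool; alternatively the same identity follows from the Piola/null-Lagrangian structure $\det\dif{a}=\tfrac1d\operatorname{div}\bigl((\operatorname{cof}\dif{a})^{T}a\bigr)$, the periodicity of $\operatorname{cof}\dif{a}$, and the analogous statement for $(d-1)$-minors.

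\emph{Parts (d)--(h).} Now add the assumptions that $a$ is a homeomorphism of $\R^d$, $p>d$ and $\det\dif{a}>0$ a.e. Part (f) is obtained by applying $a^{-1}$ to the identity $a(a^{-1}(z)+k)=a(a^{-1}(z))+\bar Ak=z+\bar Ak$ from (a), which gives $a^{-1}(z+\bar Ak)=a^{-1}(z)+k$, i.e.\ $a^{-1}-\bar A^{-1}\placeholder$ is $\bar A$-periodic. For the equivalence in (d) one uses $(\varphi\circ a)(y+k)=\varphi(a(y)+\bar Ak)$ (by (a)) together with surjectivity of $a$ to pass both ways between $\bar A$-periodicity of $\varphi$ and $Y$-periodicity of $\varphi\circ a$. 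For the integral identity: by (a) and bijectivity of $a$, the sets $\set{a(Y)+\bar Ak}_{k\in\Z^d}=\set{a(Y+k)}_k$ partition $\R^d$, so $a(Y)$ is a fundamental domain of the lattice $\bar A\Z^d$, as is $\bar AY$; the integral of an $\bar A$-periodic $\Leb^1$ map over any fundamental domain is independent of the domain, and the area formula together with (c) gives $\abs{a(Y)}=\int_Y\det\dif{a}=\det\bar A=\abs{\bar AY}$. Part (e) is then (d) applied to the $\bar A$-periodic $\Leb^1$ map $\dif{\varphi}$, combined with the vanishing of the mean of a periodic gradient over a period cell. Part (g) follows by writing $a^{-1}=\bar A^{-1}\placeholder+c$ with $c$ $\bar A$-periodic (from (f)) and averaging over $a(Y)$ or $\bar AY$, discarding $\fint\dif{c}=0$ by (e). Finally (h) is (c) applied to the homeomorphism $g:=a^{-1}\circ(\bar A\placeholder)\in\SobW^{1,q}_{\mathrm{loc}}$, which has $Y$-periodic derivative $\dif{g}=\dif{a^{-1}}(\bar A\placeholder)\bar A$, mean $\fint_Y\dif{g}=(\fint_{\bar AY}\dif{a^{-1}})\bar A=\bar A^{-1}\bar A=I$ by (g), and $\det\dif{g}=\det\dif{a^{-1}}(\bar A\placeholder)\det\bar A>0$ a.e.\ (by the chain rule for $a^{-1}\circ a=\id$ and (c)); (c) then gives $1=\det\bigl(\fint_Y\dif{g}\bigr)=\fint_Y\det\dif{g}=\det\bar A\cdot\fint_{\bar AY}\det\dif{a^{-1}}$, so $\fint_{\bar AY}\det\dif{a^{-1}}=\det\bar A^{-1}$, and the value over $a(Y)$ agrees by (d).
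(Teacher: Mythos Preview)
Your proof is correct and follows essentially the same approach as the paper: part (a) via the constant-difference argument and vanishing mean of a periodic gradient, (b)--(c) via weak convergence of rescaled periodic maps plus weak continuity of the Jacobian, and (d) via the tesselation/fundamental-domain argument combined with the area formula. The paper merely sketches (a) and (d) and omits (e)--(h) entirely as ``easy consequences,'' so your treatment of (f)--(h), in particular the reduction of (h) to (c) applied to $g=a^{-1}\circ(\bar A\placeholder)$, is more explicit than what the paper provides but entirely in the same spirit.
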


\begin{proof} We first sketch the proof of (a). Since $\dif{a}$ is $Y$-periodic, for all $k \in \mathbb{Z}^d$, there exists a constant $c_k > 0$, such that
\begin{equation*}
	a(y + k) = a(y) + c_k, \quad y \in \overline{Y}.
\end{equation*}
By setting $y = 0$, we obtain $c_k = a(k)$. Thus, $a: \Z^d \to \R^d$ is linear and can be represented as $a(k) = \tilde{A}k$ for some matrix $\tilde{A} \in \R^{d\times d}$. Note that we can represent $\tilde{A}$ explicitly as $\tilde{A}_{ij} = a_j(e_i)$. We claim $\tilde{A} = \bar{A}$. Indeed, since $a - \tilde{A}\placeholder$ is $Y$-periodic, the mean over $Y$ of its derivative is zero and thus,
\begin{equation*}
	0 = \fint_{Y} \dif{a}(y) - \tilde{A} \,\dx[y] = \bar{A} - \tilde{A}.
\end{equation*}
(b) and (c) are a consequence of (a), the weak convergence of rescaled periodic maps to their mean and the weak continuity of the determinant. Let us sketch (d). Since $a$ is a homeomorphism, we obtain
\begin{alignat*}{2}
	&\varphi \text{ is $\bar{A}$-periodic} \\
	&\qquad\Leftrightarrow\varphi(z + \bar{A}k) = \varphi(z) &&\quad\text{for a.e.\ } z \in \R^d \text{ and all } k \in \Z^d \\
	&\qquad\Leftrightarrow \varphi(a(y + k)) \overset{(a)}{=} \varphi(a(y) + \bar{A}k) = \varphi(a(y)) &&\quad\text{for a.e.\ } y \in \R^d \text{ and all } k \in \Z^d \\
	&\qquad\Leftrightarrow \varphi \circ a \text{ is $Y$-periodic}.
\end{alignat*}
Moreover, since $(a(Y) + \bar{A}k)_{k \in \Z^d} = (a(Y + k))_{k \in \Z^d}$ generates a tesselation of $\R^d$ and $\varphi(z + \bar{A}k) = \varphi(z)$ for a.e.\ $z \in a(Y)$ and all $k \in \Z^d$, it is not hard to show that
\begin{equation*}
	\fint_{\bar{A}Y} \varphi = \lim_{n \to \infty} \fint_{B(0,n)} \varphi = \fint_{a(Y)} \varphi.
\end{equation*}
This implies the claim, since by (c) and a change of variables (cf.\ \cite[Thm. 3.8]{EG15} and \cite[Thm. B.3.10]{KR19}), we observe
\begin{equation*}
	\abs{a(Y)} = \int_{a(Y)} 1 = \int_Y \det \dif{a}(y) \,\dx[y] = \det \bar{A} = \abs{\bar{A}Y}.
\end{equation*}
We omit the proof of (e) -- (h), since they are easy consequences of (a) -- (d).
\end{proof}

We proceed by proving \cref{Prop:SFJBilipschitz}. In fact, we prove a slightly stronger statement, which emphasizes the structures we use for our reasoning.

\begin{proposition}[Injectivity] \label{Prop:invers_is_homeomorphism}
Let $p > d$ and $a \in \SobW^{1,p}_{\mathrm{loc}}(\R^d, \R^d)$ be a continuous function with $Y$-periodic derivative and $\det \dif{a}(y) > 0$ for a.e.\ $y \in Y$. Then $a$ is injective a.e., in the sense that for a.e.\ $z \in \R^d$ the preimage $a^{-1}\set{z}$ consists of at most one point. Moreover, assume $a$ has bounded distortion, that is, there exists $K > 0$, such that
\begin{equation} \label{Eq:bounded_distortion}
	\abs{\dif{a}(y)}^d \leq K \det \dif{a}(y), \quad \text{for a.e.\ } y \in \R^d.
\end{equation}
Then, $a$ is in fact a homeomorphism, onto and $a^{-1} \in \SobW^{1,1}_\mathrm{loc}(\R^d,\R^d)$ with $\dif{a^{-1}}(z) = \dif{a}(a^{-1}(z))^{-1}$ for a.e.\ $z \in \R^d$. If $a$ additionally satisfies
\begin{equation} \label{Eq:regularity_inverse}
	\abs{\dif{a}(\placeholder)^{-1}}^p \det\dif{a} \in \Leb^1(Y) \quad (\text{resp.\ } \abs{\dif{a}(\placeholder)^{-1}} \in \Leb^\infty(\R^d) \text{ for } p = \infty),
\end{equation}
then, $a^{-1} \in \SobW^{1,p}_\mathrm{loc}(\R^d,\R^d)$.
\end{proposition}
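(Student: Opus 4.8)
\emph{Setup and a.e.\ injectivity.} The plan is to first extract the structure forced by periodicity, then to establish injectivity up to a null set by combining Brouwer's fixed point theorem with the transformation rule. By \cref{Lem:periodic_derivative}(a),(c) the matrix $\bar A:=\fint_Y\dif a(y)\,\dx[y]$ is invertible with $\det\bar A=\fint_Y\det\dif a(y)\,\dx[y]>0$, and $a(x)=\bar Ax+\psi(x)$ with $\psi$ continuous and $Y$-periodic, so $M_0:=\sup_{\R^d}\abs{\psi}<\infty$; in particular $\abs{a(x)}\to\infty$ as $\abs x\to\infty$, hence $a$ is proper. Since $p>d$, the continuous map $a$ satisfies Lusin's condition (N), so the transformation rule (\cite[Thm.~3.8]{EG15}, \cite[Thm.~B.3.10]{KR19}) yields
\begin{equation*}
  \int_E g(a(y))\det\dif a(y)\,\dx[y]=\int_{\R^d}g(z)\,N(a,E,z)\,\dx[z],\qquad N(a,E,z):=\#\set{y\in E:a(y)=z},
\end{equation*}
for all measurable $E\subset\R^d$ and $g\ge0$. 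Moreover $a$ is onto: for $w\in\R^d$ the continuous map $x\mapsto\bar A^{-1}(w-\psi(x))$ sends the closed ball $\overline{B\big(\bar A^{-1}w,\norm{\bar A^{-1}}M_0\big)}$ into itself, hence has a fixed point $x$ by Brouwer's theorem, and then $\bar Ax+\psi(x)=w$, i.e.\ $a(x)=w$. Finally, $a(x+k)=a(x)+\bar Ak$ implies that $N(z):=N(a,\R^d,z)$ is $\bar A\Z^d$-periodic, and the transformation rule with $E=Y$, $g\equiv1$, together with $\fint_Y\det\dif a=\det\bar A$, gives
\begin{equation*}
  \int_{\bar AY}N(z)\,\dx[z]=\int_{\R^d}N(a,Y,z)\,\dx[z]=\int_Y\det\dif a(y)\,\dx[y]=\det\bar A=\abs{\bar AY}.
\end{equation*}
Since $a$ is onto, $N\ge1$ everywhere; being integer-valued with mean $1$ on a period cell, $N\equiv1$ a.e., which is the asserted a.e.\ injectivity.

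\emph{Upgrade to a homeomorphism.} Now assume \eqref{Eq:bounded_distortion}. Then $a$ is a non-constant mapping of bounded distortion, hence open by Reshetnyak's theorem (for $d=1$ the inequality forces $\dif a\ge0$, so $a$ is monotone, and $\det\dif a>0$ a.e.\ makes it strictly increasing). An open map that is injective outside a null set is genuinely injective: if $a(y_1)=a(y_2)$ with $y_1\ne y_2$, then disjoint open neighbourhoods $U_i\ni y_i$ would have $a(U_1)\cap a(U_2)$ open and nonempty, forcing $N\ge2$ on a set of positive measure. Thus $a$ is a continuous, injective, open map, i.e.\ a homeomorphism onto its image, which equals $\R^d$ by surjectivity. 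By the transformation rule and injectivity, $\abs{a(E)}=\int_E\det\dif a$ for measurable $E$, so $\det\dif a>0$ a.e.\ gives condition $(N^{-1})$; by the classical regularity theory for homeomorphisms of bounded distortion, $a^{-1}\in\SobW^{1,d}_\mathrm{loc}(\R^d,\R^d)\subset\SobW^{1,1}_\mathrm{loc}(\R^d,\R^d)$, and comparing the (approximate) differentials of $a$ and $a^{-1}$ on the co-null set where both exist and are invertible gives $\dif{a^{-1}}(z)=\dif a(a^{-1}(z))^{-1}$ for a.e.\ $z$.

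\emph{Sobolev regularity of the inverse.} Under the extra hypothesis \eqref{Eq:regularity_inverse}, the change of variables $z=a(y)$ combined with the formula for $\dif{a^{-1}}$ gives
\begin{equation*}
  \int_{a(Y)}\abs{\dif{a^{-1}}(z)}^p\,\dx[z]=\int_Y\abs{\dif a(y)^{-1}}^p\det\dif a(y)\,\dx[y]<\infty
\end{equation*}
(resp.\ $\norm{\dif{a^{-1}}}_{\Leb^\infty(\R^d)}=\norm{\dif a(\placeholder)^{-1}}_{\Leb^\infty(\R^d)}<\infty$ when $p=\infty$). Since $N\equiv1$ a.e., the translates $a(Y)+\bar Ak$ ($k\in\Z^d$) are a.e.\ pairwise disjoint and cover $\R^d$, and $\dif{a^{-1}}$ is $\bar A\Z^d$-periodic by \cref{Lem:periodic_derivative}(f); hence $\dif{a^{-1}}\in\Leb^p_\mathrm{loc}(\R^d,\R^{d\times d})$, and together with $a^{-1}\in\SobW^{1,1}_\mathrm{loc}$ this yields $a^{-1}\in\SobW^{1,p}_\mathrm{loc}(\R^d,\R^d)$.

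\emph{Main obstacle.} As stressed in the introduction, periodicity of $\dif a$ together with positivity of the Jacobian does not entail pointwise injectivity by elementary means: the transformation rule and Brouwer's theorem only deliver injectivity up to a null set, via the interplay of surjectivity with the multiplicity count $N\equiv1$. The genuinely delicate step is the passage to a homeomorphism, which rests on the openness of mappings of bounded distortion (Reshetnyak) and on the Sobolev regularity of their inverses; throughout one must be careful to argue with the continuous representative and to verify conditions (N) and $(N^{-1})$ so that the change-of-variables identities used above are legitimate.
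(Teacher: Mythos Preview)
Your proof is correct and takes a genuinely different route for the a.e.\ injectivity step. The paper argues by contradiction: assuming the non-injectivity set $O=\{y:\exists z\neq y,\ a(y)=a(z)\}$ has positive measure, it exploits the periodicity of $O$ to show that the rescaled maps $a_\eps$ retain a uniform positive mass of non-injectivity on $Y$, contradicting the area-formula computation $\abs{a_\eps(O[a_\eps,Y])}\le\int_Y\det\dif a_\eps-\abs{a_\eps(Y)}\to0$. You instead first prove surjectivity directly from the decomposition $a=\bar A\,\cdot+\psi$ via Brouwer's fixed-point theorem, and then use the area formula once to compute $\int_{\bar AY}N(z)\,\dx[z]=\abs{\bar AY}$, forcing the integer-valued multiplicity $N\ge1$ to equal $1$ a.e. This is shorter and more direct; it also yields surjectivity without any distortion assumption, whereas the paper obtains surjectivity only in Step~4 after invoking openness of bounded-distortion maps. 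The one step you leave implicit is the identity $\int_{\bar AY}N(a,\R^d,z)\,\dx[z]=\int_{\R^d}N(a,Y,z)\,\dx[z]$, which follows from $N(a,Y+k,z)=N(a,Y,z-\bar Ak)$ and the fact that $\{\bar AY+\bar Ak\}_{k\in\Z^d}$ tiles $\R^d$; this is straightforward but worth spelling out.

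For the upgrade to a homeomorphism and the regularity of the inverse, your argument parallels the paper's (openness of bounded-distortion maps plus a.e.\ injectivity forces true injectivity; the inverse then inherits Sobolev regularity). The paper cites \cite[Thm.~5.2]{HK14} and \cite[Thm.~3.1]{FG95} for $a^{-1}\in\SobW^{1,1}_{\rm loc}$ and the chain-rule formula; your appeal to the regularity theory of quasiconformal inverses gives the stronger $\SobW^{1,d}_{\rm loc}$, which is fine once injectivity is known.
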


\begin{proof}
\stepemph{Step 1 -- Idea of the proof}: Let $a_\eps(x) := \eps a(\tfrac{x}{\eps})$. Without loss of generality, we assume $a(0) = 0$. Since $p > d$ and $a$ is continuous, we have for all open, bounded sets $U \subset \R^d$ the area formula (cf.\ \cite[Thm. 3.8]{EG15} and \cite[Thm. B.3.10]{KR19}),
\begin{equation}
	\int_U \det \dif{a}(x) \,\dx = \int_{a(U)} \mathcal{H}^0(U \cap a^{-1}\set{z}) \,\dx[z].
\end{equation}
Note that $\mathcal{H}^0(U \cap a^{-1}\set{z})$ is a measure of the non-injectivity of $a|_U$. Define the level set
\begin{equation*}
	O[a,U] := \class{x \in U}{\mathcal{H}^0(U \cap a^{-1}\set{a(x)}) \geq 2} = \class{x \in U}{\exists y \in U, x \neq y:\; a(x) = a(y)}.
\end{equation*}
Then $a(U\setminus O[a,U])$ and $a(O[a,U])$ are disjoint and
\begin{equation*}
	\int_U \det \dif{a}(x) \,\dx \geq \int_{a(U\setminus O[a,U])} 1 \,\dx[z] + \int_{a(O[a,U])} 2 \,\dx[z] = \abs{a(U)} + \abs{a(O[a,U])}.
\end{equation*}
Applying this to $a_\eps$, $a_\eps \to \bar{A}\placeholder$ uniformly and $\det \dif{a_\eps} \wto \det \bar{A}$ by \cref{Lem:periodic_derivative} imply that necessarily
\begin{equation} \label{Eq:ProofSFJ:contradiction}
	\abs{a_\eps(O[a_\eps,Y])} \leq \int_Y \det \dif{a_\eps}(x) \,\dx - \abs{a_\eps(Y)} \xrightarrow{\eps \to 0} \det \bar{A} - \abs{\bar{A}Y} = 0.
\end{equation}
Now, the idea of this proof is to show, that if $a$ is not injective (a.e.), then the periodicity of $\dif{a}$ implies that the points, where $a$ is not injective, are periodically distributed over $\R^d$ and thus the mass of non-injectivity points of the rescaled functions $a_\eps$ does not vanish in $Y$, i.e., $\abs{a_\eps(O[a_\eps, Y])} > \delta$ for some $\delta > 0$ independent of $\eps$ which is a contradiction to \eqref{Eq:ProofSFJ:contradiction}.
\medskip

\stepemph{Step 2 -- Injectivity a.e.}: Suppose $a$ is not injective a.e. Then,
\begin{equation*}
	O := O[a,\R^d] = \class{x \in \R^d}{\exists z \in \R^d, x \neq z:\; a(x) = a(z)}
\end{equation*}
satisfies $\abs{O} > 0$. This set is periodic, i.e.\ $O = k + O$ for any $k \in \Z^d$. Indeed, if $x \neq z$ satisfy $a(x) = a(z)$, then $a(k+x) = a(x) + \bar{A}k = a(z) + \bar{A}k = a(k+z)$. Thus, also $\abs{O \cap Y} > 0$, since otherwise
\begin{equation*}
	\abs{O} = \sum_{k \in \Z^d} \abs{O \cap (k + Y)} = \sum_{k \in \Z^d} \abs{(k + O) \cap (k + Y)} = \sum_{k \in \Z^d}\abs{k + (O \cap Y)} = \sum_{k \in \Z^d}\abs{(O \cap Y)} = 0.
\end{equation*}
Since $a_\eps$ is obtained from $a$ by rescaling, the set $\eps O = \bigcup_{k \in \Z^d} \eps (k + O \cap Y)$ consists of all points, where $a_\eps$ is not injective. Our goal is to find a suitable subset $O^* \subset O \cap Y$ with positive measure and a sufficiently large collection $K^*_\eps \subset \Z^d$, such that
\begin{equation} \label{Eq:ProofSFJ:K}
	\bigcup_{k \in K^*_\eps} \eps(k + O^*) \overset{!}{\subset} O[a_\eps, Y]
\end{equation}
yields a contradiction to \eqref{Eq:ProofSFJ:contradiction}.
Let $O_n := \class{y \in Y}{\exists z \in [-n,n)^d, z \neq y:\; a(z)=a(y)}$, $n \in \N$. Since $O_n \uparrow (O\cap Y)$, there exists $n_0 \in \N$, such that $\abs{O_{n_0}} > 0$. We set $O^* := O_{n_0}$. Lusin's condition $\mathcal{N}^{-1}$ (cf.\ \cite[Thm.~B3.13]{KR19}) implies that also $\abs{a(O^*)} > 0$. Note that this is a critical property for this proof; it is satisfied, since $\det \dif{a} > 0$ a.e.\ in $\R^d$ and $a \in \SobW^{1,p}(Y, \R^d)$ with $p > d$. Since $a$ is continuous and $O^*$ precompact, $a(O^*)$ is bounded. Thus, there exists $l_0 \in \N$, such that the sets $a(O^*) + l_0\bar{A}k$, $k \in \Z^d$ are pair-wise disjoint. We set
\begin{equation*}
	K^*_\eps := \class{k \in l_0\Z^d}{ \eps(k + [-n_0, n_0)^d) \subset Y},
\end{equation*}
which is non-empty for $\eps \ll 1$. We observe that then the sets $a_\eps(\eps(k + O^*)) = \eps (a(O^*) + \bar{A}k)$, $k \in K^*_\eps$ are pair-wise disjoint as well. We claim that with these definitions we obtain the desired contradiction to \eqref{Eq:ProofSFJ:contradiction}. We show that \eqref{Eq:ProofSFJ:K} holds. Let $y \in \eps(k + O^*)$ for some $k \in K^*_\eps$. Then, by definition of $K^*_\eps$, we have $y \in Y$. Moreover, from the definition of $O^*$ and the rescaling $a_\eps = \eps a(\tfrac{\placeholder}{\eps})$, we obtain some $z \in \eps(k + [-n_0, n_0)^d)$, $y \neq z$ with $a_\eps(y) = a_\eps(z)$. Since $z \in Y$ by definition of $K^*_\eps$, we find $y \in O[a_\eps, Y]$. We now show that $K_\eps^*$ is large enough and infer the contradiction. We can count the elements in $K^*_\eps$ and find exactly $\lfloor\frac{\eps^{-1} - 2 n_0}{l_0}\rfloor^d$ many. Hence, for $\eps \leq 2^{-1}(2 n_0 + l_0)^{-1}$, our construction yields
\begin{equation*}
	\abs{a_\eps(O[a_\eps,Y])} \geq \sum_{k \in K^*_\eps} \abs{\eps (a(O^*) + \bar{A}k)} = \eps^d \left\lfloor\frac{\eps^{-1} - 2 n_0}{l_0}\right\rfloor^d \abs{a(O^*)} \geq (2l_0)^{-d} \abs{a(O^*)},
\end{equation*}
a contradiction to \eqref{Eq:ProofSFJ:contradiction}. Hence, $a$ must be injective a.e.
\medskip

\stepemph{Step 3 -- Sobolev homeomorphism}: For the rest of the proof, we assume that $a$ has bounded distortion. Then $a$ is a strongly open map, see \cite[Thm.~I.4.1]{Ric93} and \cite[Thm.~3-18]{HK14}. We claim that any strongly open map that is injective a.e.\ is injective everywhere. Indeed, suppose there exist $x,y \in \R^d$, $x \neq y$, such that $a(x) = a(y)$. Let $\delta > 0$, such that $B(x,\delta)\cap B(y,\delta) = \emptyset$. Then
\begin{equation*}
	O := a(B(x,\delta)) \cap a(B(y,\delta)) \ni a(x)
\end{equation*}
is open, since $a$ is an open map, and not empty. Hence $\abs{O} > 0$. But the preimage of each point in $O$ contains a point in $B(x,\delta)$ and one in $B(y,\delta)$. This is a contradiction to injectivity a.e. Moreover, since $a$ maps open sets to open sets, preimages of open sets of $a^{-1}$ are open. Hence, the inverse $a^{-1}$ is continuous. Since $a$ has bounded distortion, \cite[Thm.~5.2]{HK14} shows $a^{-1} \in \SobW^{1,1}_\mathrm{loc}(a(\R^d), \R^d)$ and \cite[Thm.~3.1]{FG95} shows $\dif{a^{-1}}(z) = \dif{a}(a^{-1}(z))^{-1}$ for a.e.\ $z \in a(\R^d)$.
\medskip

\stepemph{Step 4 -- Surjectivity}: Since $a(\R^d)$ is non-empty and open, it suffices to show that $a(\R^d)$ is closed in $\R^d$ to conclude $a(\R^d) = \R^d$. Since $a$ is continuous $a(\overline{Y})$ is compact. Moreover, in view of \cref{Lem:periodic_derivative} we have
\begin{equation*}
	a(\R^d) = \bigcup_{k \in \Z^d} a(k+ \overline{Y}) = \bigcup_{k \in \Z^d} \bar{A}k + a(\overline{Y}).
\end{equation*}
Let $(z_n) \subset a(\R^d)$ with $z_n \to z \in \R^d$. By boundedness of $(z_n)$ and since $\det \bar{A} > 0$, there exist finitely many $k_1, \dots, k_m \in \Z^d$, such that $(z_n) \subset \bigcup_{i=1}^m (\bar{A}k_i + a(\overline{Y}))$. But since this set is closed, $z \in \bigcup_{i=1}^m (\bar{A}k_i + a(\overline{Y})) \subset a(\R^d)$. Hence, $a(\R^d)$ is closed.
\medskip

\stepemph{Step 5 -- Regularity of the inverse}: If $p = \infty$, then the formula $\dif{a^{-1}}(z) = \dif{a}(a^{-1}(z))^{-1}$ for a.e.\ $z \in \R^d$ and \eqref{Eq:regularity_inverse} imply $a^{-1} \in \SobW^{1,\infty}_\mathrm{loc}(\R^d,\R^d)$. For $p < \infty$, we additionally use the transformation rule, see \cite[Thm.~1]{Bal81}, to show
\begin{equation*}
	\int_{a(Y)} \abs{\dif{a^{-1}}(z)}^p \,\dx[z] = \int_Y \abs{\dif{a}(y)^{-1}}^p \det \dif{a}(y) \,\dx[y] < \infty.
\end{equation*}
Hence, $a^{-1} \in \SobW^{1,p}(a(Y),\R^d)$ and by periodicity $a^{-1} \in \SobW^{1,p}_\mathrm{loc}(\R^d,\R^d)$.
\end{proof}

The previous proposition implies that periodic stress-free joints are already Bilipschitz. Even non-periodic stress-free joints are by definition at least piece-wise Bilipschitz. This encourages us to study Bilipschitz maps in the last part of this section. We use the following lemmas later to show uniformity of some estimates w.r.t.\ transformation of the domain by Bilipschitz maps. We denote the set of \emph{Bilipschitz maps} on $U \subset \R^d$ with \emph{Bilipschitz constant} less than or equal to $1 \leq L < \infty$ by
\begin{equation}
	\operatorname{Bil}_L(U, \R^d) := \class{a:U \to \R^d}{\tfrac{1}{L} \abs{x - y} \leq \abs{a(x) - a(y)} \leq L \abs{x-y}, \text{ for all } x,y \in U}.
\end{equation}

\begin{lemma} \label{Lem:Bil_compact_in_W1infty}
Let $U \subset \R^d$ be a Lipschitz domain and $L \in [1,\infty)$. The set $\operatorname{Bil}_L(U, \R^d)$ is sequentially closed w.r.t.\ the weak-$*$ topology in $\SobW^{1,\infty}(U, \R^d)$.
\end{lemma}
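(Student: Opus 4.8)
The plan is to exploit that the two-sided bilipschitz bounds are stable under pointwise limits, and to upgrade weak-$*$ convergence in $\SobW^{1,\infty}$ to (subsequential) uniform convergence on $\overline{U}$ by a compactness argument. First I would fix a sequence $(a_n) \subset \operatorname{Bil}_L(U,\R^d)$ with $a_n \to a$ weakly-$*$ in $\SobW^{1,\infty}(U,\R^d)$. In particular $a_n \overset{*}{\rightharpoonup} a$ in $\Leb^\infty(U,\R^d)$, so the limit $a$ is unique up to a null set, and $(a_n)$ is bounded in $\SobW^{1,\infty}$, whence $M := \sup_n \norm{a_n}_{\Leb^\infty(U)} < \infty$. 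The defining inequality of $\operatorname{Bil}_L(U,\R^d)$ already provides the equi-Lipschitz bound $\abs{a_n(x) - a_n(y)} \leq L\abs{x-y}$ for all $x,y \in U$, so each $a_n$ extends continuously to $\overline{U}$ with the same Lipschitz constant, and since $U$ is bounded the extended family is uniformly bounded on the compact set $\overline{U}$ (by $M + L\operatorname{diam}(U)$, say).

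Next I would apply the Arzelà–Ascoli theorem: the family $(a_n)$ is equicontinuous (being equi-Lipschitz) and uniformly bounded on the compact set $\overline{U}$, hence some subsequence $(a_{n_k})$ converges uniformly on $\overline{U}$ to a function $b \in \Cont(\overline{U},\R^d)$. Uniform convergence implies weak-$*$ convergence in $\Leb^\infty(U,\R^d)$, and by uniqueness of the weak-$*$ limit we get $b = a$ a.e.\ in $U$; thus $a$ admits the continuous representative $b$, which I henceforth identify with $a$. Passing the two-sided bound $\tfrac{1}{L}\abs{x-y} \leq \abs{a_{n_k}(x) - a_{n_k}(y)} \leq L\abs{x-y}$ (valid for all $x,y \in U$ and all $k$) to the limit $k \to \infty$, using the pointwise convergence $a_{n_k} \to a$, yields $\tfrac{1}{L}\abs{x-y} \leq \abs{a(x) - a(y)} \leq L\abs{x-y}$ for all $x,y \in U$, i.e.\ $a \in \operatorname{Bil}_L(U,\R^d)$, as claimed.

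The only point that requires a little care — and the closest thing to an obstacle — is the passage from weak-$*$ convergence to subsequential uniform convergence. It relies on $\overline{U}$ being compact (this is where boundedness of $U$ enters) and on the family being equicontinuous; here it is essential that the bilipschitz constant $L$ is held fixed along the sequence, which makes the equicontinuity immediate and bypasses any appeal to the embedding $\SobW^{1,\infty}(U) \hookrightarrow \Cont^{0,1}(\overline{U})$. Consequently the full strength of the Lipschitz-domain hypothesis is not really used — only connectedness (so that the Lipschitz constant is meaningful globally) and boundedness — and the statement could be relaxed accordingly. I would also note that, since every subsequence of $(a_n)$ has a further subsequence converging uniformly on $\overline{U}$ to the same limit $a$, one in fact obtains $a_n \to a$ uniformly on $\overline{U}$; however, the single extracted subsequence already suffices to conclude the lemma.
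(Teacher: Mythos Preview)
Your proof is correct and in fact more direct than the paper's. Both arguments first upgrade weak-$*$ convergence to (subsequential) uniform convergence via Arzel\`a--Ascoli. From there, however, the paper establishes the lower bilipschitz bound by constructing the inverse: it extends the maps $a_k^{-1}$ to Lipschitz maps on a common ball, extracts a uniformly convergent subsequence $a_k^{-1}\to\hat a$, and then verifies $\hat a\circ a=\id$ and $a\circ\hat a=\id$ by an $\eps$-argument. You instead pass the two-sided inequality $\tfrac{1}{L}\abs{x-y}\leq\abs{a_{n_k}(x)-a_{n_k}(y)}\leq L\abs{x-y}$ directly to the pointwise limit, which is shorter and avoids working on the image side altogether. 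Your remark that the Lipschitz-domain hypothesis is not genuinely needed is also apt: the paper tacitly uses it to deduce uniform convergence from weak-$*$ compactness in $\SobW^{1,\infty}$, whereas your argument draws the equicontinuity from the fixed bilipschitz constant $L$ itself and therefore only needs $U$ to be open, bounded and connected.
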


\begin{proof}
Let $(a_k) \subset \operatorname{Bil}_L(U, \R^d)$ converging weakly-$*$ to some $a \in \SobW^{1,\infty}(U,\R^d)$. We have to show $a \in \operatorname{Bil}_L(U, \R^d)$. Lower semi-continuity of the norm implies that
\begin{equation*}
	\norm{\dif{a}}_{\Leb^\infty(U)} \leq \liminf_{k \to \infty} \norm{\dif{a_k}}_{\Leb^\infty(U)} \leq L.
\end{equation*}
Compactness implies that $(a_k)$ also uniformly converges to $a$. Moreover, we find an open ball $B \subset \R^d$, such that $\bigcup_{k \in \N} a_k(U) \subset B$. In view of \cite[Section 3.1.1]{EG15} we can extend the inverses $a_k^{-1}$ to Lipschitz maps on $\R^d$ with Lipschitz constants smaller than or equal $L$. Since the sequence of extended inverses $(a_k^{-1})$ is bounded in $\SobW^{1,\infty}(B, \R^d)$, there exists a subsequence (not relabeled), such that $(a_k^{-1})$ weakly-$*$ (and thus especially uniformly) converges to some $\hat{a} \in \SobW^{1,\infty}(B, \R^d)$ with $\norm{\dif{\hat{a}}}_{\Leb^\infty(B)} \leq L$. To conclude the proof we need to show that $\hat{a}$ is the inverse of $a$ on $U$. Indeed, for all $k \in \N$ and $x \in U$, we have
\begin{gather*}
	\abs{\hat{a}(a(x)) - x} = \abs{\hat{a}(a(x)) - a_k^{-1}(a_k(x))} \leq \abs{\hat{a}(a(x)) - \hat{a}(a_k(x))} + \abs{\hat{a}(a_k(x)) - a_k^{-1}(a_k(x))} \\
	\leq \abs{\hat{a}(a(x)) - \hat{a}(a_k(x))} + \norm{\hat{a} - a_k^{-1}}_{\infty} \to 0.
\end{gather*}
Hence $\hat{a} \circ a = \operatorname{id}$ on $U$. Especially, for $z \in a(U)$, we have $\hat{a}(z) \in U$ and thus $a_k^{-1}(z) \in U$ for sufficiently large $k \in \N$. Thus, also
\begin{gather*}
	\abs{a(\hat{a}(z)) - z} = \abs{a(\hat{a}(z)) - a_k(a_k^{-1}(z))} \leq \abs{a(\hat{a}(z)) - a(a_k^{-1}(z))} + \abs{a(a_k^{-1}(z)) - a_k(a_k^{-1}(z))} \\
	\leq \abs{a(\hat{a}(z)) - a(a_k^{-1}(z))} + \norm{a - a_k}_{\infty} \to 0.
\end{gather*}
We conclude, $\hat{a} = a^{-1}$ in $a(U)$ and $a \in \operatorname{Bil}_L(U, \R^d)$.
\end{proof}

\begin{lemma}[cf.\ {\cite[Lemma 3.3]{DNP02}}] \label{Lem:MatrixNorm}
Let $1 \leq p < \infty$, $L \in [1,\infty)$, $U \subset \R^d$ be a Lipschitz domain, $\mathcal{U} \subset \operatorname{Bil}_L(U, \R^d)$ weakly-$*$ closed w.r.t.\ $\SobW^{1,\infty}(U, \R^d)$ and $S \subset U$ be a bounded set with $0 < \mathcal{H}^m(S) < \infty$ for some $1 \leq m \leq d$. Let $S_0$ be the set of all points $x \in S$ with $\mathcal{H}^m(S \cap B(x,\delta)) > 0$ for all $\delta > 0$. Let $K \subset \R^{d\times d}$ be a closed cone, such that for all $F \in K \setminus \set{0}$ and $a \in \mathcal{U}$
\begin{equation*}
	\dim (\ker F) < \dim (\operatorname{aff} a(S_0)),
\end{equation*}
where $\operatorname{aff} a(S_0) \subset \R^d$ denotes the smallest affine space containing $a(S_0)$. Define
\begin{equation}
	\abs{F}_{S,a,p} := \left(\min_{\xi \in \R^d} \int_{S} \abs{F a(x) - \xi}^p \,\dx[\mathcal{H}^{m}(x)]\right)^{1/p}.
\end{equation}
There exists a constant $C > 0$, such that for all $F \in K$ and $a \in \mathcal{U}$
\begin{equation}
	\abs{F} \leq C \abs{F}_{S, a, p}.
\end{equation}
\end{lemma}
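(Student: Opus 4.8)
The plan is to argue by contradiction using a compactness/normalisation argument in the spirit of \cite[Lemma 3.3]{DNP02}, with \cref{Lem:Bil_compact_in_W1infty} supplying the compactness that is missing because $\SobW^{1,\infty}$ is not reflexive.

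\emph{Reduction.} Since $F\mapsto\abs{F}$ and $F\mapsto\abs{F}_{S,a,p}$ are homogeneous of degree one and $K$ is a cone, it suffices to show
\[
  c_0:=\inf\bigl\{\abs{F}_{S,a,p}\;:\;F\in K,\ \abs{F}=1,\ a\in\mathcal{U}\bigr\}>0,
\]
and then take $C:=1/c_0$. Because $Fa(x)-\xi=F\bigl(a(x)-a(x_0)\bigr)-\bigl(\xi-Fa(x_0)\bigr)$, replacing $a$ by $a-a(x_0)$ for a fixed $x_0\in U$ changes neither $\abs{F}_{S,a,p}$ nor $\dim\operatorname{aff}a(S_0)$; hence I may assume $\mathcal{U}$ is bounded in $\SobW^{1,\infty}(U,\R^d)$ (as $\norm{\dif a}_{\Leb^\infty(U)}\le L$ and, after the normalisation, $\norm{a}_{\Leb^\infty(U)}\le L\operatorname{diam}(U)$), so that $\mathcal{U}$ is weakly-$*$ compact and its weak-$*$ limits lie in $\mathcal{U}$. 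Two elementary facts: the convex coercive function $\xi\mapsto\int_S\abs{Fa(x)-\xi}^p\,\dx[\mathcal{H}^{m}(x)]$ attains its minimum at some $\xi=\xi_{F,a}$; and $S_0\neq\emptyset$, since otherwise every $x\in S$ would have a ball $B(x,\delta_x)$ with $\mathcal{H}^{m}(S\cap B(x,\delta_x))=0$ and a countable subcover (Lindelöf) would give $\mathcal{H}^{m}(S)=0$.

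\emph{Compactness.} Suppose $c_0=0$ and choose $F_n\in K$ with $\abs{F_n}=1$, $a_n\in\mathcal{U}$, and $\xi_n:=\xi_{F_n,a_n}$ with $\int_S\abs{F_na_n(x)-\xi_n}^p\,\dx[\mathcal{H}^{m}(x)]=\abs{F_n}_{S,a_n,p}^p\to0$. Passing to subsequences: $F_n\to F$ with $\abs{F}=1$ and $F\in K$ (compactness of the unit sphere in $\R^{d\times d}$, closedness of $K$); $a_n$ converges weakly-$*$, hence uniformly on $\overline{U}$, to some $a\in\mathcal{U}$, and $a\in\operatorname{Bil}_L(U,\R^d)$ by \cref{Lem:Bil_compact_in_W1infty}; finally $\xi_n\to\xi$, because the $\xi_n$ are bounded (for $x\in S$, $\abs{\xi_n}\le\abs{F_na_n(x)-\xi_n}+\abs{F_n}\abs{a_n(x)}\le\abs{F_na_n(x)-\xi_n}+L\operatorname{diam}(U)$, and integrating over $S$ with Hölder and $\mathcal{H}^{m}(S)<\infty$ controls $\abs{\xi_n}$). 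Then $F_na_n\to Fa$ uniformly on $S$, so $F_na_n-\xi_n\to Fa-\xi$ uniformly on $S$, and since this sequence tends to $0$ in $\Leb^p(S,\mathcal{H}^{m})$ with $\mathcal{H}^{m}(S)<\infty$ we get $Fa(x)=\xi$ for $\mathcal{H}^{m}$-a.e.\ $x\in S$. As $Fa$ is continuous, the identity extends to all of $S_0$: if $Fa(x_*)\neq\xi$ for some $x_*\in S_0$, then $Fa\neq\xi$ on a ball $B(x_*,\delta)$, contradicting $\mathcal{H}^{m}(S\cap B(x_*,\delta))>0$ and $Fa=\xi$ a.e.\ on $S$. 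Fixing $y_0\in S_0$, this gives $F\bigl(a(x)-a(y_0)\bigr)=0$ for all $x\in S_0$, so $\operatorname{aff}a(S_0)-a(y_0)$ is a linear subspace of $\ker F$ and $\dim\operatorname{aff}a(S_0)\le\dim\ker F$. Since $F\in K\setminus\{0\}$ and $a\in\mathcal{U}$, the hypothesis yields $\dim\ker F<\dim\operatorname{aff}a(S_0)$ — a contradiction. Hence $c_0>0$, which proves the claim.

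\emph{Main obstacle.} The only genuinely delicate point is the simultaneous passage to the limit in the two coupled minimisations: the weak-$*$ limit of the bi-Lipschitz maps $a_n$ (where reflexivity fails, so \cref{Lem:Bil_compact_in_W1infty} is needed to keep the limit bi-Lipschitz and in $\mathcal{U}$) and the limit of the optimal shifts $\xi_n$ (for $p\neq2$ these are not given by a linear averaging formula, so one argues via the a priori bound above). The second mildly subtle step is upgrading the $\mathcal{H}^{m}$-a.e.\ identity $Fa\equiv\xi$ on $S$ to the pointwise identity on $S_0$ that drives the affine-dimension count; this is exactly where the measure-theoretic definition of $S_0$ and the continuity of $Fa$ are used. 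Everything else — homogeneity, the translation normalisation, coercivity of the $\xi$-minimisation — is routine bookkeeping.
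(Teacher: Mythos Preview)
Your proof is correct and follows essentially the same contradiction/compactness argument as the paper: normalise $|F_n|=1$, use translation invariance to make $(a_n)$ bounded, extract convergent subsequences of $F_n$, $a_n$, $\xi_n$, pass to the limit in the integral, and derive the contradiction from the dimension hypothesis. Your write-up is in fact more careful than the paper's in a few places (the Lindel\"of argument for $S_0\neq\emptyset$, the explicit upgrade from $\mathcal{H}^m$-a.e.\ to all of $S_0$ via continuity, and the explicit bound on $\xi_n$), but the underlying strategy is identical.
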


\begin{proof}
Suppose the contrary holds. Then, for all $k \in \N$ we find some $a_k \in \mathcal{U}$ and $F_k \in K$ with $\abs{F_k} = 1$, such that
\begin{equation*}
	\tfrac{1}{k} = \tfrac{1}{k}\abs{F_k}^p \geq \abs{F_k}_{S, a_k, p}^p = \int_{S} \abs{F_k a_k(x) - \xi_k}^p \,\dx[\mathcal{H}^{m}(x)],
\end{equation*}
where $\xi_k \in \R^d$ denotes a minimizer in $\abs{F_k}_{S, a_k, p}$. Since $\abs{\placeholder}_{S,a,p}$ and the assumption $\dim (\ker F) < \dim (\operatorname{aff} a(S_0))$ are translation invariant w.r.t.\ $a$, we may without loss of generality assume $(a_k)$ is bounded in $\SobW^{1,\infty}(U, \R^d)$. Hence, we find a subsequence (not relabeled), such that $a_k \to a$ uniformly, $F_k \to F$ and $\xi_k \to \xi$ for some $a \in \mathcal{U}$, $F \in K$ and $\xi \in \R^d$. Note that indeed, $(a_k)$ being bounded in $\SobW^{1,\infty}(U, \R^d)$, $\abs{F_k}$ and $\abs{F_k}_{S,a_k,p}$ being bounded, imply that $(\xi_k)$ is bounded. Then,
\begin{equation*}
	0 = \lim_{k \to \infty} \int_{S} \abs{F_k a_k(x) - \xi_k}^p \,\dx[\mathcal{H}^{m}(x)] = \int_{S} \abs{F a(x) - \xi}^p \,\dx[\mathcal{H}^{m}(x)].
\end{equation*}
Hence, $Fa(x) = \xi$ for all $x \in S_0$. This implies $\dim (\ker F) \geq \dim (\operatorname{aff} a(S_0))$ and thus $F = 0$ by assumption. But this is a contradiction to $\abs{F} = \lim_{k \to \infty} \abs{F_k} = 1$.
\end{proof}

\begin{corollary} \label{Cor:MatrixNorm}
Let $L \in [1,\infty)$, $1 \leq p < \infty$ and $K \subset \R^{d\times d}$ denote the union of the cone generated by $\SO(d) - I$ and the space of skew-symmetric matrizes. Then, there exists a constant $C=C(\Omega, \Gamma, L) > 0$ such that for all $F \in K$ and $a \in \operatorname{Bil}_L(\Omega, \R^d)$, we have
\begin{equation*}
	\abs{F} \leq C \abs{F}_{a(\Gamma), p}, \qquad
	\text{where } \abs{\placeholder}_{a(\Gamma), p} := \abs{\placeholder}_{a(\Gamma), \operatorname{id}, p}.
\end{equation*}
\end{corollary}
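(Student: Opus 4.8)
The plan is to reduce to \cref{Lem:MatrixNorm}, applied on the \emph{fixed} set $S:=\Gamma$ with $m:=d-1$ and $\mathcal U:=\operatorname{Bil}_L(\Omega,\R^d)$, after transporting the integral over the moving set $a(\Gamma)$ back to $\Gamma$ via the area formula.

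Before invoking the lemma I would assemble the structural ingredients. Recall that $\Omega$ is a bounded Lipschitz domain and $\Gamma\subset\partial\Omega$ is closed with $0<\mathcal H^{d-1}(\Gamma)<\infty$; since $\partial\Omega$ is $(d-1)$-rectifiable, so is $\Gamma$. Every $a\in\operatorname{Bil}_L(\Omega,\R^d)$ is uniformly continuous together with its inverse and hence extends uniquely to an $L$-biLipschitz map on $\overline\Omega$, so that $a|_\Gamma$ and $a(\Gamma)$ are well defined and $a(\Gamma)$ is again $(d-1)$-rectifiable. By \cref{Lem:Bil_compact_in_W1infty} the set $\mathcal U=\operatorname{Bil}_L(\Omega,\R^d)$ is sequentially weak-$*$ closed in $\SobW^{1,\infty}(\Omega,\R^d)$, which is all that the proof of \cref{Lem:MatrixNorm} uses. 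Finally, $K$ is a closed cone: $\R^{d\times d}_{\skewMat}$ is a closed subspace, and the cone generated by $\SO(d)-I$ is closed because $\SO(d)-I$ is compact -- if $t_n(R_n-I)\to F\neq 0$ with $t_n\ge 0$, $R_n\in\SO(d)$, then $t_n\abs{R_n-I}\to\abs{F}>0$, so $(t_n)$ is eventually bounded and bounded away from $0$, and after passing to a subsequence $t_n\to t>0$, $R_n\to R\in\SO(d)$, whence $F=t(R-I)$.

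\textbf{Nondegeneracy hypothesis.} The heart of the argument is to check the hypothesis of \cref{Lem:MatrixNorm}: for every $F\in K\setminus\{0\}$ and every $a\in\mathcal U$ one needs $\dim(\ker F)<\dim(\operatorname{aff}a(S_0))$, where $S_0$ is the set of $\mathcal H^{d-1}$-density points of $\Gamma$. On the one hand $\mathcal H^{d-1}(S_0)=\mathcal H^{d-1}(\Gamma)>0$, and a biLipschitz map changes $\mathcal H^{d-1}$-measure only by a factor in $[L^{-(d-1)},L^{d-1}]$, so $\mathcal H^{d-1}(a(S_0))>0$; since an affine subspace of dimension $\le d-2$ has vanishing $\mathcal H^{d-1}$-measure, this forces $\dim\operatorname{aff}a(S_0)\ge d-1$. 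On the other hand, $\dim\ker F\le d-2$ for $F\in K\setminus\{0\}$: if $F=t(R-I)$ with $t>0$ and $R\in\SO(d)\setminus\{I\}$, then $\ker F=\ker(R-I)$, and on $W:=(\ker(R-I))^\perp$ the map $R$ restricts to an orthogonal transformation of determinant $1$ with no nonzero fixed vector, which is impossible if $\dim W\le 1$ (an orthogonal map of determinant $1$ on a space of dimension $\le 1$ is the identity); hence $\dim W\ge 2$ and $\dim\ker F=d-\dim W\le d-2$. If instead $F\in\R^{d\times d}_{\skewMat}\setminus\{0\}$, then $\operatorname{rank}F$ is even and positive, so $\operatorname{rank}F\ge 2$ and $\dim\ker F\le d-2$. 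In all cases $\dim\ker F\le d-2<d-1\le\dim\operatorname{aff}a(S_0)$.

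\textbf{Conclusion.} With these verified, \cref{Lem:MatrixNorm} (with $U=\Omega$, $\mathcal U=\operatorname{Bil}_L(\Omega,\R^d)$, $S=\Gamma$, $m=d-1$, and $K$ as above) yields $C_1=C_1(\Omega,\Gamma,L,p)>0$ such that $\abs{F}\le C_1\,\abs{F}_{\Gamma,a,p}$ for all $F\in K$ and $a\in\operatorname{Bil}_L(\Omega,\R^d)$. It remains to bound $\abs{F}_{\Gamma,a,p}$ by $\abs{F}_{a(\Gamma),p}=\abs{F}_{a(\Gamma),\operatorname{id},p}$. By the area formula for Lipschitz maps on $(d-1)$-rectifiable sets, for every $\xi\in\R^d$,
\[
\int_{a(\Gamma)}\abs{Fz-\xi}^p\,\dx[\mathcal{H}^{d-1}(z)]=\int_\Gamma\abs{Fa(x)-\xi}^p\,J^\Gamma_a(x)\,\dx[\mathcal{H}^{d-1}(x)]\ \ge\ L^{-(d-1)}\int_\Gamma\abs{Fa(x)-\xi}^p\,\dx[\mathcal{H}^{d-1}(x)],
\]
where the tangential Jacobian $J^\Gamma_a$ is the product of the $d-1$ singular values of the tangential differential of $a$ along $\Gamma$, each lying in $[L^{-1},L]$ because $a$ is $L$-biLipschitz, so $J^\Gamma_a\ge L^{-(d-1)}$ $\mathcal H^{d-1}$-a.e.\ on $\Gamma$. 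Minimizing over $\xi\in\R^d$ gives $\abs{F}_{a(\Gamma),p}^p\ge L^{-(d-1)}\abs{F}_{\Gamma,a,p}^p$, hence $\abs{F}\le C_1L^{(d-1)/p}\,\abs{F}_{a(\Gamma),p}$, which is the claim with $C:=C_1L^{(d-1)/p}$.

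\textbf{Main obstacle.} There is no single deep step; the genuine content is the elementary linear-algebra observation that $\dim\ker F\le d-2$ for all $F\in K\setminus\{0\}$ -- this is exactly what makes the nondegeneracy hypothesis of \cref{Lem:MatrixNorm} applicable to the moving set $a(\Gamma)$, whose affine hull has dimension at least $d-1$ -- together with the bookkeeping needed to keep the area-formula Jacobian controlled uniformly in $a$ by the biLipschitz constant $L$.
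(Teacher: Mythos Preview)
Your proof is correct and follows essentially the same approach as the paper: apply \cref{Lem:MatrixNorm} with $S=\Gamma$, $m=d-1$, $\mathcal{U}=\operatorname{Bil}_L(\Omega,\R^d)$, then transport the integral from $\Gamma$ to $a(\Gamma)$ by a change of variables with a Jacobian bounded uniformly in $L$. The only differences are cosmetic: the paper delegates the verification of the kernel/affine-hull condition to \cite[Chap.~3]{DNP02} where you spell it out explicitly, and the paper writes the surface Jacobian as $|\operatorname{cof}\dif{a}\,\nu|$ (the boundary change-of-variables formula) where you use the tangential Jacobian from the area formula; both are bounded below by a constant depending only on $L$.
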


\begin{proof}
According to \cite[Chap.~3]{DNP02}, $K$, $S:= \Gamma$ and $\mathcal{U} := \operatorname{Bil}_L(\Omega, \R^d)$ satisfy the assumptions of the previous lemma for $m = d-1$. Hence, using the change of variables rule for boundary integrals, cf.~\cite[Chap.~1.1.3]{KR19}, we get
\begin{gather*}
	\abs{F}^p \leq c_1 \abs{F}_{\Gamma,a,p}^p \leq c_1 \int_{\Gamma} \abs{F a(x) - \xi}^p \,\dx[\mathcal{H}^{d-1}(x)] \leq c_2 \int_{\Gamma} \abs{Fa(x) - \xi}^p \abs{\operatorname{cof}\dif{a}(x)\nu(x)} \,\dx[\mathcal{H}^{d-1}(x)] \\
	= c_2 \int_{a(\Gamma)} \abs{Fz - \xi}^p \,\dx[\mathcal{H}^{d-1}(z)] = c_2 \abs{F}_{a(\Gamma), p}^p,
\end{gather*}
where $c_2$ does not depend on $a \in \operatorname{Bil}_L(\Omega,\R^d)$ and $\xi \in \R^d$ is a minimizer in $\abs{F}_{a(\Gamma), p}$.
\end{proof}

\begin{corollary} \label{Cor:MatrixNormOmega}
Let $L \in [1,\infty)$, $1 \leq p < \infty$. Then, there exists a constant $C= C(\Omega, L) > 0$ such that for all $F \in \R^{d\times d}$ and $a \in \operatorname{Bil}_L(\Omega, \R^d)$, we have
\begin{equation*}
	\abs{F} \leq C \abs{F}_{a(\Omega), p}, \qquad
	\text{where } \abs{\placeholder}_{a(\Omega), p} := \abs{\placeholder}_{a(\Omega), \operatorname{id}, p}.
\end{equation*}
\end{corollary}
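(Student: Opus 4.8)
The plan is to imitate the proof of \cref{Cor:MatrixNorm}, replacing the boundary piece $\Gamma$ by the full-dimensional set $\Omega$ itself. Concretely, I would apply \cref{Lem:MatrixNorm} with $S := \Omega$, $m := d$, $K := \R^{d\times d}$ and $\mathcal{U} := \operatorname{Bil}_L(\Omega, \R^d)$. By \cref{Lem:Bil_compact_in_W1infty}, $\mathcal{U}$ is weakly-$*$ closed in $\SobW^{1,\infty}(\Omega, \R^d)$, and since $\Omega$ is a bounded Lipschitz domain we have $0 < \mathcal{H}^d(\Omega) < \infty$. Because $\Omega$ is open, every point of $\Omega$ is a density point, so the set $S_0$ from \cref{Lem:MatrixNorm} equals $\Omega$. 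For any $a \in \mathcal{U}$ the image $a(\Omega)$ is open (a bi-Lipschitz map is a homeomorphism onto its image, with Lipschitz inverse), hence $\operatorname{aff} a(S_0) = \R^d$ has dimension $d$. Consequently, for every $F \in K \setminus \{0\}$ we have $\dim(\ker F) \leq d - 1 < d = \dim(\operatorname{aff} a(S_0))$, so the non-degeneracy hypothesis of \cref{Lem:MatrixNorm} is satisfied. The lemma then produces a constant $C_1 = C_1(\Omega, L, p) > 0$ such that $\abs{F} \leq C_1 \abs{F}_{\Omega, a, p}$ for all $F \in \R^{d\times d}$ and all $a \in \operatorname{Bil}_L(\Omega, \R^d)$.

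It then remains to pass from the integral over $\Omega$ to the integral over $a(\Omega)$. For this I would use the change of variables $z = a(x)$: for any $\xi \in \R^d$,
\[
  \int_{a(\Omega)} \abs{Fz - \xi}^p \,\dx[z] = \int_\Omega \abs{Fa(x) - \xi}^p \abs{\det \dif{a}(x)} \,\dx \geq L^{-d} \int_\Omega \abs{Fa(x) - \xi}^p \,\dx,
\]
where we used that $a \in \operatorname{Bil}_L$ implies $\abs{\dif{a}(x)^{-1}} \leq L$ and hence $\abs{\det \dif{a}(x)} = \abs{\det \dif{a}(x)^{-1}}^{-1} \geq L^{-d}$ for a.e.\ $x$. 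Taking the minimum over $\xi$ on both sides yields $\abs{F}_{a(\Omega), p}^p \geq L^{-d} \abs{F}_{\Omega, a, p}^p$, that is, $\abs{F}_{\Omega, a, p} \leq L^{d/p} \abs{F}_{a(\Omega), p}$. Combining this with the estimate from the previous paragraph gives $\abs{F} \leq C_1 L^{d/p} \abs{F}_{a(\Omega), p}$, so the corollary holds with $C := L^{d/p} C_1$, which depends only on $\Omega$, $L$ (and the fixed exponent $p$).

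The argument is essentially routine, so I do not expect a serious obstacle; the only points that require a moment's care are (i) verifying the non-degeneracy hypothesis of \cref{Lem:MatrixNorm}, which reduces to the elementary fact that a bi-Lipschitz image of an open set is open (hence has full affine hull), and (ii) the uniform lower bound $\abs{\det \dif{a}} \geq L^{-d}$ on the Jacobian, which is exactly what guarantees that the change-of-variables factor, and therefore the final constant $C$, is independent of the particular Bilipschitz map $a$.
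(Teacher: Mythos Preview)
Your proof is correct and follows essentially the same approach as the paper: apply \cref{Lem:MatrixNorm} with $S=\Omega$, $m=d$, $K=\R^{d\times d}$, $\mathcal{U}=\operatorname{Bil}_L(\Omega,\R^d)$, and then use the change of variables together with the uniform lower bound on $\abs{\det\dif{a}}$ to pass from $\abs{F}_{\Omega,a,p}$ to $\abs{F}_{a(\Omega),p}$. The paper simply declares the hypotheses of \cref{Lem:MatrixNorm} ``trivially satisfied,'' while you spell them out (via \cref{Lem:Bil_compact_in_W1infty} and the observation that $a(\Omega)$ is open), but the argument is the same.
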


\begin{proof}
$K := \R^{d\times d}$, $S:= \Omega$ and $\mathcal{U} := \operatorname{Bil}_L(\Omega, \R^d)$ trivially satisfy the assumptions of \cref{Lem:MatrixNorm} for $m = d$. Hence, the transformation rule implies
\begin{gather*}
	\abs{F}^p \leq c_1 \abs{F}_{\Omega,a,p}^p \leq c_1 \int_{\Omega} \abs{F a(x) - \xi}^p \,\dx \leq c_2 \int_{\Omega} \abs{Fa(x) - \xi}^p \abs{\det\dif{a}(x)} \,\dx \\
	= c_2 \int_{a(\Omega)} \abs{Fz - \xi}^p \,\dx[z] = c_2 \abs{F}_{a(\Omega), p}^p,
\end{gather*}
where $c_2$ does not depend on $a \in \operatorname{Bil}_L(\Omega,\R^d)$ and $\xi \in \R^d$ is a minimizer in $\abs{F}_{a(\Omega), p}$.
\end{proof}

\subsection{Extension operator for rigidity in Jones domains; Korn inequality and rigidity estimates}
\label{Sec:Proof:ExtensionOperatorRigidity}

In this section we prove \cref{Thm:RigidityEstimateJonesDomain} and conclude \cref{Cor:KornWithLpNormOfu,Cor:KornWithBoundaryValue,Cor:KornPeriodic}.

\paragraph{Extension operator for rigidity.}
As proposed in \cref{Sec:GeometricRigidity}, the results are based on an extension operator that we shall introduce first. 
Before we state the result, recall our notation for mixed growth decompositions introduced in \eqref{Eq:NotationMixedGrowthDecompositions}. We introduce the following notation for cubes.

\begin{definition}
For the cube $Q := a + [-\tfrac{l}{2}, \tfrac{l}{2}]^d$ with $a \in \R^d$ and $l > 0$, we denote the center point by $\bar{x}(Q) := a$ and the edge length by $l(Q) := l$. Moreover, for $\alpha > 0$, we define the scaled cube $\alpha Q := \bar{x}(Q) + \alpha(Q - \bar{x}(Q))$. We use the same definitions for open and half-open cubes. 
\end{definition}

The extension operator controls the distance to $\SO(d)$, $\R^{d\times d}_{\sym}$ and other sets simultaneously, as long as a geometric rigidity like statement holds on cubes. To unify this, we introduce the following notion:

\begin{definition}[$(\mathcal{A},p,q)$-rigidity] \label{Def:GeneralRigidity}
Let $Q \subset Q^+ \subset \R^d$ measurable, $p, q \in [1,\infty]$ and $\mathcal{A} \subset \R^{d\times d}$. We say $(\mathcal{A},p,q)$-rigidity holds on $Q$ w.r.t.\ $Q^+$, if the following statement holds. We find a constant $c > 0$, such that for all $u \in \SobW^{1,1}(Q^+, \R^d)$ and all decompositions $\dist(\dif{u},\mathcal{A}) = F_{\dist(\dif{u},\mathcal{A})} + G_{\dist(\dif{u},\mathcal{A})}$ in $\Leb^p + \Leb^q(Q^+)$, we find some matrix $M \in \R^{d\times d}$ and a decomposition $\dif{u} - M = F_{\dif{u}-M} + G_{\dif{u} - M}$ in $\Leb^p+\Leb^q(Q, \R^{d\times d})$, such that
\begin{equation} \label{Eq:ArbitraryRigidityEstimate}
\begin{aligned}
	\norm{F_{\dif{u} - M}}_{\Leb^p(Q)} &\leq c \norm{F_{\dist(\dif{u}, \mathcal{A})}}_{\Leb^p(Q^+)}, \\
	\norm{G_{\dif{u} - M}}_{\Leb^q(Q)} &\leq c \norm{G_{\dist(\dif{u}, \mathcal{A})}}_{\Leb^q(Q^+)}.
\end{aligned}
\end{equation}
We say $(\mathcal{A},p,q)$-rigidity holds on cubes, if $(\mathcal{A},p,q)$-rigidity holds on any cube $Q$ w.r.t.\ $Q^+ = \tfrac{33}{32}Q$.
\end{definition}

The choice $\tfrac{33}{32}$ for the scaling is technical and not important. The standard choices for $\mathcal{A}$ are $\R^{d\times d}_{\skewMat}$ which is Korn's inequality and $\SO(d)$ which is the geometric rigidity estimate, cf.\ \cref{Sec:GeometricRigidity}.

\begin{theorem} \label{Thm:Extenion_operator}
Let $U \subset \R^d$ be an open, bounded $(e, \delta)$-domain, $\rho := \min\set{\frac{1}{2}\operatorname{diam}(U), \delta}$. For $\gamma > 0$, define
\begin{equation*}
	U^+_\gamma := \class{x \in \R^d}{\dist(x, U) \leq \gamma}, \qquad
	U^-_\gamma := \class{x \in U}{\dist(x, \partial U) \geq \gamma},
\end{equation*}
and note that $U^-_\gamma \subset\subset U \subset\subset U^+_\gamma$.
There exist constants $0 < \alpha' < \alpha$ and $\alpha'' > 0$ (which we relate to the sets $U^-_{\rho\alpha''} \subset U \subset U^+_{\rho\alpha'} \subset U^+_{\rho\alpha}$) and a bounded, linear extension operator $E: \SobW^{1,1}(U, \R^d) \to \SobW^{1,1}_\mathrm{loc}(\R^d, \R^d)$ with
\begin{enumerate}[(a)] 
	\item $Eu = u$ a.e.\ in $U$ and
	\item $\supp Eu \subset U^+_{\rho\alpha}$,
\end{enumerate}
such that the following holds:
\medskip

Let $r \in [1,\infty]$, $1 \leq p \leq q \leq \infty$ and $\mathcal{A} \subset \R^{d\times d}$, such that $(\mathcal{A},p,q)$-rigidity holds on cubes. Then, there exists a constant $c > 0$, such that for all $u \in \SobW^{1,1}(U, \R^d)$ and all decompositions $\dist(\dif{u}, \mathcal{A}) = F_{\dist(\dif{u}, \mathcal{A})} + G_{\dist(\dif{u}, \mathcal{A})}$ in $\Leb^p+\Leb^q(U, \R^{d\times d})$, the following estimates hold.
\begin{enumerate}[(a)] \setcounter{enumi}{2}
	\item We find a decomposition $Eu = F_{Eu} + G_{Eu} + H_{Eu}$ in $\Leb^p+\Leb^q+\Leb^r(\R^d,\R^d)$, such that
	\begin{equation}
	\begin{aligned}
		\norm{F_{Eu}}_{\Leb^p(\R^d)} & \leq c \norm{F_{\dist(\dif{u}, \mathcal{A})}}_{\Leb^p(U)}, \\
		\norm{G_{Eu}}_{\Leb^q(\R^d)} & \leq c \norm{G_{\dist(\dif{u}, \mathcal{A})}}_{\Leb^q(U)}, \\
		\norm{H_{Eu}}_{\Leb^r(\R^d)} & \leq c \norm{u}_{\Leb^r(U)}.
	\end{aligned}
	\end{equation}
	Especially, for $p = q = r$,
	\begin{equation}
		\norm{Eu}_{\Leb^p(\R^d)} \leq c \left(\norm{u}_{\Leb^p(U)} + \norm{\dist(\dif{u}, \mathcal{A})}_{\Leb^p(U)}\right).
	\end{equation}
	\item We find a decomposition $\dist(\dif{Eu}, \mathcal{A}) = F_{\dist(\dif{Eu}, \mathcal{A})} + G_{\dist(\dif{Eu}, \mathcal{A})} + H_{\dist(\dif{Eu}, \mathcal{A})}$ in  \linebreak
	$\Leb^p+\Leb^q+\Leb^r(\R^d,\R^{d\times d})$ with $H_{\dist(\dif{Eu}, \mathcal{A})} = 0$ a.e.\ in $U^+_\mathrm{\rho\alpha'}$, such that
	\begin{equation}
	\begin{aligned}
		\norm{F_{\dist(\dif{Eu}, \mathcal{A})}}_{\Leb^p(\R^d)} & \leq c \norm{F_{\dist(\dif{u}, \mathcal{A})}}_{\Leb^p(U)}, \\
		\norm{G_{\dist(\dif{Eu}, \mathcal{A})}}_{\Leb^q(\R^d)} & \leq c \norm{G_{\dist(\dif{u}, \mathcal{A})}}_{\Leb^q(U)}, \\
		\norm{H_{\dist(\dif{Eu}, \mathcal{A})}}_{\Leb^r(\R^d)} & \leq c \left(\rho^{-1}\norm{u}_{\Leb^r(U^-_{\rho\alpha''})} + \norm{\dif{u}}_{\Leb^r(U^-_{\rho\alpha''})}\right).
	\end{aligned}
	\end{equation}
	Especially, for $p = q = r$,
		\begin{equation}
			\norm{\dist(\dif{Eu}, \mathcal{A})}_{\Leb^p(U^+_\mathrm{\rho\alpha'})} \leq c \norm{\dist(\dif{u}, \mathcal{A})}_{\Leb^p(U)}.
		\end{equation}
\end{enumerate}
Moreover, the constants $\alpha$, $\alpha'$, $\alpha''$ and $c$ depend on the domain $U$ only via its first Jones coefficient $e$.
\end{theorem}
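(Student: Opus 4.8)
The extension operator $E$ is the Jones extension of \cite{Jon81,DM04}, built from local \emph{affine} (degree-one) projections rather than constants, so that $E$ is linear and does \emph{not} depend on $\mathcal A$; the hypothesis of $(\mathcal A,p,q)$-rigidity on cubes enters only in the a posteriori estimates (c) and (d). First I would rescale so that $\rho=1$; since all constants produced below depend only on $d$, $e$, $p$, $q$, $r$ and the rigidity constant of \cref{Def:GeneralRigidity}, this yields the asserted $\rho$-scaling of the final bounds. Then fix a (fat) Whitney decomposition $\mathcal W^{\mathrm{int}}$ of $U$ into dyadic cubes $Q$ with $(\tfrac{33}{32})^2Q\subset U$, a Whitney decomposition $\mathcal W^{\mathrm{ext}}$ of $\R^d\setminus\overline U$, and set $\mathcal W^{\mathrm{ext}}_0:=\{S\in\mathcal W^{\mathrm{ext}}:l(S)\le c_0\}$ with $c_0=c_0(e)$ small. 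For each $S\in\mathcal W^{\mathrm{ext}}_0$, Jones' reflection lemma (this is where the $(e,\delta)$-property is used) provides a reflected cube $Q_S\in\mathcal W^{\mathrm{int}}$ with $l(Q_S)\simeq l(S)$ and $\dist(Q_S,S)\lesssim l(S)$, together with chains in $\mathcal W^{\mathrm{int}}\cup\mathcal W^{\mathrm{ext}}$ of uniformly bounded length, consisting of cubes of comparable size, joining $S$ to $Q_S$ and joining $Q_S$ to $Q_{S'}$ for every neighbour $S'$ of $S$; the chain length, the overlap number of the dilates $\tfrac{17}{16}S$, and the multiplicity of $S\mapsto Q_S$ are all bounded in terms of $d$ and $e$ only. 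On the rigidity cube $Q^+:=\tfrac{33}{32}Q$ let $P_Qu$ be the $\Leb^2(Q^+)$-orthogonal projection of $u$ onto affine maps, so that $M_Q:=\dif(P_Qu)=\fint_{Q^+}\dif u$ is a bounded linear functional of $u$ and $\|u-P_Qu\|_{\Leb^s(Q^+)}\lesssim l(Q)\,\|\dif u-M_Q\|_{\Leb^s(Q^+)}$ for $1\le s\le\infty$. Finally define $Eu:=u$ on $U$ and $Eu:=\sum_{S\in\mathcal W^{\mathrm{ext}}_0}\varphi_S\,P_{Q_S}u$ on $\R^d\setminus\overline U$, where $\{\varphi_S\}$ is the usual partition of unity ($\supp\varphi_S\subset\tfrac{17}{16}S$, $\sum_S\varphi_S\equiv1$ in a neighbourhood of $\partial U$, $|\nabla\varphi_S|\lesssim l(S)^{-1}$); since $l(S)\le c_0$ forces $S\subset U^+_{Cc_0}$, we automatically get $\supp Eu\subset U^+_{\rho\alpha}$ for $\alpha=\alpha(d,e)$, and $\alpha'<\alpha$, $\alpha''>0$ are fixed by the geometry of the reflection and of the transition zone $\{0<\sum_S\varphi_S<1\}$.

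That $E$ is a bounded linear operator $\SobW^{1,1}(U,\R^d)\to\SobW^{1,1}_{\mathrm{loc}}(\R^d,\R^d)$ with $Eu=u$ a.e.\ on $U$ and with no distributional derivative supported on $\partial U$ (so that indeed $Eu\in\SobW^{1,1}_{\mathrm{loc}}$) is the classical content of \cite{Jon81,DM04}; I would only run through it to check that every constant depends on $U$ solely via $e$. The single estimate I reuse is the telescoping bound for affine approximants: if $Q,Q'\in\mathcal W^{\mathrm{int}}$ are joined by a chain $R_0=Q,\dots,R_m=Q'$ of cubes of comparable size whose dilates $\tfrac{17}{16}R_i$ consecutively overlap, then on $\tfrac{17}{16}Q\cup\tfrac{17}{16}Q'$,
\begin{equation*}
  |P_Qu-P_{Q'}u|\lesssim\sum_{i=0}^m l(R_i)\fint_{R_i^+}|\dif u-M_{R_i}|,
  \qquad
  |M_Q-M_{Q'}|\lesssim\sum_{i=0}^m\fint_{R_i^+}|\dif u-M_{R_i}|,
\end{equation*}
which follows from Poincar\'e on each $R_i^+$ and the fact that an affine map is controlled, up to constants, by its averages over a fat overlap of comparable size.

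I now turn to (c) and (d), which is the new part. Fix an admissible $\mathcal A$ and a splitting $\dist(\dif u,\mathcal A)=F+G$ in $\Leb^p+\Leb^q(U)$. Applying $(\mathcal A,p,q)$-rigidity on each $Q^+$ (so with the rigidity-definition cube equal to $(\tfrac{33}{32})^2Q$) yields a matrix $\overline{M}_Q$ and a splitting $\dif u-\overline{M}_Q=F^Q+G^Q$ on $Q^+$ with $\|F^Q\|_{\Leb^p(Q^+)}\le c\|F\|_{\Leb^p((\frac{33}{32})^2Q)}$ and $\|G^Q\|_{\Leb^q(Q^+)}\le c\|G\|_{\Leb^q((\frac{33}{32})^2Q)}$; averaging, $|M_Q-\overline{M}_Q|\le\fint_{Q^+}|F^Q|+\fint_{Q^+}|G^Q|$. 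On $U$ one keeps $\dif{Eu}=\dif u$ and the given splitting. On a dilate $\tfrac{17}{16}S$ that lies in the region where $\sum_S\varphi_S\equiv1$ (which covers $U^+_{\rho\alpha'}\setminus\overline U$) one writes, using $\sum_{S'}\nabla\varphi_{S'}=0$,
\begin{equation*}
  \dif{Eu}=\overline{M}_{Q_S}+\sum_{S'}\nabla\varphi_{S'}\otimes\big(P_{Q_{S'}}u-P_{Q_S}u\big)+\sum_{S'}\varphi_{S'}\big(M_{Q_{S'}}-\overline{M}_{Q_S}\big),
\end{equation*}
whence, pointwise, $\dist(\dif{Eu},\mathcal A)\le|\dif{Eu}-\overline{M}_{Q_S}|+\dist(\overline{M}_{Q_S},\mathcal A)$, and, for a.e.\ $y\in Q_S^+$, $\dist(\overline{M}_{Q_S},\mathcal A)\le|F^{Q_S}(y)|+|G^{Q_S}(y)|+F(y)+G(y)$ by the triangle inequality together with $\dif u-\overline{M}_{Q_S}=F^{Q_S}+G^{Q_S}$. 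Combining these with the telescoping bound (the chains all have bounded length and comparable sizes, and the $l(S)$-factors cancel the $|\nabla\varphi_{S'}|$-factors), $\dist(\dif{Eu},\mathcal A)$ on $\tfrac{17}{16}S$ is dominated by a fixed-length sum of averages $\fint_{R^+}|F^R|$, $\fint_{R^+}|G^R|$, $\fint_{R^+}F$, $\fint_{R^+}G$ over the cubes $R$ in the relevant chains. Routing the $F$-type and $G$-type contributions separately (and appealing to the standard lemma of \cite{CDM14} that turns a pointwise domination $w\le f+g+h$ into an actual decomposition with the respective sizes), summing over $S$ and using bounded overlap, bounded chain length and bounded reflection multiplicity, gives $F_{\dist(\dif{Eu},\mathcal A)}\in\Leb^p$ and $G_{\dist(\dif{Eu},\mathcal A)}\in\Leb^q$ with the claimed bounds and supported in $U^+_{\rho\alpha'}$. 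On the transition zone $U^+_{\rho\alpha}\setminus U^+_{\rho\alpha'}$, where $\sum_{S'}\nabla\varphi_{S'}\ne0$ so that the telescoping trick is unavailable, every active $S$ has $l(S)\simeq\rho$ and the corresponding $Q_S$ is a cube of size $\simeq\rho$ at distance $\simeq\rho$ from $\partial U$, i.e.\ $Q_S\subset U^-_{\rho\alpha''}$; since there $|\dif{Eu}|$ is a bounded sum of $|\nabla\varphi_{S'}|\,|P_{Q_{S'}}u|$ and $|M_{Q_{S'}}|$ with all such $Q_{S'}\subset U^-_{\rho\alpha''}$, one gets $|\dif{Eu}|\lesssim\rho^{-1}\|u\|_{\Leb^r(U^-_{\rho\alpha''})}+\|\dif u\|_{\Leb^r(U^-_{\rho\alpha''})}$ there, and this (together with the far-field contribution, where $\dif{Eu}=0$ beyond $\supp Eu$) is absorbed into $H_{\dist(\dif{Eu},\mathcal A)}$, which by construction vanishes on $U^+_{\rho\alpha'}$. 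Statement (c) is obtained by the same three-way splitting applied to $Eu$ itself: locally $Eu=P_{Q_S}u+(Eu-P_{Q_S}u)$, the correction $Eu-P_{Q_S}u=\sum_{S'}\varphi_{S'}(P_{Q_{S'}}u-P_{Q_S}u)$ being $\Oclass(l(S))$ times chain-differences and hence feeding $F_{Eu},G_{Eu}$, while $\|P_{Q_S}u\|_{\Leb^r(\frac{17}{16}S)}\lesssim\|u\|_{\Leb^r(Q_S^+)}+l(Q_S)\|\dif u-M_{Q_S}\|_{\Leb^r(Q_S^+)}$ is summed over $S$ through the bounded multiplicity of $S\mapsto Q_S$ and routed into $H_{Eu}$, as is $u\,\mathbf 1_U$.

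The main obstacle is the bookkeeping of the last paragraph: propagating the three simultaneous scales $\Leb^p$, $\Leb^q$, $\Leb^r$ through the partition of unity and the telescoping estimates so that each error term is charged to the rigidity data $F^R,G^R$ (or to $F,G$, or to $\|u\|_{\Leb^r}$) of only finitely many cubes, with every overlap and chain-length bounded in terms of $d$ and $e$ alone; one also has to keep careful track of which cubes lie in $U^-_{\rho\alpha''}$ so that the $H$-terms receive the stated localisation. Everything else --- the Whitney/reflection construction, the $\SobW^{1,1}$-boundedness and the trace matching across $\partial U$, and the quantitative dependence of the Whitney and chain constants on $e$ --- is a careful but routine reprise of \cite{Jon81,DM04}.
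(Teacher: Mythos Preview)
Your proposal is correct and follows essentially the same route as the paper: the Jones--Whitney extension with affine local projections $P_Qu$, chain/telescoping estimates to control $P_{Q_S}u-P_{Q_{S'}}u$, the split into the zone where $\sum\varphi\equiv1$ (feeding $F,G$) and the transition cubes in $W_2\setminus W_3$ (feeding $H$, localized to $U^-_{\rho\alpha''}$), and summation via bounded overlap and bounded reflection multiplicity. The one streamlining the paper adds and you do not is the observation that in \cref{Def:GeneralRigidity} one may always take the rigidity matrix to be $M_Q=\fint_Q\dif u$ itself (absorbing the difference $|M_Q-\overline{M}_Q|\le\fint|F^Q|+\fint|G^Q|$ once and for all), so your two-matrix bookkeeping with $M_Q$ and $\overline{M}_Q$ collapses and the chain estimates \eqref{Eq:Estimate_PQ}--\eqref{Eq:Estimate_DPQ} refer directly to $P_Q[u]$.
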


We want to emphasize that the extension operator does not depend on the choice of $\mathcal{A}$ but is stable for \emph{all} admissible choices of $\mathcal{A}$, including Korn's inequality ($\mathcal{A} = \R^{d\times d}_{\sym}$) and the geometric rigidity estimate ($\mathcal{A} = \SO(d)$). Especially, (c) and (d) for the trivial choice $\mathcal{A} = \emptyset$ show that $E$ is a bounded operator w.r.t.\ $\SobW^{1,p}$ for any $p \in [1,\infty]$. We follow \cite{Jon81,DM04} for the construction of the extension operator and the proof of \cref{Thm:Extenion_operator}. The definition of the extension operator relies on Whitney decompositions of the domain $U$ and $\R^d \setminus U$ and a suitable reflection of the cubes from the outside to the inside of $U$ close to the boundary. For the readers convenience we recall here the relevant definitions and statements from \cite{Jon81}.

\begin{definition}[cf.\ \cite{Jon81}] \label{Def:WhitneyDecomposition}
Let $U \subset \R^d$ open. A Whitney decomposition of $U$ is a sequence of closed, dyadic cubes $Q_k$, $k \in \N$, such that $U = \bigcup_{k \in \N} Q_k$ and
\begin{enumerate}[(i)]
	\item $l(Q_k) \leq \dist(Q_k, \partial\Omega) \leq 4\sqrt{d}\, l(Q_k)$,
	\item $\operatorname{int}(Q_j) \cap \operatorname{int}(Q_k) = \emptyset$, whenever $j \neq k$,
	\item $\frac{1}{4} \leq \frac{l(Q_j)}{l(Q_k)} \leq 4$, whenever $Q_j \cap Q_k \neq \emptyset$.
\end{enumerate}
\end{definition}

\begin{lemma}[Reflection, {cf.\ \cite{Jon81}}]~ \label{Lem:Reflection}
\begin{enumerate}[(a)]
	\item Every open set $U \subset \R^d$ admits a Whitney decomposition, cf.\ \cite[Thm.~VI.1]{Ste71}.
	
	\item There exists a constant $c = c(d, e) > 0$, such that if $U$ is an $(e, \delta)$-domain, the following statements hold. Let $\rho := \min\set{\frac{1}{2}\operatorname{diam}(U), \delta}$, 
	\begin{itemize}
		\item $W_1 := (S_k)_{k \in \N}$ denote a Whitney decomposition of $U$,
		\item $W_2 := (Q_j)_{j \in \N}$ denote a Whitney decomposition of $\R^d \setminus U$ and
		\item $W_3 := \class{Q_j \in W_2}{l(Q_j) \leq \frac{\rho}{c}}$.
	\end{itemize}
	For every cube $Q_j \in W_3$, there exists a reflected cube $Q_j^*= S_k \in W_1$, such that
	\begin{equation*}
		1 \leq \tfrac{l(Q_j^*)}{l(Q_j)} \leq 4, \quad \dist(Q_j, Q_j^*) \leq c\,l(Q_j),
	\end{equation*}
	and if $Q_j, Q_k \in W_3$ with $Q_j \cap Q_k \neq \emptyset$, there exists a chain $F_{j,k} := \set{Q_j^*=S_1, \dots, S_m = Q_k^*} \subset W_1$, i.e.\ $S_j \cap S_{j+1} \neq \emptyset$, with chain length $m \leq c$. 
	
	\item There exists a constant $C = C(d) > 0$ and a partition of unity $(\varphi_j) \subset \Cont^\infty_c(\R^d, [0,1])$ subordinate to $W_3$, such that
	\begin{equation*}
		\supp \varphi_j \subset \tfrac{17}{16}Q_j, \quad \sum_{Q_j \in W_3} \varphi_j \equiv 1 \text{ on } \bigcup W_3, \quad |\nabla\varphi_j| \leq C\, l(Q_j)^{-1}.
	\end{equation*}
\end{enumerate}
\end{lemma}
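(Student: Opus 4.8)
The plan is to recall and assemble three essentially classical facts, keeping careful track of the dependence of the constants on the dimension $d$ and the first Jones coefficient $e$ only, since this uniformity is what is later needed in \cref{Thm:Extenion_operator} and \cref{Thm:RigidityEstimateJonesDomain}.

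Assertion (a) I would simply cite: it is the Whitney covering lemma \cite[Thm.~VI.1]{Ste71}. One keeps, for each $k\in\Z$, those closed dyadic cubes of side $2^{-k}$ whose distance to $\partial U$ is comparable to the side length within a fixed dyadic range, and then discards any selected cube contained in a larger selected one; properties (i)--(iii) of \cref{Def:WhitneyDecomposition} follow from the elementary geometry of dyadic cubes, with absolute constants.

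Assertion (b) is the reflection construction of \cite[§3]{Jon81} and is the only part that needs real work. Fixing Whitney decompositions $W_1$ of $U$ and $W_2$ of $\R^d\setminus U$, I would, for each small exterior cube $Q_j\in W_3$, produce an interior point $p_j\in U$ with $\abs{p_j-\bar x(Q_j)}\le c\,l(Q_j)$ and $\dist(p_j,\partial U)\ge c^{-1}l(Q_j)$ by stepping into $U$ from a nearest boundary point of $Q_j$; the restriction $l(Q_j)\le\rho/c$ together with $\operatorname{diam}(U)\ge 2\rho$ is exactly what puts $Q_j$ into the ``local'' regime where the $(e,\delta)$-property applies. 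Taking $Q_j^*\in W_1$ to be the Whitney cube containing $p_j$, properties (i) and (iii) for $W_1$ force $1\le l(Q_j^*)/l(Q_j)\le 4$ and $\dist(Q_j,Q_j^*)\le c\,l(Q_j)$ after enlarging $c$. For the chain property, if $Q_j\cap Q_k\ne\emptyset$ with $Q_j,Q_k\in W_3$, then $\abs{p_j-p_k}$ is of order $l(Q_j)$ while $p_j,p_k$ stay at distance of order $l(Q_j)$ from $\partial U$; joining them by the admissible curve $\gamma$ from the definition of an $(e,\delta)$-domain, the length bound $\operatorname{len}(\gamma)\le e^{-1}\abs{p_j-p_k}$ and the clearance bound $\dist(z,\partial U)\ge e\abs{p_j-z}\abs{p_k-z}/\abs{p_j-p_k}$ show that $\gamma$ meets at most $c(d,e)$ cubes of $W_1$, all of comparable side length, which together with $Q_j^*$ and $Q_k^*$ yields the chain $F_{j,k}\subset W_1$ of length at most $c$. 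All constants produced this way depend on $U$ only through $d$ and $e$.

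For assertion (c) I would use the standard smooth partition of unity subordinate to the slightly fattened exterior cubes: pick $\psi_j\in\Cont^\infty_c(\tfrac{17}{16}Q_j,[0,1])$ with $\psi_j\equiv 1$ on $Q_j$ and $\abs{\nabla\psi_j}\le C(d)\,l(Q_j)^{-1}$, and set $\varphi_j:=\psi_j/\sum_{Q_k\in W_3}\psi_k$. Because the dilated cubes have bounded overlap and, by property (iii), overlapping cubes have comparable side lengths, the denominator is $\ge 1$ on $\bigcup W_3$ and its gradient is of order $l(Q_j)^{-1}$ on $\supp\varphi_j$, which gives $\abs{\nabla\varphi_j}\le C(d)\,l(Q_j)^{-1}$. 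The step I expect to be the main obstacle is the uniform bound on the chain lengths in (b): this is precisely the point at which the rectifiable-curve (twisted-cone) condition defining a Jones domain is used in an essential way, and it is what ultimately makes the extension operator of \cref{Thm:Extenion_operator} depend on $U$ only through $e$.
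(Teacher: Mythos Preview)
The paper does not prove this lemma at all: it is stated with the attribution ``cf.\ \cite{Jon81}'' and used as a black box, with parts (a) and (c) being classical (Whitney, Stein) and part (b) being Jones' reflection construction. Your sketch is a correct outline of the standard argument from \cite{Jon81} and tracks the constant dependence on $(d,e)$ as required, so there is nothing to compare against and nothing to correct.
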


Throughout this section let $U \subset \R^d$ an $(e,\delta)$-domain and $W_1$, $W_2$, $W_3$ and $(\varphi_j)$ as in \cref{Lem:Reflection}. Note that by property (i) of \cref{Def:WhitneyDecomposition}, $W_3$ consists of the cubes that are close to the boundary of $U$. In fact, we may choose $0 < \alpha' < \alpha$ only depending on $e$ and $d$, such that
\begin{equation*}
	U^+_{\rho\alpha'} \subset \bigcup W_3\cup\overline U \subset U^+_{\rho\alpha}.
\end{equation*}
For $u \in \SobW^{1,1}(U, \R^d)$, we define the extension of $u$ as,
\begin{equation} \label{Eq:Formula_extension}
	Eu(x) := \begin{cases} 
		u(x) & \text{if } x \in U, \\ 
		\sum_{Q_j \in W_3} P_{Q_j^*}[u](x) \varphi_j(x) & \text{if } x \in \operatorname{int}(\R^d \setminus U),
	\end{cases}
\end{equation}
where $P_Q[u]$ denotes the affine map
\begin{equation} \label{Eq:DefinitionPolynomialPQ}
	P_Q[u](x) := \bar{u}_Q + M(x - \bar{x}_Q), \qquad x \in \R^d,
\end{equation}
with
\begin{equation}
\newcommand{\smallfint}{\begingroup\textstyle\fint\endgroup}
	\bar{u}_Q := \fint_Q u, \quad M := \fint_Q \dif u, \quad \bar{x}_Q := \fint_Q x \,\dx.
\end{equation}
Jones showed in \cite[Lem.~2.3]{Jon81} that $\abs{\partial U} = 0$. Thus, the formula defines $Eu$ up to a null-set. We show later that indeed $Eu \in \SobW^{1,1}_\text{loc}(\R^d, \R^d)$. 

\begin{remark}
The main difference of the extension operators in \cite{Jon81,DM04} and in our work is the choice of $M$. We want to motivate our choice. By studying \cite{DM04} we identify the following key properties that $M$ needs to satisfy:
\begin{enumerate}[(a)]
	\item $M$ is linear w.r.t.\ $u$, such that $E$ is linear.
	\item $\abs{M}$ can be controlled by $\dif{u}$, such that $E$ is a bounded operator.
	\item We can estimate the difference $u - P_Q[u]$ by $\dist(\dif{u}, \mathcal{A})$ in $\SobW^{1,p}(Q)$ to be able to control $\dist(\dif{Eu}, \mathcal{A})$ (also in the mixed growth sense).
\end{enumerate}
The fact that $M = \fint_Q \dif{u}$ is a suitable choice is now due to the following simple observation. In \cref{Def:GeneralRigidity}, we can always choose the explicit matrix $M = \fint_Q \dif u$. Indeed, let $\tilde{M} \in \R^{d\times d}$ denote a matrix that satisfies the statement in the definition of $(\mathcal{A}, p, q)$-rigidity. Then, the inequality
\begin{equation*}
	\abs{M - \tilde{M}} = \abs{\fint_Q \dif{u} - \tilde{M}} \leq \fint_U \abs{\dif u - \tilde{M}} \leq \fint_Q \abs{F_{\dif{u} - \tilde{M}}} + \fint_Q \abs{G_{\dif{u} - \tilde{M}}}
\end{equation*}
implies this statement by using $\dif{u} - M = (\dif{u} - \tilde{M}) + (M - \tilde{M})$ and \cref{Rem:Properties_rigidity} (i) below. With this choice, \eqref{Eq:ArbitraryRigidityEstimate} reads
\begin{equation} \label{Eq:Estimate_PQ}
\begin{aligned}
	\norm{F_{\dif{u} - \dif{P_Q[u]}}}_{\Leb^p(Q)} &\leq c \norm{F_{\dist(\dif{u}, \mathcal{A})}}_{\Leb^p(Q^+)}, \\
	\norm{G_{\dif{u} - \dif{P_Q[u]}}}_{\Leb^q(Q)} &\leq c \norm{G_{\dist(\dif{u}, \mathcal{A})}}_{\Leb^q(Q^+)}.
\end{aligned}
\end{equation}
Moreover, since $\fint_Q u - P_Q[u] = 0$, a mixed growth version of the Poincaré-Wirtinger inequality, see \cref{Prop:Poincare_interpol}, yields a decomposition $u - P_Q[u] = F_{u - P_Q[u]} + G_{u - P_Q[u]}$ in $\Leb^p + \Leb^q(Q, \R^d)$ with
\begin{equation} \label{Eq:Estimate_DPQ}
\begin{aligned}
	\norm{F_{u - P_Q[u]}}_{\Leb^p(Q)} &\leq c \operatorname{diam}(Q) \norm{F_{\dist(\dif{u}, \mathcal{A})}}_{\Leb^p(Q^+)}, \\
	\norm{G_{u - P_Q[u]}}_{\Leb^q(Q)} &\leq c \operatorname{diam}(Q) \norm{G_{\dist(\dif{u}, \mathcal{A})}}_{\Leb^q(Q^+)}.
\end{aligned}
\end{equation}
These are exactly the estimates needed for (c). We note that using a similar argument we can also always use in \cref{Def:GeneralRigidity} the explicit choice $M \in \operatorname{Arg\,min}_{N \in \overline{\mathcal A}} \big|N - \fint_Q\dif{u}\big|  \subset \overline{\mathcal A}$.
\end{remark}

Before we proceed with the proof of \cref{Thm:Extenion_operator}, we provide some further remarks.

\begin{remark}~ \label{Rem:Properties_rigidity}
\begin{enumerate}[(i)]
	\item To conclude \eqref{Eq:Estimate_PQ} and \eqref{Eq:Estimate_DPQ} from the arguments above, we used the fact that an inequality of the form $\abs{v} \leq \tilde{f} + \tilde{g}$ for some $\tilde{f} \in \Leb^p(Q)$, $\tilde{g} \in \Leb^q(Q)$ with $\tilde{f},\tilde{g} \geq 0$ already implies the existence of a decomposition $v = f + g$ with $\norm{\tilde{f}}_{\Leb^p} \leq \norm{f}_{\Leb^p}$, $\norm{\tilde{g}}_{\Leb^q} \leq \norm{g}_{\Leb^q}$. This has already been pointed out in the notation section of \cite{CDM14}. We shall use this fact without further comments several times throughout the proofs below.
	
	\item The constant in \cref{Def:GeneralRigidity} is invariant under scaling and translation of the domains $Q$ and $Q^+$, which can be seen by a standard scaling argument. Moreover, this holds true with the explicit choice $M = \fint_Q \dif u$ from above. Especially, if $(\mathcal{A},p,q)$ rigidity holds on one cube, then it holds on all.
	
	\item If $(\mathcal{A}, p, q)$-rigidity holds on two open sets $Q_1, Q_2 \subset \R^d$ with $Q_1 \cap Q_2 \neq \emptyset$, then it is not hard to show that it also holds on $Q_1 \cup Q_2$. Especially, if it holds on cubes, then it holds on any finite union of intersecting cubes. We carry out the argument in the proof of \cref{Thm:RigidityEstimateJonesDomain}.
\end{enumerate}
\end{remark}

The proof of \cref{Thm:Extenion_operator} is analogous to \cite{Jon81,DM04}. Since one has to be careful to deal with the mixed growth estimates and the general $(\mathcal{A},p,q)$-rigidity, we provide the main arguments for the readers convenience. To that end, we fix $r \geq 1$, $1 \leq p \leq q \leq \infty$ and $\mathcal{A} \subset \R^{d\times d}$, such that $(\mathcal{A}, p, q)$-rigidity holds on cubes. Further, we consider a decomposition $\dist(\dif u, \mathcal{A}) = F_{\dist(\dif u, \mathcal{A})} + G_{\dist(\dif u, \mathcal{A})}$ in $\Leb^p+\Leb^q(U)$. 
\medskip

The idea of the proof relies on two observations. First, by construction we have $Eu \approx P_{Q_j^*}[u]$ on $Q_j$ and in view of \eqref{Eq:Estimate_PQ} and \eqref{Eq:Estimate_DPQ}, $u \approx P_{Q_j^*}[u]$ on $Q_j^*$, see \cref{Fig:ReflectionOfPolynomials}. Hence, to relate $Eu$ to $u$, the idea is to relate $P_{Q_j^*}[u]|_{Q_j}$ to $P_{Q_j^*}[u]|_{Q_j^*}$. This can be done, since $P_{Q_j^*}[u]$ is a polynomial and $Q_j$ and $Q_j^*$ have a similar size and controlled distance. This is made rigorous in the following lemma.

\begin{figure}[ht]
\begin{center}
	\includegraphics[page=1]{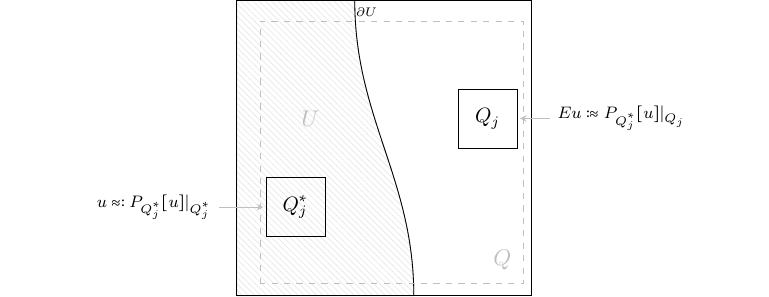}
\end{center}
\caption{\cref{Lem:ReflectionOfPolynomials} allows to relate $P_{Q_j^*}[u]|_{Q_j}$ to $P_{Q_j^*}[u]|_{Q_j^*}$ and thus $Eu|_{Q_j}$ to $u$.}
\label{Fig:ReflectionOfPolynomials}
\end{figure}

\begin{lemma}[{cf.\ \cite[Lem.~2.1]{Jon81}}] \label{Lem:ReflectionOfPolynomials}
Let $m \in \N$, $1 \leq p \leq q \leq \infty$, $Q \subset \R^d$ a cube and $E,F \subset Q$ measurable sets satisfying $\abs{E},\abs{F} \geq \gamma \abs{Q}$ for some $\gamma > 0$. There exists a constant $c=c(\gamma,m,d)$, such that for any polynomial $P$ of degree at most $m$ and decomposition $P = F_{P|_E} + G_{P|_E}$ in $\Leb^p+\Leb^q(E)$, we find a decomposition $P = F_{P|_F} + G_{P|_F}$ in $\Leb^p+\Leb^q(F)$ satisfying
\begin{equation}
\begin{aligned}
	\norm{F_{P|_F}}_{\Leb^p(F)} &\leq c \norm{F_{P|_E}}_{\Leb^p(E)}, \\
	\norm{G_{P|_F}}_{\Leb^q(F)} &\leq c \norm{G_{P|_E}}_{\Leb^p(E)}.
\end{aligned}
\end{equation}
(For the proof see \cref{Sec:mixed-growth-estimates}.)
\end{lemma}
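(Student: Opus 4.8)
The plan is to reduce the statement to one genuinely nontrivial ingredient — a uniform Remez-type inequality for polynomials — the rest being elementary truncation and Chebyshev estimates. As a preliminary reduction I would pass to the unit cube: the affine change of variables $y = (x - \bar{x}(Q))/l(Q)$ maps $Q$ onto $[-\tfrac{1}{2},\tfrac{1}{2}]^d$, carries $E$ and $F$ to sets of the same relative measures (still $\ge \gamma\abs{Q}$), and preserves the degree of $P$; moreover each estimate in the claim is covariant under this rescaling, since on both sides a decomposition piece is measured in $\Leb^p$ (resp. $\Leb^q$) over a subset of $Q$, so both sides acquire the same power of $l(Q)$ and the factor cancels. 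Hence I may assume $\abs{Q} = 1$ (and note $\gamma \le 1$, since $E \subset Q$).

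The key input is the multivariate Remez (Brudnyi--Ganzburg) inequality: there is a constant $\kappa_0 = \kappa_0(\gamma,m,d)$ such that for every polynomial $R$ of degree at most $m$ and every measurable $S \subset Q$ with $\abs{S} \ge \tfrac{\gamma}{2}\abs{Q}$ one has $\norm{R}_{\Leb^\infty(Q)} \le \kappa_0\,\norm{R}_{\Leb^\infty(S)}$. I would cite this from the literature rather than reprove it: a soft compactness argument on the finite-dimensional space of polynomials of degree $\le m$ yields such an equivalence only for a fixed set $S$, whereas here $S$ will be an a priori uncontrolled sublevel set and uniformity over all admissible shapes of $S$ is precisely what is needed.

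Given a decomposition $P = F_{P|_E} + G_{P|_E}$ in $\Leb^p+\Leb^q(E)$, set $a := \norm{F_{P|_E}}_{\Leb^p(E)}$, $b := \norm{G_{P|_E}}_{\Leb^q(E)}$, and choose the truncation levels $t_a := (4/\gamma)^{1/p}a$ and $t_b := (4/\gamma)^{1/q}b$ (read as $a$, resp. $b$, when $p$, resp. $q$, equals $\infty$). By Chebyshev's inequality $\abs{\{x \in E : \abs{F_{P|_E}(x)} > t_a\}} \le a^p/t_a^p = \tfrac{\gamma}{4}$, and likewise $\abs{\{x \in E : \abs{G_{P|_E}(x)} > t_b\}} \le \tfrac{\gamma}{4}$. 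Consequently the sublevel set $S := \{x \in Q : \abs{P(x)} \le t_a + t_b\}$ contains $\{x \in E : \abs{F_{P|_E}(x)} \le t_a,\ \abs{G_{P|_E}(x)} \le t_b\}$ and hence has measure at least $\abs{E} - \tfrac{\gamma}{2} \ge \tfrac{\gamma}{2}$. Applying the Remez inequality to $P$ and $S$ (on which $\abs{P} \le t_a + t_b$) yields $\norm{P}_{\Leb^\infty(Q)} \le \kappa_0(t_a + t_b) \le 2\kappa_0\max\{t_a,t_b\}$.

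The decomposition on $F$ then follows from a one-line case distinction. If $t_a \ge t_b$, take $F_{P|_F} := P|_F$ and $G_{P|_F} := 0$: then $\norm{F_{P|_F}}_{\Leb^p(F)} \le \norm{P}_{\Leb^\infty(Q)}\abs{F}^{1/p} \le 2\kappa_0 t_a = 2\kappa_0(4/\gamma)^{1/p}a \le \tfrac{8\kappa_0}{\gamma}a$, using $\abs{F} \le 1$ and $(4/\gamma)^{1/p} \le 4/\gamma$, while the $\Leb^q$-part is zero. If $t_a < t_b$, take $F_{P|_F} := 0$ and $G_{P|_F} := P|_F$: then $\norm{G_{P|_F}}_{\Leb^q(F)} \le \norm{P}_{\Leb^\infty(Q)}\abs{F}^{1/q} \le 2\kappa_0 t_b \le \tfrac{8\kappa_0}{\gamma}b$. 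In both cases $F_{P|_F} + G_{P|_F} = P$ on $F$, so the claim holds with $c := 8\kappa_0/\gamma = c(\gamma,m,d)$; in particular $c$ does not depend on $p$ and $q$, since $(4/\gamma)^{1/p}$ and $(4/\gamma)^{1/q}$ are bounded by $4/\gamma$ uniformly for $p,q \in [1,\infty]$. The main — indeed the only — obstacle is the uniform Remez inequality; everything else is routine.
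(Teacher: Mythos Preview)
Your proof is correct and reaches exactly the same decomposition as the paper: one performs a case distinction and sets either $F_{P|_F}=P,\ G_{P|_F}=0$ or $F_{P|_F}=0,\ G_{P|_F}=P$, depending on which of the two input norms dominates. The only difference is how the key polynomial estimate is obtained: the paper cites Jones's $p=q$ lemma directly and then invokes the equivalence of $\Leb^p$ and $\Leb^q$ norms on the finite-dimensional space of polynomials of degree $\le m$, whereas you derive the needed $\Leb^\infty$ bound from scratch via Chebyshev plus the Brudnyi--Ganzburg (Remez) inequality. Since Jones's lemma is itself a Remez-type statement, the two arguments are essentially equivalent; yours is a bit more self-contained and also makes transparent that the constant does not depend on $p$ or $q$.
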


The second observation is that due to the definition of the partition of unity, $Eu|_{Q_j}$ admits contributions $P_{Q_k^*}[u]$ only from neighboring cubes $Q_k$, i.e.\ only if $Q_k \cap Q_j \neq 0$, see \cref{Fig:Estimate_chain_W1}. Thus, we need to relate $P_{Q_j^*}[u]$ and $P_{Q_k^*}[u]$ on $Q_j$. To do so, together with \cref{Lem:ReflectionOfPolynomials} we can use that the reflections of neighboring cubes are not necessarily neighbors but at least connected by a controlled chain in view of \cref{Lem:Reflection}. The following lemma provides estimates for such a case.

\begin{figure}[ht]
\begin{center}
	\includegraphics[page=2]{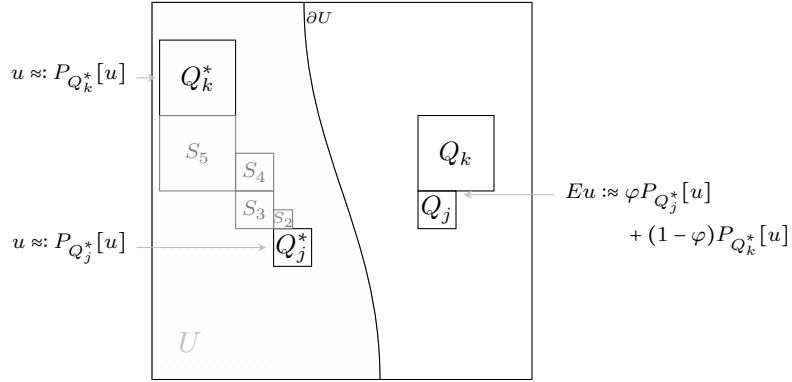}
\end{center}
\caption{\cref{Lem:Estimate_chain_W1} allows to relate $P_{Q_j^*}[u]$ and $P_{Q_k^*}[u]$ which is necessary to estimate $Eu$ close to $Q_j \cap Q_k$.}
\label{Fig:Estimate_chain_W1}
\end{figure}

\begin{lemma}[{cf.\ \cite[Lem.~3.1]{Jon81} and \cite[Lem.~2.3]{DM04}}] \label{Lem:Estimate_chain_W1}
Let $m \in \N$ and $\mathcal{F} := \set{S_1, \dots, S_m} \subset W_1$ a chain of cubes in $W_1$. Then, we find decompositions $P_{S_1}[u] - P_{S_m}[u] = F_{P_{S_1}[u] - P_{S_m}[u]} + G_{P_{S_1}[u] - P_{S_m}[u]}$ in $\Leb^p+\Leb^q(S_1, \R^d)$ and $\dif{P_{S_1}[u]} - \dif{P_{S_m}[u]} = F_{\dif{P_{S_1}[u]} - \dif{P_{S_m}[u]}} + G_{\dif{P_{S_1}[u]} - \dif{P_{S_m}[u]}}$ in $\Leb^p+\Leb^q(S_1, \R^{d\times d})$, such that
\begin{gather}
	\begin{aligned}
		\norm{F_{P_{S_1}[u] - P_{S_m}[u]}}_{\Leb^p(S_1)} &\leq c l(S_1) \norm{F_{\dist(\dif{u}, \mathcal{A})}}_{\Leb^p(\bigcup \frac{33}{32}\mathcal{F})}, \\
		\norm{G_{P_{S_1}[u] - P_{S_m}[u]}}_{\Leb^q(S_1)} &\leq c l(S_1) \norm{G_{\dist(\dif{u}, \mathcal{A})}}_{\Leb^q(\bigcup \frac{33}{32}\mathcal{F})},
	\end{aligned} \\[\baselineskip]
	\begin{aligned}
		\norm{F_{\dif{P_{S_1}[u]} - \dif{P_{S_m}[u]}}}_{\Leb^p(S_1)} &\leq c \norm{F_{\dist(\dif{u}, \mathcal{A})}}_{\Leb^p(\bigcup \frac{33}{32}\mathcal{F})}, \\
		\norm{G_{\dif{P_{S_1}[u]} - \dif{P_{S_m}[u]}}}_{\Leb^q(S_1)} &\leq c \norm{G_{\dist(\dif{u}, \mathcal{A})}}_{\Leb^q(\bigcup \frac{33}{32}\mathcal{F})},
	\end{aligned}
\end{gather}
for some constant $c = c(d,m,p,q) > 0$. Here, $\frac{33}{32}\mathcal{F} := \set{\frac{33}{32}S_1, \dots, \frac{33}{32}S_m}$.
\end{lemma}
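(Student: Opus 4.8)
\emph{Plan of the proof.} The plan is to run a telescoping argument along the chain and reduce everything to pairs of neighbouring cubes, to which one applies $(\mathcal{A},p,q)$-rigidity on cubes together with the polynomial comparison \cref{Lem:ReflectionOfPolynomials}. Two facts are used throughout. First, for every Whitney cube $T \in W_1$ one has $\tfrac{33}{32}T \subset U$ (because $\dist(T,\partial U) \geq l(T)$ by \cref{Def:WhitneyDecomposition}), so that $(\mathcal{A},p,q)$-rigidity is available on $T$ w.r.t.\ $\tfrac{33}{32}T$; by the discussion around \eqref{Eq:Estimate_PQ}--\eqref{Eq:Estimate_DPQ} we may use the explicit affine map $P_T[u]$ in \cref{Def:GeneralRigidity}, so that for the given decomposition $\dist(\dif u,\mathcal{A}) = F_{\dist(\dif u,\mathcal{A})} + G_{\dist(\dif u,\mathcal{A})}$ in $\Leb^p+\Leb^q(U)$ we obtain decompositions of $u - P_T[u]$ and of $\dif u - \dif P_T[u]$ in $\Leb^p+\Leb^q(T)$ whose components are bounded by $c\,l(T)\|F_{\dist(\dif u,\mathcal{A})}\|_{\Leb^p(\frac{33}{32}T)}$ and $c\,l(T)\|G_{\dist(\dif u,\mathcal{A})}\|_{\Leb^q(\frac{33}{32}T)}$ (respectively by the same quantities without the factor $l(T)$ for the gradient). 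Second, since the chain length $m$ is bounded by an absolute constant, \cref{Def:WhitneyDecomposition}(iii) forces $l(S_i) \simeq l(S_1)$ and $\dist(S_i,S_1) \lesssim l(S_1)$ for all $i$, with constants depending only on $d$ and $m$; in particular every cube and every $\tfrac{33}{32}$-dilate occurring below lies in $\bigcup\tfrac{33}{32}\mathcal{F}$, and any cube $R$ with $l(R)\simeq l(S_1)$ meeting $S_1$ satisfies $|S_1\cap R|, |S_j\cap R| \gtrsim |R|$.

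\textbf{Step 1 (reduction to neighbours).} Write
\begin{align*}
  P_{S_1}[u] - P_{S_m}[u] &= \sum_{i=1}^{m-1}\bigl(P_{S_i}[u] - P_{S_{i+1}}[u]\bigr), &
  \dif P_{S_1}[u] - \dif P_{S_m}[u] &= \sum_{i=1}^{m-1}\bigl(\dif P_{S_i}[u] - \dif P_{S_{i+1}}[u]\bigr).
\end{align*}
Each summand is an affine map (respectively a constant matrix) and $m \leq c$, so it suffices to produce, for one pair of neighbouring Whitney cubes $S := S_i$, $S' := S_{i+1}$, decompositions of $P_S[u] - P_{S'}[u]$ and of $\dif P_S[u] - \dif P_{S'}[u]$ in $\Leb^p+\Leb^q(S_1)$ with $\Leb^p$-component of norm $\leq c\,l(S_1)\|F_{\dist(\dif u,\mathcal{A})}\|_{\Leb^p(\frac{33}{32}S\cup\frac{33}{32}S')}$ and $\Leb^q$-component of norm $\leq c\,l(S_1)\|G_{\dist(\dif u,\mathcal{A})}\|_{\Leb^q(\frac{33}{32}S\cup\frac{33}{32}S')}$ (the factor $l(S_1)$ being dropped in the gradient case), with $c=c(d,m,p,q)$; summing over $i$ and using $\bigcup_i(\tfrac{33}{32}S_i\cup\tfrac{33}{32}S_{i+1}) \subset \bigcup\tfrac{33}{32}\mathcal{F}$ then gives the assertion. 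Transporting a decomposition of the affine map $P_S[u]-P_{S'}[u]$ from $S$, $S'$, or a bridge cube (see Step 2) to $S_1$ is legitimate by \cref{Lem:ReflectionOfPolynomials}: all the sets involved have measure $\gtrsim |R|$ inside a fixed cube $R$ of side $\simeq l(S_1)$ containing them, so their mixed-growth norms are comparable up to $c(d,m,p,q)$.

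\textbf{Step 2 (one neighbouring pair).} Let $S,S'$ be neighbouring Whitney cubes, WLOG $l(S)\leq l(S')$; after possibly refining the chain by a bounded local detour we may assume $\overline S\cap\overline{S'}$ contains a full $(d-1)$-face of $S$. Pick a cube $T$ with $l(T)=\tfrac12 l(S)$ straddling that face, so that $|T\cap S| = |T\cap S'| = \tfrac12|T|$ and $\tfrac{33}{32}T\subset U$ (the interface lies at distance $\geq\tfrac14 l(S)$ from $\partial U$). The first paragraph applied on $S$, $S'$ and $T$ yields decompositions $u-P_S[u]=F+G$ in $\Leb^p+\Leb^q(S)$, $u-P_T[u]=\widetilde F+\widetilde G$ in $\Leb^p+\Leb^q(T)$, $u-P_{S'}[u]=F'+G'$ in $\Leb^p+\Leb^q(S')$, and their gradient analogues. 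On $T\cap S$ — which has measure $\gtrsim|S|$ in $S$ and $\gtrsim|T|$ in $T$ — the identity $P_S[u]-P_T[u] = (P_S[u]-u)+(u-P_T[u])$ gives a decomposition of the affine map $P_S[u]-P_T[u]$ in $\Leb^p+\Leb^q(T\cap S)$ with components $\leq c\,l(S_1)\|F_{\dist(\dif u,\mathcal{A})}\|_{\Leb^p(\frac{33}{32}S\cup\frac{33}{32}T)}$, $\leq c\,l(S_1)\|G_{\dist(\dif u,\mathcal{A})}\|_{\Leb^q(\frac{33}{32}S\cup\frac{33}{32}T)}$, and \cref{Lem:ReflectionOfPolynomials} upgrades it to $S_1$; the same on $T\cap S'$ bounds $P_{S'}[u]-P_T[u]$, and $P_S[u]-P_{S'}[u] = (P_S[u]-P_T[u]) - (P_{S'}[u]-P_T[u])$ closes the estimate. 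The gradient estimate is obtained identically, dropping the factor $l(S_1)$ and using $\dif P_T[u]=\fint_T\dif u$ (so the constant matrix $\dif P_S[u]-\dif P_T[u]$ is controlled by the averages over $T\cap S$ of $|\dif P_S[u]-\dif u|$ and $|\dif u-\dif P_T[u]|$, each bounded via Hölder's inequality on the cube); as always, the passage between bounds of the form $|v|\leq\tilde f+\tilde g$ and honest decompositions $v=f+g$ is \cref{Rem:Properties_rigidity}(i).

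\textbf{Main obstacle.} The analytic content (cube-rigidity plus Poincaré, telescoping, triangle inequalities) is routine once the geometry is set up; the delicate part is the geometric bookkeeping: choosing the bridge cube $T$ and checking that every $\tfrac{33}{32}$-dilate entering a cube-rigidity application stays inside $U$ and inside $\bigcup\tfrac{33}{32}\mathcal{F}$, ensuring the constant depends on the chain length $m$ only (and on $d,p,q$), handling the borderline case where consecutive chain cubes meet only in a face of dimension $<d-1$ (inserting one further bridge cube, or arranging in \cref{Lem:Reflection} that consecutive cubes share a $(d-1)$-face), and carrying \emph{both} components of every mixed-growth decomposition through each triangle inequality and each invocation of \cref{Lem:ReflectionOfPolynomials}.
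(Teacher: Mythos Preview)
Your proof is correct and follows the same overall plan as the paper: telescope along the chain, reduce to neighbouring pairs, and compare via $(\mathcal{A},p,q)$-rigidity on cubes together with \cref{Lem:ReflectionOfPolynomials}. The one genuine difference is the choice of intermediate in Step~2. You insert a bridge cube $T$ straddling a shared $(d-1)$-face of $S_i$ and $S_{i+1}$ and compare $P_{S_i}[u]$, $P_{S_{i+1}}[u]$ each against $P_T[u]$. The paper instead uses $P_{S_i\cup S_{i+1}}[u]$ as the intermediate, writing
\[
P_{S_i}[u]-P_{S_{i+1}}[u]=\bigl(P_{S_i}[u]-P_{S_i\cup S_{i+1}}[u]\bigr)+\bigl(P_{S_i\cup S_{i+1}}[u]-P_{S_{i+1}}[u]\bigr),
\]
and then adding and subtracting $u$ on $S_i$ (respectively $S_{i+1}$), applying \eqref{Eq:Estimate_PQ}--\eqref{Eq:Estimate_DPQ} with $Q=S_i$, $Q=S_{i+1}$, and $Q=S_i\cup S_{i+1}$. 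This buys exactly the robustness you flag as the main obstacle: it never requires that consecutive Whitney cubes share a full face, so there is no need for local detours or auxiliary bridge cubes when $S_i\cap S_{i+1}$ is lower-dimensional. The price is that one must know $(\mathcal{A},p,q)$-rigidity holds on the union $S_i\cup S_{i+1}$, but this follows from \cref{Rem:Properties_rigidity}(iii) applied to the overlapping dilates $\tfrac{33}{32}S_i$ and $\tfrac{33}{32}S_{i+1}$. Your approach is perfectly valid once the face issue is handled; the paper's choice of intermediate simply avoids that bookkeeping altogether.
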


\begin{proof}
Using a telescopic sum, we obtain the estimate
\begin{align*}
	\abs{P_{S_1}[u] - P_{S_m}[u]} &\leq \sum_{i=1}^{m-1} \abs{P_{S_i}[u] - P_{S_{i+1}}[u]}  \\
	&\leq \sum_{i=1}^{m-1} \abs{P_{S_i}[u] - P_{S_i \cup S_{i+1}}[u]} + \abs{P_{S_i \cup S_{i+1}}[u] - P_{S_{i+1}}[u]}, \qquad \text{on } S_1.
\end{align*}
In view of \cref{Lem:ReflectionOfPolynomials} it suffices to estimate $P_{S_i}[u] - P_{S_i \cup S_{i+1}}[u]$ and $P_{S_i \cup S_{i+1}}[u] - P_{S_{i+1}}[u]$ on $S_i$ (respectively on $S_{i+1}$) instead of $S_1$.  Here, we can add and subtract $u$ and then use \eqref{Eq:Estimate_PQ} and \eqref{Eq:Estimate_DPQ} with $Q = S_i$ and $Q^+ = \frac{33}{32}S_i$ (respectively with $Q = S_{i+1}$ and $Q = S_i \cup S_{i+1}$ and $Q^+$ analogously) to obtain the desired mixed growth estimates in terms of $\dist(\dif{u}, \mathcal{A})$. Note that in order to control $\gamma$ in \cref{Lem:ReflectionOfPolynomials}, we use that $S_1$ and $S_i$, $S_{i+1}$ are close in view of \cref{Lem:Reflection}. Moreover, the constant in \eqref{Eq:Estimate_PQ} and \eqref{Eq:Estimate_DPQ} can be chosen uniformly by scaling and translation invariance of the constant in $(\mathcal{A},p,q)$-rigidity.
\end{proof}

We use these lemmas to estimate $Eu$. We only provide a sketch of the proofs. For the details we refer to the related lemmas in \cite{Jon81} and \cite{DM04}.

\begin{lemma}[{cf.\ \cite[Lem.~3.2]{Jon81} and \cite[Lem.~2.4]{DM04}}] \label{Lem:Estimate_W3}
Let $Q_j \in W_3$. We define
\begin{equation}
	\mathcal{F}(Q_j) := \class{\tfrac{33}{32}S_i}{S_i \in F_{j,k}, Q_k \in W_3, Q_j \cap Q_k \neq \emptyset},
\end{equation}
where $F_{j,k}$ denote the chains connecting $Q_j^*$ and $Q_k^*$ in $W_1$ defined in \cref{Lem:Reflection} (b). We find decompositions $Eu = F_{Eu} + G_{Eu} + H_{Eu}$ in $\Leb^p+\Leb^q+\Leb^r(Q_j, \R^d)$ and $\dist(\dif{Eu}, \mathcal{A}) = F_{\dist(\dif{Eu}, \mathcal{A})} + G_{\dist(\dif{Eu}, \mathcal{A})}$ in $\Leb^p+\Leb^q(Q_j)$, such that
\begin{gather}
	\begin{aligned} \label{Eq:EstimateEuOnW3}
		\norm{F_{Eu}}_{\Leb^p(Q_j)} &\leq c l(Q_j) \norm{F_{\dist(\dif{u}, \mathcal{A})}}_{\Leb^p(\bigcup\mathcal{F}(Q_j))}, \\
		\norm{G_{Eu}}_{\Leb^q(Q_j)} &\leq c l(Q_j) \norm{G_{\dist(\dif{u}, \mathcal{A})}}_{\Leb^q(\bigcup\mathcal{F}(Q_j))}, \\
		\norm{H_{Eu}}_{\Leb^r(Q_j)} &\leq c \norm{u}_{\Leb^r(Q_j^*)},
	\end{aligned} \\[\baselineskip]
	\begin{aligned} \label{Eq:EstimateDEuOnW3}
		\norm{F_{\dist(\dif{Eu}, \mathcal{A})}}_{\Leb^p(Q_j)} &\leq C \norm{F_{\dist(\dif{u}, \mathcal{A})}}_{\Leb^p(\bigcup\mathcal{F}(Q_j))}, \\
		\norm{G_{\dist(\dif{Eu}, \mathcal{A})}}_{\Leb^q(Q_j)} &\leq C \norm{G_{\dist(\dif{u}, \mathcal{A})}}_{\Leb^q(\bigcup\mathcal{F}(Q_j))},
	\end{aligned}
\end{gather}
for some constants $c = c(d, e, \mathcal{A}, p, q, r) > 0$ and $C = C(d, e, \mathcal{A}, p, q) > 0$.
\end{lemma}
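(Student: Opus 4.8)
The plan is to adapt the proofs of \cite[Lem.~3.2]{Jon81} and \cite[Lem.~2.4]{DM04}, taking care to propagate the mixed-growth $\Leb^p+\Leb^q$-decompositions and the abstract $(\mathcal{A},p,q)$-rigidity through every estimate. Fix $Q_j\in W_3$. Since $\supp\varphi_k\subset\tfrac{17}{16}Q_k$, on $Q_j$ the extension \eqref{Eq:Formula_extension} reduces to $Eu=\sum_k P_{Q_k^*}[u]\,\varphi_k$, where the sum runs only over the cubes $Q_k\in W_3$ touching $Q_j$ (a number bounded in terms of $d$). By \cref{Lem:Reflection}(c) we have $\sum_k\varphi_k\equiv1$ on the open set $\operatorname{int}Q_j\subset\bigcup W_3$, hence $\sum_k\nabla\varphi_k=0$ there, so that on $\operatorname{int}Q_j$
\begin{align*}
  Eu&=P_{Q_j^*}[u]+\sum_k\big(P_{Q_k^*}[u]-P_{Q_j^*}[u]\big)\varphi_k,\\
  \dif{Eu}&=\fint_{Q_j^*}\dif{u}+\sum_k\Big(\fint_{Q_k^*}\dif{u}-\fint_{Q_j^*}\dif{u}\Big)\varphi_k+\sum_k\big(P_{Q_k^*}[u]-P_{Q_j^*}[u]\big)\otimes\nabla\varphi_k.
\end{align*}
It then remains to decompose each block on $Q_j$ in the form of \eqref{Eq:EstimateEuOnW3}--\eqref{Eq:EstimateDEuOnW3}.

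The three difference blocks are handled by \cref{Lem:Estimate_chain_W1}: for each admissible $k$ the chain $F_{j,k}\subset W_1$ from $Q_j^*=S_1$ to $Q_k^*=S_m$ has length $m\leq c$ by \cref{Lem:Reflection}(b), so that lemma decomposes $P_{Q_j^*}[u]-P_{Q_k^*}[u]$ (with an $l(Q_j^*)$-weight) and $\fint_{Q_j^*}\dif{u}-\fint_{Q_k^*}\dif{u}$ (without weight) on $Q_j^*$ in $\Leb^p+\Leb^q$, with right-hand sides controlled by $\dist(\dif{u},\mathcal{A})$ over $\bigcup\tfrac{33}{32}F_{j,k}\subset\bigcup\mathcal{F}(Q_j)$. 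Since these objects are polynomials of degree $\leq1$ and $Q_j,Q_j^*$ lie inside a common cube of comparable measure, \cref{Lem:ReflectionOfPolynomials} transfers the decompositions to $Q_j$ with a uniform constant; multiplying by $\varphi_k\in[0,1]$, resp.\ by $\abs{\nabla\varphi_k}\leq C\,l(Q_k)^{-1}\leq c\,l(Q_j)^{-1}$, and summing the finitely many $k$ produces the difference-block contributions to \eqref{Eq:EstimateEuOnW3} and \eqref{Eq:EstimateDEuOnW3}. For the constant matrix $\fint_{Q_j^*}\dif{u}$ in $\dif{Eu}$ we apply $(\mathcal{A},p,q)$-rigidity on $Q_j^*$ with $Q^+=\tfrac{33}{32}Q_j^*$, taking the matrix $M'\in\overline{\mathcal{A}}$ that realizes $\dist(\fint_{Q_j^*}\dif{u},\mathcal{A})$ (cf.\ the remark containing \eqref{Eq:Estimate_PQ}); from $\dif{u}-M'=F+G$ in $\Leb^p+\Leb^q(Q_j^*)$ and $\abs{\fint_{Q_j^*}(\dif{u}-M')}\leq\fint_{Q_j^*}\abs{F}+\fint_{Q_j^*}\abs{G}$, together with the boundedness of $l(Q_j)$ for $Q_j\in W_3$, the constant $\dist(\fint_{Q_j^*}\dif{u},\mathcal{A})$ on $Q_j$ splits (via \cref{Rem:Properties_rigidity}(i)) into $\Leb^p+\Leb^q$-parts controlled by $\norm{F_{\dist(\dif{u},\mathcal{A})}}_{\Leb^p(\frac{33}{32}Q_j^*)}$ and $\norm{G_{\dist(\dif{u},\mathcal{A})}}_{\Leb^q(\frac{33}{32}Q_j^*)}$. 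Adding the three contributions (again \cref{Rem:Properties_rigidity}(i)) and using the triangle inequality for $\dist(\cdot,\mathcal{A})$ yields \eqref{Eq:EstimateDEuOnW3}.

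The remaining task is to route the leading term $P_{Q_j^*}[u]$ of $Eu$ into the bulk part $H_{Eu}$. On $Q_j^*$ write $P_{Q_j^*}[u]=u-(u-P_{Q_j^*}[u])$ and decompose $u-P_{Q_j^*}[u]=F'+G'$ in $\Leb^p+\Leb^q(Q_j^*)$ using the mixed-growth Poincaré--Wirtinger estimate \eqref{Eq:Estimate_DPQ} (with $Q=Q_j^*$), so that $\norm{F'}_{\Leb^p(Q_j^*)}\leq c\,l(Q_j)\norm{F_{\dist(\dif{u},\mathcal{A})}}_{\Leb^p(\frac{33}{32}Q_j^*)}$ and similarly for $G'$. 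This gives the pointwise bound $\abs{P_{Q_j^*}[u]}\leq\abs{u}+\abs{F'}+\abs{G'}$ on $Q_j^*$, i.e.\ a three-term $\Leb^r+\Leb^p+\Leb^q$-decomposition of the affine function $P_{Q_j^*}[u]$ there; the three-term version of \cref{Lem:ReflectionOfPolynomials} then gives $P_{Q_j^*}[u]=H''+F''+G''$ on $Q_j$ with $\norm{H''}_{\Leb^r(Q_j)}\leq c\norm{u}_{\Leb^r(Q_j^*)}$ and $\norm{F''}_{\Leb^p(Q_j)},\norm{G''}_{\Leb^q(Q_j)}$ bounded as above. Setting $H_{Eu}:=H''$, adding $F'',G''$ to the difference-block contributions, and noting $\tfrac{33}{32}Q_j^*\in\mathcal{F}(Q_j)$ (trivial chain $F_{j,j}=\set{Q_j^*}$), completes \eqref{Eq:EstimateEuOnW3}. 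Uniformity of all constants in $d,e,\mathcal{A},p,q,r$ follows from the scaling and translation invariance of \cref{Rem:Properties_rigidity}(ii) and the fact that the Whitney and reflection data depend on $U$ only through $e$ (\cref{Lem:Reflection}).

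The main obstacle is the bookkeeping rather than any single estimate: throughout, the $\Leb^p$-part of every decomposition must stay controlled by the $\Leb^p$-part of $\dist(\dif{u},\mathcal{A})$ and the $\Leb^q$-part by the $\Leb^q$-part (never mixing the two), and the bulk term $H_{Eu}$ must be controlled by $\norm{u}_{\Leb^r}$ \emph{alone}, without acquiring any $\dif{u}$- or $\dist(\dif{u},\mathcal{A})$-contribution. This is exactly what forces the three-term split of $P_{Q_j^*}[u]$ (instead of absorbing its gradient part) and the repeated appeal to \cref{Rem:Properties_rigidity}(i) when summing blocks. A secondary point requiring a brief check is that $\sum_k\nabla\varphi_k$ does vanish on $\operatorname{int}Q_j$ for \emph{every} $Q_j\in W_3$, including those adjacent to the larger cubes of $W_2\setminus W_3$; this holds because $\sum_{Q_k\in W_3}\varphi_k$ is constantly $1$ on the open set $\operatorname{int}Q_j\subset\bigcup W_3$.
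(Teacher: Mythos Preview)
Your proposal is correct and follows essentially the same approach as the paper: the identity $Eu=P_{Q_j^*}[u]+\sum_k(P_{Q_k^*}[u]-P_{Q_j^*}[u])\varphi_k$ on $Q_j$, the chain lemma for the differences, transfer via \cref{Lem:ReflectionOfPolynomials}, and the split $P_{Q_j^*}[u]=u-(u-P_{Q_j^*}[u])$ on $Q_j^*$ for the bulk term. The only cosmetic difference is that the paper bounds $\dist(\fint_{Q_j^*}\dif{u},\mathcal{A})$ directly by the averaged pointwise triangle inequality $\dist(M,\mathcal{A})\leq|M-\dif{u}(y)|+\dist(\dif{u}(y),\mathcal{A})$, rather than taking a detour through rigidity with a minimizer $M'\in\overline{\mathcal{A}}$; both routes work, but the paper's avoids your (harmless, unnecessary) appeal to the boundedness of $l(Q_j)$.
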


\begin{proof}
The partition of unity is constructed such that $\varphi_k|_{Q_j} \equiv 0$ whenever $Q_k \cap Q_j = \emptyset$ and $\sum_{Q_k \in W_3}\varphi_k \equiv 1$ on $Q_j$. Thus, we have the formula,
\begin{equation*}
	Eu|_{Q_j} = P_{Q_j^*}[u] + \sum_{\substack{Q_k \in W_3, \\ Q_j \cap Q_k \neq \emptyset}} (P_{Q_k^*}[u] - P_{Q_j^*}[u]) \varphi_k \qquad
	\text{on } Q_j.
\end{equation*}
Again, using \cref{Lem:ReflectionOfPolynomials}, it suffices to estimate $P_{Q_j^*}[u]$ and $P_{Q_k^*}[u] - P_{Q_j^*}[u]$ on $Q_j^*$ instead of $Q_j$. To show \eqref{Eq:EstimateEuOnW3}, we can now estimate on $Q_j^*$ the latter term using \cref{Lem:Estimate_chain_W1} and the former term using \eqref{Eq:Estimate_PQ} and the formula $P_{Q_j^*}[u] = (P_{Q_j^*}[u] - u) + u$. 
For the derivative, we obtain from the formula above,
\begin{equation*}
	(\dif{Eu} - \dif{P_{Q_j^*}[u]})|_{Q_j} = \sum_{\substack{Q_k \in W_3, \\ Q_j \cap Q_k \neq \emptyset}} (P_{Q_k^*}[u] - P_{Q_j^*}[u]) \nabla\varphi_k^T + \varphi_k (\dif{P_{Q_k^*}[u]} - \dif{P_{Q_j^*}[u]}) \qquad
	\text{on } Q_j,
\end{equation*}
We can estimate the right-hand side analogously to the procedure for $Eu|_{Q_j}$ and make the following observation from which we can infer \eqref{Eq:EstimateDEuOnW3}:
\begin{align*}
	\dist(\dif{Eu}, \mathcal{A}) &\leq \abs{\dif{Eu} - \dif{P_{Q_j^*}[u]}} + \dist(\dif{P_{Q_j^*}[u]}, \mathcal{A}) \\
	&\leq \abs{\dif{Eu} - \dif{P_{Q_j^*}[u]}} + \fint_{Q_j^*} \abs{\dif{u} - \dif{P_{Q_j^*}[u]}}  + \fint_{Q_j^*} \dist(\dif{u}, \mathcal{A}).
\end{align*}
\end{proof}

\begin{lemma}[{cf.\ \cite[Lem.~3.3]{Jon81} and \cite[Lem.~2.5]{DM04}}] \label{Lem:Estimate_W2_wo_W3}
Let $Q_j \in W_2 \setminus W_3$. We define
\begin{equation}
	\mathcal{F}(Q_j) := \class{\tfrac{33}{32}Q_k^*}{Q_k \in W_3, Q_j \cap Q_k \neq \emptyset}.
\end{equation}
We find decompositions $Eu = F_{Eu} + G_{Eu} + H_{Eu}$ in $\Leb^p+\Leb^q+\Leb^r(Q_j, \R^d)$ and $\dist(\dif{Eu}, \mathcal{A}) = F_{\dist(\dif{Eu}, \mathcal{A})} + G_{\dist(\dif{Eu}, \mathcal{A})} + H_{\dist(\dif{Eu}, \mathcal{A})}$ in $\Leb^p+\Leb^q+\Leb^r(Q_j)$, such that
\begin{gather}
	\begin{aligned} \label{Eq:EstimateEuOnW2}
		\norm{F_{Eu}}_{\Leb^p(Q_j)} &\leq c \rho \norm{F_{\dist(\dif{u}, \mathcal{A})}}_{\Leb^p(\bigcup\mathcal{F}(Q_j))}, \\
		\norm{G_{Eu}}_{\Leb^q(Q_j)} &\leq c \rho \norm{G_{\dist(\dif{u}, \mathcal{A})}}_{\Leb^q(\bigcup\mathcal{F}(Q_j))}, \\
		\norm{H_{Eu}}_{\Leb^r(Q_j)} &\leq c \norm{u}_{\Leb^r(\bigcup\mathcal{F}(Q_j))},
	\end{aligned}
\intertext{and}
	\begin{aligned} \label{Eq:EstimateDEuOnW2}
		\norm{F_{\dist(\dif{Eu}, \mathcal{A})}}_{\Leb^p(Q_j)} &\leq c \norm{F_{\dist(\dif{u}, \mathcal{A})}}_{\Leb^p(\bigcup\mathcal{F}(Q_j))}, \\
		\norm{G_{\dist(\dif{Eu}, \mathcal{A})}}_{\Leb^q(Q_j)} &\leq c \norm{G_{\dist(\dif{u}, \mathcal{A})}}_{\Leb^q(\bigcup\mathcal{F}(Q_j))}, \\
		\norm{H_{\dist(\dif{Eu}, \mathcal{A})}}_{\Leb^q(Q_j)} &\leq c \left(\rho^{-1}\norm{u}_{\Leb^r(\bigcup \mathcal{F}(Q_j))} + \norm{\dif{u}}_{\Leb^r(\bigcup \mathcal{F}(Q_j))}\right),
	\end{aligned}
\end{gather}
for some constant $c = c(d, e, \mathcal{A}, p, q, r) > 0$.
\end{lemma}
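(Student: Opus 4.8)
The plan is to follow the scheme of \cite[Lem.~3.3]{Jon81} and \cite[Lem.~2.5]{DM04}, adapted to the mixed-growth, $(\mathcal{A},p,q)$-rigidity setting; this case is actually easier than \cref{Lem:Estimate_W3} because all Whitney cubes that enter the estimate on a single $Q_j\in W_2\setminus W_3$ live on one length scale. First I would record the geometry: since $W_3$ collects the cubes of $W_2$ of edge length $\leq\rho/c$, a cube $Q_j\in W_2\setminus W_3$ has $l(Q_j)>\rho/c$ and is ``far'' from $\partial U$. Because $\supp\varphi_k\subset\tfrac{17}{16}Q_k$, the value of $Eu$ on $Q_j$ only involves the cubes $Q_k\in W_3$ with $\tfrac{17}{16}Q_k\cap Q_j\neq\emptyset$; if there are none, $Eu\equiv 0$ a.e.\ on $Q_j$ and the claim is trivial. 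For such $Q_k$, Whitney property (iii) of \cref{Def:WhitneyDecomposition} gives $\tfrac14 l(Q_j)\leq l(Q_k)\leq\rho/c$, so $l(Q_k)\simeq\rho\simeq l(Q_j)$; there are at most $c(d)$ of them, and by \cref{Lem:Reflection}(b) their reflections $Q_k^*\in W_1$ satisfy $l(Q_k^*)\simeq\rho$ and $\dist(Q_k^*,Q_j)\lesssim\rho$, so all the $Q_k^*$ occurring here are pairwise at distance $\lesssim\rho$ and of comparable size.

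Next I would fix one reference cube $Q_{k_0}$ among these and telescope: on $Q_j$, with $M_k:=\dif{P_{Q_k^*}[u]}$,
\[
  Eu=\Big(\sum_k\varphi_k\Big)P_{Q_{k_0}^*}[u]+\sum_k\varphi_k\big(P_{Q_k^*}[u]-P_{Q_{k_0}^*}[u]\big),
\]
\[
  \dif{Eu}=\Big(\sum_k\varphi_k\Big)M_{k_0}+\sum_k\varphi_k\,(M_k-M_{k_0})+\Big(\nabla\!\!\sum_k\varphi_k\Big)\otimes P_{Q_{k_0}^*}[u]+\sum_k(\nabla\varphi_k)\otimes\big(P_{Q_k^*}[u]-P_{Q_{k_0}^*}[u]\big),
\]
where $0\leq\sum_k\varphi_k\leq 1$, $\abs{\nabla\varphi_k}\leq C l(Q_k)^{-1}\simeq\rho^{-1}$, and $\nabla(\sum_k\varphi_k)$ vanishes wherever $\sum_k\varphi_k\equiv 1$. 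For the difference terms I would connect $Q_k^*$ to $Q_{k_0}^*$ by a chain $\mathcal{F}_k\subset W_1$ of length $\leq c(d,e)$, apply \cref{Lem:Estimate_chain_W1} on the base cube of $\mathcal{F}_k$, and transfer the resulting $\Leb^p+\Leb^q$ decompositions to $Q_j$ via \cref{Lem:ReflectionOfPolynomials} (the parameter $\gamma$ there is controlled because $Q_j$, $Q_k^*$, $Q_{k_0}^*$ have comparable sizes). This bounds $P_{Q_k^*}[u]-P_{Q_{k_0}^*}[u]$ on $Q_j$ by $c\rho$ times the $F$-/$G$-norms of $\dist(\dif u,\mathcal{A})$ over $\bigcup\mathcal{F}(Q_j)$ (with $\mathcal{F}(Q_j)$ tacitly enlarged by the finitely many chain cubes) and $M_k-M_{k_0}$ by the same without the $\rho$. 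Multiplying by $\varphi_k\leq1$ or by $\nabla\varphi_k\simeq\rho^{-1}$ and summing over the $\leq c(d)$ indices, the $\rho^{-1}$ cancels the $\rho$; this yields the $F$- and $G$-parts of the estimates for $Eu$ (with prefactor $\rho$) and for $\dist(\dif{Eu},\mathcal{A})$ (without prefactor).

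It then remains to handle the reference term. For the gradient I would write $\dist(\dif{Eu},\mathcal{A})\leq\abs{\dif{Eu}-M_{k_0}}+\dist(M_{k_0},\mathcal{A})$; since $M_{k_0}=\fint_{Q_{k_0}^*}\dif u$, the explicit-$M$ form of $(\mathcal{A},p,q)$-rigidity on $\tfrac{33}{32}Q_{k_0}^*\subset U$ (cf.\ \eqref{Eq:Estimate_PQ}) together with $\dist(M_{k_0},\mathcal{A})\leq\fint_{Q_{k_0}^*}\abs{\dif u-M_{k_0}}+\fint_{Q_{k_0}^*}\dist(\dif u,\mathcal{A})$ absorbs $\dist(M_{k_0},\mathcal{A})$ into the $\Leb^p+\Leb^q$ parts. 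The new remainder comes from $(\nabla\sum_k\varphi_k)\otimes P_{Q_{k_0}^*}[u]$ and from $((\sum_k\varphi_k)-1)M_{k_0}$: using $\operatorname{diam}(Q_j)\lesssim\rho$ and $\dist(Q_j,\bar x_{Q_{k_0}^*})\lesssim\rho$ in \eqref{Eq:DefinitionPolynomialPQ} one gets $\abs{P_{Q_{k_0}^*}[u]}\leq\fint_{Q_{k_0}^*}\abs{u}+C\rho\fint_{Q_{k_0}^*}\abs{\dif u}$ on $Q_j$, so (since $\abs{Q_j}\simeq\rho^d\simeq\abs{Q_{k_0}^*}$) the $\Leb^r(Q_j)$-norm of $\rho^{-1}\abs{P_{Q_{k_0}^*}[u]}+\abs{M_{k_0}}$ is $\leq c\big(\rho^{-1}\norm{u}_{\Leb^r(Q_{k_0}^*)}+\norm{\dif u}_{\Leb^r(Q_{k_0}^*)}\big)$, which is the $H_{\dist(\dif{Eu},\mathcal{A})}$-term. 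For $Eu$ itself I would split $P_{Q_{k_0}^*}[u]=(P_{Q_{k_0}^*}[u]-u)+u$ on $Q_{k_0}^*$, bound the polynomial difference in $\Leb^p+\Leb^q(Q_{k_0}^*)$ by $c\rho$ times $\dist(\dif u,\mathcal{A})$ via the mixed-growth Poincar\'e--Wirtinger estimate (cf.\ \eqref{Eq:Estimate_DPQ}), transfer the polynomial part to $Q_j$ by \cref{Lem:ReflectionOfPolynomials}, and keep $\norm{u}_{\Leb^r(Q_{k_0}^*)}$ as the $H_{Eu}$-term. Summing the $\leq c(d)$ contributions and using \cref{Rem:Properties_rigidity}(i) to pass from pointwise-dominated bounds to genuine decompositions yields all four groups of inequalities.

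The mixed-growth bookkeeping (three-term $\Leb^p+\Leb^q+\Leb^r$ decompositions, powers of $\rho$, iterating \cref{Lem:ReflectionOfPolynomials}) is purely mechanical given \cref{Lem:Estimate_chain_W1,Lem:ReflectionOfPolynomials}. The hard part is the chaining step: the reflected cubes $Q_k^*$ and $Q_{k_0}^*$ sit near $Q_j$ but need not intersect, so one cannot invoke \cref{Lem:Reflection}(b) verbatim and must re-run the $(e,\delta)$-domain chaining argument for two comparable-size Whitney cubes at comparable distance, and then verify that the chain cubes are included in $\bigcup\mathcal{F}(Q_j)$ on the right-hand sides — this is exactly the gap behind the ``cf.'' in the lemma's citation of \cite{Jon81,DM04}.
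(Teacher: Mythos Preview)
Your approach works in principle but is more elaborate than the paper's, and the ``hard part'' you flag at the end is self-inflicted. The paper does \emph{not} telescope through a reference cube $Q_{k_0}$ and does \emph{not} invoke \cref{Lem:Estimate_chain_W1} at all in this lemma. Instead it treats each summand $\varphi_k P_{Q_k^*}[u]$ on its own: by \cref{Lem:ReflectionOfPolynomials} the polynomial $P_{Q_k^*}[u]$ is transferred from $Q_j$ back to $Q_k^*$, and on $Q_k^*$ one writes $P_{Q_k^*}[u]=(P_{Q_k^*}[u]-u)+u$, feeding the first piece through \eqref{Eq:Estimate_DPQ} (this produces the $F$- and $G$-parts with the $\rho$-prefactor) and sending $\norm{u}_{\Leb^r(Q_k^*)}$ into the $H$-part. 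For the gradient the paper simply expands $\dif{Eu}=\sum_k P_{Q_k^*}[u]\nabla\varphi_k^T+\varphi_k\dif{P_{Q_k^*}[u]}$, bounds $\dist(\dif{Eu},\mathcal{A})\leq\abs{\dif{Eu}}+\dist(0,\mathcal{A})$ with $\dist(0,\mathcal{A})\leq\fint_{Q_k^*}\dist(\dif u,\mathcal A)+\fint_{Q_k^*}\abs{\dif u}$, and handles each $\dif{P_{Q_k^*}[u]}$ via $\dif{P_{Q_k^*}[u]}=(\dif{P_{Q_k^*}[u]}-\dif u)+\dif u$ on $Q_k^*$.

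The reason this crude direct route is available here but not in \cref{Lem:Estimate_W3} is precisely the $H$-term in \eqref{Eq:EstimateDEuOnW2}: in the $W_3$ case one needs $\dist(\dif{Eu},\mathcal{A})$ controlled purely in $\Leb^p+\Leb^q$, which forces the telescoping so that $\sum_k\nabla\varphi_k\equiv 0$ on $Q_j$ kills the raw $P_{Q_j^*}[u]\nabla\varphi_k$ contribution. On $Q_j\in W_2\setminus W_3$ one has $\sum_k\varphi_k\not\equiv 1$ anyway, so telescoping gains nothing, and the lemma explicitly permits the resulting $\rho^{-1}\norm{u}_{\Leb^r}+\norm{\dif u}_{\Leb^r}$ remainder. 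Consequently your chaining of $Q_k^*$ to $Q_{k_0}^*$ (for $Q_k,Q_{k_0}$ that touch $Q_j$ but possibly not each other) is unnecessary, and the whole ``hard part'' disappears.
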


\begin{proof}
Note that $\mathcal{F}(Q_j)$ is empty only if $Eu|_{Q_j} \equiv 0$. Thus, we may assume $\mathcal{F}(Q_j) \neq \emptyset$. Then, \eqref{Eq:EstimateEuOnW2} follows from the formula
\begin{equation*}
	Eu|_{Q_j} = \sum_{\substack{Q_k \in W_3, \\ Q_j \cap Q_k \neq \emptyset}} P_{Q_k^*}[u] \varphi_k \qquad
	\text{on } Q_j,
\end{equation*}
where we estimate $P_{Q_k^*}[u]$ as in the previous lemma using $P_{Q_k^*}[u] = (P_{Q_k^*}[u] - u) + u$ on $Q_k^*$. Note that since $\mathcal{F}(Q_j) \neq \emptyset$, we have $\frac{4\rho}{c_1} \geq l(Q_j) \geq \frac{\rho}{c_1}$. Similarly, we obtain \eqref{Eq:EstimateDEuOnW2} from the formulas
\begin{align*}
	\dif{Eu} &= \sum_{\substack{Q_k \in W_3, \\ Q_j \cap Q_k \neq \emptyset}} P_{Q_k^*}[u] \nabla\varphi_k^T + \varphi_k \dif{P_{Q_k^*}[u]}, \quad\text{and} \\
	\dist(\dif{Eu}, \mathcal{A}) &\leq \abs{\dif{Eu}} + \dist(0, \mathcal{A}) \leq \abs{\dif{Eu}} + \fint_{Q_k^*} \dist(\dif{u}, \mathcal{A}) + \fint_{Q_k^*} \abs{\dif{u}},
\end{align*}
by estimating $\dif{P_{Q_k^*}[u]}$ using $\dif{P_{Q_k^*}[u]} = (\dif{P_{Q_k^*}[u]} - \dif{u}) + \dif{u}$ in $Q_k^*$.
\end{proof}

\begin{proof}[Proof of \cref{Thm:Extenion_operator}]
Let $u \in \SobW^{1,1}(U, \R^d)$. Recall formula \eqref{Eq:Formula_extension}, which states
\begin{equation*}
	Eu(x) := \begin{cases} 
		u(x) & \text{if } x \in U, \\ 
		\sum_{Q_j \in W_3} P_{Q_j^*}(u)(x) \varphi_j(x) & \text{if } x \in \operatorname{int}(U^c).
	\end{cases}
\end{equation*}
Note that $(\set{0},p,p)$-regularity is trivially satisfied from the observation
\begin{equation*}
	\norm{\dif{u} - 0}_{\Leb^p(Q)} = \norm{\dif{u}}_{\Leb^p(Q)} = \norm{\dist(\dif{u}, \set{0})}_{\Leb^p(Q)},
\end{equation*}
for any measurable set $Q \subset \R^d$ and any $p \in [1,\infty]$. Therefore, \cref{Lem:Estimate_W3,Lem:Estimate_W2_wo_W3}, applied for $\mathcal{A} = \set{0}$ and $r=p=q=1$ and $r=p=q=\infty$ respectively, yield the estimates
\begin{align*}
	\norm{Ev}_{\SobW^{1,1}(\operatorname{int}(\R^d \setminus U))} &\leq c_1 \rho^{-1} \norm{v}_{\SobW^{1,1}(U)}, && \text{for all } v \in \SobW^{1,1}(U, \R^d), \tag{$*_1$}\\
	\norm{Ev}_{\SobW^{1,\infty}(\operatorname{int}(\R^d \setminus U))} &\leq c_1 \rho^{-1} \norm{v}_{\SobW^{1,\infty}(U)}, && \text{for all } v \in \SobW^{1,\infty}(U, \R^d). \tag{$*_2$}
\end{align*}
Indeed, $(*_1)$ and $(*_2)$ follow by summing over the cubes $Q_j \in W_2$, where we note that in view of the controlled size and distances of cubes and their reflected cubes by \cref{Def:WhitneyDecomposition,Lem:Reflection},
\begin{equation*}
	\sum_{Q_j \in W_3} \mathds{1}_{Q_j^*} \leq c_2 \mathds{1}_U, \quad 
	\sum_{Q_j \in W_3} \mathds{1}_{\bigcup \mathcal{F}(Q_j)} \leq c_2 \mathds{1}_U, \quad
	\sum_{Q_j \in W_2 \setminus W_3} \mathds{1}_{\bigcup \mathcal{F}(Q_j)} \leq c_2 \mathds{1}_{U^-_{\rho\alpha''}}.
\end{equation*}
We have to show that the weak derivative of $Eu$ exists and thus, indeed, $Eu$ belongs to $\SobW^{1,1}(\R^d, \R^d)$. This can be seen as follows. By the approximation result of Jones in \cite[Sec.~4]{Jon81}, there exists a sequence $(u_k) \subset \Cont^\infty(\R^d, \R^d)$, which converges to $u$ in $\SobW^{1,1}(U, \R^d)$. It follows analogously to \cite[Lem.~3.5]{Jon81} from $(*_2)$ that $Eu_k \in \SobW^{1,\infty}(\R^d, \R^d)$ is a Lipschitz map. Applying $(*_1)$ yields that $(E u_k)$ defines a Cauchy sequence in $\SobW^{1,1}(\R^d, \R^d)$ and thus, converges to $Eu$ in $\SobW^{1,1}(\R^d, \R^d)$. Especially $Eu \in \SobW^{1,1}(\R^d, \R^d)$ holds. Finally, we obtain the estimates in \cref{Thm:Extenion_operator} from \cref{Lem:Estimate_W3,Lem:Estimate_W2_wo_W3} by summing over the cubes in $W_2$.
\end{proof}

\paragraph{Geometric rigidity estimate and Korn's inequality in Jones domains.}
In the remainder of this section we utilize the extension operator to obtain \cref{Thm:RigidityEstimateJonesDomain,Cor:KornWithLpNormOfu,Cor:KornWithBoundaryValue,Cor:KornPeriodic}.

\begin{proof}[Proof of \cref{Thm:RigidityEstimateJonesDomain}]
Recall the definitions $r := \operatorname{diam(U)}$ and $\rho := \min\set{\frac{r}{2},\delta}$. We only consider the the case of the rigidity estimate, since the argument for Korn's inequality is similar in view of $\abs{\sym F} = \dist(F, \R^{d\times d}_{\skewMat})$.
\medskip

\stepemph{Step 0 -- Rigidity on finite unions of cubes}: We cover $U$ by $m$ cubes $Q_1, \dots, Q_m \subset \R^d$ of size $l(Q_i) \sim \rho$. We can do so with $m \lesssim (\tfrac{r}{\rho})^d$ many cubes. To be precise we ask for the following properties from the covering:
\begin{align*}
	&U \subset \bigcup_{i=1}^m Q_i \subset \bigcup_{i=1}^m \tfrac{33}{32}Q_i \subset U^+_{\rho\alpha'}, &
	&m \leq c_1 (\tfrac{r}{\rho})^d, &
	&\frac{1}{c_1}\rho \leq l(Q_i) \leq c_1 \rho, & 
	&\sum_{i=1}^m \mathds{1}_{Q_i} \leq c_1, \text{ and} \\
	&\mathrlap{\text{for each } i=1,\dots,m, \text{ there exists } j < i \text{ with } \abs{Q_i \cap Q_j} \geq \tfrac{1}{2}\abs{Q_i}.}
\end{align*}
Such a covering can be constructed using $d + 1$ shifted copies of the lattice 
\begin{equation*}
	\class{Q_k = \tfrac{\alpha'\rho}{2\sqrt{d}}(k + [0,1)^d)}{k \in \Z^d, Q_k\cap U \neq \emptyset}.
\end{equation*}
Recall $(\mathcal{A}, p, q)$-rigidity, cf.\ \cref{Def:GeneralRigidity}. Let us first show that $(\SO(d),p,q)$-rigidity holds on $\bigcup_{i=1}^m Q_i$ w.r.t.\ $Q^+ = \bigcup_{i=1}^m \frac{33}{32} Q_i$ with a constant $c m$ where $c=c(d,p,q) > 0$. Consider $v \in \SobW^{1,1}(\bigcup_{i=1}^m Q_i)$ and a decomposition $\dist(\dif{v}, \SO(d)) = F_{\dist(\dif{v}, \SO(d))} + G_{\dist(\dif{v}, \SO(d))}$ in $\Leb^p+\Leb^q(\bigcup_{i=1}^m Q_i)$. Due to \cite[Prop.~3.4]{FJM02} and \cite[Thm.~1.1]{CDM14} $(\SO(d),p,q)$-rigidity holds on cubes. Thus, we find $R_i \in \SO(d)$ and decompositions $\dif{v} - R_i = F_{\dif{v} - R_i} + G_{\dif{v} - R_i}$ in $\Leb^p+\Leb^q(Q_i, \R^{d\times d})$, $i=1,\dots,m$, satisfying
\begin{align*}
	\norm{F_{\dif{v} - R_i}}_{\Leb^p(Q_i)} &\leq c_2 \norm{F_{\dist(\dif{v}, \SO(d))}}_{\Leb^p(\tfrac{33}{32}Q_i)}, \\
	\norm{G_{\dif{v} - R_i}}_{\Leb^q(Q_i)} &\leq c_2 \norm{G_{\dist(\dif{v},\SO(d))}}_{\Leb^q(\tfrac{33}{32}Q_i)}.
\end{align*}
Note that by scaling and translation invariance of \cref{Def:GeneralRigidity}, we can choose the constant $c_2$ independent of $i$. We show that we can replace $R_i$ by $R_1$ by estimating the difference. For each $i$, we find a chain $Q_1 = Q_{i_1}, \dots Q_{i_k} = Q_i$ with $k \leq m$ and $\big|Q_{i_j} \cap Q_{i_{j+1}}\big| \geq c_3 \min \set{\big|Q_{i_j}\big|, \big|Q_{i_{j+1}}\big|}$. Thus,
\begin{align*}
	\abs{R_i - R_1} &\leq \sum_{j=1}^{k-1} \big|R_{i_{j+1}} - R_{i_j}\big| 
	\leq \sum_{j=1}^{k-1} \fint_{Q_{i_j} \cap Q_{\mathrlap{i_{j+1}}}} \big|\dif{v} - R_{i_j}\big| + \big|\dif{v} - R_{i_{j+1}}\big| \\
	&\leq \sum_{j=1}^{k-1} \fint_{Q_{i_j} \cap Q_{\mathrlap{i_{j+1}}}} \big|F_{\dif{v} - R_{i_j}}\big| + \big|G_{\dif{v} - R_{i_j}}\big| + \big|F_{\dif{Eu} - R_{i_{j+1}}}\big| + \big|G_{\dif{v} - R_{i_{j+1}}}\big|.
\end{align*}
We estimate the right-hand side as follows,
\begin{equation*}
	\norm{\fint_{Q_{i_j} \cap Q_{\mathrlap{i_{j+1}}}} \big|F_{\dif{v} - R_{i_j}}\big|}_{\Leb^p(Q_i)}
	\leq \tfrac{\abs{Q_i}^{1/p}}{\abs{Q_{i_j} \cap Q_{i_{j+1}}}^{1/p}} \big\Vert F_{\dif{v} - R_{i_j}}\big\Vert_{\Leb^p(Q_{i_j} \cap Q_{i_{j+1}})} \leq c_4 \norm{F_{\dist(\dif{v}, \SO(d))}}_{\Leb^p(\tfrac{33}{32}Q_{i_j})},
\end{equation*}
and obtain analogous results for the other terms. Note that the constant $c_4$ only depends on $d$ and $p$ (respectively on $q$). Hence, \cref{Rem:Properties_rigidity} (i) shows that $R_i - R_1$ admits a decomposition $R_i - R_1 = F_{R_i - R_1} + G_{R_i - R_1}$ in $\Leb^p+\Leb^q(\Omega, \R^{d\times d})$ with
\begin{align*}
	\norm{F_{R_i - R_1}}_{\Leb^p(Q_i)} &\leq k c_4 \norm{F_{\dist(\dif{v}, \SO(d))}}_{\Leb^p(\tfrac{33}{32}Q_i)}, \\
	\norm{G_{R_i - R_1}}_{\Leb^q(Q_i)} &\leq k c_4 \norm{G_{\dist(\dif{v},\SO(d))}}_{\Leb^q(\tfrac{33}{32}Q_i)}.
\end{align*}
Set $A_i := Q_i \setminus \bigcup_{j=i}^{i-1} Q_j$. Thus, choosing $R := R_1$, $F_{\dif{v} - R} := \sum_{i=1}^{m} (F_{\dif{v} - R_i} + F_{R_i - R_1}) \mathds{1}_{A_i}$ and $G_{\dif{v} - R} := \sum_{i=1}^{m} (G_{\dif{v} - R_i} + G_{R_i - R_1}) \mathds{1}_{A_i}$, we obtain $\dif{v} - R = F_{\dif{v} - R} + G_{\dif{v} - R}$ a.e.\ in $\bigcup_{i=1}^m Q_i$ and
\begin{align*}
	\norm{F_{\dif{v} - R}}_{\Leb^p(\bigcup_{i=1}^m Q_i)} &\leq m c_5 \norm{F_{\dist(\dif{v}, \SO(d))}}_{\Leb^p(\bigcup_{i=1}^m\frac{33}{32}Q_i)}, \\
	\norm{G_{\dif{v} - R}}_{\Leb^q(\bigcup_{i=1}^m Q_i)} &\leq m c_5 \norm{G_{\dist(\dif{v},\SO(d))}}_{\Leb^q(\bigcup_{i=1}^m\frac{33}{32}Q_i)}.
\end{align*}

\stepemph{Step 1 -- $A \equiv I$}: Let us first restrict to the case $A \equiv I$. Given $u \in \SobW^{1,1}(U, \R^d)$ and a decomposition $\dist(\dif{u}, \SO(d)) = F_{\dist(\dif{u}, \SO(d))} + G_{\dist(\dif{u}, \SO(d))}$ in $\Leb^p+\Leb^q(U)$, by \cref{Thm:Extenion_operator} we find a decomposition $\dist(\dif{Eu}, \SO(d)) = F_{\dist(\dif{Eu}, \SO(d))} + G_{\dist(\dif{Eu}, \SO(d))}$ in $\Leb^p+\Leb^q(\bigcup_{i=1}^m Q_i)$, such that
\begin{align*}
	\norm{F_{\dist(\dif{Eu}, \SO(d))}}_{\Leb^p(\bigcup_{i=1}^m Q_i)} &\leq c_6 \norm{F_{\dist(\dif{u}, \SO(d))}}_{\Leb^p(U)}, \\
	\norm{G_{\dist(\dif{Eu}, \SO(d))}}_{\Leb^q(\bigcup_{i=1}^m Q_i)} &\leq c_6 \norm{G_{\dist(\dif{u}, \SO(d))}}_{\Leb^q(U)}.
\end{align*}
Then, by Step 0, we find $R \in \SO(d)$ and a decomposition $\dif{Eu} - R = F_{\dif{Eu} - R} + G_{\dif{Eu} - R}$ in $\Leb^p+\Leb^q(\bigcup_{i=1}^m Q_i, \R^{d\times d})$, such that
\begin{align*}
	\norm{F_{\dif{Eu} - R}}_{\Leb^p(\bigcup_{i=1}^m Q_i)} &\leq c_7 (\tfrac{r}{\rho})^d \norm{F_{\dist(\dif{u}, \SO(d))}}_{\Leb^p(U)}, \\
	\norm{G_{\dif{Eu} - R}}_{\Leb^q(\bigcup_{i=1}^m Q_i)} &\leq c_7 (\tfrac{r}{\rho})^d \norm{G_{\dist(\dif{u}, \SO(d))}}_{\Leb^q(U)}.
\end{align*}
The claim follows by combining the two results.
\medskip

\stepemph{Step 2 -- Bilipschitz potentials}:
Now, if $A$ admits a Bilipschitz potential $a$ \cref{Thm:RigidityEstimateJonesDomain} is an easy consequence of the arguments above for $\tilde{u} := u \circ a^{-1}$ on $\tilde{U} = a(U)$ and using the transformation rule. This is possible, since $\tilde{U}$ is a Jones domain and $\tilde{u} \in \SobW^{1,1}(\tilde{U}, \R^d)$. Note that $e$, $\delta$ and $\tfrac{r}{\rho}$ are controlled when transforming the domain by a map in $\operatorname{Bil}_L(U,\R^d)$.
\medskip

\stepemph{Step 3 -- Arbitrary stress-free joints}:
Now, let us consider an arbitrary stress-free joint $A \in \operatorname{SFJ}(U)$ with potential $a$. By \ref{Property:SFJisPiecewiseBilipschitz}, we find disjoint Lipschitz domains $U_1,\dots,U_n$ with $\overline{U} = \bigcup_{i=1}^n \overline{U_i}$, such that $a$ is Bilipschitz on $U_i$ for all $i=1,\dots,n$. Thus, we may apply Step 2 on $U_i$ and obtain $R_i \in \SO(d)$ and decompositions $\dif{u}A(\placeholder)^{-1} -R_i = F^i_{\dif{u}A(\placeholder)^{-1} - R_i} + G^i_{\dif{u}A(\placeholder)^{-1} - R_i}$ in $\Leb^p+\Leb^q(U_i, \R^{d\times d})$, such that
\begin{align*}
	\norm{F^i_{\dif{u}A(\placeholder)^{-1} - R_i}}_{\Leb^p(U_i)} &\leq c_8 \norm{F_{\dist(\dif{u}A(\placeholder)^{-1}, \SO(d))}}_{\Leb^p(U_i)}, \\
	\norm{G^i_{\dif{u}A(\placeholder)^{-1} - R_i}}_{\Leb^q(U_i)} &\leq c_8 \norm{G_{\dist(\dif{u}A(\placeholder)^{-1}, \SO(d))}}_{\Leb^q(U_i)}.
\end{align*}
It remains to show that we can choose the same rotation for each $i=1,\dots,n$. Indeed, we can do so, by estimating the difference between the rotations of neighboring domains. Therefore, let $i\neq j$ with $\mathcal{H}^{d-1}(\partial U_i \cap \partial U_j) > 0$. Set $\Gamma_{ij} := \partial U_i \cap \partial U_j$ and define $\tilde{u}_i := u \circ a^{-1} \in \SobW^{1,p}(a(U_i),\R^d)$ and $\tilde{u}_j := u \circ a^{-1} \in \SobW^{1,p}(a(U_j),\R^d)$. Since $\tilde{u}_i = \tilde{u}_j$ on $a(\Gamma_{ij})$, we obtain by \cref{Cor:MatrixNorm}, continuity of the trace operator, the Poincaré-Wirtinger inequality and the transformation rule, for suitable $\xi_i, \xi_j \in \R^d$,
\begin{align*}
	\abs{R_i - R_j}^p &= \abs{R_j R_i^T - I}^p 
	\leq c_9 \int_{a(\Gamma_{ij})} \abs{R_i z - R_j z - \xi_i + \xi_j}^p \,\dx[\mathcal{H}^{d-1}(z)] \\
	&= c_9 \int_{a(\Gamma_{ij})} \abs{(R_i z - \tilde{u}(z) - \xi_i) - (R_j z - \tilde{u}(z) - \xi_j)}^p \,\dx[\mathcal{H}^{d-1}(z)], \\
	&\leq c_{10} \left(\int_{a(U_i)} \abs{R_i - \dif{\tilde{u}}}^p + \int_{a(U_j)} \abs{R_j - \dif{\tilde{u}}}^p\right) \\
	&\leq c_{11} \left(\int_{U_i} \abs{R_i - \dif{u}\dif{a}(\placeholder)^{-1}}^p + \int_{U_j} \abs{R_j - \dif{u}\dif{a}(\placeholder)^{-1}}^p\right) \\
	&\leq c_{12} \int_{U_i \cup U_j} \dist^p\big(\dif{u}\dif{a}(\placeholder)^{-1}, \SO(d)\big).
\end{align*}
Since $\dist(\dif{u}\dif{a}(\placeholder)^{-1}, \SO(d)) = F_{\dist(\dif{u}A(\placeholder)^{-1}, \SO(d))} + G_{\dist(\dif{u}A(\placeholder)^{-1}, \SO(d))}$, we obtain
\begin{equation*}
	\abs{R_i - R_j} \leq c_{13} \left(\big\Vert F_{\dist(\dif{u}A(\placeholder)^{-1}, \SO(d))}\big\Vert_{\Leb^p(U)} + \big\Vert G_{\dist(\dif{u}A(\placeholder)^{-1}, \SO(d))}\big\Vert_{\Leb^q(U)} \right).
\end{equation*}
Hence, since $U$ is connected, by an induction argument it follows that we may choose $R_i := R_1$ for all $i=1,\dots,n$, which finishes the proof. Note that $R_1$ is chosen arbitrarily among the $R_i$.
\end{proof}

\begin{proof}[Proof of \cref{Cor:KornWithLpNormOfu,Cor:KornWithBoundaryValue}.]
Let $u \in \SobW^{1,p}(U, \R^d)$. By \cref{Thm:RigidityEstimateJonesDomain}, we find $S \in \R^{d\times d}_{\skewMat}$, such that
\begin{equation*}
	\norm{\dif{u}}_{\Leb^p(U)} 
	\leq c_1 \left( \norm{\dif{u}A(\placeholder)^{-1} - S}_{\Leb^p(U)} + \abs{S} \right) 
	\leq c_2 \left( \norm{\sym(\dif{u}A(\placeholder)^{-1})}_{\Leb^p(U)} + \abs{S} \right).
\end{equation*}
Hence, it remains to bound $\abs{S}$ by the right-hand sides of \eqref{Eq:KornWithBoundaryValue} and \eqref{Eq:KornWithLpNormOfu} respectively. By \ref{Property:SFJisPiecewiseBilipschitz} we find a Lipschitz domain $\tilde{U} \subset U$ such that the potential $a$ of $A$ is Bilipschitz on $\tilde{U}$. In the case of \cref{Cor:KornWithBoundaryValue}, we can choose $\tilde{U}$ such that $\tilde{\Gamma} := \Gamma \cap \partial\tilde{U}$ satisfies $\mathcal{H}^{d-1}(\tilde{\Gamma}) > 0$. Consider $\tilde{u} := u \circ a^{-1} \in \SobW^{1,p}(a(\tilde{U}), \R^d)$. Then,
\begin{equation*}
	\dif{\tilde{u}}(a(x)) = \dif{u}(x)\dif{a}(x)^{-1} \qquad \text{for a.e.\ } x \in \tilde{U}.
\end{equation*}
Let $\xi := \fint_{a(\tilde{U})} Sz - \tilde{u}(z) \,\dx[z]$. For \cref{Cor:KornWithLpNormOfu}, we estimate the modulus of $S$ using \cref{Cor:MatrixNormOmega}, the change of variables rule and the Poincaré-Wirtinger inequality,
\begin{alignat*}{3}
	\abs{S}^p &\leq c_3 \int_{a(\tilde{U})} \abs{Sz - \xi}^p \,\dx[z] \leq \mathrlap{c_4 \left(\int_{a(\tilde{U})} \abs{Sz - \tilde{u}(z) - \xi}^p \,\dx[z] + \norm{\tilde{u}}_{\Leb^p(a(\tilde{U}))}^p\right)} \\
	&\leq c_5\left( \int_{a(\tilde{U})} \abs{S - \dif{\tilde{u}}(z)}^p \,\dx[z] + \norm{\tilde{u}}_{\Leb^p(a(\tilde{U}))}^p\right) 
	&&\leq c_6\left( \int_{\tilde{U}} \abs{S - \dif{u}(x)\dif{a}(x)^{-1}}^p \,\dx + \norm{u}_{\Leb^p(\tilde{U})}^p\right) \\ 
	&&&\leq c_7 \left(\norm{\sym\left( \dif{u}\dif{a}(\placeholder)^{-1}\right)}_{\Leb^p(U)}^p + \norm{u}_{\Leb^p(\tilde{U})}^p\right).
\end{alignat*}
Similarly if $u \in \SobW^{1,p}_{\Gamma,0}(U,\R^d)$, i.e.\ $\tilde{u} = 0$ on $a(\tilde{\Gamma})$, we estimate using \cref{Cor:MatrixNorm},
\begin{align*}
	\abs{S}^p &\leq c_8 \int_{a(\tilde{\Gamma})} \abs{Sz - \xi}^p \,\dx[z] = c_6 \int_{a(\tilde{\Gamma})} \abs{Sz - \tilde{u}(z) - \xi}^p \,\dx[z]
	\leq c_9 \int_{a(\tilde{U})} \abs{S - \dif{\tilde{u}}(z)}^p \,\dx[z] \\
	&\leq c_{10} \int_{\tilde{U}} \abs{S - \dif{u}(x)\dif{a}(x)^{-1}}^p \,\dx 
	\leq c_{11} \norm{\sym\left( \dif{u}\dif{a}(\placeholder)^{-1}\right)}_{\Leb^p(U)}^p.
\end{align*}
Hence, \cref{Cor:KornWithBoundaryValue} follows by applying the Poincaré-Friedrich inequality. Note that, if $a$ is Bilipschitz on $U$, we can choose $\tilde{U} = U$ and $\tilde{\Gamma} = \Gamma$ and then the constants can be chosen uniformly for potentials with controlled Bilipschitz constant.
\end{proof}

\begin{proof}[Proof of \cref{Cor:KornPeriodic}]
We argue similarly to \cite[Thm 2.3]{Pom03}. Let $\varphi \in \SobW^{1,p}(\Omega, \R^{d\times d})$ and let $a$ denote the Bilipschitz potential of $A$ with $a(0)=0$, cf.\ \cref{Prop:SFJBilipschitz}.
\medskip

\stepemph{Step 1}: First, we show that $\sym(\dif{\varphi}\dif{a}(\placeholder)^{-1}) = 0$ implies $\varphi = 0$. This can be carried out as follows. Let $\tilde{\varphi} := \varphi \circ a^{-1}$. If $\varphi$ satisfies $\sym(\dif{\varphi}\dif{a}(\placeholder)^{-1}) = 0$, then $\tilde{\varphi}$ satisfies $\sym \dif{\tilde{\varphi}} = 0$. As stated in the proof of \cite[Thm 6.3-4]{Cia88}, then $\tilde{\varphi}_i$ must be an affine map, $i=1,\dots,d$. From the periodicity of $\varphi$ and $\dif{a}$, we obtain (cf.~\cref{Lem:periodic_derivative})
\begin{equation*}
	\tilde{\varphi}_i(k a(e_j)) = \tilde{\varphi}_i(a(k e_j)) = \varphi_i(k e_j) = \varphi_i(e_j) = \tilde{\varphi}_i(a(e_j)), \quad k \in \Z.
\end{equation*}
Since the vectors $a(e_j) = a(0) + \bar{A}e_j = \bar{A}e_j$, $j=1,\dots,d$ span the matrix $\bar{A} := \fint_Y \dif{a}(y)\,\dx[y]$ with $\det \bar{A} \neq 0$, see \cref{Lem:periodic_derivative}, we infer that $\tilde{\varphi}$ and thus also $\varphi$ are in fact constant. Finally, because $\int_Y \varphi = 0$, $\varphi = 0$.
\medskip

\stepemph{Step 2}: We show that Step 1 implies the claim. Suppose the claim does not hold. Then we find a sequence $(\varphi_k) \subset \SobW^{1,p}_{\mathrm{per},0}(Y,\R^d)$ with $\norm{\varphi_k}_{\SobW^{1,p}(Y)} = 1$ and
\begin{equation*}
	\norm{\sym (\dif{\varphi_k}\dif{a}(\placeholder)^{-1})}_{\Leb^p(Y)} \leq \tfrac{1}{k}.
\end{equation*}
Since $(\varphi_k)$ is bounded in $\SobW^{1,p}(Y,\R^d)$, it is not restrictive to assume that $\varphi_k \wto \varphi \in \SobW^{1,p}_{\mathrm{per},0}(Y,\R^d)$ weakly in $\SobW^{1,p}(Y,\R^d)$. \cref{Cor:KornWithLpNormOfu} yields
\begin{equation*}
	\norm{\varphi_k - \varphi_l}_{\SobW^{1,p}(Y)} \leq c_1 \big(\norm{\varphi_k - \varphi_l}_{\Leb^p(Y)} + \norm{\sym (\dif{\varphi_k}\dif{a}(\placeholder)^{-1})}_{\Leb^p(Y)}
	+ \norm{\sym (\dif{\varphi_l}\dif{a}(\placeholder)^{-1})}_{\Leb^p(Y)} \big).
\end{equation*}
Hence, $(\varphi_k)$ is a Cauchy sequence in $\SobW^{1,p}(Y,\R^d)$ and thus actually converges strongly to $\varphi$. By continuity we obtain $\sym (\dif{\varphi}\dif{a}(\placeholder)^{-1}) = 0$ and thus by Step 1, $\varphi = 0$. But this is a contradiction to $\norm{\varphi}_{\SobW^{1,p}(Y)} = 1$.
\end{proof}

\subsection{Proofs of Lemmas \ref{Lem:homandprestrain} and \ref{L:coercivity} and introduction of auxiliary integrands \texorpdfstring{$\widetilde{W}^h_{\hom}$ and $\widetilde{Q}_{\hom}$}{}}
\label{Sec:Proof:AsyExpansionWhom1}

For the proofs of \cref{Thm:Expansion_Whom,Cor:ExpansionMinimizer,Prop:Bhom_algorithmic} it is natural to work with the transformed energy densities
\begin{align}
	\widetilde{W}^h_{\hom}(F) &:= \inf_{k \in \N} \inf_{\varphi \in \SobW^{1,\infty}_\mathrm{per}(kY, \R^d)} \fint_{kY} W\left(y, (F + \dif{\varphi}(y)A(y)^{-1})(I-hB_h(y))\right) \,\dx[y], \\
	\widetilde{Q}_{\hom}(G) &:= \min_{\varphi \in \SobH^1_\mathrm{per}(Y, \R^d)} \int_Y Q\left(y, G + \dif{\varphi}(y)A(y)^{-1}\right) \,\dx[y].
\end{align}
The relation to $[W^h]_{\hom}$ and $[Q^A]_{\hom}$ is the following: For all $F,G\in\R^{d\times d}$ it holds
\begin{equation}\label{eq:st1111}
  \begin{aligned}
  \widetilde{W}^h_{\hom}(F) &= \Big[(y,F) \mapsto W^h(y,FA(y))\Big]_{\hom}(F) = [W^h]_{\hom}(F\bar{A}), \\
  \widetilde{Q}_{\hom}(G) &= \Big[(y,G) \mapsto Q^A(y,GA(y))\Big]_{\hom}(G) = [Q^A]_{\hom}(G\bar{A}).
\end{aligned}
\end{equation}
Indeed, this can be easily seen by realizing that for $A\in\operatorname{SFJ}_{\rm per}$ with potential $a$, and for any $F\in\R^{d\times d}$, Lemma~\ref{Lem:periodic_derivative} implies that
\begin{equation}\label{eq:fromAtobarA}
  \varphi_F = F(\bar{A}^{-1}\placeholder - a^{-1}) \circ a \in \SobW^{1,\infty}_\mathrm{per}(Y, \R^d),\qquad FA(\placeholder)^{-1}=F\bar A^{-1}-D\varphi_FA(\placeholder)^{-1}.
\end{equation}
With this observation at hand, the identities \eqref{eq:st1111} follow from the definition of $[\cdot ]_{\hom}$. Note that in the definition of $\widetilde{Q}_{\hom}$ it suffices to minimize w.r.t.\ a single periodicity cell and correctors in $H^1$. This is due to the fact that $Q$ satisfies a quadratic growth condition and is convex. With similar argumentation, we can prove \cref{Lem:homandprestrain}.

\begin{proof}[Proof of \cref{Lem:homandprestrain}]
Let $G \in \R^{d \times d}$ and $a$ denote the unique potential of $A$ with $a(0) = 0$. \cref{Lem:periodic_derivative} shows that the map $\varphi_A := \bar{A}\placeholder - a$ satisfies $\varphi_A \in \SobW^{1,\infty}_\mathrm{per}(Y, \R^d)$. Hence,
\begin{equation*}
	W^h(y, A(y) + G) = W^h(y, \bar{A} + G - \dif{\varphi_A}(y)).
\end{equation*}
From this \cref{Lem:homandprestrain} follows immediately, since $\dif{\varphi_A}(y)$ is not seen when applying $[\placeholder]_{\hom}$ to the right-hand side.
\end{proof}

We note that we shall establish most properties of $[W^h]_{\hom}$ and $[Q^A]_{\hom}$ by proving equivalent assertions for $\widetilde{W}^h_{\hom}$ and $\widetilde{Q}_{\hom}$. An example is the following:
\begin{proof}[Proof of \cref{L:coercivity}]
In view of \eqref{eq:st1111} and the definition of $\sym_{\bar A}$ it suffices to show 
\begin{equation*}
  \forall G\in\R^{d\times d}\,:\,\frac{1}{c_1}\abs{(\sym G)\bar A}^2\leq \widetilde{Q}_{\hom}(G)\leq c_1\abs{(\sym G)\bar A}^2.
\end{equation*}
For the upper bound note that by \eqref{Eq:UniformBounds_Q} and the invertibility of $\bar A$, we have 
\begin{equation*}
  \widetilde{Q}_{\hom}(G)\leq \int_Y Q\big(y,G\big) \,\dx[y] \leq \beta_{\rm el}\abs{\sym G}^2
  \leq \beta_{\rm el} \abs{\bar{A}^{-1}}^2\,\abs{(\sym G)\bar A}^2.
\end{equation*}
For the lower bound, let $\varphi \in \SobH^1_\mathrm{per}(Y, \R^d)$ and $a$ the potential of $A$ with $a(0) = 0$. We use that $\varphi \circ a \in \SobH^1_\text{per}(\bar{A}Y, \R^d)$ and thus its derivative is orthogonal to constants in $\Leb^2(a(Y), \R^{d\times d})$, cf.\ \cref{Lem:periodic_derivative}. We observe
\begin{align*}
	\abs{\sym G}^2 &\leq \abs{\sym G}^2 + \fint_{a(Y)} \abs{\sym\dif{(\varphi \circ a)}}^2
	= \fint_{a(Y)} \abs{\sym(G + \dif{(\varphi \circ a)})}^2 \\
	&\leq \tfrac{\norm{\det A}_{\Leb^\infty}}{\abs{a(Y)}} \int_Y \abs{\sym (G + \dif{\varphi}A(\placeholder)^{-1})}^2
	\leq \tfrac{\norm{\det A}_{\Leb^\infty}}{\alpha_{\rm el}\det \bar A} \int_Y Q\big(y, G + \dif{\varphi}(y)A(y)^{-1}\big) \,\dx[y].
\end{align*}
Hence, we observe the lower bound by minimizing the inequality over $\varphi$.
\end{proof}

\subsection{Representation formulas for the homogenized energy and perturbation. Proofs of Lemmas \ref{lemma:homogenizationaftelinearization} and \ref{Thm:FormulaBhom} and Proposition~\ref{Prop:Bhom_algorithmic}}
\label{Sec:ProofHomogenPerturbation}
\begin{proof}[Proof of \cref{Thm:FormulaBhom}]  
We claim that $P_{\mathcal O}$ is onto and injective. Indeed, by definition we have $\mathcal O\subset(\mathcal S+\R^{d\times d}_{\sym})$ and $\mathcal{O}\subset \mathcal{S}^\perp$ and thus,
\begin{equation*}
  \mathcal{O} = \operatorname{P}_\mathcal{O}(\mathcal{S}+\R^{d\times d}_{\sym}) = \operatorname{P}_\mathcal{O}(\mathcal{S}) + \operatorname{P}_\mathcal{O}(\R^{d\times d}_{\sym}) = \operatorname{P}_\mathcal{O}(\R^{d\times d}_{\sym}),
\end{equation*}
which yields surjectivity. For injectivity we only need to show $P_{\mathcal O}(\sym G) = 0$ implies $\sym G = 0$. For the argument, first note that
\begin{equation}\label{eq:P123333}
\begin{aligned}
  \operatorname{P}_\mathcal{O}(\sym G) = 0 \quad&\Leftrightarrow\quad \operatorname{P}_{\mathcal{S}}(\sym G) = \sym G \\
  &\Leftrightarrow\quad \sym G \in \mathcal{S} \quad\Leftrightarrow\quad \exists \varphi \in \SobH^1_\mathrm{per}(Y,\R^d):\; \sym\left[ G - \dif{\varphi}A(\placeholder)^{-1}\right] = 0.
\end{aligned}
\end{equation}
We claim: If $\operatorname{P}_{\mathcal O}(\sym G) = 0$, then there exists $\varphi \in \SobH^1_\mathrm{per}(Y,\R^d)$ such that
\begin{equation*}
  \sym G = \sym \dif(\varphi\circ a^{-1})\qquad\text{a.e.\ in } \R^d,
\end{equation*}
where $a$ denotes the potential of $A$ as in \ref{Property:SFJisDerivative} with $a(0) = 0$. Indeed, this follows since by \eqref{eq:P123333} there exists $\varphi \in \SobH^1_\mathrm{per}(Y,\R^d)$ such that $\sym G = \sym (\dif{\varphi}A(\placeholder)^{-1})$, and by the chain rule we have $\dif{\varphi}(y)A^{-1}(y)=\dif\varphi(y)\dif{a^{-1}}(z)=\dif{(\varphi\circ a^{-1})}(z)$ for a.e.\ $z=a(y) \in \R^d$. Now the assertion $\sym G=0$ follows by integrating \eqref{eq:P123333} over $a(Y)$ and by appealing to \cref{Lem:periodic_derivative}.
\end{proof}
\begin{proof}[Proof of \cref{lemma:homogenizationaftelinearization}]
\stepemph{Step 1 -- Proof of \eqref{lemma:homogenizationaftelinearization:2}}: 
By definition of $W^h$ we have
\begin{equation*}
  \tfrac{1}{h^2}W^h(y,A(y)+hG)=\tfrac{1}{h^2}W\big(y,I+h(GA(y)^{-1}-B_h(y))-h^2GA(y)^{-1}B_h(y)\big).
\end{equation*}
Since up to a subsequence we have $B_h \to B$ a.e.\ and in view of \ref{Property:W_Expansion}, we deduce that the right-hand side converges to $Q(y,GA(y)^{-1}-B(y))=Q^A(y,G-B(y)A(y))$ as claimed.
\medskip

\stepemph{Step 2 -- Proof of \eqref{lemma:homogenizationaftelinearization:1}}:
Thanks to the definition of $Q^A$, $[\cdot]_{\hom}$, and $\mathcal S$, we have
\begin{align*}
	&\text{LHS of \eqref{lemma:homogenizationaftelinearization:1}} 
	= \inf_{\varphi \in \SobH^1_{\rm per}(Y, \R^d)}\int_Y Q\big(y,GA(y)^{-1} + \dif{\varphi}(y)A(y)^{-1} - B(y)\big)\,\dx[y]. \\
	&\qquad = \inf_{\Psi \in \mathcal{S}^\perp} \norm{GA(\placeholder)^{-1} + \Psi - B}_Q^2
	= \norm{\operatorname{P}_{\mathcal S^\perp}(\sym(GA(\placeholder)^{-1}-B))}_Q^2
	= \norm{\operatorname{P}_{\mathcal S^\perp}(\sym(G\bar{A}^{-1} - B))}_Q^2,
\end{align*}
where in the last identity we used that $\sym(GA(\placeholder)^{-1})-\sym(G\bar A^{-1})\in\mathcal S$, which follows from \eqref{eq:fromAtobarA}. Since $\mathcal S+\mathcal O+(\mathcal S+\R^{d\times d}_{\sym})^\perp$ is an orthogonal sum, we get
\begin{equation*}
  \text{LHS of \eqref{lemma:homogenizationaftelinearization:1}} 
  =\norm{\operatorname{P}_{\mathcal O}(\sym(G\bar{A}^{-1} - B))}_Q^2 + \norm{\operatorname{P}_{(\mathcal S + \R^{d\times d}_{\sym})^\perp}(\sym B)}_Q^2.
\end{equation*}
In view of the definition of $B_{\hom}$ and $R^A(B)$ we arrive at
\begin{equation}\label{eq:ss1123ss}
  \text{LHS of \eqref{lemma:homogenizationaftelinearization:1}} 
  =\norm{\operatorname{P}_{\mathcal O}(\sym(G\bar{A}^{-1}) - B_{\hom})}_Q^2 + R^A(B).
\end{equation}
Note that in the special case $B=0$, this identity reduces to  $Q^A_{\hom}(G)=\norm{\operatorname{P}_{\mathcal O}(\sym(G\bar A^{-1}))}_Q^2$. In particular, $\norm{\operatorname{P}_{\mathcal O}(\sym(G\bar A^{-1}) - B_{\hom})}_Q^2=Q^A_{\hom}(G-B_{\hom}\bar A)$ and \eqref{lemma:homogenizationaftelinearization:1} follows.
\end{proof}

We finish this section with the proof of the representation formulas \cref{Prop:Bhom_algorithmic}.

\begin{proof}[Proof of \cref{Prop:Bhom_algorithmic}]
Thanks to the periodic Korn inequality (cf.~\cref{Cor:KornPeriodic}) and the non-degeneracy of $Q$ (cf.~\eqref{Eq:UniformBounds_Q}), we see that \eqref{Eq:correctors} is a coercive, strictly convex minimization problem. We conclude that the corrector $\varphi_G$ indeed exists and is unique. Furthermore, since the associated Euler-Lagrange equation is linear, we deduce that
\begin{equation*}
  \widetilde{Q}_{\hom}(G)=\xi\cdot\Qhom\;\xi,\qquad \xi=\operatorname{emb}^{-1}(G),\qquad\text{for all } G\in\R^{d\times d}_{\sym}.
\end{equation*}
Combined with \eqref{eq:st1111} the second identity in \eqref{Eq:char_Bhom} follows.
It remains to prove the representation of $B_{\hom}$. To this end, we first note that
\begin{equation*}
  \operatorname{P}_{\mathcal O}(\sym G) =\sym G + \sym(\dif{\varphi_G}A(y)^{-1}).
\end{equation*}
Indeed, this follows from  $\sym G\in \mathcal S+\mathcal O$ and the definition of $\varphi_G$. We also conclude that
\begin{equation*}
  b_i = \left( P_{\mathcal O}(G_i), \sym B\right)_Q.
\end{equation*}
Thanks to the surjectivity of $P_{\mathcal O}$ we have $\mathcal O=P_{\mathcal O}(\operatorname{emb}(\R^s))$, and thus, we obtain the following characterization of $B_{\hom}$:
\begin{align*}
&\operatorname{P}_{\mathcal O}(B_{\hom}) = \operatorname{P}_\mathcal{O}(\sym B) \quad\Leftrightarrow\quad\forall\,\chi \in \mathcal{O}\,:\,\left(\chi, \operatorname{P}_{\mathcal O}(B_{\hom})\right)_Q = \left(\chi, \sym B\right)_Q\\
&\quad\Leftrightarrow\quad  \forall\,\xi \in \R^s\,:\,\left(\operatorname{P}_{\mathcal O}(\operatorname{emb}(\xi)), \operatorname{P}_{\mathcal O}(B_{\hom})\right)_Q = \left(\operatorname{P}_{\mathcal O}(\operatorname{emb}(\xi)), \sym B \right)_Q \\
&\quad\Leftrightarrow\quad \forall\,\xi \in \R^s\,:\,\xi \cdot \Qhom\; \operatorname{emb}^{-1}(B_{\hom}) = \xi \cdot b, 
\end{align*}
and thus $B_{\hom} = \operatorname{emb}\big((\Qhom)^{-1}b\big)$ as claimed.
\end{proof}

\subsection{Asymptotic expansion of the homogenized elastic energy density; Proofs of Theorem~\ref{Thm:Expansion_Whom} and Corollary~\ref{Cor:ExpansionMinimizer}}
\label{Sec:Proof:AsyExpansionWhom2}

We note that \cref{Thm:Expansion_Whom} (a) easily follows from frame-indifference of $W$ by substituting $\varphi$ with $R\varphi$ in the definition of $W^h_{\hom}$. 
In view of \eqref{eq:st1111}, for \cref{Thm:Expansion_Whom} (b) and (c) it suffices to prove the following two propositions:

\begin{proposition}[Non-degeneracy] \label{Prop:NonDegeneracy}
There exists some $\alpha > 0$ and $h_0 > 0$ such that for all $F \in \R^{d \times d}$ and $0 < h \leq h_0$
\begin{equation}
	\widetilde{W}^h_{\hom}(F) \geq \tfrac{1}{\alpha} \dist^2(F, \SO(d)) - \alpha h^2 \norm{B_h}_{\Leb^2(Y)}^2.
\end{equation}
\end{proposition}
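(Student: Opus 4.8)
The plan is to exploit the structure $W(y,\cdot)\ge \alpha_{\rm el}\dist^2(\cdot,\SO(d))$ from \ref{Property:W_NonDegeneracy}, combined with the geometric rigidity estimate on many periodicity cells, exactly as in the classical proof of non-degeneracy of the multi-cell formula but keeping track of the prestrain term. Fix $F\in\R^{d\times d}$, $k\in\N$ and a corrector $\varphi\in\SobW^{1,\infty}_{\rm per}(kY,\R^d)$, and abbreviate $G:=F+\dif\varphi(y)A(y)^{-1}$. First I would use \ref{Property:W_NonDegeneracy} to bound, for $\psi(y):=Fy+\varphi(y)$ (which has $\dif\psi=G A(y)$ up to the right factorization — more precisely set $u(y):=\psi(a^{-1}(y))$ on the transformed cell so that $\dif u = \dif\psi\,\dif a^{-1}= (F+\dif\varphi A^{-1})$),
\[
  \fint_{kY} W\big(y,(F+\dif\varphi(y)A(y)^{-1})(I-hB_h(y))\big)\dx[y]
  \ge \alpha_{\rm el}\fint_{kY}\dist^2\big(G(I-hB_h),\SO(d)\big)\dx[y].
\]
Using $\dist(G(I-hB_h),\SO(d))\ge \dist(G,\SO(d))-|G||hB_h|$ together with $|G|\le \dist(G,\SO(d))+\sqrt d$ and the Cauchy–Schwarz/Young inequality, I would absorb the prestrain contribution: the right-hand side is bounded below by $\tfrac{\alpha_{\rm el}}2\fint_{kY}\dist^2(G,\SO(d))\dx[y] - C\,h^2\fint_{kY}|B_h|^2\dx[y]$ once $h\le h_0$ is small enough (using $\limsup_{h\to0} h\|B_h\|_{L^\infty}=0$ from \eqref{def:alternative2} to control the cross term uniformly in $y$), and by periodicity $\fint_{kY}|B_h|^2=\|B_h\|_{L^2(Y)}^2$.

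It then remains to bound $\fint_{kY}\dist^2(G,\SO(d))\dx[y]$ below by $\tfrac1{c}\dist^2(F,\SO(d))$ with a constant independent of $k$. This is the point where I would invoke the geometric rigidity estimate in the form of \cref{Thm:RigidityEstimateJonesDomain} (with $p=q=2$), applied to $u$ on the Jones domain $a(kY)$ — equivalently, its scaling-invariant version on the cube $kY$ with the stress-free joint $A$: there is a rotation $R\in\SO(d)$ and a constant $C$ depending only on $d$, the Jones coefficient of $Y$ (which is scale-invariant) and the Bilipschitz constant of $a$, such that
\[
  \fint_{kY}|G-R|^2\dx[y]\le C\fint_{kY}\dist^2(G,\SO(d))\dx[y].
\]
Then by $Y$-periodicity of $\dif\varphi A(\cdot)^{-1}$ we have $\fint_{kY}G\dx[y]=F$ (indeed $\fint_{kY}\dif\varphi\,\dx=0$ and $\fint_Y A^{-1}$ issues are avoided because $\dif\varphi A^{-1}$ is the gradient of a periodic map in the transformed variable, cf. \cref{Lem:periodic_derivative}(e), so its mean over $a(kY)$ vanishes), hence $\dist^2(F,\SO(d))\le |F-R|^2=|\fint_{kY}(G-R)|^2\le \fint_{kY}|G-R|^2$ by Jensen. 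Chaining the three inequalities gives
\[
  \fint_{kY} W\big(y,(F+\dif\varphi A^{-1})(I-hB_h)\big)\dx[y]\ge \tfrac{\alpha_{\rm el}}{2C}\dist^2(F,\SO(d))-C'h^2\|B_h\|_{L^2(Y)}^2,
\]
with $C,C'$ independent of $k$ and $\varphi$; taking the infimum over $k$ and $\varphi$ yields the claim with $\alpha:=\max\{2C/\alpha_{\rm el},\,C'\}$.

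The main obstacle I anticipate is making the rigidity step genuinely uniform in $k$: one must either apply \cref{Thm:RigidityEstimateJonesDomain} directly on the domain $a(kY)$ (which is a Jones domain whose coefficient and diameter-to-$\rho$ ratio are controlled uniformly in $k$, since rescaling is harmless and $a$ is globally Bilipschitz by \cref{Prop:SFJBilipschitz}), and then transfer back via the transformation rule $\dif u(a(y))=\dif\psi(y)\dif a(y)^{-1}$, or apply the stress-free-joint version of rigidity on the cube $kY$. Care is also needed that the corrector $\varphi$ on $kY$ — which is only $kY$-periodic — does not literally belong to the hypotheses of the periodic Korn/rigidity statements; one uses instead that $\psi-R\,a$ is $kY$-periodic so that subtracting a rotation is the natural object, and that the Jones rigidity estimate is for arbitrary $\SobW^{1,1}$ maps with no boundary condition. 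The prestrain bookkeeping (the cross term $h\langle G-R, G\cdot hB_h\rangle$ and the quadratic term) is routine once $h\|B_h\|_{L^\infty}$ is small, so the quantitative, $k$-uniform rigidity constant is really the crux.
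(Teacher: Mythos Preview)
Your overall strategy is sound and your treatment of the prestrain term matches the paper's. However, there is a computational slip: with $\psi(y)=Fy+\varphi(y)$ you get $\dif(\psi\circ a^{-1})=(F+\dif\varphi)A^{-1}=FA^{-1}+\dif\varphi A^{-1}$, not $F+\dif\varphi A^{-1}=G$. The correct auxiliary map is $v(z)=Fz+(\varphi\circ a^{-1})(z)$ on $a(kY)$ (equivalently $\psi(y)=Fa(y)+\varphi(y)$ on $kY$), whose gradient is $G\circ a^{-1}$. With this fix your Jensen step works, since $\fint_{a(kY)}\dif(\varphi\circ a^{-1})=0$ by \cref{Lem:periodic_derivative}(e); note that the relevant average is over $a(kY)$, not $kY$, so your displayed identity $\fint_{kY}G=F$ should be read as $\fint_{a(kY)}G\circ a^{-1}=F$ (as your parenthetical already hints).

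Your route is genuinely different from the paper's. After the same prestrain bookkeeping and transformation to $a(k_\eta Y)$, the paper does \emph{not} invoke the geometric rigidity estimate at all; instead it uses quasiconvexity of $\operatorname{Qdist}^2(\cdot,\SO(d))$ together with the fact that $\varphi_\eta\circ a^{-1}$ is periodic and $a(k_\eta Y)$ is a periodicity cell for that lattice, to obtain directly $\fint_{a(k_\eta Y)}\dist^2(F+\dif(\varphi_\eta\circ a^{-1}),\SO(d))\ge \operatorname{Qdist}^2(F,\SO(d))$, and then cites Zhang's result \cite{Zha97} to bound $\operatorname{Qdist}^2$ below by a multiple of $\dist^2$. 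Your approach---rigidity on $a(kY)$ with a $k$-uniform constant, then Jensen---avoids the external Zhang theorem and stays entirely within the toolkit the paper builds (the scaling-invariant constant in \cref{Thm:RigidityEstimateJonesDomain}); the paper's quasiconvexity route is shorter precisely because quasiconvexity is a scale-free inequality with no domain constant to track across $k$.
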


\begin{proposition}[Asymptotic expansion] \label{Prop:AsyExpansion}
There exists a continuous, increasing map $\rho: [0,\infty) \to [0,\infty]$ with $\rho(0) = 0$, such that for all $h > 0$ and $G \in \R^{d \times d}$,
\begin{equation}
	\abs{\tfrac{1}{h^2} \widetilde{W}^h_{\hom}(I + hG) - \left(\widetilde{Q}_{\hom}(G - B_{\hom}) + R^A(B)\right)} \leq (1 + \abs{G}^2) \rho(h + \abs{hG}).
\end{equation}
\end{proposition}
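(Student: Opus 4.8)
The plan is to prove the two-sided estimate by matching lower and upper bounds, using the corrector decomposition developed in \cref{Sec:Characterization_Bhom} together with the expansion property \ref{Property:W_Expansion}. First I would reduce to a single periodicity cell: although $\widetilde{W}^h_{\hom}$ is defined via a multi-cell infimum over $kY$, I claim (and would verify) that for the upper bound it suffices to test with correctors on $Y$, while the lower bound must be argued for all $k$ simultaneously. For the \emph{upper bound}, fix $G$ and insert the admissible corrector $\varphi = h\varphi_{G}$, where $\varphi_G \in \SobH^1_{\mathrm{per},0}(Y,\R^d)$ is the corrector of \eqref{Eq:correctors}; then
\begin{equation*}
  \widetilde{W}^h_{\hom}(I+hG) \le \fint_Y W\big(y, I + h(G + \dif{\varphi_G}A(y)^{-1})(I-hB_h(y))\big)\,\dx[y].
\end{equation*}
Writing $I + h(G + \dif{\varphi_G}A^{-1})(I-hB_h) = I + h\big(G + \dif{\varphi_G}A^{-1} - B_h\big) + h^2(\text{bounded})$ and applying \ref{Property:W_Expansion}, the leading term is $h^2 Q(y, G + \dif{\varphi_G}A(y)^{-1} - B_h(y))$, and the error is controlled by $(1+|G|^2)\,r(\,C(h+|hG|)\,)$ times a constant depending on $\|A^{-1}\|_{L^\infty}$, $\limsup h\|B_h\|_{L^\infty}$. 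Since $B_h \to B$ in $L^2_{\rm loc}$ by \eqref{def:alternative2} and $Q$ has quadratic growth, $\fint_Y Q(y, G + \dif{\varphi_G}A^{-1} - B_h)\,\dx[y] \to \fint_Y Q(y, G + \dif{\varphi_G}A^{-1} - B)\,\dx[y]$, which by the chain of identities in Step~2 of the proof of \cref{lemma:homogenizationaftelinearization} (see \eqref{eq:ss1123ss}) equals $\widetilde Q_{\hom}(G - B_{\hom}) + R^A(B)$ once we note $\widetilde Q_{\hom}(G-B_{\hom}) = \|\operatorname{P}_{\mathcal O}(\sym(G\bar A^{-1}) - B_{\hom})\|_Q^2$. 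Some care is needed to absorb the cross terms and the $h^2$-perturbation of the argument uniformly in $G$; this is routine but must be written with the explicit modulus $\rho$.

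For the \emph{lower bound}, the key point is the geometric-rigidity-type control. Fix $k$ and an arbitrary corrector $\varphi \in \SobW^{1,\infty}_{\rm per}(kY,\R^d)$. Using the lower non-degeneracy bound in \ref{Property:W_NonDegeneracy}, $W(y,F) \ge \alpha_{\rm el}\dist^2(F,\SO(d))$, together with a pointwise comparison $\dist^2(I + hM, \SO(d)) \ge \tfrac{h^2}{C}|\sym M|^2 - Ch^3|M|^3 - \dots$ valid for $h|M|$ small, I split the cell into the region where $|G + \dif\varphi A^{-1} - B_h|$ is of order $1/h$ or larger (handled crudely by the coercivity $W \ge \alpha_{\rm el}\dist^2(\cdot,\SO(d)) \ge \tfrac{1}{C}|\cdot|^2 - C$) and the good region. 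In the good region one gets $\tfrac{1}{h^2}W^h \gtrsim \fint |\sym(G + \dif\varphi A^{-1} - B_h)|^2 - (\text{errors})$. Minimizing the leading quadratic over $\varphi$ and using that $\mathcal S$ is closed (periodic Korn, \cref{Cor:KornPeriodic}) together with the orthogonal decomposition $L^2(Y,\R^{d\times d}_{\sym}) = \mathcal S \oplus \mathcal O \oplus (\mathcal S + \R^{d\times d}_{\sym})^\perp$ — which is stable under passing to $kY$ since the projection of a constant matrix lies in $\mathcal O \subset \mathcal S + \R^{d\times d}_{\sym}$ already at scale $Y$ — yields the lower bound $\widetilde Q_{\hom}(G - B_{\hom}) + R^A(B)$ up to the modulus $\rho$. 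Here one replaces $Q$ by $\sym$-norm via the lower bound in \eqref{Eq:UniformBounds_Q}, then goes back; since $Q$ is convex the single-cell and multi-cell infima of the quadratic form agree (cf.\ \cite[Lem.~4.1]{Muel87}), so the bound is uniform in $k$.

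The main obstacle I anticipate is making the error terms uniform in $G$ \emph{and} in the corrector, while keeping the modulus $\rho$ independent of $G$. The difficulty is that the natural error from \ref{Property:W_Expansion} is $|G + \dif\varphi A^{-1} - B_h|^2\, r(|h(G + \dif\varphi A^{-1} - B_h)|)$, and $\dif\varphi$ is unbounded a priori; in the good region one has the pointwise bound $h|G + \dif\varphi A^{-1} - B_h| \le \eta$ for a fixed small $\eta$, so $r$ is evaluated at a small argument, and the remaining $L^2$-mass of $|G + \dif\varphi A^{-1} - B_h|^2$ is exactly what the $Q$-term controls — this closes the loop, but the bookkeeping (choice of threshold, the $h^3$-correction in the rigidity inequality, the contribution of the bad region which must be shown to carry energy $\ge$ something that forces it to be negligible) is the technical heart. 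I would handle the bad region by noting that on it $\tfrac{1}{h^2}W^h \ge \tfrac{1}{Ch^2}|G + \dif\varphi A^{-1} - B_h|^2 - C/h^2 \ge \tfrac{\eta}{Ch}|G+\dif\varphi A^{-1}-B_h| - C/h^2$, which, if a minimizing sequence spent nonvanishing mass there, would blow up faster than the competitor energy from the upper bound — so without loss of generality we may restrict to correctors for which the bad region is empty (or truncate $\varphi$), reducing everything to the good-region analysis. Assembling the two bounds gives the claimed inequality with $\rho(t) := C\big(r(Ct) + \omega(t)\big)$ where $\omega$ collects the $L^2_{\rm loc}$-convergence rate $\|B_h - B\|_{L^2(Y)} + h\|B_h\|_{L^\infty(Y)}$ expressed as a function of $h$ via \eqref{def:alternative2}.
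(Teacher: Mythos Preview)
Your upper-bound argument has a genuine gap: the test corrector $\varphi_G$ from \eqref{Eq:correctors} is the minimizer of $\varphi\mapsto\int_Y Q(y,G+\dif\varphi A^{-1})\,\dx[y]$, \emph{not} of the shifted problem $\varphi\mapsto\int_Y Q(y,G+\dif\varphi A^{-1}-B)\,\dx[y]$. These differ unless $\operatorname P_{\mathcal S}(\sym B)=0$. Concretely, $\sym(G+\dif\varphi_G A^{-1})=\operatorname P_{\mathcal O}(\sym G)$, so
\[
\int_Y Q(y,G+\dif\varphi_G A^{-1}-B)\,\dx[y]
=\|\operatorname P_{\mathcal O}(\sym G)-\sym B\|_Q^2
=\widetilde Q_{\hom}(G-B_{\hom})+R^A(B)+\|\operatorname P_{\mathcal S}(\sym B)\|_Q^2,
\]
which overshoots the target by the fixed positive quantity $\|\operatorname P_{\mathcal S}(\sym B)\|_Q^2$. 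The identity you invoke from \cref{lemma:homogenizationaftelinearization} holds only \emph{after} taking the infimum over $\varphi$. The paper's remedy is to build the test corrector as $\sigma_h=\alpha_h\varphi+\tilde\psi$, where $\psi\in\SobH^1_{\mathrm{per}}(Y,\R^d)$ satisfies $\sym(\dif\psi A^{-1})=\operatorname P_{\mathcal S}(\sym B)$ (equivalently $\sym B-\sym(\dif\psi A^{-1})=B_{\hom}+\operatorname P_{(\mathcal S+\R^{d\times d}_{\sym})^\perp}(\sym B)$); the $\tilde\psi$-part is exactly what kills the extra $\|\operatorname P_{\mathcal S}(\sym B)\|_Q^2$. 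You also need $W^{1,\infty}$-approximations of both $\varphi$ and $\psi$, since the multi-cell formula requires Lipschitz correctors.

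Your lower-bound sketch has the right skeleton (good/bad-set splitting, a priori control of $\dif\varphi$, reduction to the quadratic problem), but the step ``replace $Q$ by $\sym$-norm via \eqref{Eq:UniformBounds_Q}, then go back'' cannot give a sharp lower bound: passing through $\alpha_{\rm el}|\sym\cdot|^2\le Q\le\beta_{\rm el}|\sym\cdot|^2$ loses the constant. The paper instead (i) obtains an $L^2$-bound on $\dif\sigma_h$ via the rigidity estimate \cref{Thm:RigidityEstimateJonesDomain} (not just the pointwise coercivity of $W$), (ii) linearizes directly to $Q$ on a large set $Y_h\subset k_hY$ using \cref{Lem:ExpansionPeriodicityCell}, and (iii) compares $\int_{k_hY}Q(y,H_h+\dif\tilde\varphi_h A^{-1}-\alpha_h^{-1}\Psi)$ to $\int_{k_hY}Q(y,G+\dif\varphi_G A^{-1}-\alpha_h^{-1}\Psi)$ via the convexity inequality $Q(G_1)-Q(G_2)\ge 2(G_1-G_2):\mathbb L_Q G_2$, killing the cross terms by the Euler--Lagrange equations for $\varphi_G$ and for $\Psi=\operatorname P_{(\mathcal S+\R^{d\times d}_{\sym})^\perp}(\sym B)$ (which hold on every $k_hY$). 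This last orthogonality step is precisely what makes the multi-cell bound sharp without ever leaving $Q$.
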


We start with the argument for the non-degeneracy property:

\begin{proof}[Proof of \cref{Prop:NonDegeneracy}]
We adapt the argument of \cite[Thm. 1.1]{MN11}. By definition of $\widetilde{W}^h_{\hom}$ we find for all $\eta > 0$ some $k_\eta \in \N$ and $\varphi_\eta \in \SobW^{1,\infty}_\mathrm{per}(k_\eta Y, \R^d)$ such that
\begin{align*}
	\widetilde{W}^h_{\hom}(F) &\geq  \fint_{k_\eta Y} W\Big(y, (F + \dif{\varphi_\eta}A(y)^{-1})(I - h B_h)\Big) \,\dx[y] - \eta \\
	&\geq \alpha_\mathrm{el} \fint_{k_\eta Y} \dist^2\Big((F + \dif{\varphi_\eta}A(y)^{-1})(I - h B_h), \SO(d)\Big) \,\dx[y] - \eta.
\end{align*}
Here and throughout this section we often omit the explicit dependence on the argument of certain quantities when integrating for easier reading. However, we do display the dependence for $A(\placeholder)^{-1}$ to distinguish between the function and matrix inverse.
Furthermore, the triangle inequality yields for all $\tilde{F} \in \R^{d\times d}$,
\begin{align*}
	\dist(\tilde{F}, \SO(d)) &\leq \dist(\tilde{F}(I - hB_h(y)), \SO(d)) + h\abs{B_h(y)}\abs{\tilde{F}} \\
	&\leq \dist(\tilde{F}(I - hB_h(y)), \SO(d)) + h\abs{B_h(y)}(\dist(\tilde{F}, \SO(d)) + \abs{I}).
\end{align*}
Since $\limsup_{h \to 0} h\norm{B_h}_{\Leb^\infty(\R^d)} \to 0$, we can absorb for small $h \ll 1$ the term $\dist(\tilde{F}, \SO(d))$ into the left-hand side and obtain
\begin{equation}
	\dist^2(\tilde{F}(I - hB_h(y)), \SO(d)) \geq \tfrac{1}{c_1} \dist^2(\tilde{F}, \SO(d)) - c_1 h^2 \abs{B_h(y)}^2.
\end{equation}
Thus, with $\tilde{F} = F + \dif{\varphi_\eta}(y)A(y)^{-1}$ and thanks to $Y$-periodicity of $B_h$, we get
\begin{equation*}
	\widetilde{W}^h_{\hom}(F) \geq \tfrac{1}{c_2} \fint_{k_\eta Y} \dist^2\left(F + \dif{\varphi_\eta}A(y)^{-1}, \SO(d)\right) \,\dx[y] - c_2h^2\norm{B_h}_{\Leb^2(Y)}^2 -\eta.
\end{equation*}
Note that $\norm{B_h}_{\Leb^2(Y)}$ is bounded. Hence, by the transformation rule and $A = \dif{a}$ with $\frac{1}{c} \leq \det \dif{a} \leq c$ a.e., we obtain
\begin{equation*}
	\widetilde{W}^h_{\hom}(F)
	\geq \tfrac{1}{c_3} \fint_{a(k_\eta Y)}\dist^2\big(F + \dif{(\varphi_\eta \circ a^{-1})}, \SO(d)\big)\,\dx[z] - c_3 h^2\norm{B_h}_{\Leb^2(Y)}^2 - \eta.
\end{equation*}
Finally, by quasiconvexity of $\operatorname{Qdist}^2(\placeholder, \SO(d))$, we get
\begin{equation*}
	\widetilde{W}^h_{\hom}(F)
	\geq \tfrac{1}{c_3} \operatorname{Qdist}^2(F, \SO(d)) - c_3 h^2\norm{B_h}_{\Leb^2(Y)}^2 - \eta,
\end{equation*}
since $\varphi_\eta \circ a^{-1} \in \SobH^1_\text{per}(k_\eta\bar{A}Y, \R^d)$ and $a(k_\eta Y)$ is a periodicity cell for this kind of periodicity, cf.\ \cref{Lem:periodic_derivative} and \cite[Section 5.1.1]{Bra06}. Since this holds for arbitrary $\eta > 0$ with constants independent of $\eta$ and Zhang showed in \cite{Zha97} that $\operatorname{Qdist}^2(\placeholder, \SO(d))$ can again be controlled by $\dist^2(\placeholder, \SO(d))$, we conclude the claim.
\end{proof}

Before we proceed with the proof of the asymptotic expansion, we show the following technical lemma which we use for the linearization.

\begin{lemma} \label{Lem:ExpansionPeriodicityCell}
For $h > 0$, let $k_h \in \N$ and $\delta_h \in (0,\infty)$ with $\limsup_{h \to 0} \delta_h < \infty$ and $\lim_{h \to 0} h^2\delta_h^{-1} = 0$. Moreover, let $\Phi_h \in \Leb^2(k_hY, \R^{d\times d})$ with
\begin{equation*}
	\limsup_{h \to 0} \delta_h \fint_{k_hY} \abs{\Phi_h(y)}^2 \,\dx[y] < \infty,
\end{equation*}
and $\Psi_h, \Psi \in \Leb^2_\mathrm{per}(Y,\R^{d\times d})$ with $\delta_h\norm{\Psi_h - \Psi}^2_{\Leb^2(Y)} \to 0$. Then, there exist subsets $Y_h \subset k_hY$ with $\frac{1}{k_h^d} \abs{k_hY \setminus Y_h} \to 0$, such that
\begin{equation}
	\delta_h \abs{\fint_{k_h Y} \mathds{1}_{Y_h}(y) \Big( \tfrac{1}{h^2} W\big(y, I + h\Phi_h(y) + h\Psi_h(y)\big) - Q\big(y, \Phi_h(y) + \Psi(y)\big) \,\dx[y] \Big)} \to 0.
\end{equation}
Moreover, if $\lim_{h \to 0} \norm{h\Psi_h}_{\Leb^\infty(\R^d)} = 0$ and $\Phi_h$ satisfies the uniform bound
\begin{equation*}
	\limsup_{h \to 0} \delta_h \norm{\Phi_h}_{\Leb^\infty(k_hY)}^2 < \infty,
\end{equation*}
then we may choose $Y_h = k_hY$.
\end{lemma}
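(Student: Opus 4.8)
The plan is to split the integrand pointwise as
\[
\tfrac{1}{h^2} W\big(y, I + h\Phi_h + h\Psi_h\big) - Q\big(y, \Phi_h + \Psi\big)
= \underbrace{\Big(\tfrac{1}{h^2}W(y,I+h\Phi_h+h\Psi_h) - Q(y,\Phi_h+\Psi_h)\Big)}_{=:E_h^{(1)}(y)} + \underbrace{\Big(Q(y,\Phi_h+\Psi_h) - Q(y,\Phi_h+\Psi)\Big)}_{=:E_h^{(2)}(y)},
\]
and to control each piece separately after multiplying by $\delta_h$ and integrating over a suitable good set $Y_h$. The term $E_h^{(2)}$ is the easier one: by the bilinearity of $Q$ via $\mathbb{L}_Q$ and \eqref{Eq:UniformBounds_Q}, $|E_h^{(2)}(y)| \lesssim (|\Phi_h(y)| + |\Psi_h(y)| + |\Psi(y)|)\,|\Psi_h(y)-\Psi(y)|$, so Cauchy--Schwarz on $k_hY$ together with the uniform bound $\delta_h\fint_{k_hY}|\Phi_h|^2 \le C$, the periodicity of $\Psi_h,\Psi$ (which makes $\fint_{k_hY}|\Psi_h|^2 = \fint_Y|\Psi_h|^2$ bounded), and the hypothesis $\delta_h\|\Psi_h-\Psi\|_{L^2(Y)}^2 \to 0$ gives $\delta_h\fint_{k_hY}|E_h^{(2)}| \to 0$ with no exceptional set needed.

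The term $E_h^{(1)}$ is where the good set $Y_h$ enters. By \ref{Property:W_Expansion} applied with $G = h\Phi_h(y) + h\Psi_h(y)$,
\[
\big|\tfrac{1}{h^2}W(y,I+h\Phi_h+h\Psi_h) - \tfrac{1}{h^2}Q(y,h\Phi_h+h\Psi_h)\big| = \tfrac{1}{h^2}|G|^2 r(|G|) \le (|\Phi_h(y)|+|\Psi_h(y)|)^2\, r\big(h|\Phi_h(y)|+h|\Psi_h(y)|\big),
\]
and since $Q(y,\cdot)$ is quadratic, $\tfrac{1}{h^2}Q(y,h\Phi_h+h\Psi_h) = Q(y,\Phi_h+\Psi_h)$ exactly, so $|E_h^{(1)}(y)| \le (|\Phi_h|+|\Psi_h|)^2 r(h|\Phi_h|+h|\Psi_h|)$. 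Define the good set by cutting off where $\Phi_h$ is large, e.g.
\[
Y_h := \big\{ y \in k_hY : h|\Phi_h(y)| \le \eta_h \big\}, \qquad \eta_h := (h^2\delta_h^{-1})^{1/4} \to 0 .
\]
On $Y_h$ we have $h|\Phi_h|+h|\Psi_h| \le \eta_h + \|h\Psi_h\|_{L^\infty} =: \sigma_h \to 0$ (using $\|h\Psi_h\|_{L^\infty} \le \|h(\Psi_h-\Psi)\|_{L^\infty} + h\|\Psi\|_{L^\infty}$; the first term is controlled since $\delta_h$ is bounded below away from $0$ — wait, it need not be, so instead bound $\|\Psi_h\|_{L^2(Y)}$-boundedness only — more safely, replace the $L^\infty$ bound on $h\Psi_h$ by absorbing $\Psi_h$ itself into the cutoff by enlarging $Y_h$ to also require $h|\Psi_h(y)| \le \eta_h$, which still has small complement by Chebyshev since $\delta_h\fint_Y|\Psi_h|^2$ is bounded). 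Then monotonicity of $r$ gives $|E_h^{(1)}(y)| \le (|\Phi_h|+|\Psi_h|)^2 r(\sigma_h)$ on $Y_h$, and
\[
\delta_h \fint_{k_hY} \mathds{1}_{Y_h}|E_h^{(1)}| \le r(\sigma_h)\,\delta_h \fint_{k_hY}(|\Phi_h|+|\Psi_h|)^2 \le C\, r(\sigma_h) \to 0 .
\]
Chebyshev's inequality gives $\tfrac{1}{k_h^d}|k_hY\setminus Y_h| \le \tfrac{1}{\eta_h^2}\fint_{k_hY}h^2|\Phi_h|^2 \le \tfrac{h^2}{\eta_h^2}\cdot\tfrac{C}{\delta_h} = C\eta_h^2 \to 0$, as required, and similarly for the $\Psi_h$-part of the cutoff.

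For the last assertion, if additionally $\|h\Psi_h\|_{L^\infty}\to 0$ and $\delta_h\|\Phi_h\|_{L^\infty(k_hY)}^2$ is bounded, then $h\|\Phi_h\|_{L^\infty} \le (h^2\delta_h^{-1})^{1/2}(\delta_h\|\Phi_h\|_{L^\infty}^2)^{1/2} \to 0$, so the cutoff is vacuous: we may take $Y_h = k_hY$, running the same $E_h^{(1)}$ estimate with $\sigma_h := h\|\Phi_h\|_{L^\infty}+\|h\Psi_h\|_{L^\infty}\to 0$ and the $E_h^{(2)}$ estimate unchanged. The main obstacle is the bookkeeping of the two competing smallness scales — the $L^\infty$-cutoff level $\eta_h$ must go to zero (so that $r(\sigma_h)\to 0$) yet slowly enough that the excised volume, which scales like $h^2/(\delta_h\eta_h^2)$, still vanishes; the choice $\eta_h=(h^2/\delta_h)^{1/4}$ balances these, exploiting precisely the hypothesis $h^2\delta_h^{-1}\to 0$.
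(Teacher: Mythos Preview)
Your decomposition into $E_h^{(1)}$ and $E_h^{(2)}$ and the Chebyshev cutoff at level $\eta_h=(h^2\delta_h^{-1})^{1/4}$ match the paper's argument exactly, and your treatment of $E_h^{(2)}$ is correct. There is, however, one genuine gap in the $E_h^{(1)}$ estimate.

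You write $r(\sigma_h)$ as a scalar, but under Assumption~\ref{Ass:ElasticMaterialLaw} the remainder in \ref{Property:W_Expansion} is only guaranteed to exist for each $W(y,\cdot)$ separately: one has $r=r(y,\cdot)$ with $r(y,\delta)\to 0$ as $\delta\to 0$ for a.e.\ $y$, but no uniformity in $y$ is assumed. Thus on $Y_h$ you only get $|E_h^{(1)}(y)|\le |G_h(y)|^2\,r(y,\sigma_h)$, and the step
\[
\delta_h\fint_{k_hY}\mathds{1}_{Y_h}|G_h|^2\,r(y,\sigma_h)\,\dx[y]\;\le\;r(\sigma_h)\cdot\delta_h\fint_{k_hY}|G_h|^2
\]
is not justified: $r(y,\sigma_h)$ cannot be pulled out of the integral, and since $|G_h|^2$ is not periodic and the domain $k_hY$ varies with $h$, no direct dominated-convergence argument applies either.

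The paper fixes this with a second layer in the cutoff. Set $\bar r_h(y):=r(y,\sigma_h)$; by \ref{Property:W_NonDegeneracy} one has $\bar r_h\le 2\beta_{\rm el}$ a.e.\ for small $h$, and $\bar r_h\to 0$ a.e., so $\rho_h:=(\int_Y\bar r_h)^{1/2}\to 0$ by dominated convergence. Now enlarge the excised set by also removing $\{y:\bar r_h(y)>\rho_h\}$; by Markov and $Y$-periodicity of $\bar r_h$ this has relative measure at most $\rho_h\to 0$. On the new $Y_h$ one has $r(y,|hG_h|)\le\bar r_h(y)\le\rho_h$, a genuine constant, and your estimate goes through with $\rho_h$ in place of $r(\sigma_h)$. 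The same issue and the same fix (this time via dominated convergence on $\int_Y$, exploiting periodicity of $r(y,\cdot)$) applies to your argument for the $L^\infty$ case $Y_h=k_hY$.
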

\begin{proof}
Let $G_h := \Psi_h + \Phi_h$. For some $Y_h \subset k_h Y$ (to be specified below), \ref{Property:W_Expansion} yields that 
\begin{equation*}
	\delta_h \abs{\fint_{k_h Y} \mathds{1}_{Y_h}(y)\Big( \tfrac{1}{h^2} W\big(y, I + h\Phi_h + h\Psi_h\big) -  Q\big(y, \Phi_h + \Psi\big)\Big) \,\dx[y]}
	\leq \text{(I)}_h + \text{(II)}_h,
\end{equation*}
where
\begin{align*}
	\text{(I)}_h &:= \delta_h \fint_{k_h Y} \mathds{1}_{Y_h} \abs{G_h}^2 r(y,\abs{hG_h}) \,\dx[y], &
	\text{(II)}_h &:= \delta_h \fint_{k_h Y} \abs{Q\big(y, \Phi_h + \Psi_h\big) - Q\big(y, \Phi_h + \Psi\big)} \,\dx[y].
\end{align*}
Since $Q$ is a quadratic form and satisfies \eqref{Eq:UniformBounds_Q}, we obtain $\text{(II)}_h \to 0$ from the $\Leb^2$-bound of $\Phi_h$ and the convergence and $Y$-periodicity of $\Psi_h$.
We proceed to show $\text{(I)}_h \to 0$ for suitable sets $Y_h$. Consider $\bar{r}_h(y) := r(y, h^{1/2}\delta_h^{-1/4})$ and $\rho_h := (\int_Y \bar{r}_h)^{1/2}$. 
By the second identity in \ref{Property:W_NonDegeneracy} we have $\bar{r}_h \leq 2\beta_{\rm el}$ a.e.\ for $h \ll 1$ and by \ref{Property:W_Expansion}, $\bar{r}_h$ converges to $0$ a.e.\ as $h \to 0$. Thus, by dominated convergence, $\rho_h \to 0$. Now set,
\begin{equation*}
	Y_h := \class{y \in k_hY}{\abs{G_h(y)} \leq h^{-1/2}\delta_h^{-1/4}} \cap \class{y \in k_hY}{\bar{r}_h(y) \leq \rho_h}.
\end{equation*}
Then, Markov's inequality, the $Y$-periodicity of $\bar{r}_h$ and the $\Leb^2$-boundedness of $\Phi_h$ and $\Psi_h$ yield that $\frac{1}{k_h^d} \abs{k_hY \setminus Y_h} \to 0$. With this definition, we obtain
\begin{equation*}
	\text{(I)}_h \leq \delta_h \fint_{k_h Y} \mathds{1}_{Y_h} \abs{G_h}^2 \bar{r}_h \,\dx[y] \leq \rho_h \fint_{k_h Y} \delta_h \abs{G_h}^2 \,\dx[y].
\end{equation*}
Since the integral on the right-hand side is bounded, we indeed obtain $\text{(I)}_h \to 0$. Finally, note that if $\Phi_h$ and $\Psi_h$ satisfy the uniform bounds, we can estimate $\text{(I)}_h$ differently as
\begin{align*}
	\text{(I)}_h &\leq c_1 \fint_{k_h Y} \delta_h(\norm{\Phi_h}_{\Leb^\infty}^2 + \abs{\Psi_h}^2) r(y, \norm{hG_h}_{\Leb^\infty}) \,\dx[y] \\
	&\leq c_2 \int_Y (1 + \delta_h\abs{\Psi_h - \Psi}^2 + \abs{\Psi}^2) r(y, \norm{hG_h}_{\Leb^\infty}) \,\dx[y].
\end{align*}
Since $\norm{hG_h}_{\Leb^\infty} \to 0$ and $r(y,\norm{hG_h}_{\Leb^\infty}) \leq 2 \beta_\mathrm{el}$ a.e.\ for $\norm{hG_h}_{\Leb^\infty} \leq \rho_\mathrm{el}$, dominated convergence (with strongly converging dominating sequence) shows that the right-hand side converges to $0$. Hence, in this case we may choose $Y_h = k_h Y$.
\end{proof}

\begin{proof}[Proof~\cref{Prop:AsyExpansion}]
\stepemph{Step 1 -- Reduction}: In order to treat the cases $G_h = B_{\hom}$ and $G_h \neq B_{\hom}$ simultaneously, we let throughout this proof, $\alpha_h := \abs{G_h - B_{\hom}}$ if $G_h \neq B_{\hom}$ and $\alpha_h := 1$ otherwise. As can be easily seen by a contradiction argument, it suffices to prove that
\begin{equation*}
	\frac{\abs{\widetilde{W}^h_{\hom}(I + hG_h) - h^2\left(\widetilde{Q}_{\hom}(G_h - B_{\hom}) + R^A(B)\right)}}{\abs{hG_h}^2 + h^2} \to 0, \qquad \text{as } h \to 0,
\end{equation*}
for an arbitrary sequence $(G_h) \subset \R^{d\times d}$ satisfying $hG_h \to 0$. Moreover, we may without loss assume that $H_h := \frac{1}{\alpha_h}(G_h - B_{\hom}) \to G$ for some $G \in \R^{d\times d}$ by the subsequence principle and compactness. We prove separately the upper and lower bounds
\begin{align} 
	\limsup_{h \to 0} \frac{\widetilde{W}^h_{\hom}(I + hG_h) - h^2\left(\widetilde{Q}_{\hom}(G_h - B_{\hom}) + R^A(B)\right)}{\abs{hG_h}^2 + h^2} \leq 0, \label{Eq:ProofAsyExpansion:Limsup}\\
	\liminf_{h \to 0} \frac{\widetilde{W}^h_{\hom}(I + hG_h) - h^2\left(\widetilde{Q}_{\hom}(G_h - B_{\hom}) + R^A(B)\right)}{\abs{hG_h}^2 + h^2} \geq 0. \label{Eq:ProofAsyExpansion:Liminf}
\end{align}
For the proof we adapt the argument of \cite[Thm 1.1]{MN11}.
\medskip

\stepemph{Step 2 -- Upper bound}: By definition of $\widetilde{W}^h_{\hom}$ we have for all $\sigma_h \in \SobW^{1,\infty}_\mathrm{per}(Y, \R^d)$ that
\begin{equation*}
	\widetilde{W}^h_{\hom}(I + hG_h) \leq \int_Y W\left(y, (I + hG_h + h\dif{\sigma_h}A(y)^{-1})(I - hB_h) \right) \,\dx[y].
\end{equation*}
Suppose that $\sigma_h$ satisfies
\begin{equation} \label{Eq:ProofAsyExpansion:sigmah}
	\limsup_{h \to 0} \tfrac{1}{\abs{G_h}^2 + 1} \norm{\dif{\sigma_h}}_{\Leb^\infty(Y)}^2 < \infty.
\end{equation}
Then, since $\limsup_{h \to 0} \norm{hB_h}_{\Leb^\infty(\R^d)} = 0$, the assumptions of \cref{Lem:ExpansionPeriodicityCell} including the uniform bound hold with
\begin{equation*}
	\delta_h := \tfrac{1}{\abs{G_h}^2 + 1}, \quad
	\Phi_h := G_h + \dif{\sigma_h}A(\placeholder)^{-1}, \quad 
	\Psi_h := -B_h - hG_h B_h - h\dif{\sigma_h}A(\placeholder)^{-1}B_h, \quad
	\Psi := -B.
\end{equation*}
Thus, we obtain
\begin{equation*}
	\widetilde{W}^h_{\hom}(I + hG_h) \leq \int_Y Q\big(y, G_h + \dif{\sigma_h}A(y)^{-1} - B\big) \,\dx[y] + (\abs{G_h}^2 + 1)\operatorname{rest}_1(h),
\end{equation*}
with $\operatorname{rest}_1(h) \to 0$ as $h \to 0$. Our goal is to construct a suitable map $\sigma_h$ satisfying \eqref{Eq:ProofAsyExpansion:sigmah}. This construction is done by estimating the latter term in \eqref{Eq:ProofAsyExpansion:Limsup} in two steps. We fix $\eta > 0$. First, by an approximation argument we find some $\varphi \in \SobW^{1,\infty}_\mathrm{per}(Y, \R^d)$ such that
\begin{equation*}
	\widetilde{Q}_{\hom}(G) \geq \int_Y Q\left(y, G + \dif{\varphi}A(y)^{-1}\right) \,\dx[y] - \eta.
\end{equation*}
Second, recall the construction of $B_{\hom}$ and $R^A(B)$ in \cref{Sec:Characterization_Bhom}. Note that $B_{\hom} - \sym B + \operatorname{P}_{(\mathcal{S} + \R^{d\times d}_{\sym})^\perp}(\sym B) = \operatorname{P}_S(B_{\hom} - \sym B) \in \mathcal{S}$, and thus, we find $\psi \in \SobH^1_\mathrm{per}(Y,\R^d)$ such that,
\begin{equation} \label{Eq:ProofAsyExpansion:BhomFormula}
	\sym B - \sym(\dif{\psi}A(\placeholder)^{-1})
	= B_{\hom} + \operatorname{P}_{(\mathcal{S} + \R^{d\times d}_{\sym})^\perp}(\sym B).
\end{equation}
Hence, again by an approximation argument, we find $\tilde{\psi} \in \SobW^{1,\infty}_\mathrm{per}(Y, \R^d)$ independent of $h$, such that with $\sigma_h := \alpha_h\varphi + \tilde{\psi}$ (recall $H_h = \frac{1}{\alpha_h}(G_h - B_{\hom})$),
\begin{align*}
	&\int_Y Q\left(y, G_h + \dif{\sigma_h}A(y)^{-1} - B\right) \,\dx[y] \\
	&\qquad= \int_Y Q\left(y, G_h + (\alpha_h\dif{\varphi} + \dif{\psi})A(y)^{-1} - B\right) \,\dx[y] \\
	&\qquad\leq \int_Y Q\left(y, G_h + (\alpha_h\dif{\varphi} + \dif{\tilde{\psi}})A(y)^{-1} - B\right) \,\dx[y] + (\abs{G_h}^2 + 1)\eta \\
	&\qquad= \int_Y Q\left(y, G_h - B_{\hom} + \alpha_h\dif{\varphi}A(y)^{-1}\right) \,\dx[y] + R^A(B) + (\abs{G_h}^2 + 1)\eta \\
	&\qquad= \alpha_h^2 \int_Y Q\left(y, H_h + \dif{\varphi}A(y)^{-1}\right) \,\dx[y] + R^A(B) + (\abs{G_h}^2 + 1)\eta.
\end{align*}
Note that $\sigma_h$ satisfies \eqref{Eq:ProofAsyExpansion:sigmah}. Combining these results, we conclude
\begin{align*}
	\text{LHS of \eqref{Eq:ProofAsyExpansion:Limsup}}
	&\leq \limsup_{h \to 0} \frac{\int_Y Q(y, G_h + \dif{\sigma_h}A(y)^{-1} - B) \,\dx[y] - \widetilde{Q}_{\hom}(G_h - B_{\hom}) - R^A(B)}{\abs{G_h}^2 + 1} \\
	&\leq \limsup_{h \to 0} \left(\int_Y Q(y, H_h + \dif{\varphi}A(y)^{-1}) \,\dx[y] - \widetilde{Q}_{\hom}(H_h) \right) \tfrac{\alpha_h^2}{\abs{G_h}^2 + 1} + \eta \\
	&= \left(\int_Y Q(y, G + \dif{\varphi}A(y)^{-1}) \,\dx[y] - \widetilde{Q}_{\hom}(G) \right) \limsup_{h \to 0} \tfrac{\alpha_h^2}{\abs{G_h}^2 + 1} + \eta 
	\leq c_1 \eta.
\end{align*}
Since $\eta > 0$ was arbitrary and the constant $c_1$ is independent of $\eta$, we conclude \eqref{Eq:ProofAsyExpansion:Limsup}.
\medskip

\stepemph{Step 3 -- Lower bound}:
We observe,
\begin{equation*}
	\text{LHS of \eqref{Eq:ProofAsyExpansion:Liminf}} \geq \liminf_{h \to 0} \frac{\widetilde{W}^h_{\hom}(I + hG_h)}{\abs{hG_h}^2 + h^2} - \limsup_{h \to 0} \frac{\widetilde{Q}_{\hom}(G_h - B_{\hom}) + R^A(B)}{\abs{G_h}^2 + 1}.
\end{equation*}
Thus, without loss of generality we may assume that $\liminf_{h \to 0} \frac{\widetilde{W}^h_{\hom}(I + hG_h)}{\abs{hG_h}^2 + h^2}$ is finite and restrict ourselves to a subsequence (not relabeled) where the $\liminf$ is achieved as a limit.
By definition of $\widetilde{W}^h_{\hom}$, for all $h > 0$ we find $k_h \in \N$ and $\sigma_h \in \SobW^{1,\infty}_\mathrm{per}(k_hY, \R^d)$, such that
\begin{equation*}
	\widetilde{W}^h_{\hom}(I + hG_h)\geq \fint_{k_hY} W\left(y, (I + hG_h + h\dif{\sigma_h}A(y)^{-1}) (I - hB_h) \right) \,\dx[y] - h(\abs{hG_h}^2 + h^2).
\end{equation*}
In order to apply \cref{Lem:ExpansionPeriodicityCell}, we want to show
\begin{equation} \label{Eq:ProofAsyExpansion:sigmah_lower}
	\limsup_{h \to 0} \tfrac{1}{\abs{G_h}^2 + 1} \fint_{k_h Y} \abs{\dif{\sigma}_h}^2 \,\dx[y] < \infty.
\end{equation}
Indeed, by the rigidity estimate \cref{Thm:RigidityEstimateJonesDomain}, we find a constant rotation $R_h \in \SO(d)$ and a constant $c_2 > 0$ independent of $k_h$ such that
\begin{equation*}
	 \fint_{k_h Y} \abs{I + hG_h + h\dif{\sigma_h}A(y)^{-1} - R_h}^2 \,\dx[y]
	 \leq c_2 \fint_{k_h Y} \dist^2\big(I + hG_h + h\dif{\sigma_h}A(y)^{-1}, \SO(d)\big) \,\dx[y].
\end{equation*}
Arguing as in \cref{Prop:NonDegeneracy}, using the definition of $\sigma_h$, we may estimate the right-hand side to find
\begin{equation*}
	\fint_{k_h Y} \abs{I + hG_h + h\dif{\sigma_h}A(y)^{-1} - R_h}^2 \,\dx[y] 
	\leq c_3 \left(\widetilde{W}^h_{\hom}(I+hG_h) + h(\abs{hG_h}^2 + h^2) + h^2 \right).
\end{equation*}
Using $\fint_{a(k_hY)} \dif{(\sigma_h \circ a^{-1})} = 0$, see \cref{Lem:periodic_derivative}, and Pythagoras, we now obtain,
\begin{align*}
	\tfrac{1}{\abs{G_h}^2 + 1}\fint_{k_hY} \abs{\dif{\sigma_h}}^2 \,\dx[y] 
	&\leq \tfrac{c_4}{\abs{G_h}^2 + 1} \fint_{k_hY} \abs{\dif{\sigma_h}\dif{a}(y)^{-1}}^2 \det \dif{a} \,\dx[y] \\ 
	&\leq \tfrac{c_5}{\abs{G_h}^2 + 1}\left(\fint_{a(k_hY)} \abs{\dif{(\sigma_h \circ a^{-1})}}^2 \,\dx[z] + \abs{\frac{I + hG_h - R_h}{h}}^2\right) \\
	&= \tfrac{c_5}{\abs{G_h}^2 + 1} \fint_{a(k_hY)} \abs{\frac{I + hG_h - R_h}{h} + \dif{(\sigma_h \circ a^{-1})}}^2 \,\dx[z] \\
	&\leq \tfrac{c_6}{\abs{hG_h}^2 + h^2} \fint_{k_hY} \abs{I + hG_h + h\dif{\sigma_h}\dif{a}(y)^{-1} - R_h}^2 \,\dx[y] \\
	&\leq c_7 \left(\frac{\widetilde{W}^h_{\hom}(I+hG_h)}{\abs{hG_h}^2 + h^2} + 1\right).
\end{align*}
Thus, we may apply \cref{Lem:ExpansionPeriodicityCell} with
\begin{equation*}
	\delta_h := \tfrac{1}{\abs{G_h}^2 + 1}, \quad
	\Phi_h := G_h + \dif{\sigma_h}A(\placeholder)^{-1}, \quad 
	\Psi_h := -B_h - hG_h B_h - h\dif{\sigma_h}A(\placeholder)^{-1}B_h, \quad
	\Psi := -B,
\end{equation*}
and find sets $Y_h \subset k_h Y$ with $\frac{1}{k_h^d}\abs{k_h Y \setminus Y_h} \to 0$, such that
\begin{align*}
	\widetilde{W}^h_{\hom}(I + hG_h)
	&\geq \fint_{k_hY} \mathds{1}_{Y_h} W\left(y, (I + hG_h + h\dif{\sigma_h}A(y)^{-1}) (I - hB_h) \right) \,\dx[y] - h(\abs{hG_h}^2 + h^2) \\
	&\geq h^2\fint_{k_hY} \mathds{1}_{Y_h} Q\left(y,G_h + \dif{\sigma_h}A(y)^{-1} - B\right)\,\dx[y] + (\abs{hG_h}^2 + h^2)\operatorname{rest}_2(h),
\end{align*}
with $\operatorname{rest}_2(h) \to 0$. Let $\psi \in \SobH^1_\mathrm{per}(Y,\R^d)$ be as in \eqref{Eq:ProofAsyExpansion:BhomFormula}, set $\Psi := \operatorname{P}_{(\mathcal{S} + \R^{d\times d}_{\sym})^\perp}(\sym B)$ and define $\tilde{\varphi}_h \in \SobH^1_\mathrm{per}(k_hY, \R^d)$ by $\sigma_h = \psi + \alpha_h \tilde{\varphi}_h$. We observe as in Step 2,
\begin{equation*}
	Q\left(y, G_h + \dif{\sigma_h}(y)A(y)^{-1} - B(y)\right)
	= \alpha_h^2 Q\left(y,H_h + \dif{\tilde{\varphi}_h}(y)A(y)^{-1} - \alpha_h^{-1}\Psi(y) \right).
\end{equation*}
Moreover, let $\varphi_G \in \SobH^1_\mathrm{per}(Y,\R^d)$ the corrector for $\widetilde{Q}_{\hom}(G)$, cf.\ \cref{Prop:Bhom_algorithmic}. Then, using orthogonality and $Y$-periodicity, we observe
\begin{equation*}
	\widetilde{Q}_{\hom}(G) + \alpha_h^{-2} R^A(B) = \fint_{k_hY} Q\left(y, G + \dif{\varphi_G}A(y)^{-1} - \alpha_h^{-1}\Psi\right) \,\dx[y].
\end{equation*}
Concluding the results of this step so far, we obtain
\begin{align*}
	&\text{LHS of \eqref{Eq:ProofAsyExpansion:Liminf}} \\
	&\quad\geq \liminf_{h \to 0} \tfrac{\alpha_h^2}{\abs{G_h}^2 + 1}
		\begin{aligned}[t]
			\Bigg( \fint_{k_hY} \mathds{1}_{Y_h} Q\left(y,H_h + \dif{\tilde{\varphi}_h}A(y)^{-1} - \alpha_h^{-1}\Psi \right) \,\dx[y]& \\
			 - \fint_{k_hY} Q\left(y, G + \dif{\varphi_G}A(y)^{-1} - \alpha_h^{-1}\Psi\right) \,\dx[y]& 
			 + \widetilde{Q}_{\hom}(G) - \widetilde{Q}_{\hom}(H_h) \Bigg)
		\end{aligned} \\
	&\quad= \liminf_{h \to 0} \tfrac{\alpha_h^2}{\abs{G_h}^2 + 1}
		\begin{aligned}[t]
			\Bigg( \fint_{k_hY} \mathds{1}_{Y_h} Q\left(y,H_h + \dif{\tilde{\varphi}_h}A(y)^{-1} - \alpha_h^{-1}\Psi \right) \,\dx[y]& \\
		 - \fint_{k_hY} Q\left(y, G + \dif{\varphi_G}A(y)^{-1} - \alpha_h^{-1}\Psi\right) \,\dx[y]& \Bigg).
		\end{aligned} 
\end{align*}
Since $Q$ is a quadratic form, there holds the formula $Q(y,G_1) - Q(y,G_2) \geq 2(G_1 - G_2):\mathbb{L}_Q(y)G_2$ for all $G_1, G_2 \in \R^{d\times d}$. Thus, we can treat the latter term as follows,
\begin{gather*}
	\frac{1}{2} \fint_{k_hY} \mathds{1}_{Y_h} Q\left(y,H_h + \dif{\tilde{\varphi}_h}A(y)^{-1} - \alpha_h^{-1}\Psi \right)
	- Q\left(y, G + \dif{\varphi_G}A(y)^{-1} - \alpha_h^{-1}\Psi\right) \,\dx[y] \\
	\geq \text{(I)}_h + \text{(II)}_h + \text{(III)}_h,
\intertext{where}
\begin{aligned}
	\text{(I)}_h &:= \fint_{k_hY} \Big[ H_h + \dif{\tilde{\varphi}_h}A(y)^{-1} - \alpha_h^{-1}\Psi \Big] : \mathbb{L}_Q \Big[ (\mathds{1}_{Y_h} - 1)\big(G + \dif{\varphi_G}A(y)^{-1} - \alpha_h^{-1}\Psi\big) \Big] \,\dx[y], \\
	\text{(II)}_h &:= \fint_{k_hY} \Big[ H_h - G \Big] : \mathbb{L}_Q \Big[ G + \dif{\varphi_G}A(y)^{-1} - \alpha_h^{-1}\Psi \Big] \,\dx[y], \\
	\text{(III)}_h &:= \fint_{k_hY} \Big[ (\dif{\tilde{\varphi}_h} - \dif{\varphi_G})A(y)^{-1} \Big] : \mathbb{L}_Q \Big[ G + \dif{\varphi_G}A(y)^{-1} - \alpha_h^{-1}\Psi \Big] \,\dx[y].
\end{aligned}
\end{gather*}
We show that each term, potentially multiplied by $\frac{\alpha_h^2}{\abs{G_h}^2 + 1}$, converges to $0$. Define $\chi_h(y) := \frac{1}{k_h^d} \sum_{\xi \in \Z^d, \xi + Y \subset k_hY} (1-\mathds{1}_{Y_h}(y + \xi))$, $y \in Y$. Then $\frac{\alpha_h^2}{\abs{G_h}^2 + 1}\text{(I)}_h$ converges to $0$ by the Cauchy-Schwartz inequality, since 
\begin{equation*}
	\tfrac{\alpha_h^2}{\abs{G_h}^2 + 1} \fint_{k_h Y} \abs{H_h + \dif{\tilde{\varphi}_h}A(y)^{-1} + \alpha_h^{-1}\Psi}^2 \,\dx[y] 
\end{equation*}
is bounded due to \eqref{Eq:ProofAsyExpansion:sigmah_lower} and by periodicity,
\begin{equation*}
	\fint_{k_hY}\mathds{1}_{k_hY \setminus Y_h}\abs{G + \dif{\varphi_G}A(y)^{-1} + \alpha_h^{-1}\Psi}^2 \,\dx[y]
	= \int_Y \chi_h \abs{G + \dif{\varphi_G}A(y)^{-1} + \alpha_h^{-1}\Psi}^2 \,\dx[y],
\end{equation*}
which converges to $0$ when multiplied with $\tfrac{\alpha_h^2}{\abs{G_h}^2 + 1}$ by dominated convergence, since $\norm{\chi_h}_{\Leb^\infty(Y)} \leq 1$ and $\norm{\chi_h}_{\Leb^1(Y)} = \frac{1}{k_h^d} \abs{k_h Y \setminus Y_h}$. Similarly, using the Cauchy-Schwartz inequality, we observe $\frac{\alpha_h^2}{\abs{G_h}^2 + 1}\text{(II)}_h \to 0$, since $H_h \to G$. To treat $\text{(III)}_h$, we note that according to \cite[Thm.~2.1]{Mar78} $\varphi_G$ also minimizes
\begin{equation*}
	\psi \in \SobH^1_\mathrm{per}(k_hY, \R^d) \mapsto \fint_{k_h Y} Q\big(y, G + \dif{\psi}A(y)^{-1}\big) \,\dx[y].
\end{equation*}
Referring to the associated Euler-Lagrange equation, we obtain
\begin{equation*}
	\text{(III)}_h = \alpha_h^{-1}\fint_{k_hY} \Big[ \big(\dif{\tilde{\varphi}_h} - \dif{\varphi_G}\big)A(y)^{-1} \Big] : \mathbb{L}_Q \Psi \,\dx[y].
\end{equation*} 
Thus, we obtain $\text{(III)}_h = 0$, since similarly $\Psi = \operatorname{P}_{(\mathcal{S} + \R^{d\times d}_{\sym})^\perp}(\sym B)$ satisfies the Euler-Lagrange equation
\begin{equation*}
	\fint_{k_hY} \Big[\hat{G} + \dif{\hat{\varphi}}A(y)^{-1}\Big] : \mathbb{L}_Q \Psi \,\dx[y] = 0, \qquad
	\text{for all } \hat{G} \in \R^{d\times d}, \hat{\varphi} \in \SobH^1_{\rm per}(k_hY, \R^d).
\end{equation*}
This completes the proof.
\end{proof}

Finally, we prove \cref{Cor:ExpansionMinimizer} in the following equivalent form:
\begin{corollary}
Let $(F_h^*) \subset \R^{d\times d}$ denote a sequence of almost minimizers for $(\widetilde{W}^h_{\hom})$ in the sense that
\begin{equation*}
	\limsup_{h \to 0} \frac{1}{h^2} \abs{\widetilde{W}^h_{\hom}(F_h^*) - \inf_{F \in \R^{d\times d}} \widetilde{W}^h_{\hom}(F)} = 0.
\end{equation*}
Then there exist rotations $R_h \in \SO(d)$, such that
\begin{equation}
	F_h^* = R_h(I + h B_{\hom}) + \oclass(h).
\end{equation}
\end{corollary}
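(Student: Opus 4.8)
The plan is to combine three ingredients that are already established: the non-degeneracy estimate \cref{Prop:NonDegeneracy}, which will confine $F_h^*$ to an $\Oclass(h)$-neighbourhood of $\SO(d)$; the asymptotic expansion \cref{Prop:AsyExpansion} together with frame-indifference and the coercivity of $\widetilde{Q}_{\hom}$ (valid since, as in the proof of \cref{L:coercivity}, $\widetilde{Q}_{\hom}(G)=[Q^A]_{\hom}(G\bar A)$ controls $\abs{(\sym G)\bar A}^2$ from below), which will identify $\sym G_h\to B_{\hom}$ for the rescaled displacement gradient $G_h$; and finally a small rotation to absorb the skew part of $G_h$. Throughout I use that $\widetilde{W}^h_{\hom}(\cdot)=[W^h]_{\hom}(\cdot\,\bar A)\ge 0$.

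First I would establish compactness. Since $\inf_F\widetilde{W}^h_{\hom}(F)\le\widetilde{W}^h_{\hom}(I+hB_{\hom})$ and, by \cref{Prop:AsyExpansion} applied with the $h$-independent matrix $G=B_{\hom}$ (and $\widetilde{Q}_{\hom}(0)=0$), the right-hand side equals $h^2R^A(B)+\oclass(h^2)$, almost minimality forces $\widetilde{W}^h_{\hom}(F_h^*)\le h^2R^A(B)+\oclass(h^2)$; in particular $\widetilde{W}^h_{\hom}(F_h^*)\le Ch^2$. Feeding this into \cref{Prop:NonDegeneracy} (and using that $\norm{B_h}_{\Leb^2(Y)}$ stays bounded) yields $\dist^2(F_h^*,\SO(d))\le Ch^2$, so for small $h$ I may fix $R_h\in\SO(d)$ with $\abs{F_h^*-R_h}\le Ch$ and write $F_h^*=R_h(I+hG_h)$ with $G_h:=h^{-1}(R_h^TF_h^*-I)$ a bounded sequence.

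Next I would pin down the symmetric part. The auxiliary density inherits frame-indifference through $\widetilde{W}^h_{\hom}(\cdot)=[W^h]_{\hom}(\cdot\,\bar A)$ and \cref{Thm:Expansion_Whom}(a), so $\widetilde{W}^h_{\hom}(F_h^*)=\widetilde{W}^h_{\hom}(I+hG_h)$. Since $(G_h)$ is bounded and $hG_h\to 0$, the error term in \cref{Prop:AsyExpansion} is $\oclass(1)$, hence $\tfrac1{h^2}\widetilde{W}^h_{\hom}(F_h^*)=\widetilde{Q}_{\hom}(G_h-B_{\hom})+R^A(B)+\oclass(1)$. Because $\widetilde{Q}_{\hom}\ge 0$ and, by the first step, $\tfrac1{h^2}\widetilde{W}^h_{\hom}(F_h^*)\le R^A(B)+\oclass(1)$, this forces $\widetilde{Q}_{\hom}(G_h-B_{\hom})\to 0$; coercivity of $\widetilde{Q}_{\hom}$ (the version of \cref{L:coercivity} for $\widetilde{Q}_{\hom}$) then gives $\sym(G_h-B_{\hom})\to 0$, i.e.\ $\sym G_h\to B_{\hom}$ since $B_{\hom}\in\R^{d\times d}_{\sym}$. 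To remove the bounded skew part $S_h:=\skewMat G_h$, I take $\widetilde{R}_h\in\SO(d)$ to be the orthogonal polar factor of $I+hS_h$ (a rotation, since $\det(I+hS_h)>0$ for skew $S_h$); from $(I+hS_h)^T(I+hS_h)=I-h^2S_h^2=I+\Oclass(h^2)$ one gets $\widetilde{R}_h=I+hS_h+\Oclass(h^2)$, whence a short computation using boundedness of $S_h$ and $\sym G_h$ gives $\widetilde{R}_h^T(I+hG_h)=I+h\sym G_h+\Oclass(h^2)=I+hB_{\hom}+\oclass(h)$, the last step by the convergence $\sym G_h\to B_{\hom}$. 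Setting $\widehat{R}_h:=R_h\widetilde{R}_h\in\SO(d)$ and using that $(\widehat{R}_h)$ is bounded (so multiplication by it preserves $\oclass(h)$) then yields $F_h^*=R_h(I+hG_h)=\widehat{R}_h(I+hB_{\hom})+\oclass(h)$, which is the claim.

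The machinery does most of the work, so the only genuinely delicate point is the last step: \cref{Prop:AsyExpansion} controls only the \emph{symmetric} part of $G_h-B_{\hom}$, and the skew part, although bounded, need not converge and must be absorbed into the rotation by hand; one has to check that this correction does not spoil the $\oclass(h)$ error. A minor but necessary bookkeeping point is that \cref{Prop:NonDegeneracy,Prop:AsyExpansion} are phrased for the auxiliary density $\widetilde{W}^h_{\hom}$, so both frame-indifference and the translation back to $[W^h]_{\hom}$ must be invoked through the identity $\widetilde{W}^h_{\hom}(F)=[W^h]_{\hom}(F\bar A)$.
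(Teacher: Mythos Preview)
Your argument is correct, but it differs from the paper's in how the rotation $R_h$ is chosen. You pick a nearest rotation to $F_h^*$, which leaves a general bounded $G_h$; the asymptotic expansion then only pins down $\sym G_h$, so you need the second rotation $\widetilde R_h$ to absorb the skew part by hand. The paper instead applies the polar decomposition $F_h^*=R_h\sqrt{{F_h^*}^T F_h^*}$ directly (after checking $\det F_h^*>0$ via non-degeneracy), which produces a \emph{symmetric} $G_h^*:=h^{-1}(\sqrt{{F_h^*}^T F_h^*}-I)$ from the outset. Boundedness of $G_h^*$ follows from $\abs{G_h^*}^2=h^{-2}\dist^2(F_h^*,\SO(d))$, and then $\Gamma$-convergence of $G\mapsto h^{-2}\widetilde W^h_{\hom}(I+hG)$ to $G\mapsto\widetilde Q_{\hom}(G-B_{\hom})+R^A(B)$ together with the uniqueness of $B_{\hom}$ as minimizer in $\R^{d\times d}_{\sym}$ forces $G_h^*\to B_{\hom}$ without any skew correction. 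The paper's route is slightly slicker because the polar decomposition does in one step what your two rotations do; on the other hand, your approach is more hands-on and makes explicit why only the symmetric part is determined by the energy, which is conceptually instructive.
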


\begin{proof}
Let $\mathcal{F}_h, \mathcal{F}: \R^{d\times d} \to \R$ with $\mathcal{F}_h(G) := \frac{1}{h^2}\widetilde{W}^h_{\hom}(I+hG)$ and $\mathcal{F}_h(G) := \widetilde{Q}_{\hom}(G - B_{\hom}) + R^A(B)$. Then, \cref{Prop:AsyExpansion} implies $\Gamma$-convergence of $\mathcal{F}_h$ to $\mathcal{F}$. Thus, any subsequence of any bounded sequence $(G_h) \subset \R^{d\times d}$ with $\mathcal{F}_h(G_h) - \inf \mathcal{F}_h \to 0$, admits a further subsequence which converges to a minimizer of $\mathcal{F}$. We construct such a sequence. \cref{Prop:AsyExpansion} clearly implies $\widetilde{W}^h_{\hom}(F_h^*) \to 0$ as $h \to 0$. Hence, non-degeneracy \cref{Prop:NonDegeneracy} implies that $\det F_h^* > 0$ if $h \ll 1$. Thus, by polar decomposition we obtain a rotation $R_h \in \SO(d)$ such that
\begin{equation*}
	F_h^* = R_h\sqrt{{F_h^*}^T F_h^*}.
\end{equation*}
Let $G_h^* := \frac{1}{h}(\sqrt{{F_h^*}^T F_h^*} -I)$. Then, by frame indifference of $\widetilde{W}^h_{\hom}$, $G_h^*$ satisfies $\mathcal{F}_h(G_h^*) - \inf \mathcal{F}_h \to 0$. Moreover, by \cref{Prop:NonDegeneracy},
\begin{equation*}
	\abs{G_h^*}^2 = \tfrac{1}{h^2}\abs{\sqrt{{F_h^*}^T F_h^*} -I}^2 \leq \tfrac{1}{h^2}\dist^2(F_h^*, \SO(d)) \leq c_1\left(\frac{\widetilde{W}^h_{\hom}(F_h^*)}{h^2} + 1 \right).
\end{equation*}
Thus, $(G_h^*) \subset \R^{d\times d}_{\sym}$ is bounded and any subsequence admits a further subsequence which converges to a minimizer of $\mathcal{F}$. Since $B_{\hom}$ is the unique minimizer of $\mathcal{F}$ in $\R^{d\times d}_{\sym}$, the whole sequence $G_h^*$ must converge to $B_{\hom}$. Hence, the claim follows.
\end{proof}

\subsection{Linearization; Proof of Theorem \ref{Thm:gamma_convergence} (1) and (4), (\ref{Eq:equi_coercivity_Ihhom}) and Proposition \ref{Prop:strong_conv} (a) and (b)}
\label{Sec:ProofLinearization}

In this section we show the directions (1) and (4) in \cref{Thm:gamma_convergence} as well as an associated equi-coercivity statement which especially includes \eqref{Eq:equi_coercivity_Ihhom} as a consequence. We show here a stronger statement which includes (1) and (4) simultaneously and does not require any periodicity assumptions. Indeed, note that $W^h$, $A$, $B_h$ and $B$ as well as $W^h_{\hom}$, $\bar{A}$ and $B_{\hom}$ satisfy the assumptions of the following theorem if $h$ is small enough. For $W^h_{\hom}$ this is a consequence of \cref{Thm:Expansion_Whom} and for $W^h$ this follows from similar but easier considerations.

\begin{theorem} \label{Thm:Linearization}
Let $\alpha > 0$, $\hat{W}^h: \Omega \times \R^{d\times d} \to [0,\infty]$ a sequence of Carathéodory functions continuous in the second component, $\hat{A} \in \operatorname{SFJ(\Omega)}$ and $\hat{B}_h, \hat{B} \in \Leb^2(\Omega, \R^{d\times d})$ such that for a.e.\ $x \in \Omega$ and all $h > 0$, the following properties hold.
\begin{enumerate}[(i)]
	\item (Frame indifference): $\hat{W}^h(x,RF) = \hat{W}^h(F)$ for all $F \in \R^{d\times d}$, $R \in \SO(d)$.
	
	\item (Non-degeneracy): $\hat{W}^h(x,F) \geq \tfrac{1}{\alpha} \dist^2\big(F\hat{A}(x)^{-1}, \SO(d)\big) - \alpha h^2 \big(\abs{\hat{B}_h(x)}^2 + 1\big)$ for all $F \in \R^{d\times d}$.
	
	\item (Asymptotic expansion): There exists a quadratic form $\tilde{Q}: \Omega \times \R^{d\times d} \to \R$, a map $\hat{R} \in \Leb^2(\Omega)$ and a remainder $\hat{r}: \Omega \times [0,\infty) \to [0,\infty]$ (which is measurable in the first, continuous and increasing in the second component and satisfies $\limsup_{\delta \to 0} \operatorname{ess\,sup}_{x \in \Omega} \hat{r}(x,\delta) < \infty$ and $\lim_{\delta \to 0}\hat{r}(x,\delta) = 0$), such that for all $G \in \R^{d\times d}$,
	\begin{gather*}
		\abs{\tfrac{1}{h^2} \hat{W}^h(x, A(x) + hG) - \big(\tilde{Q}(x,G - \hat{B}(x)\hat{A}(x)) + \hat{R}(x)\big)} \\
		\leq (1 + \abs{\hat{B}_h(x)}^2 + \abs{G}^2) \hat{r}(x, h + \abs{hG})
	\end{gather*}
	and with $\sym_{\hat{A}(x)}$ defined as in \cref{L:coercivity},
	\begin{equation*}
		\tfrac{1}{\alpha} \big|\sym_{\hat{A}(x)} G\big|^2 \leq \tilde{Q}(x,G) \leq \alpha \big|\sym_{\hat{A}(x)} G\big|^2.
	\end{equation*}
	
	\item $\hat{B}_h \to \hat{B}$ in $\Leb^2(\Omega, \R^{d\times d})$ and $\limsup_{h \to 0} \norm{h \hat{B}_h}_{\Leb^\infty(\Omega)} = 0$.
\end{enumerate}
Then, the energy functionals $\hat{\mathcal{I}}^h(u) := \int_\Omega \hat{W}^h(x, \hat{A}(x) + h\dif{u}(x)) \,\dx$ $\Gamma$-converge w.r.t.\ weak convergence in $\SobH^1_{\Gamma, g}(\Omega, \R^d)$ to $\hat{\mathcal{I}}^\mathrm{lin}(u) := \int_\Omega \hat{Q}(x, \dif{u}(x) - \hat{A}(x)\hat{B}(x))  + \hat{R}(x)\,\dx$.
\end{theorem}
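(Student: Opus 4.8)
The statement is a $\Gamma$-convergence result of the now-classical Dal Maso--Negri--Percivale type, adapted to the prestrained setting with a stress-free joint $\hat A$. I would follow the well-trodden route: (i) establish compactness/equi-coercivity, (ii) prove the $\liminf$-inequality, (iii) construct a recovery sequence. The key input replacing the standard geometric rigidity estimate is \cref{Thm:RigidityEstimateJonesDomain} applied on $\Omega$ with the stress-free joint $\hat A$ (equivalently, applied to $u\circ a^{-1}$ on the Jones domain $a(\Omega)$, where $a$ is the piece-wise Bilipschitz potential of $\hat A$), together with the prestrained Korn inequalities \cref{Cor:KornWithBoundaryValue,Cor:KornWithLpNormOfu}. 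Throughout I would work with the scaled strain variable: given a deformation-like map $\hat A+h\dif u$ with $\hat{\mathcal I}^h(u)\le C$, non-degeneracy (ii) gives $\int_\Omega\dist^2(\,(\hat A+h\dif u)\hat A(\placeholder)^{-1},\SO(d))\,\dx \le C h^2$, i.e. $\dist^2(I+h\dif u\,\hat A(\placeholder)^{-1},\SO(d))$ is of order $h^2$ in $L^1$.

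\textbf{Compactness and equi-coercivity.}
First I would fix $u_h\in \SobH^1_{\Gamma,g}(\Omega,\R^d)$ with $\sup_h\hat{\mathcal I}^h(u_h)<\infty$. By \cref{Thm:RigidityEstimateJonesDomain} (with $p=q=2$, in the form transported to $\Omega$ via the piece-wise Bilipschitz potential) there is $R_h\in\SO(d)$ with $\norm{\dif u_h\,\hat A(\placeholder)^{-1}-h^{-1}(R_h-I)}_{\Leb^2(\Omega)}\le C h^{-1}\norm{\dist(I+h\dif u_h\hat A(\placeholder)^{-1},\SO(d))}_{\Leb^2(\Omega)}\le C$. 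A standard argument (using the boundary condition $u_h=g$ on $\Gamma$ together with \cref{Cor:MatrixNorm} to control the skew part, and $|R_h-I|^2\lesssim |R_h-I-\skewMat(\ldots)|^2+\dots$, exploiting that $\SO(d)-I$ is tangent to the skew matrices) yields $h^{-1}|R_h-I|\le C$ and hence $h^{-1}\skewMat(R_h-I)\to W$ for some $W\in\R^{d\times d}_{\skewMat}$ along a subsequence; then $\dif u_h\hat A(\placeholder)^{-1}$ is bounded in $\Leb^2$, so by \cref{Cor:KornWithLpNormOfu} (and the boundary condition for the full $\SobH^1$-bound) $u_h$ is bounded in $\SobH^1(\Omega,\R^d)$ and, up to a subsequence, $u_h\wto u$ weakly in $\SobH^1_{\Gamma,g}(\Omega,\R^d)$. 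Taking $\hat{\mathcal I}^h(u_h)\le C$ constant and tracking constants gives the equi-coercivity bound $\norm{u}^2_{\SobH^1}\le C(\hat{\mathcal I}^h(u)+1)$, which specialises to \eqref{Eq:equi_coercivity_Ihhom}.

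\textbf{$\liminf$-inequality.}
Given $u_h\wto u$ in $\SobH^1_{\Gamma,g}$, I may assume $\liminf_h\hat{\mathcal I}^h(u_h)<\infty$ and pass to a subsequence attaining the liminf. Set $G_h:=\dif u_h$. The compactness step gives, after subtracting $h^{-1}(R_h-I)$ and using $h^{-1}(R_h-I)\to W\in\R^{d\times d}_{\skewMat}$, that the ``linearised strain'' $E_h:=G_h\hat A(\placeholder)^{-1}-h^{-1}(R_h-I)$ is bounded in $\Leb^2$ and $\sym(G_h\hat A(\placeholder)^{-1})=\sym E_h \wto \sym(\dif u\,\hat A(\placeholder)^{-1}-W\cdot 0)=\sym(\dif u\,\hat A(\placeholder)^{-1})$ weakly in $\Leb^2$ (the skew shift drops out under $\sym$; more precisely one shows $\sym_{\hat A}$-type convergence). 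Using frame indifference (i) to replace $\hat A+hG_h$ by $R_h^T(\hat A+hG_h)=\hat A+h\tilde G_h$ with $\tilde G_h$ a controlled modification, the asymptotic expansion (iii) gives pointwise a.e.\ (on the ``good set'' where $h|\tilde G_h|\to0$, whose complement has vanishing measure by a Markov-type truncation as in \cref{Lem:ExpansionPeriodicityCell}) that $h^{-2}\hat W^h(x,\hat A+h\dif u_h)\ge \hat Q(x,\,\cdot\,)+\hat R(x)-(1+|\hat B_h|^2+|\tilde G_h|^2)\hat r(x,\cdot)$; then weak $\Leb^2$ lower semicontinuity of the convex integrand $G\mapsto\int_\Omega \hat Q(x,\sym_{\hat A}G\text{-part})$ together with $\hat B_h\to\hat B$ strongly in $\Leb^2$ and $\hat R\in\Leb^1$ yields $\liminf_h\hat{\mathcal I}^h(u_h)\ge \int_\Omega \hat Q(x,\dif u-\hat A\hat B)+\hat R\,\dx=\hat{\mathcal I}^{\mathrm{lin}}(u)$. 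The care needed here is exactly the truncation/uniform-integrability bookkeeping of \cref{Lem:ExpansionPeriodicityCell}, plus handling the unknown rotations $R_h$.

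\textbf{Recovery sequence and the main obstacle.}
For $u\in\SobH^1_{\Gamma,g}$, density lets me first take $u\in\SobW^{1,\infty}(\Omega,\R^d)$ with $u=g$ on $\Gamma$ and then set $u_h:=u$ (no oscillation is needed since there is no $\eps$-scale here): $\hat A+h\dif u=\hat A(I+h\dif u\,\hat A(\placeholder)^{-1})$ with $h\norm{\dif u\,\hat A(\placeholder)^{-1}}_{\Leb^\infty}\to0$, so the second clause of (iii) (with $\hat B_h$ bounded in $\Leb^2$, $\norm{h\hat B_h}_{\Leb^\infty}\to0$) and dominated convergence give $\hat{\mathcal I}^h(u)\to\int_\Omega \hat Q(x,\dif u-\hat A\hat B)+\hat R\,\dx$; a diagonal argument over the density approximation, using continuity of $v\mapsto\int_\Omega\hat Q(x,\dif v-\hat A\hat B)+\hat R$ in the $\SobH^1$-norm (from the growth bounds on $\hat Q$ in (iii)), produces the recovery sequence for general $u$. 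I expect the genuine difficulty to be the compactness step: extracting a usable limit for the rotation field $R_h$ under only a boundary condition on a part $\Gamma$ of $\partial\Omega$, and doing so uniformly while the geometric rigidity constant is only available on the transformed Jones domain $a(\Omega)$ — this is where \cref{Thm:RigidityEstimateJonesDomain}, \cref{Cor:MatrixNorm} and \cref{Cor:KornWithBoundaryValue} must be combined carefully, and where the prestrain $\hat A$ (not close to $I$) genuinely complicates the standard Dal Maso--Negri--Percivale argument.
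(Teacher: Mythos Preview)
Your recovery-sequence argument matches the paper's almost verbatim (density, constant sequence $u_h=u_\delta$, expansion on $\Leb^\infty$-bounded gradients, diagonal argument). Your compactness/equi-coercivity paragraph is also essentially what the paper does---but in a \emph{separate} statement, \cref{Thm:Equi-coercivityLinearization}, not in the proof of \cref{Thm:Linearization} itself.

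The real difference is in the $\liminf$ step, and here you make things much harder than necessary. Because $\Gamma$-convergence is taken w.r.t.\ \emph{weak} convergence in $\SobH^1_{\Gamma,g}$, you are \emph{given} $u_h\wto u$ weakly in $\SobH^1$; in particular $(\dif u_h)$ is already bounded in $\Leb^2$. The paper exploits this directly: apply the expansion~(iii) with $G=\dif u_h(x)$ on a good set $\Omega_h$ (\cref{Lem:ExpansionLinearization}, the analogue of \cref{Lem:ExpansionPeriodicityCell}), discard $\Omega\setminus\Omega_h$ using $\hat W^h\ge 0$, observe that $\mathds 1_{\Omega_h}(\dif u_h-\hat B\hat A)\wto \dif u-\hat B\hat A$ weakly in $\Leb^2$, and conclude by weak lower semicontinuity of the quadratic integrand. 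No rotations $R_h$, no frame indifference, no rigidity estimate enter the $\liminf$ argument at all. Your route via $R_h^T(\hat A+h\dif u_h)=\hat A+h\tilde G_h$ can be made to work (since you argue $h^{-1}|R_h-I|$ is bounded), but it is a detour: the expansion~(iii) is already centred at $\hat A(x)$, so there is nothing to rotate away.

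Consequently, what you flag as ``the genuine difficulty''---extracting a limit for the rotation field under a partial boundary condition on a Jones-transformed domain---is not a difficulty for \cref{Thm:Linearization} at all. It is precisely the content of the separate equi-coercivity theorem, where \cref{Thm:RigidityEstimateJonesDomain} and \cref{Cor:MatrixNorm} are indeed the key tools, and your sketch for that part is on target.
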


Before we move on to the proof, we provide the following lemma whose proof follows analogously to \cref{Lem:ExpansionPeriodicityCell}.

\begin{lemma} \label{Lem:ExpansionLinearization}
Consider the situation of \cref{Thm:Linearization}. Let $(\Phi_h) \subset \Leb^2(\Omega, \R^{d\times d})$ bounded. Then, there exist subsets $\Omega_h \subset \Omega$ with $\abs{\Omega \setminus \Omega_h} \to 0$, such that
\begin{equation*}
	\Bigg| \frac{1}{h^2} \int_{\Omega_h} \hat{W}^h\left(x, \hat{A}(x) + h\Phi_h(x)\right) \,\dx
	- \int_{\Omega_h} \hat{Q}\left(x, \Phi_h(x) - \hat{B}(x)\hat{A}(x)\right) \,\dx - \int_\Omega \hat{R} \Bigg| \to 0.
\end{equation*}
Moreover, if $(\Phi_h)$ is bounded in $\Leb^\infty(\Omega, \R^{d\times d})$ then we may choose $\Omega_h = \Omega$.
\end{lemma}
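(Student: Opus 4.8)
The statement to prove is \cref{Lem:ExpansionLinearization}, which is a pointwise-in-space analogue of \cref{Lem:ExpansionPeriodicityCell}: we must show that, up to removing a set $\Omega_h$ of vanishing measure, the rescaled energy $\frac{1}{h^2}\hat W^h(x,\hat A+h\Phi_h)$ integrated against the bulk is well approximated by $\hat Q(x,\Phi_h-\hat B\hat A)+\hat R$.

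\textbf{Plan of proof.} The plan is to mimic the proof of \cref{Lem:ExpansionPeriodicityCell} almost verbatim, replacing the periodicity cell $k_hY$ by $\Omega$ and the averaged integrals $\fint_{k_hY}$ by $\int_\Omega$. First I would invoke the asymptotic expansion hypothesis (iii) of \cref{Thm:Linearization} with $G = \Phi_h(x)$ to get, for a.e.\ $x$,
\begin{equation*}
  \left|\tfrac{1}{h^2}\hat W^h(x,\hat A(x)+h\Phi_h(x)) - \big(\hat Q(x,\Phi_h(x)-\hat B(x)\hat A(x))+\hat R(x)\big)\right|
  \leq (1+|\hat B_h(x)|^2+|\Phi_h(x)|^2)\,\hat r(x,h+|h\Phi_h(x)|).
\end{equation*}
Integrating over a subset $\Omega_h\subset\Omega$ (to be chosen) and using that $\hat R\in L^2(\Omega)\subset L^1(\Omega)$, the error is controlled by $\int_{\Omega_h}(1+|\hat B_h|^2+|\Phi_h|^2)\,\hat r(x,h+|h\Phi_h|)\,\dx$. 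Here, in contrast to \cref{Lem:ExpansionPeriodicityCell}, there is no extra term analogous to $(\mathrm{II})_h$, since $\hat Q$ already carries the fixed argument $\hat B$ rather than an $h$-dependent $\hat B_h$; so the only term to estimate is this $(\mathrm{I})_h$-type remainder.

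\textbf{Choice of $\Omega_h$.} Following the construction in the proof of \cref{Lem:ExpansionPeriodicityCell}, I would set $\bar r_h(x):=\hat r(x,h^{1/2})$ (note $\limsup_{\delta\to0}\operatorname{ess\,sup}_x\hat r(x,\delta)<\infty$ gives an $L^\infty$ bound for $h\ll1$, and $\hat r(x,\cdot)\to0$ pointwise gives $\bar r_h\to0$ a.e., hence $\rho_h:=\big(\int_\Omega\bar r_h\big)^{1/2}\to0$ by dominated convergence since $\Omega$ is bounded), and define
\begin{equation*}
  \Omega_h := \{x\in\Omega : |\Phi_h(x)|\leq h^{-1/2}\}\cap\{x\in\Omega : \bar r_h(x)\leq\rho_h\}.
\end{equation*}
Markov's inequality and boundedness of $(\Phi_h)$ in $L^2$ give $|\Omega\setminus\{|\Phi_h|\leq h^{-1/2}\}|\leq h\|\Phi_h\|_{L^2}^2\to0$, and $|\Omega\setminus\{\bar r_h\leq\rho_h\}|\leq\rho_h^{-1}\int_\Omega\bar r_h=\rho_h\to0$; hence $|\Omega\setminus\Omega_h|\to0$. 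On $\Omega_h$ one has $h+|h\Phi_h(x)|\leq h+h^{1/2}$, so $\hat r(x,h+|h\Phi_h(x)|)\leq\bar r_h(x)\leq\rho_h$ for $h\ll1$ (using monotonicity of $\hat r$ in the second variable), and therefore
\begin{equation*}
  \int_{\Omega_h}(1+|\hat B_h|^2+|\Phi_h|^2)\,\hat r(x,h+|h\Phi_h|)\,\dx
  \leq\rho_h\int_\Omega(1+|\hat B_h|^2+|\Phi_h|^2)\,\dx\to0,
\end{equation*}
because $(\hat B_h)$ and $(\Phi_h)$ are bounded in $L^2(\Omega)$ and $\rho_h\to0$. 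This yields the claimed convergence.

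\textbf{The $L^\infty$-bounded case.} If $(\Phi_h)$ is in addition bounded in $L^\infty$, then $|h\Phi_h|\to0$ uniformly, so $\hat r(x,h+|h\Phi_h(x)|)\leq\operatorname{ess\,sup}_x\hat r(x,h+\|h\Phi_h\|_{L^\infty})=:\tilde\rho_h$, which is bounded for $h\ll1$ and tends to $0$ by the hypothesis on $\hat r$; again $\int_\Omega(1+|\hat B_h|^2+|\Phi_h|^2)\,\dx$ is bounded (now even the $|\Phi_h|^2$ term is trivially bounded), so dominated convergence with the strongly convergent dominating sequence $1+|\hat B_h|^2$ (recall $\hat B_h\to\hat B$ in $L^2$) forces the full integral to vanish, and one may take $\Omega_h=\Omega$. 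I do not anticipate a genuine obstacle here; the only point requiring minor care is making sure the dominating sequence converges strongly in $L^1$ so that one may apply the generalized dominated convergence theorem, exactly as in the corresponding step of \cref{Lem:ExpansionPeriodicityCell}.
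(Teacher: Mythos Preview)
Your proposal is correct and follows exactly the route the paper indicates (the paper does not spell out the proof but says it ``follows analogously to \cref{Lem:ExpansionPeriodicityCell}'', which is precisely what you do). Two minor cosmetic points: since the remainder argument in (iii) is $h+|h\Phi_h|$ rather than $|h\Phi_h|$, you should set $\bar r_h(x):=\hat r(x,2h^{1/2})$ (or $\hat r(x,h+h^{1/2})$) so that monotonicity actually yields $\hat r(x,h+|h\Phi_h|)\le\bar r_h$ on $\Omega_h$; and in the $L^\infty$ case the claim $\tilde\rho_h\to0$ is neither justified by the hypotheses nor needed---your subsequent appeal to generalized dominated convergence (pointwise convergence $\hat r(x,h+\|h\Phi_h\|_{L^\infty})\to0$ together with the strongly $L^1$-convergent majorant $C(1+|\hat B_h|^2)$) already does the job.
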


\begin{proof}[Proof of \cref{Thm:Linearization}]

\stepemph{Step 1 -- Recovery sequence}: Let $u \in \SobH^1_{\Gamma,g}(\Omega, \R^d)$. By \cref{Def:boundary_conditions} there is a sequence $(u_\delta) \subset \SobW^{1,\infty}_{\Gamma, g}(\Omega, \R^d)$ converging to $u$ strongly in $\SobH^1$ as $\delta \to 0$. It is sufficient to show 
\begin{equation*}
	\hat{\mathcal{I}}^h(u_\delta) \xrightarrow[\text{(I)}]{h \to 0} \hat{\mathcal{I}}^\mathrm{lin}(u_\delta) \xrightarrow[\text{(II)}]{\delta \to 0} \hat{\mathcal{I}}^\mathrm{lin}(u),
\end{equation*}
because then we may extract a diagonal sequence $\delta(h)$ with $\hat{\mathcal{I}}^h(u_{\delta(h)}) \to \hat{\mathcal{I}}^\mathrm{lin}(u)$ using Attouch's diagonalization lemma, see \cite[Lemma 1.15]{Att84}. It remains for us to show (I), since (II) is a direct consequence of the continuity of $\hat{\mathcal{I}}^\mathrm{lin}$ w.r.t.\ strong convergence in $\SobH^1(\Omega,\R^d)$. But (I) follows by applying \cref{Lem:ExpansionLinearization} with the constant sequence $\Phi_h = \dif{u_\delta}$.
\medskip

\stepemph{Step 2 -- Lower bound}: Let again $u \in \SobH^1_{\Gamma,g}(\Omega, \R^d)$ and $(u_h) \subset \SobH^1_{\Gamma,g}(\Omega, \R^d)$ be an arbitrary sequence converging weakly to $u$ in $\SobH^1(\Omega)$. We show
\begin{equation*}
	\liminf_{h \to 0} \hat{\mathcal{I}}^h(u_h) \geq \hat{\mathcal{I}}^\mathrm{lin}(u).
\end{equation*}
Since $(\dif{u_h}) \subset \Leb^2(\Omega, \R^{d\times d})$ is bounded, we may apply \cref{Lem:ExpansionLinearization} with $\Phi_h = \dif{u_h}$. Thus, since $\hat{W}^h$ is non-negative, we obtain
\begin{equation*}
	\liminf_{h \to 0} \hat{\mathcal{I}}^h(u_h) \geq \liminf_{h \to 0} \int_{\Omega}  \hat{Q}\left(x, \mathds{1}_{\Omega_h}(x)(\dif{u_h}(x) - \hat{B}(x)\hat{A}(x))\right) + \hat{R}(x) \,\dx.
\end{equation*}
Finally, since $\mathds{1}_{\Omega_h}$ is bounded in $\Leb^\infty(\Omega)$ and converges to $1$ strongly in $\Leb^2(\Omega)$, we obtain that $\mathds{1}_{\Omega_h}(\dif{u_h} - \hat{B}\hat{A})$ converges weakly to $\dif{u} - \hat{B}\hat{A}$ in $\Leb^2(\Omega, \R^{d\times d})$. Hence, weak lower semi-continuity of the quadratic integral functional, yields 
\begin{equation*}
	\liminf_{h \to 0} \int_{\Omega}  \hat{Q}\left(x, \mathds{1}_{\Omega_h}(x)(\dif{u_h}(x) - \hat{B}(x)\hat{A}(x))\right) + \hat{R}(x) \,\dx \geq \hat{\mathcal{I}}^\mathrm{lin}(u). \qedhere
\end{equation*}
\end{proof}

Moreover, we prove the following equi-coercivity statement. The approach to prove this statement mimics \cite[§5.3.1]{NeuPHD} and is influenced by \cite[§5.1]{BNS20} and \cite[§3]{DNP02}.

\begin{theorem} \label{Thm:Equi-coercivityLinearization}
Consider the situation of \cref{Thm:Linearization}. Then, we find a constant $c > 0$, such that for all $h > 0$,
\begin{equation}
	\norm{u}_{\SobH^1(\Omega)} \leq c \left(\hat{\mathcal{I}}^h(u) + 1\right).
\end{equation}
\end{theorem}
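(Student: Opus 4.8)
The strategy is the standard one for equi-coercivity in linearization problems (as in \cite[§5.3.1]{NeuPHD}, \cite[§3]{DNP02}), but carried out through the Bilipschitz potential of the stress-free joint so that the geometric rigidity estimate on Jones domains (\cref{Thm:RigidityEstimateJonesDomain}) can be applied. Fix $u \in \SobH^1_{\Gamma,g}(\Omega,\R^d)$ and write $F := \hat{A} + h\dif{u}$. The point of departure is the non-degeneracy assumption (ii): since $\hat{W}^h$ is non-negative,
\begin{equation*}
	\int_\Omega \dist^2\big(F\hat{A}(\placeholder)^{-1}, \SO(d)\big) \,\dx \leq \alpha \hat{\mathcal{I}}^h(u) + \alpha^2 h^2\big(\norm{\hat{B}_h}_{\Leb^2(\Omega)}^2 + \abs{\Omega}\big),
\end{equation*}
and the right-hand side is bounded by $c(\hat{\mathcal{I}}^h(u) + 1)$ because $\hat{B}_h \to \hat{B}$ in $\Leb^2(\Omega)$ and $h$ is small. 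Thus $\dist(\dif{u}\,\hat{A}(\placeholder)^{-1} + \tfrac1h(I - \text{something}), \dots)$ — more precisely, since $F\hat{A}(\placeholder)^{-1} = I + h\,\dif{u}\,\hat{A}(\placeholder)^{-1}$, this reads $\int_\Omega \dist^2(I + h\,\dif{u}\hat{A}(\placeholder)^{-1}, \SO(d)) \,\dx \leq c h^2(\hat{\mathcal{I}}^h(u) + 1)$.

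Next I would apply the geometric rigidity estimate \cref{Thm:RigidityEstimateJonesDomain}(a) on $\Omega$ (a Lipschitz, hence Jones, domain) with $p = q = 2$ to the deformation $x \mapsto a(x) + h u(x)$, whose gradient is $F = \hat{A} + h\dif{u}$; note $\dist(\dif{u}\hat{A}(\placeholder)^{-1},\SO(d))$ in the theorem should be read with $\hat A$ in place of $A$ and $\dif u$ replaced by $F$, i.e. we control $\dist(F\hat A(\placeholder)^{-1},\SO(d))$. This yields a rotation $R \in \SO(d)$ with
\begin{equation*}
	\norm{F\hat{A}(\placeholder)^{-1} - R}_{\Leb^2(\Omega)} \leq C \norm{\dist(F\hat{A}(\placeholder)^{-1},\SO(d))}_{\Leb^2(\Omega)} \leq C h\sqrt{\hat{\mathcal{I}}^h(u) + 1}.
\end{equation*}
Since $F\hat{A}(\placeholder)^{-1} - R = (I - R) + h\,\dif{u}\,\hat{A}(\placeholder)^{-1}$, the triangle inequality and $\sqrt{|\Omega|}$ give both $\abs{I - R} \leq C h\sqrt{\hat{\mathcal{I}}^h(u)+1}$ (after absorbing, using that $R$ is the rigidity rotation and $|I-R|$ is bounded) and hence $\norm{\dif{u}\,\hat{A}(\placeholder)^{-1}}_{\Leb^2(\Omega)} \leq \tfrac1h\norm{F\hat{A}(\placeholder)^{-1}-R}_{\Leb^2} + \tfrac1h\abs{I-R}\sqrt{|\Omega|} \leq C\sqrt{\hat{\mathcal{I}}^h(u)+1}$. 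Because $\hat{A}^{-1}$ is uniformly bounded and invertible (property \ref{Property:SFJUniformBound} of stress-free joints), this controls $\norm{\dif{u}}_{\Leb^2(\Omega)}$; more carefully, I would instead estimate $\norm{\sym(\dif u\,\hat A(\placeholder)^{-1})}_{\Leb^2}$ directly from $F\hat A(\placeholder)^{-1}-R$ by noting $\dist^2(M,\SO(d)) \gtrsim |\sym M - I|^2$ for $M$ near $\SO(d)$, to set up for Korn.

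To pass from $\norm{\dif{u}}_{\Leb^2(\Omega)}$ (or $\norm{\sym(\dif u\hat A(\placeholder)^{-1})}_{\Leb^2}$) to the full $\SobH^1$ norm, I would invoke Korn's inequality with boundary values \cref{Cor:KornWithBoundaryValue}: since $u - g \in \SobH^1_{\Gamma,0}(\Omega,\R^d)$ and $g$ is fixed in $\SobW^{1,\infty}$, we get
\begin{equation*}
	\norm{u - g}_{\SobH^1(\Omega)} \leq C \norm{\sym\big((\dif{u} - \dif g)\hat{A}(\placeholder)^{-1}\big)}_{\Leb^2(\Omega)} \leq C\big(\norm{\sym(\dif{u}\hat{A}(\placeholder)^{-1})}_{\Leb^2(\Omega)} + \norm{g}_{\SobW^{1,\infty}(\Omega)}\big),
\end{equation*}
and then $\norm{u}_{\SobH^1} \leq \norm{u-g}_{\SobH^1} + \norm{g}_{\SobH^1} \leq C(\hat{\mathcal{I}}^h(u)+1)$, where I have used $\sqrt{\hat{\mathcal I}^h(u)+1} \le \hat{\mathcal I}^h(u)+1$. (Alternatively one combines \cref{Thm:RigidityEstimateJonesDomain}(b) directly with the boundary-condition argument as in the proof of \cref{Cor:KornWithBoundaryValue}.) The main obstacle is bookkeeping rather than conceptual: one must be careful that the rigidity rotation $R$ genuinely satisfies $|I-R| \lesssim h\sqrt{\hat{\mathcal I}^h(u)+1}$ — this needs the observation that $\fint_\Omega F\hat A(\placeholder)^{-1} = I + h\fint_\Omega \dif u\,\hat A(\placeholder)^{-1}$ is within $O(h\sqrt{\hat{\mathcal I}^h+1})$ of $\SO(d)$, so the best rotation is $O(h\sqrt{\hat{\mathcal I}^h+1})$-close to $I$ — together with the absorption step handling the nonlinear $|I-R|\,|R|$ cross term, exactly as in \cite[§3]{DNP02}. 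Everything else (the $\Leb^2$ bound on $\hat B_h$, the $\hat A^{-1}$ bound, and the Korn estimate) is available verbatim from results already stated in the excerpt.
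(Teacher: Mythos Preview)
Your overall strategy (non-degeneracy $\Rightarrow$ rigidity $\Rightarrow$ control of $\dif u$ $\Rightarrow$ Poincar\'e/Korn) is correct, and you rightly flag the bound on $|I-R|$ as the crux. However, the two ways you propose to obtain $|I-R|\le Ch\sqrt{\hat{\mathcal I}^h(u)+1}$ both fail:

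\begin{itemize}
\item The averaging-and-absorbing argument is circular. Taking means in $\|(I-R)+h\dif u\,\hat A(\placeholder)^{-1}\|_{\Leb^2}\le Ch\sqrt{\hat{\mathcal I}^h+1}$ only yields $|I-R|\le Ch\sqrt{\hat{\mathcal I}^h+1}+h|\fint_\Omega\dif u\,\hat A^{-1}|$, and the second term is precisely the quantity you are trying to bound. No absorption closes this: substituting $\|\dif u\,\hat A^{-1}\|_{\Leb^2}\le \tfrac1h\|F\hat A^{-1}-R\|_{\Leb^2}+\tfrac1h|I-R|\sqrt{|\Omega|}$ back in gives $|I-R|\le |I-R|+\dots$, which is vacuous.
\item The pointwise claim $\dist^2(M,\SO(d))\gtrsim|\sym M-I|^2$ for $M$ near $\SO(d)$ is false: take $M=R\in\SO(d)$ with $R\neq I$. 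More concretely, for $u=\tfrac1h(R'-I)a$ with $R'\in\SO(d)$ and $|R'-I|\sim\sqrt{h}$, one has $I+h\dif u\,\hat A^{-1}=R'$, so the rigidity right-hand side vanishes while $|I-R'|\sim\sqrt{h}\gg h$. This $u$ does not satisfy the boundary condition, which is exactly the point: the boundary condition is \emph{needed} to pin down $R$ near $I$, and you only invoke it afterwards in Korn.
\end{itemize}

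The paper uses the boundary condition \emph{before} Korn, precisely to bound $|I-R|$. Pick a Lipschitz piece $\Omega_i\subset\Omega$ on which the potential $a$ of $\hat A$ is Bilipschitz and $\mathcal H^{d-1}(\Gamma\cap\partial\Omega_i)>0$. Since $u=g$ on $\Gamma$, one has $u\circ a^{-1}=g\circ a^{-1}$ on $a(\Gamma\cap\partial\Omega_i)$, and \cref{Cor:MatrixNorm} together with trace continuity and Poincar\'e--Wirtinger on $a(\Omega_i)$ gives
\[
|I-R|^2\le c\int_\Omega|I+h\dif u\,\hat A(\placeholder)^{-1}-R|^2\,\dx + ch^2\|g\|_{\Leb^2(\Gamma,\mathcal H^{d-1})}^2.
\]
Combining this with the rigidity estimate yields the a~priori bound
\[
\int_\Omega|\dif u|^2\,\dx\le c\Big(\tfrac1{h^2}\int_\Omega\dist^2(I+h\dif u\,\hat A(\placeholder)^{-1},\SO(d))\,\dx+\|g\|_{\Leb^2(\Gamma)}^2\Big),
\]
after which non-degeneracy (ii) and the Poincar\'e--Friedrich inequality finish the proof; Korn is not needed.
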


\begin{proof}
We show that there is a constant $c_1$ depending only on $\Omega$, $\Gamma$ and $\hat{A}$ such that
\begin{equation} \label{Eq:ProofCoercivityLinearization:Apriori}
	\int_\Omega \abs{\dif{u}(x)}^2 \,\dx \leq c_1 \left(\frac{1}{h^2} \int_\Omega \dist^2\big(I + h\dif{u}(x)\hat{A}(x)^{-1}, \SO(d)\big) \,\dx + \norm{g}_{\Leb^2(\Gamma, \mathcal{H}^{d-1})}^2\right).
\end{equation}
Then the claim follows immediately from the non-degeneracy of $\hat{W}^h$ and the Poincaré-Friedrich inequality. \eqref{Eq:ProofCoercivityLinearization:Apriori} can be shown in the following way. By the geometric rigidity estimate \cref{Thm:RigidityEstimateJonesDomain}, we find a rotation $R \in \SO(d)$, such that
\begin{equation} \label{Eq:ProofCoercivityLinearization:Rigidity}
	\int_{\Omega} \abs{I + h\dif{u}(x)\hat{A}(x)^{-1} - R}^2 \,\dx \leq c_2 \int_\Omega \dist^2\big(I + h\dif{u}(x)\hat{A}(x)^{-1}, \SO(d)\big) \,\dx.
\end{equation}
Note that $I + h\dif{u}\hat{A}(\placeholder)^{-1} = \dif{\phi}\hat{A}(\placeholder)^{-1}$, where $\phi = a + hu$ and $a: \Omega \to \R^d$ is the potential of $\hat{A}$, i.e.\ $\hat{A} = \dif{a}$. We seek to estimate the difference $I - R$ suitably. By \ref{Property:SFJisPiecewiseBilipschitz} we find some Lipschitz domain $\Omega_i \subset \Omega$, such that $a$ is Bilipschitz on $\Omega_i$ and $\mathcal{H}^{d-1}(\Gamma \cap \partial \Omega_i) > 0$. Then, $u - g = 0$ on $\Gamma$ implies $u \circ a^{-1} - g \circ a^{-1} = 0$ on $a(\Gamma \cap \partial\Omega_i)$ in the sense of traces. Thus, by choosing a suitable $\xi_h \in \R^d$, \cref{Cor:MatrixNorm}, continuity of the trace operator and the Poincaré-Wirtinger inequality yield
\begin{align*}
	\abs{I - R}^2 
	&\leq c_3  \abs{I - R}_{a(\Gamma \cap \partial\Omega_i), 2}^2 
	\leq c_4 \int_{a(\Gamma \cap \partial\Omega_i)} \abs{z - Rz + hu(a^{-1}(z)) - hg(a^{-1}(z)) - \xi_h}^2 \,\dx[\mathcal{H}^{d-1}(z)] \\
	&\leq c_5 \int_{\partial a(\Omega_i)} \abs{z - Rz + hu(a^{-1}(z)) - \xi_h}^2 \,\dx[\mathcal{H}^{d-1}(z)] + c_5 h^2 \norm{g \circ a^{-1}}_{\Leb^2(a(\Gamma \cap \partial\Omega_i), \mathcal{H}^{d-1})}^2 \\
	&\leq c_6 \int_{a(\Omega_i)} \abs{I + h\dif{(u \circ a^{-1})}(z) - R}^2\,\dx[\mathcal{H}^{d-1}(z)] + c_6 h^2 \norm{g}_{\Leb^2(\Gamma, \mathcal{H}^{d-1})}^2. \\
	&\leq c_7 \int_{\Omega} \abs{I + h\dif{u}(x)\dif{a}(x)^{-1} - R}^2\,\dx[\mathcal{H}^{d-1}(x)] + c_7 h^2 \norm{g}_{\Leb^2(\Gamma, \mathcal{H}^{d-1})}^2.
\end{align*}
Hence, we conclude \eqref{Eq:ProofCoercivityLinearization:Apriori} by combining this estimate with \eqref{Eq:ProofCoercivityLinearization:Rigidity}.
\end{proof}

Finally, we state the following strong convergence result in the spirit of \cref{Prop:strong_conv}. In fact, this result admits \cref{Prop:strong_conv} (a) and (b) as corollaries.

\begin{proposition} \label{Prop:strong-convergenceLinearization}
Consider the situation of \cref{Thm:Linearization}. Let $u_h \wto u$ weakly in $\SobH^1(\Omega, \R^d)$ and assume $\hat{\mathcal{I}}^h(u_h) \to \hat{\mathcal{I}}^\mathrm{lin}(u)$. Then, $\dif{u_h} \to \dif{u}$ strongly in $\Leb^2(\Omega, \R^{d\times d})$.
\end{proposition}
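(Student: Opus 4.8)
The plan is to exploit the quadratic structure of the limit functional $\hat{\mathcal{I}}^\mathrm{lin}$ together with the asymptotic expansion of \cref{Lem:ExpansionLinearization}, following the standard ``liminf of quadratic energies forces strong convergence'' paradigm. First I would note that by \cref{Thm:Equi-coercivityLinearization} the sequence $(u_h)$ is bounded in $\SobH^1(\Omega,\R^d)$, so the weak convergence $u_h \wto u$ is legitimate and $(\dif{u_h})$ is bounded in $\Leb^2(\Omega,\R^{d\times d})$. Hence I may apply \cref{Lem:ExpansionLinearization} with $\Phi_h := \dif{u_h}$ to obtain sets $\Omega_h \subset \Omega$ with $|\Omega\setminus\Omega_h|\to 0$ and
\begin{equation*}
	\frac{1}{h^2}\int_{\Omega_h} \hat{W}^h(x,\hat{A}(x)+h\dif{u_h}(x))\,\dx - \int_{\Omega_h}\hat{Q}(x,\dif{u_h}(x)-\hat{B}(x)\hat{A}(x))\,\dx - \int_\Omega\hat{R} \longrightarrow 0.
\end{equation*}
Since $\hat W^h\geq 0$, the tail $\int_{\Omega\setminus\Omega_h}$ only helps the lower bound, and combined with the convergence hypothesis $\hat{\mathcal{I}}^h(u_h)\to\hat{\mathcal{I}}^\mathrm{lin}(u)$ this yields
\begin{equation*}
	\limsup_{h\to 0}\int_{\Omega_h}\hat{Q}(x,\dif{u_h}(x)-\hat{B}(x)\hat{A}(x))\,\dx \leq \int_\Omega \hat{Q}(x,\dif{u}(x)-\hat{B}(x)\hat{A}(x))\,\dx.
\end{equation*}

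Next I would introduce the shifted variables $v_h := \dif{u_h}-\hat{B}\hat{A}$ and $v := \dif{u}-\hat{B}\hat{A}$; since $\hat{B}\hat{A}\in\Leb^2$ is fixed we have $v_h\wto v$ weakly in $\Leb^2(\Omega,\R^{d\times d})$, and moreover $\mathds{1}_{\Omega_h}v_h\wto v$ as well because $\mathds{1}_{\Omega_h}\to 1$ boundedly in $\Leb^2$. Now the key point: the functional $V\mapsto\int_\Omega\hat{Q}(x,V(x))\,\dx$ is, by the coercivity bound $\tfrac1\alpha|\sym_{\hat A(x)}G|^2\leq\hat Q(x,G)\leq\alpha|\sym_{\hat A(x)}G|^2$ in assumption (iii), a nonnegative quadratic form on $\Leb^2$ that is weakly lower semicontinuous; but the quantity it genuinely controls is $\sym_{\hat A}V$, not $V$ itself. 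So I would split the argument: from the $\limsup$ inequality above plus weak lower semicontinuity of $V\mapsto\int\hat Q(x,\mathds{1}_{\Omega_h(\cdot)}V)$ one gets $\int_{\Omega_h}\hat Q(x,v_h)\to\int_\Omega\hat Q(x,v)$, and combined with weak convergence $\mathds{1}_{\Omega_h}v_h\wto v$ and the fact that $\hat Q(x,\cdot)$ induces a scalar product equivalent to $\|\sym_{\hat A(x)}\cdot\|^2$, the parallelogram identity / Radon--Riesz argument in the Hilbert space $(\Leb^2(\Omega,\R^{d\times d}),(\cdot,\cdot)_{\hat Q})$ forces $\sym_{\hat A}v_h\to\sym_{\hat A}v$ strongly in $\Leb^2$. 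Equivalently, writing $\sym_{\hat A(x)}G=\sym(G\hat A(x)^{-1})\hat A(x)$ and using that $\hat A,\hat A^{-1}\in\Leb^\infty$, we get $\sym(\dif{u_h}\hat A(\placeholder)^{-1})\to\sym(\dif{u}\hat A(\placeholder)^{-1})$ strongly in $\Leb^2(\Omega,\R^{d\times d})$.

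Finally I would upgrade from strong convergence of the symmetrized gradient to strong convergence of the full gradient by a Korn-type argument. Setting $w_h := u_h-u\in\SobH^1_{\Gamma,0}(\Omega,\R^d)$ (the boundary values cancel since both lie in $\SobH^1_{\Gamma,g}$), we have $w_h\wto 0$ weakly in $\SobH^1$ and $\sym(\dif{w_h}\hat A(\placeholder)^{-1})\to 0$ strongly in $\Leb^2$. By \cref{Cor:KornWithBoundaryValue} applied on $\Omega$ with boundary part $\Gamma$ (valid since $\Omega$ is a Lipschitz domain, $\mathcal H^{d-1}(\Gamma)>0$, and $\hat A\in\operatorname{SFJ}(\Omega)$),
\begin{equation*}
	\|w_h\|_{\SobH^1(\Omega)} \leq C\,\|\sym(\dif{w_h}\hat A(\placeholder)^{-1})\|_{\Leb^2(\Omega)} \longrightarrow 0,
\end{equation*}
so $w_h\to 0$ strongly in $\SobH^1(\Omega,\R^d)$, hence $\dif{u_h}\to\dif{u}$ strongly in $\Leb^2(\Omega,\R^{d\times d})$, which is the claim. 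I expect the main obstacle to be the middle step: carefully justifying that convergence of energies plus weak convergence implies strong convergence of $\sym_{\hat A}v_h$, taking proper care of the truncation sets $\Omega_h$ (so that the weak lower semicontinuity is applied to $\mathds{1}_{\Omega_h}v_h$ and the defect $|\Omega\setminus\Omega_h|\to 0$ is genuinely negligible for the $\Leb^2$-bounded, but not $\Leb^\infty$-bounded, sequence $v_h$) and of the fact that $\hat Q$ controls only the symmetrized part, so that the ``strong convergence in the $\hat Q$-norm'' must be translated correctly before invoking Korn.
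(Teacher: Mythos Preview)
Your overall architecture is sound and mirrors the first half of the paper's approach: invoke \cref{Lem:ExpansionLinearization} to replace $\hat{\mathcal I}^h$ by the quadratic functional on $\Omega_h$, compare energies, then apply a Korn inequality. However, the step you yourself flag as the ``main obstacle'' is a genuine gap that your outline does not close.

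The Radon--Riesz argument, applied correctly, yields strong $\Leb^2$-convergence of $\mathds{1}_{\Omega_h}\sym(\dif{u_h}\hat A(\cdot)^{-1})$, not of $\sym(\dif{u_h}\hat A(\cdot)^{-1})$ itself; you silently drop the indicator when passing to the next line. The tail $\mathds{1}_{\Omega\setminus\Omega_h}\dif{u_h}$ is merely $\Leb^2$-bounded, and $|\Omega\setminus\Omega_h|\to 0$ does \emph{not} force it to vanish in $\Leb^2$ without uniform integrability. This is not a technicality: the upgrade from $\Leb^p$ ($p<2$) to $\Leb^2$ convergence is precisely the content added by \cite{ADD12} over \cite{DNP02}. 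The paper's proof (see the explicit argument for \cref{Prop:strong_conv}~(d), to which it refers) first observes that the nonlinear energy on the bad set, $\tfrac{1}{h^2}\int_{\Omega\setminus\Omega_h}\hat W^h(\cdot,\hat A+h\dif{u_h})$, also tends to zero (since the quadratic part on $\Omega_h$ already exhausts the limit); combined with the non-degeneracy assumption~(ii) this shows that $\big(\tfrac{1}{h^2}\dist^2(I+h\dif{u_h}\hat A(\cdot)^{-1},\SO(d))\big)$ is uniformly integrable. Then \cref{Prop:uniform_integrability}~--- which rests on the mixed-growth rigidity estimate of \cref{Thm:RigidityEstimateJonesDomain}~--- transfers uniform integrability to $(|\dif{u_h}|^2)$, and Vitali's theorem closes the argument. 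Your proposal contains no substitute for this mechanism.

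A secondary point: your use of \cref{Cor:KornWithBoundaryValue} requires $u_h-u\in\SobH^1_{\Gamma,0}$, which the proposition does not explicitly assume. The paper uses \cref{Cor:KornWithLpNormOfu} instead, which only needs $u_h\to u$ in $\Leb^2$ (supplied by Rellich compactness from the weak $\SobH^1$-convergence).
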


At the heart of upgrading to strong convergence lies a formula for quadratic forms, stating
\begin{equation} \label{Eq:QuadraticTrick}
	\hat{Q}(x,F - \hat{F}) = \hat{Q}(x,F) - \hat{Q}(x,\hat{F}) + 2 (\hat{F} - F) : \mathbb{L}_{\hat{Q}}(x) \hat{F}, \qquad x \in \Omega,\, F,\hat{F} \in \R^{d\times d}.
\end{equation}
This allows to upgrade a convergence of energies and weak convergence in $\Leb^2(\Omega, \R^{d\times d})$ to strong convergence, at least for the symmetric part using $\abs{\sym(F - \hat{F})}^2 \leq \alpha \hat{Q}(x,F - \hat{F})$. Using this trick, strong convergence in $\SobW^{1,p}$ for $1 \leq p < 2$ for the linearization in elasticity was already established in \cite{DNP02}. The argument was later refined in \cite{ADD12} to also include $p = 2$. The essential ingredient for this was the observation that uniform integrability of the distance to the set of rotations implies uniform integrability of the distance to a single rotation. For our purposes here, we need the following variant of this statement.

\begin{proposition} \label{Prop:uniform_integrability}
Let $1 < p < \infty$, $\mathcal{F} \subset W^{1,1}(\Omega, \R^d)$ and $\mathcal{A} \subset \operatorname{SFJ}(\Omega)$ such that \cref{Thm:RigidityEstimateJonesDomain} holds with a uniform constant in $\mathcal{A}$ and $(\eta_\phi^A)_{\phi \in \mathcal{F}}^{A \in \mathcal{A}} \subset (0,\infty)$. Assume
\begin{equation*}
	\class{\eta_\phi^A \dist^p(\dif{\phi}A(\placeholder)^{-1}, \SO(d))}{\phi \in \mathcal{F}, A \in \mathcal{A}}
\end{equation*}
to be uniformly integrable. Then, there exist rotations $(R_\phi^A)_{\phi \in \mathcal{F}}^{A \in \mathcal{A}} \subset \SO(d)$, such that
\begin{equation*}
	\class{\eta_\phi^A \abs{\dif{\phi}A(\placeholder)^{-1} - R_\phi^A}^p}{\phi \in \mathcal{F}, A \in \mathcal{A}}
\end{equation*}
is uniformly integrable.
\end{proposition}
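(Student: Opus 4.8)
Write $d_\phi^A:=\dist(\dif{\phi}A(\placeholder)^{-1},\SO(d))$ and $g_\phi^A:=\eta_\phi^A\,(d_\phi^A)^p$, so that $\{g_\phi^A\}_{\phi\in\mathcal F,A\in\mathcal A}$ is the uniformly integrable family from the hypothesis. Since each $\eta_\phi^A$ is a fixed positive number and $g_\phi^A\in\Leb^1(\Omega)$, we have $d_\phi^A\in\Leb^p(\Omega)$; in particular $\sup_{\phi,A}\eta_\phi^A\norm{d_\phi^A}_{\Leb^p(\Omega)}^p=\sup_{\phi,A}\norm{g_\phi^A}_{\Leb^1(\Omega)}<\infty$. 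Fix any exponent $q\in(p,\infty)$. By \cref{Thm:RigidityEstimateJonesDomain}(a), whose constant $C=C(\Omega,p,q,\mathcal A)>0$ is uniform over $\mathcal A$ by assumption and in which the rotation may be taken to depend only on $\phi$ and $A$ and not on the chosen splitting (as in \cite{CDM14}; this is implicit in the proof of \cref{Thm:RigidityEstimateJonesDomain}), we obtain for every $\phi\in\mathcal F$, $A\in\mathcal A$ a rotation $R_\phi^A\in\SO(d)$ such that every decomposition $d_\phi^A=F+G$ in $\Leb^p+\Leb^q(\Omega)$ gives rise to a decomposition $\dif{\phi}A(\placeholder)^{-1}-R_\phi^A=F'+G'$ in $\Leb^p+\Leb^q(\Omega,\R^{d\times d})$ with $\norm{F'}_{\Leb^p(\Omega)}\leq C\norm{F}_{\Leb^p(\Omega)}$ and $\norm{G'}_{\Leb^q(\Omega)}\leq C\norm{G}_{\Leb^q(\Omega)}$. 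These $R_\phi^A$ will be the rotations claimed by the proposition.

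\textbf{An $\varepsilon$-dependent splitting.} Fix $\varepsilon>0$. By uniform integrability of $\{g_\phi^A\}$ choose $M=M(\varepsilon)>0$ with $\sup_{\phi,A}\int_{\{g_\phi^A>M\}}g_\phi^A\,\dx\leq\varepsilon$, and split
\begin{equation*}
  d_\phi^A=\underbrace{d_\phi^A\,\mathds{1}_{\{g_\phi^A>M\}}}_{=:F\in\Leb^p(\Omega)}+\underbrace{d_\phi^A\,\mathds{1}_{\{g_\phi^A\leq M\}}}_{=:G\in\Leb^\infty(\Omega)\subset\Leb^q(\Omega)}.
\end{equation*}
By construction $\eta_\phi^A\norm{F}_{\Leb^p(\Omega)}^p=\int_{\{g_\phi^A>M\}}g_\phi^A\,\dx\leq\varepsilon$, and since $g_\phi^A\leq M$ forces $d_\phi^A\leq(M/\eta_\phi^A)^{1/p}$ we have $\norm{G}_{\Leb^\infty(\Omega)}\leq(M/\eta_\phi^A)^{1/p}$. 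Lifting this splitting through \cref{Thm:RigidityEstimateJonesDomain}(a) gives $F',G'$ with
\begin{equation*}
  \eta_\phi^A\norm{F'}_{\Leb^p(\Omega)}^p\leq C^p\eta_\phi^A\norm{F}_{\Leb^p(\Omega)}^p\leq C^p\varepsilon,
\end{equation*}
and, crucially using that the weight $\eta_\phi^A$ is a constant,
\begin{equation*}
  \norm{\eta_\phi^A\abs{G'}^p}_{\Leb^{q/p}(\Omega)}=\eta_\phi^A\norm{G'}_{\Leb^q(\Omega)}^p\leq C^p\eta_\phi^A\norm{G}_{\Leb^\infty(\Omega)}^p\abs{\Omega}^{p/q}\leq C^pM\abs{\Omega}^{p/q}.
\end{equation*}
The decisive point is that $\eta_\phi^A$ cancels in the last step, so this bound is uniform in $\phi,A$ with no control whatsoever on the size of the $\eta_\phi^A$.

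\textbf{Conclusion.} From $\abs{\dif{\phi}A(\placeholder)^{-1}-R_\phi^A}\leq\abs{F'}+\abs{G'}$ we obtain $\eta_\phi^A\abs{\dif{\phi}A(\placeholder)^{-1}-R_\phi^A}^p\leq2^{p-1}\eta_\phi^A\abs{F'}^p+2^{p-1}\eta_\phi^A\abs{G'}^p$, hence for any measurable $E\subset\Omega$, applying Hölder's inequality with exponents $q/p$ and $q/(q-p)$ to the second term,
\begin{equation*}
  \int_E\eta_\phi^A\abs{\dif{\phi}A(\placeholder)^{-1}-R_\phi^A}^p\,\dx\leq2^{p-1}C^p\varepsilon+2^{p-1}C^pM\abs{\Omega}^{p/q}\abs{E}^{1-p/q}.
\end{equation*}
Taking $E=\Omega$ with $\varepsilon=1$ bounds the family $\{\eta_\phi^A\abs{\dif{\phi}A(\placeholder)^{-1}-R_\phi^A}^p\}$ in $\Leb^1(\Omega)$ uniformly; and given a target $\varepsilon_0>0$, choosing first $\varepsilon$ small (which fixes $M$) and then $\abs{E}<\delta$ small forces the right-hand side below $\varepsilon_0$ uniformly in $\phi\in\mathcal F$, $A\in\mathcal A$. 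This is precisely uniform integrability. The only genuine obstacle is the correct treatment of the a priori unbounded weights $\eta_\phi^A$: it is handled by measuring the high-integrability remainder as $\eta_\phi^A\abs{G'}^p$ in $\Leb^{q/p}$ (where $\eta_\phi^A$ disappears), rather than the naive $\eta_\phi^A\norm{G'}_{\Leb^q}^q$, together with the fact that a single rotation $R_\phi^A$ can be used for all $\varepsilon$; should one prefer not to invoke the latter universality, one may instead fix $R_\phi^A$ from the plain estimate and absorb the (constant) defect $R_\phi^A-R_\phi^{A,\varepsilon}$ into the $\Leb^{q/p}$-remainder, where the same cancellation makes $\eta_\phi^A\abs{R_\phi^A-R_\phi^{A,\varepsilon}}^p$ bounded.
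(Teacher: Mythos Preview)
Your proof is correct and follows essentially the same approach as the paper's (which in turn follows \cite[Cor.~4.2]{CDM14}): truncate the weighted distance at a level $M$ furnished by uniform integrability, apply the mixed-growth rigidity estimate of \cref{Thm:RigidityEstimateJonesDomain} to the resulting $\Leb^p+\Leb^q$ splitting, and conclude via H\"older. The paper absorbs the weight by writing $d_\phi^A:=(\eta_\phi^A)^{1/p}\dist(\dif{\phi}A(\placeholder)^{-1},\SO(d))$ from the outset and then defers the final uniform-integrability deduction to \cite[Lem.~5.1]{ADD12}; you keep $\eta_\phi^A$ separate, make the cancellation $\eta_\phi^A\norm{G}_{\Leb^\infty}^p\leq M$ explicit, and spell out the H\"older step yourself. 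Your remark that the rotation can be taken independent of the splitting is justified (cf.\ \cref{Rem:Properties_rigidity}(i), where $M=\fint\dif u$ works) and is also implicitly used in the paper's argument.
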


This proposition is a consequence of the mixed growth version of the rigidity estimate, see \cref{Thm:RigidityEstimateJonesDomain}. The proof exactly follows \cite[Cor.~4.2]{CDM14} but for the readers convenience we sketch the main arguments of the proof.

\begin{proof}
Let $d_\phi^A := (\eta_\phi^A)^{1/p} \dist(\dif{\phi}A(\placeholder)^{-1}, \SO(d))$. Due to uniform integrability, we find $T_\eps > 0$, such that
\begin{equation*}
	\int_{\set{d_\phi^A > T_\eps}} \abs{d_\phi^A}^p \leq \eps.
\end{equation*}
Consider the decompositions $\dist(\dif{\phi}A(\placeholder)^{-1}, \SO(d)) = F_\phi^A + G_\phi^A$ in $\Leb^p + \Leb^\infty(\Omega)$, where
\begin{equation*}
	F_\phi^A := \dist(\dif{\phi}A(\placeholder)^{-1}, \SO(d)) \mathds{1}_{\set{d_\phi^A > T_\eps}}, \qquad
	G_\phi^A := \dist(\dif{\phi}A(\placeholder)^{-1}, \SO(d)) \mathds{1}_{\set{d_\phi^A \leq T_\eps}}.
\end{equation*}
Choose $q \in (p,\infty)$. Then, by construction we observe,
\begin{equation*}
	\norm{(\eta_\phi^A)^{1/p} F_\phi^A}_{\Leb^p(\Omega)}^p \leq \eps, \qquad
	\norm{(\eta_\phi^A)^{1/p} G_\phi^A}_{\Leb^q(\Omega)}^q \leq c_1 T_\eps^{q-p}.
\end{equation*}
By \cref{Thm:RigidityEstimateJonesDomain}, we obtain rotations $R_\phi^A \in \SO(3)$ and decompositions $\dif{\phi}A(\placeholder)^{-1} - R_\phi^A = \hat{F}_\phi^A + \hat{G}_\phi^A$ in $\Leb^p+\Leb^q(\Omega, \R^{d\times d})$, such that
\begin{align*}
	\norm{(\eta_\phi^A)^{1/p}\hat{F}_\phi^A}_{\Leb^p(\Omega)}^p &\leq c_2 \norm{(\eta_\phi^A)^{1/p}F_\phi^A}_{\Leb^p(\Omega)}^p \leq c_2 \eps, \\
	\norm{(\eta_\phi^A)^{1/p}\hat{G}_\phi^A}_{\Leb^q(\Omega)}^q &\leq c_2 \norm{(\eta_\phi^A)^{1/p}G_\phi^A}_{\Leb^q(\Omega)}^q \leq c_3 T_\eps^{q - p}.
\end{align*}
By assumption the constant $c_2$ is independent of $\phi$ and $A$. Note that $(\eta_\phi^A)^{1/p}\left(\dif{\phi}A(\placeholder)^{-1} - R_\phi^A\right) = (\eta_\phi^A)^{1/p}\hat{F}_\phi^A + (\eta_\phi^A)^{1/p}\hat{G}_\phi^A$. It is not hard to show that these estimates imply uniform integrability of $\eta_\phi^A \abs{\dif{\phi}A(\placeholder)^{-1} - R_\phi^A}^p$, see \cite[Lem.~5.1]{ADD12} for a proof.
\end{proof}

Using \cref{Prop:uniform_integrability} and Korn's inequality \cref{Cor:KornWithLpNormOfu}, the proof of \cref{Prop:strong-convergenceLinearization} can be obtained analogously to \cite[Sect.~5]{ADD12}. Thus, we omit this proof here. Note that later we do carry out the proof of the strong convergences \cref{Prop:strong_conv} (c) and (d), since there we have the additional difficulty to deal with two-scale convergence. One may also follow and alter the proof there to establish a proof for \cref{Prop:strong-convergenceLinearization}.

\subsection{Homogenization; Proof of Theorem \ref{Thm:gamma_convergence} (2) and (5), (\ref{Eq:equi_coercivity_Ihe}), (\ref{Eq:equi_coercivity_Iline}) and Proposition \ref{Prop:strong_conv} (c) and (d)}
\label{Sec:ProofSimultaneous}

Our approach to obtain the homogenization results relies on the notion of \emph{two-scale convergence}, which was developed by Gabriel Ngutseng in \cite{Ngu89} and first used by Grégoire Allaire in \cite{All92} to treat such homogenization problems. Following \cite{Vis06,Vis07,MT07,CDG02} we appeal to a characterization of two-scale convergence via the \emph{periodic unfolding operator} $\mathcal{T}_\eps: \R^d \times Y \to \R^d$, given by
\begin{equation}\label{Eq:UnfoldingOperator}
	\mathcal{T}_\eps(x,y) := \eps\lfloor\tfrac{x}{\eps}\rfloor + \eps y,
\end{equation}
where $\eps > 0$ and $\lfloor z\rfloor \in \Z^d$ denotes the (vectorial) integer part of $z \in \R^d$, given by $\lfloor z\rfloor_i := \max \class{\xi \in \Z}{\xi_i \leq z_i}$, $i=1,\dots,d$. Moreover, we let $\mathcal{T}_\eps$ act point-wise on maps, i.e., for $u: \R^d \to \R^n$ we set $\mathcal{T}_\eps u := u \circ \mathcal{T}_\eps$. In combination with the unfolding operator, we extend maps defined on subsets of $\R^d$ to $\R^d$ by $0$.

\begin{definition}
Let $1 \leq p \leq \infty$. We say $(u_\eps) \subset \Leb^p(\Omega)$ weakly (resp.\ strongly) two-scale converges to $u \in \Leb^p(\Omega \times Y)$ and write $u_\eps \xrightharpoonup{2} u$ (resp. $u_\eps \xrightarrow{2} u$), if $(u_\eps)$ is bounded in $\Leb^p(\Omega)$ and $\mathcal{T}_\eps u_\eps \wto u$ (resp.\ $\mathcal{T}_\eps u_\eps \to u$) in $\Leb^p(\Omega \times Y)$.
\end{definition}

\begin{remark}~
\begin{enumerate}[(i)]
	\item Since $\mathcal{T}_\eps^{-1}(\Omega) \subset \class{(x,y) \in \R^d \times Y}{\dist(x,\Omega) \leq \eps\sqrt{d}}$ and $\abs{\partial\Omega} = 0$, boundedness of $(u_\eps)$ in $\Leb^p(\Omega)$ ensures that
	\begin{equation} \label{Eq:UnfoldingRestTermVanishes}
		\iint_{\R^d \setminus \Omega \times Y} \abs{\mathcal{T}_\eps u_\eps}^p \to 0 \qquad \text{as } \eps \to 0.
	\end{equation}
	Especially, from the isometry from $\Leb^1(\R^d)$ to $\Leb^1(\R^d \times Y)$ induced by the unfolding operator, see \cite[Lem.~1.1]{Vis06}, we obtain
	\begin{equation} \label{Eq:IsometryUnfolding}
		\lim_{\eps \to 0} \left(\int_\Omega u_\eps - \iint_{\Omega \times Y} \mathcal{T}_\eps u_\eps\right) = 0.
	\end{equation}

	\item Below, we also speak of two-scale convergence of a sequence $(u_h) \subset \Leb^p(\Omega)$, where we mean convergence of $\mathcal{T}_{\eps(h)}u_h$ for a subsequence $\eps: (0,\infty) \to (0,\infty)$. The precise meaning will be clear from the context.
\end{enumerate}
\end{remark}

With this technique, the proof of \cref{Thm:gamma_convergence} (2) is standard, see e.g.\ \cite{All92}. We sketch the proof here, since we shall need elements of it for the simultaneous limit. Equation \eqref{Eq:IsometryUnfolding} and the characterization of two-scale limits of derivatives, see \cite[Prop.~1.14]{All92}, motivates the definition of the \emph{two-scale $\Gamma$-limit} $\EnergyIlints: \SobH^1(\Omega, \R^d) \times \Leb^2(\Omega, \SobH^1_\mathrm{per}(Y,\R^d)) \to \R$ of $\EnergyIline{\eps}$, given by
\begin{equation} \label{Eq:Two-scale-Gamma-limit}
	\EnergyIlints(u,\varphi) := \iint_{\Omega \times Y} Q \left( y, (\dif{u}(x) + \dif{_y \varphi}(x,y))A(y)^{-1} - B(y) \right) \,\dx[y]\,\dx.
\end{equation}
Indeed, strong continuity and weak lower semi-continuity w.r.t.\ $\Leb^2$ of the quadratic energy functional, implies that we can understand $\EnergyIlints$ rigorously as a two-scale $\Gamma$-limit of $\EnergyIline{\eps}$ in the sense that given $(u,\varphi) \in \SobH^1_{\Gamma, g}(\Omega, \R^d) \times \Leb^2(\Omega, \SobH^1_\mathrm{per}(Y,\R^d))$, the following statements hold.
\begin{itemize}
	\item Suppose $u_\eps \wto u$ weakly in $\SobH^1_{\Gamma, g}(\Omega, \R^d)$ and $\dif{u_\eps} \xrightharpoonup{2} \dif{u} + \dif{_y \varphi}$ weakly two-scale in $\Leb^2(\Omega \times Y, \R^{d\times d})$. Then, $\liminf_{\eps \to 0} \EnergyIline{\eps}(u_\eps) \geq \EnergyIlinhom(u,\varphi)$.
	
	\item We find a sequence $(u_\eps) \subset \SobH^1_{\Gamma, g}(\Omega, \R^d)$ converging to $(u,\varphi)$ in the sense as above with $\lim_{\eps \to 0} \EnergyIline{\eps}(u_\eps) = \EnergyIlinhom(u,\varphi)$.
\end{itemize}
A construction for the recovery sequence can be found in \cite[Thm. 3.3.1 (3)]{NeuPHD}. In fact, we also use this construction in the proof of \cref{Thm:gamma_convergence} (5). Given this statement it is not hard to infer that $\EnergyIline{\eps}$ $\Gamma$-converges w.r.t.\ weak convergence in $\SobH^1_{\Gamma, g}(\Omega, \R^d)$ with the limit given by
\begin{equation}\label{Eq:homogenized_limit}
	\EnergyIlinhom[*](u) := \min\class{\EnergyIlints(u,\varphi)}{\varphi \in \Leb^2(\Omega, \SobH^1_{\mathrm{per}}(Y,\R^d))} , \quad u \in \SobH^1(\Omega, \R^d).
\end{equation}
Indeed, one can use the direct method to show that a minimizer exists. The argument uses the version of Korn's inequality given in \cref{Cor:KornPeriodic} to obtain a equi-coercivity statement and the weak lower semi-continuity of quadratic functionals. From the existence of a minimizer, the $\Gamma$-convergence follows immediately. Finally, testing the Euler-Lagrange equation for the minimizer with test functions of the form $v(x)n(y)$ with $v \in \Leb^2(\Omega)$ and $n \in \SobH^1_{\mathrm{per},0}(Y,\R^d)$, we obtain $\EnergyIlinhom[*] = \EnergyIlinhom$. Before we proceed with the proof of \cref{Thm:gamma_convergence} (5), we show the following lemma which is in the spirit of \cref{Lem:ExpansionPeriodicityCell,Lem:ExpansionLinearization}.

\begin{lemma} \label{Lem:ExpansionTwoScale}
Let $\eps: (0,\infty) \to (0,\infty)$ with $\lim_{h \to 0} \eps(h) = 0$. Let $(\Phi_h),(\Psi_h) \subset \Leb^2(\Omega, \R^{d\times d})$ bounded and $\Psi \in \Leb^2(\Omega, \Leb^2_\text{per}(Y,\R^{d\times d}))$, such that $\Psi_h \xrightarrow{2} \Psi$ in $\Leb^2(\Omega\times Y, \R^{d\times d})$. Then, there exist subsets $\Omega_h \subset \Omega$ with $\abs{\Omega \setminus \Omega_h} \to 0$, such that
\begin{equation*}
	\Bigg| \frac{1}{h^2} \int_{\Omega_h} W\left(\tfrac{x}{\eps(h)}, I + h\Phi_h(x) + h\Psi_h(x)\right) \,\dx
	- \int_{\Omega_h} Q\left(\tfrac{x}{\eps(h)}, \Phi_h(x) + \Psi(x, \tfrac{x}{\eps(h)})\right) \,\dx \Bigg| \to 0.
\end{equation*}
Moreover, if $(\Phi_h)$ is bounded in $\Leb^\infty(\Omega, \R^{d\times d})$ and $\lim_{h \to 0} \norm{h\Psi_h}_{\Leb^\infty(\Omega)} = 0$, then we may choose $\Omega_h = \Omega$.
\end{lemma}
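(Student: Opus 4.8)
The statement is a pointwise-in-$x$ linearization of the energy density, made uniform along a two-scale structure, and its proof should run parallel to that of \cref{Lem:ExpansionPeriodicityCell} and \cref{Lem:ExpansionLinearization}. Set $G_h := \Phi_h + \Psi_h$ and write, using \ref{Property:W_Expansion},
\begin{equation*}
  \left|\tfrac{1}{h^2}W\big(\tfrac{x}{\eps(h)}, I+hG_h(x)\big) - Q\big(\tfrac{x}{\eps(h)}, G_h(x)\big)\right| \leq \abs{G_h(x)}^2\, r\big(\tfrac{x}{\eps(h)}, h\abs{G_h(x)}\big),
\end{equation*}
so that the left-hand side in the claim is bounded by $\text{(I)}_h+\text{(II)}_h$ with
\begin{equation*}
  \text{(I)}_h := \int_{\Omega_h} \abs{G_h}^2\, r\big(\tfrac{\placeholder}{\eps(h)}, h\abs{G_h}\big)\,\dx, \qquad
  \text{(II)}_h := \int_{\Omega_h} \left| Q\big(\tfrac{x}{\eps(h)}, \Phi_h + \Psi_h\big) - Q\big(\tfrac{x}{\eps(h)}, \Phi_h + \Psi(x,\tfrac{x}{\eps(h)})\big)\right|\,\dx,
\end{equation*}
where the sets $\Omega_h$ are still to be chosen. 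The term $\text{(II)}_h$ is the genuinely new ingredient compared to the periodic-cell lemma; the term $\text{(I)}_h$ is handled essentially verbatim as before.

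\textbf{Controlling $\text{(II)}_h$ via two-scale convergence.} Since $Q(y,\placeholder)$ is a quadratic form satisfying \eqref{Eq:UniformBounds_Q}, bilinearity gives
\begin{equation*}
  \left| Q(y,F_1)-Q(y,F_2)\right| \leq \beta_{\rm el}\,\abs{\sym(F_1-F_2)}\,\abs{\sym(F_1+F_2)} \leq \beta_{\rm el}\,\abs{F_1-F_2}\,\big(\abs{F_1}+\abs{F_2}\big),
\end{equation*}
so by Cauchy--Schwarz $\text{(II)}_h \leq \beta_{\rm el}\,\norm{\Psi_h(\placeholder) - \Psi(\placeholder,\tfrac{\placeholder}{\eps(h)})}_{\Leb^2(\Omega)}\,\big(\text{bounded}\big)$, where boundedness of the second factor uses that $(\Phi_h)$ and $(\Psi_h)$ are bounded in $\Leb^2$. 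It therefore suffices to show $\norm{\Psi_h(\placeholder)-\Psi(\placeholder,\tfrac{\placeholder}{\eps(h)})}_{\Leb^2(\Omega)} \to 0$. This is exactly strong two-scale convergence re-expressed via the unfolding operator: applying $\mathcal{T}_{\eps(h)}$, which is an $\Leb^2$-isometry up to the vanishing boundary layer \eqref{Eq:UnfoldingRestTermVanishes}, the quantity $\norm{\Psi_h-\Psi(\placeholder,\tfrac{\placeholder}{\eps(h)})}_{\Leb^2(\Omega)}$ is (asymptotically) comparable to $\norm{\mathcal{T}_{\eps(h)}\Psi_h - \mathcal{T}_{\eps(h)}\big(\Psi(\placeholder,\tfrac{\placeholder}{\eps(h)})\big)}_{\Leb^2(\Omega\times Y)}$, the first term converges to $\Psi$ by hypothesis $\Psi_h\xrightarrow{2}\Psi$, and for the second term one uses that for admissible (e.g.\ Carathéodory) $\Psi$ one has $\mathcal{T}_{\eps(h)}\big(\Psi(\placeholder,\tfrac{\placeholder}{\eps(h)})\big)\to\Psi$ in $\Leb^2(\Omega\times Y)$ — this is the standard compatibility of the unfolding operator with admissible test functions, cf.\ \cite{Vis06,CDG02}. (If $\Psi$ is only assumed in $\Leb^2(\Omega,\Leb^2_{\rm per}(Y))$ one first approximates it by such admissible functions and passes to the limit; the approximation error is controlled uniformly by $\norm{\Psi_h}_{\Leb^2}$-boundedness.) I expect this step — reconciling ``strong two-scale'' with the $\Leb^2$-norm of the oscillating difference — to be the main (though routine) obstacle, and it is the only place where the two-scale hypothesis genuinely enters.

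\textbf{Controlling $\text{(I)}_h$ and choosing $\Omega_h$.} This is identical to the corresponding step in \cref{Lem:ExpansionPeriodicityCell}. For the general case, set $\bar r_h(x):= r\big(\tfrac{x}{\eps(h)}, \sqrt{h}\,\big)$, which is $\leq 2\beta_{\rm el}$ a.e.\ for small $h$ by \ref{Property:W_NonDegeneracy} and tends to $0$ a.e.\ (using $Y$-periodicity of $r(\placeholder,\delta)$ and two-scale/mean-value convergence of oscillating periodic functions in $\Leb^1$, so that $\int_\Omega \bar r_h \to 0$), and put
\begin{equation*}
  \Omega_h := \class{x\in\Omega}{\abs{G_h(x)}\leq h^{-1/2}} \cap \class{x\in\Omega}{\bar r_h(x)\leq \textstyle(\int_\Omega \bar r_h)^{1/2}}.
\end{equation*}
By Markov's inequality and $\Leb^2$-boundedness of $G_h$, $\abs{\Omega\setminus\Omega_h}\to 0$; and on $\Omega_h$, monotonicity of $r(x,\placeholder)$ gives $\text{(I)}_h \leq (\int_\Omega \bar r_h)^{1/2}\int_\Omega\abs{G_h}^2 \to 0$. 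In the case where $(\Phi_h)$ is bounded in $\Leb^\infty$ and $\norm{h\Psi_h}_{\Leb^\infty(\Omega)}\to 0$, one instead estimates $\text{(I)}_h$ directly with $\Omega_h=\Omega$: then $h\abs{G_h}\leq h\norm{\Phi_h}_{\Leb^\infty}+\norm{h\Psi_h}_{\Leb^\infty}\to 0$ uniformly, so $r(\tfrac{\placeholder}{\eps(h)},h\abs{G_h})$ is dominated by $2\beta_{\rm el}$ for small $h$ and tends to $0$ in $\Leb^1$ by dominated convergence against the strongly convergent majorant $c(1+\abs{\Psi_h-\Psi}^2+\abs{\Psi}^2)$, exactly as in \cref{Lem:ExpansionPeriodicityCell}; the integrability of $\abs{\Psi}^2$ over $\Omega\times Y$ suffices since, after unfolding, $Q(\tfrac{x}{\eps},\placeholder)$-type terms pull back to genuine $\Omega\times Y$ integrals. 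Finally, in the uniform case $\text{(II)}_h\to 0$ is immediate from $\norm{h\Psi_h}_{\Leb^\infty}\to 0$ (hence $\Psi_h\to 0$ in $\Leb^\infty$ and, a fortiori, the oscillating difference vanishes — or one simply keeps the $\Leb^2$ argument above, which still applies). Collecting the two bounds yields the claim.
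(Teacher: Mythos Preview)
Your proposal is correct and follows essentially the same approach as the paper: split into the remainder term $\text{(I)}_h$ controlled on a good set $\Omega_h$ (defined via a threshold on $|G_h|$ and on $r(\tfrac{\cdot}{\eps(h)},\sqrt h)$) and the quadratic replacement term $\text{(II)}_h$ handled via strong two-scale convergence after unfolding. The paper phrases the definition of $\Omega_h$ using the unfolding operator explicitly, $\Omega_h = \{|G_h|\le h^{-1/2}\}\cap\mathcal T_{\eps(h)}(\{(x,y):\bar r_h(y)\le\rho_h\})$ with $\bar r_h(y)=r(y,\sqrt h)$ and $\rho_h=(\int_Y\bar r_h)^{1/2}$, which after unwinding is exactly your $x$-space description; your observation that the compatibility $\mathcal T_{\eps(h)}\big(\Psi(\placeholder,\tfrac{\placeholder}{\eps(h)})\big)\to\Psi$ is the genuine content behind ``$\Psi_h-\Psi\xrightarrow{2}0$'' is spot on and in fact more explicit than the paper.

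One small slip: in the final sentence you write that $\|h\Psi_h\|_{\Leb^\infty}\to 0$ gives $\Psi_h\to 0$ in $\Leb^\infty$, which is false (and in the applications $\Psi_h$ carries the prestrain $B_h(\tfrac{\placeholder}{\eps(h)})$, which does not vanish). Your parenthetical ``or one simply keeps the $\Leb^2$ argument above'' is the correct fix and is what the paper does; just drop the erroneous claim. Similarly, ``$\bar r_h(x)$ tends to $0$ a.e.'' is not literally true because of the fast oscillation in $\tfrac{x}{\eps(h)}$ --- but your actual argument uses only $\int_\Omega\bar r_h\to 0$, which is correct via unfolding and dominated convergence in $Y$, so this is merely a phrasing issue.
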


\begin{proof}
Let $G_h := \Psi_h + \Phi_h$. As in \cref{Lem:ExpansionPeriodicityCell} we may estimate the term in question from above by a sum $\text{(I)}_h + \text{(II)}_h$, where
\begin{align*}
	\text{(I)}_h &:= \int_{\Omega_h} \abs{G_h(x)}^2 r\left(\tfrac{x}{\eps(h)},\abs{hG_h(x)}\right) \,\dx, \\
	\text{(II)}_h &:= \int_{\Omega_h} Q\left(\tfrac{x}{\eps(h)}, \Phi_h(x) + \Psi_h(x)\right) - Q\left(y, \Phi_h(x) + \Psi(x, \tfrac{x}{\eps(h)})\right) \,\dx.
\end{align*}
We estimate $\text{(II)}_h$ first. Since, $\Phi_h$ and $\Psi_h$ are bounded in $\Leb^2(\Omega, \R^{d\times d})$ and $Q$ satisfies \eqref{Eq:UniformBounds_Q}, we find from \eqref{Eq:IsometryUnfolding}, that
\begin{equation*}
	\abs{\text{(II)}_h - \iint_{\Omega \times Y} Q\left(y, \mathcal{T}_{\eps(h)}\big[\mathds{1}_{\Omega_h}(\Phi_h + \Psi_h)\big](x,y)\right) - Q\left(y, \mathcal{T}_{\eps(h)}\big[\mathds{1}_{\Omega_h}(\Phi_h + \Psi)\big](x,y)\right) \,\dx[y]\dx} \to 0.
\end{equation*}
Thus, since $Q$ is a quadratic form, we obtain $\text{(II)}_h \to 0$ for an arbitrary sequence $(\Omega_h)$ from the $\Leb^2$-boundedness of $\Phi_h$ and the two-scale convergence $(\Psi_h - \Psi) \xrightarrow{2} 0$ in $\Leb^2(\Omega \times Y, \R^{d\times d})$. It remains to treat the term $\text{(I)}_h$. As in \cref{Lem:ExpansionPeriodicityCell}, we set $\bar{r}_h(y) := r(y, h^{1/2})$ and $\rho_h := (\int_Y \bar{r}_h)^{1/2}$. By dominated convergence we observe $\rho_h \to 0$. We set
\begin{equation*}
	\Omega_h := \class{x \in \Omega}{\abs{G_h(x)} \leq h^{-1/2}} \cap \mathcal{T}_{\eps(h)} \Big( \class{(x,y) \in \R^d \times Y}{\bar{r}_h(y) \leq \rho_h} \Big).
\end{equation*}
Then, Markov's inequality and \eqref{Eq:IsometryUnfolding} yield $\abs{\Omega \setminus \Omega_h} \to 0$. The set $\Omega_h$ is defined, such that the $Y$-periodicity of $r$ implies $\mathds{1}_{\Omega_h}(x) r(\tfrac{x}{\eps(h)},\abs{hG_h(x)}) \leq \rho_h$. Hence, $\text{(I)}_h \leq \norm{G_h}_{\Leb^2(\Omega)}^2 \rho_h$ converges to $0$ as $h \to 0$. If $\Phi_h$ and $\Psi_h$ satisfy the uniform bounds, then again by \eqref{Eq:IsometryUnfolding},
\begin{equation*}
	\limsup_{h \to 0} \text{(I)}_h \leq c_1 \limsup_{h \to 0} \iint_{\Omega \times Y} (\norm{\Phi_h}_{\Leb^\infty(\Omega)}^2 + \abs{\mathcal{T}_{\eps(h)}\Psi_h(x,y)}^2) r(y, \norm{hG_h}_{\Leb^\infty(\Omega)}) \,\dx[y]\dx,
\end{equation*}
independently of the definition of $\Omega_h$. The right-hand side equals $0$ by dominated convergence (with strongly converging dominating sequence). Hence, we may choose $\Omega_h = \Omega$.
\end{proof}

\begin{proof}[Proof of \cref{Thm:gamma_convergence} (5)]
Let $\eps: (0,\infty) \to (0,\infty)$ with $\lim_{h \to 0} \eps(h) = 0$. We show that $\EnergyIhe{\eps(h)}$ $\Gamma$-converges w.r.t.\ weak convergence in $\SobH^1_{\Gamma, g}(\Omega, \R^d)$ to $\EnergyIlinhom$.
\medskip

\stepemph{Step 1 -- Recovery sequence}: Let $u \in \SobH^1_{\Gamma, g}(\Omega, \R^d)$. We find some $\varphi \in \Leb^2(\Omega, \SobH^1_{\mathrm{per},0}(Y,\R^d))$, such that $\EnergyIlinhom(u) = \EnergyIlinhom[*](u) = \EnergyIlints(u,\varphi)$. Via a density argument, we find sequences $(u_\delta) \subset \SobW^{1,\infty}(\Omega, \R^d) \cap \SobH^1_{\Gamma, g}(\Omega, \R^d)$ and $(\varphi_\delta) \subset \Cont^\infty_c(\Omega, \Cont^\infty_\mathrm{per}(Y,\R^d))$ such that
\begin{alignat*}{2}
		u_{\delta} &\to u &&\qquad \text{strongly in } \SobH^1(\Omega, \R^d), \\
		\varphi_\delta &\to \varphi &&\qquad\text{strongly in } \Leb^2(\Omega, \SobH^1_{\mathrm{per}}(Y,\R^d)).
\end{alignat*}
Define $u_{\delta, h}(x) := u_\delta(x) + \eps(h) \varphi_\delta(x,\tfrac{x}{\eps(h)})$. Then, for fixed $\delta > 0$, the sequence $(\dif{u_{\delta,h}})$ is bounded in $\Leb^\infty(\Omega, \R^{d\times d})$. Thus, the assumptions of \cref{Lem:ExpansionTwoScale}, including the uniform bounds, are satisfied with
\begin{equation*}
	\Phi_h(x) := \dif{u_{\delta,h}}(x)A(\tfrac{x}{\eps(h)})^{-1}, \quad
	\Psi_h(x) := -B_h(\tfrac{x}{\eps(h)}) - h\Phi_h(x)B_h(\tfrac{x}{\eps(h)}), \quad
	\Psi(x,y) := -B(y),
\end{equation*}
and we obtain $\EnergyIhe{\eps(h)}(u_{\delta, h}) - \EnergyIline{\eps(h)}(u_{\delta,h}) \to 0$. Now, continuity of the quadratic energy functional w.r.t.\ strong convergence in $\Leb^2$ applied twice along with \eqref{Eq:IsometryUnfolding}, implies
\begin{equation*}
	\lim_{\delta \to 0}\lim_{h \to 0} \EnergyIhe{\eps(h)}(u_{\delta, h}) = \lim_{\delta \to 0}\lim_{h \to 0} \EnergyIline{\eps(h)}(u_{\delta, h}) = \lim_{\delta \to 0} \EnergyIlints(u_\delta, \varphi_\delta) = \EnergyIlints(u, \varphi) = \EnergyIlinhom(u).
\end{equation*}
Especially, for fixed $\delta$, the sequence $(u_{\delta,h})$ is a recovery sequence for the two-scale $\Gamma$-convergence of $\EnergyIline{\eps(h)}$ introduced above. Thus, by Attouch's diagonalization lemma \cite[Lem.~1.15]{Att84}, we find a diagonal sequence $u_h := u_{h,\delta(h)}$ satisfying $\EnergyIhe{\eps(h)}(u_h) \to \EnergyIlinhom(u)$ and $u_h \to u$ in $\Leb^2(\Omega,R^{d\times d})$. Since $(u_h)$ is bounded in $\SobH^1(\Omega, \R^{d\times d})$, we conclude $u_h \wto u$.
\medskip

\stepemph{Step 2 -- Lower bound}: Let $u \in \SobH^1_{\Gamma, g}(\Omega, \R^d)$ and $(u_h) \subset \SobH^1_{\Gamma, g}(\Omega, \R^d)$ converging weakly to $u$ in $\SobH^1(\Omega, \R^d)$. By \cite[Prop.~1.14]{All92} we find some $\varphi \in \Leb^2(\Omega, \SobH^1_{\mathrm{per},0}(\Omega, \R^d))$ and a subsequence (not relabeled) such that
\begin{equation*}
	\liminf_{h \to 0} \EnergyIhe{\eps(h)}(u_h) = \lim_{h \to 0} \EnergyIhe{\eps(h)}(u_h), \quad 
	\dif{u_h} \xrightharpoonup{2} \dif{u} + \dif{_y \varphi} \text{ in } \Leb^2(\Omega \times Y, \R^{d\times d}).
\end{equation*}
Since $(\dif{u_h})$ is bounded in $\Leb^2(\Omega,\R^{d\times d})$, we may apply \cref{Lem:ExpansionTwoScale} with
\begin{equation*}
	\Phi_h(x) := \dif{u_h}(x)A(\tfrac{x}{\eps(h)})^{-1}, \quad
	\Psi_h(x) := -B_h(\tfrac{x}{\eps(h)}) - h\Phi_h(x)B_h(\tfrac{x}{\eps(h)}), \quad
	\Psi(x,y) := -B(y).
\end{equation*}
Thus, we find subsets $\Omega_h \subset \Omega$ with $\abs{\Omega \setminus \Omega_h} \to 0$, such that
\begin{align*}
	\liminf_{h \to 0} \EnergyIhe{\eps(h)}(u_h) 
	&\geq \liminf_{h \to 0} \frac{1}{h^2} \int_{\Omega_h} W\left(\tfrac{x}{\eps(h)}, (I + h\dif{u_h}(x)A(\tfrac{x}{\eps(h)})^{-1})(I - h B_h\big(\tfrac{x}{\eps(h)}\big))\right) \,\dx \\
	&= \liminf_{h \to 0} \int_\Omega Q\left(\tfrac{x}{\eps(h)}, \mathds{1}_{\Omega_h}(x)\big(\dif{u_h}(x)A(\tfrac{x}{\eps(h)})^{-1} - B(\tfrac{x}{\eps(h)})\big)\right) \,\dx \\
	&= \liminf_{h \to 0} \iint_{\Omega \times Y} Q\left(y, \mathcal{T}_{\eps(h)}\Big[\mathds{1}_{\Omega_h}\big(\dif{u_h}A(\tfrac{\placeholder}{\eps(h)})^{-1} - B(\tfrac{\placeholder}{\eps(h)})\big)\Big](x,y)\right) \,\dx[y]\dx.
\end{align*}
The inequality above is trivial, since $W$ is non-negative and the last equality follows from \eqref{Eq:IsometryUnfolding}. Then, since $\mathds{1}_{\Omega_h}$ is bounded in $\Leb^\infty(\Omega)$ and converges in $\Leb^2(\Omega)$ to $1$, we find that $\mathds{1}_{\Omega_h}(\dif{u_h}A(\tfrac{\placeholder}{\eps(h)})^{-1} - B(\tfrac{\placeholder}{\eps(h)}))$ converges weakly two-scale to $(\dif{u} + \dif{_y \varphi})A(\placeholder)^{-1} - B$ in $\Leb^2(\Omega \times Y, \R^{d\times d})$. Hence, lower semi-continuity of the quadratic functional w.r.t.\ weak convergence in $\Leb^2(\Omega\times Y)$ yields
\begin{equation*}
	\liminf_{h \to 0} \EnergyIhe{\eps(h)}(u_h) \geq \EnergyIlints(u,\varphi) \geq \EnergyIlinhom[*](u) = \EnergyIlinhom(u). \qedhere
\end{equation*}
\end{proof}

We proceed by proving the associated equi-coercivity statement \eqref{Eq:equi_coercivity_Ihe}. Note that this also implies \eqref{Eq:equi_coercivity_Iline}, since $\Gamma$-convergence of $\EnergyIhe{\eps}$ to $\EnergyIline{\eps}$ implies
\begin{equation*}
	\EnergyIline{\eps}(u) = \inf \class{\liminf_{h \to 0} \EnergyIhe{\eps}(u_h)}{u_h \wto u \text{ weakly in } \SobH^1_{\Gamma,g}(\Omega,\R^d)}.
\end{equation*}
Indeed, given \eqref{Eq:equi_coercivity_Ihe}, from this characterization one obtains \eqref{Eq:equi_coercivity_Iline} immediately from the weak lower semi-continuity of the norm in $\SobH^1(\Omega,\R^d)$.

\begin{proof}[Proof of \eqref{Eq:equi_coercivity_Ihe}]
Let $u \in \SobH^1_{\Gamma,g}(\Omega, \R^d)$ and $\eps,h > 0$ small enough. As in the proof of \cref{Thm:Equi-coercivityLinearization}, in view of \cref{Thm:RigidityEstimateJonesDomain}, we find a constant rotation $R \in \SO(d)$, such that
\begin{equation*}
	\int_{\Omega} \abs{I + h\dif{u}(x)A(\tfrac{x}{\eps})^{-1} - R}^2 \,\dx \leq c_1 \int_\Omega \dist^2(I + h\dif{u}(x)A(\tfrac{x}{\eps})^{-1}, \SO(d)) \,\dx.
\end{equation*}
Let $a$ denote the potential of $A$ as given in \ref{Property:SFJisDerivative}. The constant $c_1$ is independent of $\eps$, since the potential $a_\eps := \eps a(\frac{\placeholder}{\eps})$ of $A(\frac{\placeholder}{\eps})$ is a Bilipschitz map by \cref{Prop:SFJBilipschitz} with Bilipschitz constant independent of $\eps$. We conclude the claim by estimating $I - R$ as in \cref{Thm:Equi-coercivityLinearization}. Indeed, repeating the argument there but with $a$ replaced by $a_\eps$ and applied on the whole part of the boundary $\Gamma$ instead of $\Gamma \cap \partial \Omega_i$, we obtain
\begin{equation*}
	\abs{I - R}^2
	\leq c_2 \int_{\Omega} \abs{I + h\dif{u}(x)\dif{a_\eps}(x)^{-1} - R}^2\,\dx + c_2 h^2 \norm{g}_{\Leb^2(\Gamma, \mathcal{H}^{d-1})}^2.
\end{equation*}
Note that also $c_2$ is independent of $\eps$ by the uniformity of the estimate \cref{Cor:MatrixNorm} w.r.t.\ the Bilipschitz maps $a_\eps$. Now, combining these two estimates, we obtain
\begin{equation*}
	\int_{\Omega} \abs{\dif{u}(x)}^2 \,\dx \leq c_3 \int_\Omega \dist^2(I + h\dif{u}(x)A(\tfrac{x}{\eps})^{-1}, \SO(d)) \,\dx + c_3 \norm{g}_{\Leb^2(\Gamma, \mathcal{H}^{d-1})}^2.
\end{equation*}
From this we may conclude the claim by appealing to the Poincaré-Friedrich inequality and non-degeneracy \ref{Property:W_NonDegeneracy}. Indeed, for the non-degeneracy we can argue as in \cref{Prop:NonDegeneracy} to estimate the right-hand side by $c_4 (\EnergyIhe{\eps}(u) + 1)$.
\end{proof}

We finish by proving \cref{Prop:strong_conv} (c) and (d) which establishes strong convergence. In comparison to \cref{Prop:strong-convergenceLinearization} we can only expect to obtain strong two-scale convergence of the sequences, since even the recovery sequences constructed for \cref{Thm:gamma_convergence} (2) and (5) are only two-scale converging. An upgrade to strong two-scale convergence has been obtained in \cite[Sec.~7.5]{NeuPHD} in the context of homogenization for planar rods. We follow the procedure presented there and incorporate the arguments from \cite{ADD12} to show \cref{Prop:strong_conv} (d).

\begin{proof}[Proof of \cref{Prop:strong_conv} (c)]
Using the construction in the proof of \cref{Thm:gamma_convergence} (5), we find a recovery sequence $(u_\eps^*)$, satisfying $\EnergyIline{\eps}(u_\eps^*) \to \EnergyIlints(u,\varphi) = \EnergyIlinhom(u)$ and
\begin{align*}
	&u_\eps^* \to u &&\text{strongly in } \Leb^2(\Omega, \R^d), \\
	&\dif{u_\eps^*} \xrightarrow{2} \dif{u} + \dif{_y \varphi} &&\text{strongly two-scale in } \Leb^2(\Omega \times Y, \R^d).
\end{align*}
Since $u_\eps \wto u$ weakly in $\SobH^1(\Omega,\R^d)$, we infer from \cite[Prop. 1.14]{All92} that each subsequence admits a further subsequence $(u_{\eps''})$ with $\dif{u_{\eps''}} \xrightharpoonup{2} \dif{u} + \dif{_y \varphi''}$ in $\Leb^2(\Omega \times Y, \R^{d\times d})$, where $\varphi'' \in \Leb^2(\Omega, \SobH^1_{\mathrm{per}, 0}(Y, \R^d))$ a priori depends on the subsequence. But then the lower bound statement for the two-scale $\Gamma$-convergence of $\EnergyIline{\eps}$ implies
\begin{equation*}
	\EnergyIlints(u,\varphi) = \EnergyIlinhom(u) = \lim_{\eps \to 0} \EnergyIline{\eps}(u_\eps) = \liminf_{\eps'' \to 0} \EnergyIline{\eps''}(u_{\eps''}) \geq \EnergyIlints(u,\varphi'') \geq \EnergyIlints(u,\varphi).
\end{equation*}
Thus, $\varphi'' = \varphi$ by uniqueness of the minimizer of \eqref{Eq:homogenized_limit} and so the whole sequence $(u_\eps)$ satisfies $\dif{u_\eps} \xrightharpoonup{2} \dif{u} + \dif{_y \varphi}$ in $\Leb^2(\Omega \times Y, \R^{d\times d})$. Now, since $\EnergyIline{\eps}(u_\eps) - \EnergyIline{\eps}(u_\eps^*) \to 0$, we may refer to \eqref{Eq:QuadraticTrick} to show
\begin{multline*}
	\int_\Omega \abs{\sym(\dif{u_\eps}(x)A(\tfrac{x}{\eps})^{-1} - \dif{u_\eps^*}(x)A(\tfrac{x}{\eps})^{-1})}^2 \,\dx \leq c_1 \Big( \EnergyIline{\eps}(u_\eps) - \EnergyIline{\eps}(u_\eps^*)\\
	+ 2\int_\Omega (\dif{u_\eps^*}(x) - \dif{u_\eps}(x))A(\tfrac{x}{\eps})^{-1} : \mathbb{L}_Q(\tfrac{x}{\eps}) (\dif{u_\eps^*}(x)A(\tfrac{x}{\eps})^{-1} - B(\tfrac{x}{\eps}))  \,\dx \Big) \to 0.
\end{multline*}
Indeed, the last term converges to $0$ as it is a product of a weakly and a strongly two-scale converging sequence, see \cite[Prop.~1.4]{Vis06}. Now, Korn's inequality \cref{Cor:KornWithLpNormOfu} shows $\dif{u_\eps} - \dif{u_\eps^*} \to 0$ in $\Leb^2(\Omega, \R^{d\times d})$ and then the claim follows from the strong two-scale convergence $\dif{u_\eps^*} \xrightarrow{2} \dif{u} + \dif{_y \varphi}$.
\end{proof}

\begin{proof}[Proof of \cref{Prop:strong_conv} (d)]
Fix $\eps: (0, \infty) \to (0,\infty)$ with $\lim_{h \to 0} \eps(h) = 0$. As above it suffices to show that $\dif{u_h} - \dif{u_{\eps(h)}^*} \to 0$ in $\Leb^2(\Omega, \R^{d\times d})$, where $u_h := u_{\eps(h),h}$ and $u_\eps^*$ is as above.
\medskip

\stepemph{Step 1 -- Two-scale convergence}: As above we can show that $(u_h)$ satisfies $\dif{u_h} \xrightharpoonup{2} \dif{u} + \dif{_y \varphi}$ in $\Leb^2(\Omega \times Y, \R^{d \times d})$. Indeed, each subsequence of $(u_h)$ admits a further subsequence $(u_{h''})$ such that $\dif{u_{h''}} \xrightharpoonup{2} \dif{u} + \dif{_y \varphi''}$ and $\varphi'' = \varphi$ because of
\begin{equation*}
	\EnergyIlints(u,\varphi) = \EnergyIlinhom(u) = \lim_{h \to 0} \EnergyIhe{\eps(h)}(u_h) = \liminf_{h'' \to 0} \EnergyIhe[h'']{\eps(h'')}(u_{h''}) \geq \EnergyIlints(u,\varphi'').
\end{equation*}
The last inequality follows as shown in the proof of \cref{Thm:gamma_convergence} (5).
\medskip

\stepemph{Step 2 -- Quadratic trick}: Since $\dif{u_h}$ is bounded in $\Leb^2(\Omega, \R^{d\times d})$, we may apply \cref{Lem:ExpansionTwoScale} and find sets $\Omega_h \subset \Omega$ with $\abs{\Omega \setminus \Omega_h} \to 0$, such that $\EnergyIhe{\eps(h)}(u_h)$ admits a decomposition
\begin{align*}
	\EnergyIhe{\eps(h)}(u_h) &= \text{(I)}_h + \text{(II)}_h + \text{(III)}_h, \qquad \text{where,}\\
	\text{(I)}_h &:= \int_\Omega Q\left(\tfrac{x}{\eps(h)}, \mathds{1}_{\Omega_h}(x)( \dif{u_h}(x)A(\tfrac{x}{\eps(h)})^{-1} - B(\tfrac{x}{\eps(h)}))\right)\,\dx, \\
	\text{(II)}_h &:= \frac{1}{h^2}\int_{\Omega \setminus\Omega_h} W\left(\tfrac{x}{\eps(h)}, (I + h\dif{u_h}(x)A(\tfrac{x}{\eps(h)})^{-1})(I - h B_h(\tfrac{x}{\eps(h)}))\right)\,\dx,
\end{align*}
and the remainder $\text{(III)}_h$ converges to $0$. Without loss of generality, we may assume that $\{x \in \Omega \mid |\dif{u_h}(x)| \leq h^{-1/2}\} \subset \Omega_h$. Furthermore, we have $\liminf_{h \to 0} \text{(I)}_h \geq \EnergyIlinhom(u)$ as we have shown in Step 2 of the proof of \cref{Thm:gamma_convergence} (5). We show that $\text{(II)}_h \to 0$. Indeed, since $\text{(II)}_h$ is non-negative, this follows from
\begin{equation*}
	\EnergyIlinhom(u) = \lim_{h \to 0} \EnergyIhe{\eps(h)}(u_h) \geq \liminf_{h \to 0} \text{(I)}_h + \limsup_{h \to 0} \text{(II)}_h \geq \EnergyIlinhom(u) + \limsup_{h \to 0} \text{(II)}_h.
\end{equation*}
Hence, we obtain $\lim_{h \to 0} \text{(I)}_h = \lim_{h \to 0} \EnergyIhe{\eps(h)}(u_h) = \EnergyIlinhom(u)$ and we may proceed as for \cref{Prop:strong_conv} (c) to show that this implies
\begin{equation*}
	\sym \left((\mathds{1}_{\Omega_h}\dif{u_h} - \dif{u_{\eps(h)}^*})A(\tfrac{\placeholder}{\eps(h)})^{-1}\right) \to 0 \qquad \text{strongly in } \Leb^2(\Omega, \R^{d\times d}).
\end{equation*}

\stepemph{Step 3 -- Convergence of the rest}: Let $1 \leq p < 2$. By Hölder's inequality, we have
\begin{equation*}
	\int_{\Omega} \mathds{1}_{\Omega \setminus \Omega_h}(x)\abs{\dif{u_h}(x)}^p \,\dx \leq \abs{\Omega \setminus \Omega_h}^{\frac{2-p}{2}} \left(\int_\Omega \abs{\dif{u_h}(x)}^2 \,\dx\right)^{\frac{p}{2}} \to 0.
\end{equation*}
Combined with Step 2 and Korn's inequality \cref{Cor:KornWithLpNormOfu} this yields $\dif{u_h} - \dif{u_{\eps(h)}^*} \to 0$ in $\Leb^p(\Omega, \R^{d\times d})$. Especially, $\dif{u_h} - \dif{u_{\eps(h)}^*}$ converges to $0$ in measure.
\medskip

\stepemph{Step 4 -- Strong convergence in $\Leb^2$}: We seek to apply Vitali's convergence theorem to the sequence $(\dif{u_h} - \dif u_{\eps(h)}^*)$. Since we already showed convergence in measure, it remains to establish uniform integrability. Since $(\dif u_{\eps(h)}^*)$ is strongly two-scale converging in $\Leb^2(\Omega \times Y, \R^{d\times d})$, the sequence $(|\dif u_{\eps(h)}^*|^2)$ is uniformly integrable by \eqref{Eq:IsometryUnfolding}. Thus, it suffices to show $(\dif u_h)$ is uniformly integrable. For this, we want to use \cref{Prop:uniform_integrability}. First, arguing as in \cref{Prop:NonDegeneracy}, we get
\begin{equation*}
	\frac{1}{h^2}\int_{\Omega} \mathds{1}_{\Omega \setminus\Omega_h} \dist^2\big(I + h\dif{u_h}(x)A(\tfrac{x}{\eps(h)})^{-1}, \SO(d)\big) \,\dx \leq c_1 \left( \text{(II)}_h + \int_{\Omega \setminus \Omega_h} \abs{B_h(\tfrac{x}{\eps(h)})}^2 \,\dx \right).
\end{equation*}
The right hand-side converges to $0$, since $\mathds{1}_{\Omega \setminus \Omega_h}B_h(\tfrac{\placeholder}{\eps(h)})$ converges strongly two-scale to $0$ in $\Leb^2(\Omega \times Y, \R^{d\times d})$. Moreover, since $\abs{h\dif{u_h}} \leq \sqrt{h}$ on $\Omega_h$, the Taylor expansion $\dist(I+G, \SO(d)) = \abs{\sym G} + \Oclass(\abs{G}^2)$ yields,
\begin{equation*}
	\tfrac{1}{h^2} \mathds{1}_{\Omega_h}\dist^2\big(I + h\dif{u_h}A(\tfrac{\placeholder}{\eps(h)})^{-1}, \SO(d)\big) 
	\leq c_1 \left( \mathds{1}_{\Omega_h} \abs{\sym(\dif{u_h}A(\tfrac{\placeholder}{\eps(h)})^{-1})}^2 + h \abs{\dif{u_h}}^2 \right).
\end{equation*}
Since by Step 2 also here the right-hand side is strongly two-scale converging, the sequence $(\frac{1}{h^2}\dist^2(I + h\dif{u_h}A(\tfrac{\placeholder}{\eps(h)})^{-1}, \SO(d)))$ is uniformly integrable. Hence, by \cref{Prop:uniform_integrability}, we find rotations $(R_h) \subset \SO(d)$, such that $\frac{1}{h^2} \abs{I + h\dif{u_h}A(\tfrac{\placeholder}{\eps(h)})^{-1} - R_h}^2$ is uniformly integrable. Finally,
\begin{equation*}
	\abs{\frac{I - R_h}{h}}^2 \leq 2 \left(\frac{1}{h^2}\fint_\Omega \abs{I + h\dif{u_h}A(\tfrac{\placeholder}{\eps(h)})^{-1} - R_h}^2 + \fint_\Omega \abs{\dif{u_h}A(\tfrac{\placeholder}{\eps(h)})^{-1}}^2 \right)
\end{equation*}
is bounded. Hence, we conclude that $(\abs{\dif{u_h}}^2)$ is uniformly integrable and an application of Vitali's convergence theorem yields the claim.
\end{proof}

\appendix
\section{Formulas of the stress-free joints}\label{Sec:formulas-of-the-stress-free-joints}

In this section we provide the explicit formulas used in \cref{Fig:StressFreeJoints}. \cref{Fig:StressFreeJoints:laminate} is a laminate given by
\begin{equation*}
	A_{\rm a}(y) = \begin{cases} A_{\rm a}^1 & \text{if } y_1 \in [0,\tfrac{1}{2}), \\
	A_{\rm a}^2 & \text{if } y_1 \in [\tfrac{1}{2}, 1). \end{cases}
\end{equation*}
To ensure that $A_{\rm a}$ is a stress-free joint, by \eqref{Eq:Rank_one_compatiblity} it is necessary and sufficient to satisfy the rank-one compatibility condition $A_{\rm a}^2 = A_{\rm a}^1 + c \otimes e_1$ for some $c \in \R^3$ and have $\det A_{\rm a}^1, \det A_{\rm a}^2 > 0$. In \cref{Fig:StressFreeJoints:laminate} we used
\begin{equation*}
	A_{\rm a}^1 = \begin{pmatrix} 1 & 1 & 0 \\ 0 & 1 & 0 \\ 0 & 0 & 1 \end{pmatrix}, \qquad
	A_{\rm a}^2 = \begin{pmatrix} 2 & 1 & 0 \\ 0 & 1 & 0 \\ 0 & 0 & 1 \end{pmatrix}.
\end{equation*}

The stress-free joint in \cref{Fig:StressFreeJoints:2Dlaminate} is given by
\begin{equation*}
	A_{\rm b}(y) = 
	\begin{cases} 
		A_{\rm b}^1 & \text{if } (y_1,y_2) \in [0,\tfrac{1}{2})^2, \\
		A_{\rm b}^2 & \text{if } (y_1,y_2) \in [\tfrac{1}{2},1)\times [0,\tfrac{1}{2}), \\
		A_{\rm b}^3 & \text{if } (y_1,y_2) \in [\tfrac{1}{2},1)^2, \\
		A_{\rm b}^4 & \text{if } (y_1,y_2) \in [0,\tfrac{1}{2}) \times [\tfrac{1}{2},1).
	\end{cases}
\end{equation*}
We choose $A_{\rm b}^1$, $A_{\rm b}^2$ and $A_{\rm b}^3$ with positive determinant, such that they satisfy the appropriate rank-one compatibility conditions. From this $A_{\rm b}^4$ is uniquely determined. In \cref{Fig:StressFreeJoints:2Dlaminate} we used
\begin{align*}
	A_{\rm b}^1 &= \begin{pmatrix} 1 & \tfrac{1}{2} & 0 \\ 0 & 1 & 0 \\ 0 & 0 & 1 \end{pmatrix}, &
	A_{\rm b}^2 &= \begin{pmatrix} 1 & \tfrac{1}{2} & 0 \\ \tfrac{1}{2} & 1 & 0 \\ 0 & 0 & 1 \end{pmatrix}, &
	A_{\rm b}^3 &= \begin{pmatrix} 1 & -\tfrac{1}{2} & 0 \\ \tfrac{1}{2} & 1 & 0 \\ 0 & 0 & 1 \end{pmatrix}, &
	A_{\rm b}^4 &= \begin{pmatrix} 1 & -\tfrac{1}{2} & 0 \\ 0 & 1 & 0 \\ 0 & 0 & 1 \end{pmatrix}.
\end{align*}

In \cref{Fig:StressFreeJoints:smoothSFJ}, by \cref{Prop:SFJBilipschitz} and \cref{Lem:periodic_derivative} it suffices to choose $\bar{A} \in \R^{3\times 3}$ with $\det \bar{A} > 0$, such that $A_{\rm d} := \bar{A} + \dif{\bar{a}_{\rm d}}$ satisfies $\det A_{\rm d} > 0$ in $Y$. We used $\bar{A} := I$ and
\begin{equation*}
	\bar{a}_{\rm d}(y) = \tfrac{1}{20} \sum_{i=1}^3 \sin(2\pi y_i) \left(\begin{smallmatrix} 1 \\ 1 \\ 1 \end{smallmatrix}\right), \quad y \in Y.
\end{equation*}

The most interesting case is \cref{Fig:StressFreeJoints:periodicSFJ}. For the construction we consider the domains
\begin{align*}
	Y_1 &:= \class{y \in Y}{y_1 \leq \abs{y_2 - \tfrac{1}{2}}}, &
	Y_2 &:= \class{y \in Y}{\tfrac{1}{2} \geq y_2 > \abs{y_1 - \tfrac{1}{2}}}, \\
	Y_3 &:= \class{y \in Y}{1 - \abs{y_1 - \tfrac{1}{2}} > y_2 > \tfrac{1}{2}}, &
	Y_4 &:= \class{y \in Y}{1 - \abs{y_2 - \tfrac{1}{2}} \leq y_1},
\end{align*}
with normals $n_i$ at $\partial Y_1 \cap \partial Y_i$ (periodically continued) given by,
\begin{equation*}
	n_2 := \left(\tfrac{1}{\sqrt{2}}, \tfrac{1}{\sqrt{2}}, 0\right)^T, \qquad
	n_3 := \left(\tfrac{1}{\sqrt{2}}, -\tfrac{1}{\sqrt{2}},0\right)^T, \qquad
	n_4 := e_1.
\end{equation*}
For $\alpha \in [0,1]$ and $c \in \R^3$ with $c_2^{-2} = c_1^{-2}\alpha^2 + c_3^{-2}(1 - \alpha^2)$, consider the orthonormal basis
\begin{equation*}
	b_1 := \alpha e_1 + \sqrt{1 - \alpha^2} e_3, \qquad
	b_2 := e_2, \qquad
	b_3 := \alpha e_3 - \sqrt{1 - \alpha^2} e_1.
\end{equation*}
and the matrices
\begin{align*}
	A_1 &:= \sum_{i=1}^3 c_i b_i \otimes b_i, &
	A_2 &:= A_1\left(I +  \frac{2\alpha(c_2^2 c_1^{-2} - 1)}{\sqrt{2(1 - \alpha^2)}}e_3 \otimes n_2\right), \\
	A_3 &:= A_1\left(I + \frac{2\alpha(c_2^2 c_1^{-2} - 1)}{\sqrt{2(1 - \alpha^2)}}e_3 \otimes n_3\right), &
	A_4 &:= A_1\left(I + \frac{2\alpha(c_2^2 c_1^{-2} - 1)}{\sqrt{(1 - \alpha^2)}}e_3 \otimes n_4\right), \\
\end{align*}
We set $A_{\text{(c)}}(y) := A_i$, if $y \in Y_i$, $i=1,\dots,4$. One can check that with these definitions the matrices have positive determinant and all necessary rank-one compatibility conditions are satisfied. Moreover, the matrices satisfy $A_i = R_i A_1 Q_i$, for rotations $R_i, Q_i \in \SO(3)$, $i=2,\dots,4$. Thus, the stress-free joint depicts the joint of multiple bodies of the same material in different orientations, cf.\ \cite[Sect.~2]{Eri83}. This stress-free joint was found using the theory of \cite{Eri83}. In \cref{Fig:StressFreeJoints:periodicSFJ} we used the parameters $\alpha = \frac{1}{\sqrt{2}}$, $c = (\sqrt{\frac{3}{4}},1,\sqrt{\frac{3}{2}})^T$.

\section{Correctors for isotropic laminates}\label{Sec:correctors-for-isotropic-laminates}

\begin{proof}[Proof of \cref{Prop:CorrectorsIsotropicLaminate}]
We state the general procedure, how one can find the corrector $\hat{\varphi}_{G_i}$. The corrector $\hat{\varphi}_{G_i}$ is characterized as the unique solution to the Euler-Lagrange equation
\begin{equation*}
	\int_Y\big(G_i + \dif{\hat{\varphi}_{G_i}}(y) \bar{A}^{-1}\big) : \mathbb{L}_{\hat{Q}}(y) \dif{\delta\varphi}(y) \bar{A}^{-1} = 0, \quad \text{for all } \delta\varphi \in \SobH^1_\mathrm{per}(Y,\R^3).
\end{equation*}
Since we consider laminates, it is a good Ansatz to assume, $\hat{\varphi}_{G_i}$ depends only on $y_1$, i.e., $\hat{\varphi}_{G_i}(y) = \phi_{G_i}(y_1)$ for some $\phi_{G_i} \in \SobH^1_\mathrm{per}([0,1), \R^3)$. Indeed, in this case we have $\dif{\hat{\varphi}_{G_i}}(y) = \phi_{G_i}'(y_1)e_1^T$ and it is not hard to show that the Euler-Lagrange equation presented above is equivalent to the one restricted to the space $\class{\varphi}{\exists \phi \in \SobH^1_\mathrm{per}([0,1), \R^3) \text{ with } \varphi(y) = \phi(y_1) \text{ for all } y \in \R^3}$, using that the integrals w.r.t.~$y_2$ and $y_3$ vanish. For maps in this space we obtain the decomposition $\sym(\dif{\varphi}\bar{A}^{-1}) = \sum_{j=1}^6 a_j G_j$ with
\begin{align*}
	a_1 &= \alpha_1 \phi_1' + \alpha_2 \phi_2' + \alpha_3 \phi_3',& a_2 &= \alpha_1 \phi_1' - \tfrac{1}{2} \alpha_2 \phi_2' - \tfrac{1}{2} \alpha_3 \phi_3',& a_3 &= \alpha_3 \phi_3' - \alpha_2 \phi_2',&&&& \\
	a_4 &= \alpha_1 \phi_2' + \alpha_2 \phi_1',& a_5 &= \alpha_1 \phi_3' + \alpha_3 \phi_1',& a_6 &= \alpha_2 \phi_3' + \alpha_3 \phi_2',
\end{align*}
where $\alpha := \bar{A}^{-T}e_1$. Since isotropy yields $G_j : \mathbb{L}_{\hat{Q}} G_k = 0$ for $k \neq j$, we get
\begin{equation*}
	\hat{Q}(y, G_i + \dif{\varphi}(y)\bar{A}^{-1}) = \sum_{j=1}^6 (a_j(y_1) + \delta_{ij})^2\, \hat{Q}(y, G_j),
\end{equation*}
and the Euler-Lagrange equation reduces to
\begin{align*}
	0 = \int_0^1 &(\hat{\lambda} + \tfrac{2}{3}\hat{\mu})(\alpha_1 {\phi_{G_i}'}_1 + \alpha_2 {\phi_{G_i}'}_2 + \alpha_3 {\phi_{G_i}'}_3 + \delta_{i1})(\alpha_1 \delta\phi_1' + \alpha_2 \delta\phi_2' + \alpha_3 \delta\phi_3') \\
	+& \tfrac{4}{3}\hat{\mu}(\alpha_1 {\phi_{G_i}'}_1 - \tfrac{1}{2} \alpha_2 {\phi_{G_i}'}_2 - \tfrac{1}{2} \alpha_3 {\phi_{G_i}'}_3 + \delta_{i2})(\alpha_1 \delta\phi_1' - \tfrac{1}{2} \alpha_2 \delta\phi_2' - \tfrac{1}{2} \alpha_3 \delta\phi_3') \\
	+& \hat{\mu}(\alpha_3 {\phi_{G_i}'}_3 - \alpha_2 {\phi_{G_i}'}_2 + \delta_{i3})(\alpha_3 \delta\phi_3' - \alpha_2 \delta\phi_2') \\
	+& \hat{\mu} (\alpha_1 {\phi_{G_i}'}_2 + \alpha_2 {\phi_{G_i}'}_1 + \delta_{i4})(\alpha_1 \delta\phi_2' + \alpha_2 \delta\phi_1') \\
	+& \hat{\mu} (\alpha_1 {\phi_{G_i}'}_3 + \alpha_3 {\phi_{G_i}'}_1 + \delta_{i5})(\alpha_1 \delta\phi_3' + \alpha_3 \delta\phi_1') \\
	+& \hat{\mu} (\alpha_2 {\phi_{G_i}'}_3 + \alpha_3 {\phi_{G_i}'}_2 + \delta_{i6})(\alpha_2 \delta\phi_3' + \alpha_3 \delta\phi_2') \\
	= \int_0^1 & \delta\phi_1' \begin{aligned}[t] \big[
		&\alpha_1(\hat{\lambda} + \tfrac{2}{3}\hat{\mu})(\alpha_1 {\phi_{G_i}'}_1 + \alpha_2 {\phi_{G_i}'}_2 + \alpha_3 {\phi_{G_i}'}_3 + \delta_{i1}) \\
		&+ \tfrac{4}{3} \alpha_1 \hat{\mu}(\alpha_1 {\phi_{G_i}'}_1 - \tfrac{1}{2} \alpha_2 {\phi_{G_i}'}_2 - \tfrac{1}{2} \alpha_3 {\phi_{G_i}'}_3 + \delta_{i2}) \\
		&+ \alpha_2 \hat{\mu} (\alpha_1 {\phi_{G_i}'}_2 + \alpha_2 {\phi_{G_i}'}_1 + \delta_{i4}) + \alpha_3 \hat{\mu} (\alpha_1 {\phi_{G_i}'}_3 + \alpha_3 {\phi_{G_i}'}_1 + \delta_{i5})
	\big] \end{aligned} \\
	+& \delta\phi_2' \begin{aligned}[t] \big[
		&\alpha_2(\hat{\lambda} + \tfrac{2}{3}\hat{\mu})(\alpha_1 {\phi_{G_i}'}_1 + \alpha_2 {\phi_{G_i}'}_2 + \alpha_3 {\phi_{G_i}'}_3 + \delta_{i1}) \\
		&- \tfrac{2}{3} \alpha_2 \hat{\mu}(\alpha_1 {\phi_{G_i}'}_1 - \tfrac{1}{2} \alpha_2 {\phi_{G_i}'}_2 - \tfrac{1}{2} \alpha_3 {\phi_{G_i}'}_3 + \delta_{i2}) \\
		&- \alpha_2 \hat{\mu}(\alpha_3 {\phi_{G_i}'}_3 - \alpha_2 {\phi_{G_i}'}_2 + \delta_{i3})
		+ \alpha_1 \hat{\mu} (\alpha_1 {\phi_{G_i}'}_2 + \alpha_2 {\phi_{G_i}'}_1 + \delta_{i4}) \\
		&+ \alpha_3 \hat{\mu} (\alpha_2 {\phi_{G_i}'}_3 + \alpha_3 {\phi_{G_i}'}_2 + \delta_{i6})
	\big] \end{aligned} \\
	+& \delta\phi_3' \begin{aligned}[t] \big[
		&\alpha_3(\hat{\lambda} + \tfrac{2}{3}\hat{\mu})(\alpha_1 {\phi_{G_i}'}_1 + \alpha_2 {\phi_{G_i}'}_2 + \alpha_3 {\phi_{G_i}'}_3 + \delta_{i1}) \\
		&- \tfrac{2}{3} \alpha_3 \hat{\mu}(\alpha_1 {\phi_{G_i}'}_1 - \tfrac{1}{2} \alpha_2 {\phi_{G_i}'}_2 - \tfrac{1}{2} \alpha_3 {\phi_{G_i}'}_3 + \delta_{i2}) \\
		&+ \alpha_3 \hat{\mu}(\alpha_3 {\phi_{G_i}'}_3 - \alpha_2 {\phi_{G_i}'}_2 + \delta_{i3})
		+ \alpha_1 \hat{\mu} (\alpha_1 {\phi_{G_i}'}_3 + \alpha_3 {\phi_{G_i}'}_1 + \delta_{i5}) \\
		&+ \alpha_2 \hat{\mu} (\alpha_2 {\phi_{G_i}'}_3 + \alpha_3 {\phi_{G_i}'}_2 + \delta_{i6})
	\big] \end{aligned},
\end{align*}
for all $\delta\phi \in \SobH^1_\mathrm{per}([0,1), \R^3)$. It follows from a variant of the fundamental lemma of the calculus of variations that then necessarily the factors after $\delta\phi_j'$ are constant (cf.~\cite[Section 1.4]{MW89}). This yields a system of linear equations for ${\phi_{G_i}'}_j$ given by
\begin{gather*}
	\begin{pmatrix}
		\alpha_1^2 (\hat{\lambda} + 2\hat{\mu}) + (\alpha_2^2 + \alpha_3^2)\hat{\mu} & 
		\alpha_1\alpha_2 (\hat{\lambda} + \hat{\mu}) &
		\alpha_1\alpha_3 (\hat{\lambda} + \hat{\mu}) \\
		\alpha_1\alpha_2 (\hat{\lambda} + \hat{\mu}) &
		\alpha_2^2 (\hat{\lambda} + 2\hat{\mu}) + (\alpha_1^2 + \alpha_3^2)\hat{\mu} &
		\alpha_2\alpha_3 (\hat{\lambda} + \hat{\mu}) \\
		\alpha_1\alpha_3 (\hat{\lambda} + \hat{\mu}) &
		\alpha_2\alpha_3 (\hat{\lambda} + \hat{\mu}) &
		\alpha_3^2 (\hat{\lambda} + 2\hat{\mu}) + (\alpha_1^2 + \alpha_2^2)\hat{\mu}
	\end{pmatrix} 
	\begin{pmatrix}
		{\phi_{G_i}'}_1 \\ {\phi_{G_i}'}_2 \\ {\phi_{G_i}'}_3
	\end{pmatrix} \\
	= \begin{pmatrix}
		c_1 \\ c_2 \\ c_3
	\end{pmatrix} - 
	\begin{pmatrix}
		\alpha_1 (\hat{\lambda} - \tfrac{2}{3}\hat{\mu}) \delta_{i1} +  \tfrac{4}{3}\alpha_1\hat{\mu}\delta_{i2} + \alpha_2\hat{\mu}\delta_{i4} + \alpha_3\hat{\mu}\delta_{i5}\\ 
		\alpha_2 (\hat{\lambda} - \tfrac{2}{3}\hat{\mu}) \delta_{i1} - \tfrac{2}{3}\alpha_2 \hat{\mu}\delta_{i2} - \alpha_2 \hat{\mu}\delta_{i3} + \alpha_1 \hat{\mu}\delta_{i4} + \alpha_3\hat{\mu}\delta_{i6} \\ 
		\alpha_3 (\hat{\lambda} - \tfrac{2}{3}\hat{\mu}) \delta_{i1} - \tfrac{2}{3}\alpha_3 \hat{\mu}\delta_{i2} + \alpha_3 \hat{\mu}\delta_{i3} + \alpha_1 \hat{\mu}\delta_{i5} - \alpha_2\hat{\mu}\delta_{i6}
	\end{pmatrix} \\
	\Leftrightarrow: C\phi' = c - r,
\end{gather*}
for some constant $c := (c_1, c_2, c_3)^T \in \R^3$. Note that this constant is not arbitrary, but can be calculated from $\phi'$ as we shall see below. The matrix $C$ is of the form
\begin{equation*}
	C = (\hat{\lambda} + \hat{\mu}) \alpha\alpha^T + \abs{\alpha}^2\hat{\mu} I = \abs{\alpha}^2 \hat{\mu} \left( \frac{\hat{\lambda} + \hat{\mu}}{\abs{\alpha}^2 \hat{\mu}} \alpha\alpha^T + I\right)
\end{equation*}
The inverses of matrizes of such a form have been calculated in \cite{Mil81}. Indeed, $C$ is invertible with inverse
\begin{equation*}
	C^{-1} = \frac{1}{\abs{\alpha}^2 \hat{\mu}} \left( I - \frac{\hat{\lambda} + \hat{\mu}}{\abs{\alpha}^2 \hat{M}}\right).
\end{equation*}
The constant $c$ can be calculated as follows. Since $\phi$ is periodic, we obtain
\begin{equation*}
	0 = \int_0^1 \phi'(t) \,\dx[t] = \mean{\phi'} = \mean{C^{-1}} c - \mean{C^{-1}r}.
\end{equation*}
Hence, $c = \mean{C^{-1}}^{-1} \mean{C^{-1}r}$. The inverse of $\mean{C^{-1}}$ can be calculated similarly to the inverse of $C$ and is given by
\begin{equation*}
	\mean{C^{-1}}^{-1} = \abs{\alpha}^2 \meanHarm{\hat{\mu}} \left(I + \left( \frac{\meanHarm{\hat{M}}}{\meanHarm{\hat{\mu}}} - 1\right)\abs{\alpha}^2 \alpha\alpha^T\right).
\end{equation*}
The claim now follows from calculating $\phi' = C^{-1} \left(\mean{C^{-1}}^{-1} \mean{C^{-1}r} - r\right)$.
\end{proof}

\section{Mixed growth estimates}\label{Sec:mixed-growth-estimates}

In this section we sketch the proof of \cref{Lem:ReflectionOfPolynomials} and provide a mixed growth version of the Poincaré-Wirtinger inequality.

\begin{proof}[Proof of \cref{Lem:ReflectionOfPolynomials}]
For $p = q$ (i.e.\ with decompositions) the lemma is \cite[Lem.~2.1]{Jon81}. We have to check that this lemma also holds in the mixed growth sense. For this we can use the equivalence of the norm $\norm{\placeholder}_{\Leb^p}$ and $\norm{\placeholder}_{\Leb^q}$ on the finite dimensional vector space of polynomials of degree at most $m$. In fact it is easy to show that we can use the following choices
\begin{equation*}
	F_{P|_F} = \begin{cases}
		P & \text{if } \norm{G_{P|_E}}_{\Leb^q(E)} \leq \norm{F_{P|_E}}_{\Leb^q(E)}, \\
		0 & \text{else},
	\end{cases} \qquad
	G_{P|_F} = \begin{cases}
		0 & \text{if } \norm{G_{P|_E}}_{\Leb^q(E)} \leq \norm{F_{P|_E}}_{\Leb^q(E)}, \\
		P & \text{else}.
	\end{cases}
\end{equation*}
\end{proof}

\begin{proposition}[Poincaré-Wirtinger inequality] \label{Prop:Poincare_interpol}
Let $Q \subset \R^d$ be a finite union of overlapping cubes, $1 \leq p \leq q \leq \infty$, $v \in \SobW^{1,p}(Q)$ and a decomposition $\dif{v} = F_{\dif{v}} + G_{\dif{v}}$ in $\Leb^p+\Leb^q(Q,\R^d)$. Then, we find a decomposition $v - \fint_Q v = F_{v - \fint_Q v} + G_{v - \fint_Q v}$ in $\Leb^p+\Leb^q(Q)$ with 
\begin{equation}
\begin{aligned}
	\norm{F_{v - \fint_Q v}}_{\Leb^p(Q)} &\leq c\, \operatorname{diam}(Q) \norm{F_{\dif{v}}}_{\Leb^p(Q)}, \\
	\norm{G_{v - \fint_Q v}}_{\Leb^q(Q)} &\leq c\, \operatorname{diam}(Q) \norm{G_{\dif{v}}}_{\Leb^q(Q)}.
\end{aligned}
\end{equation}
for some constant $c > 0$, which is invariant under scaling and translation of $Q$.
\end{proposition}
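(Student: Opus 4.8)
The plan is to reduce the mixed-growth Poincaré--Wirtinger inequality to a single pointwise potential estimate. Concretely, I will first establish that for a.e.\ $x \in Q$
\begin{equation}\label{Eq:PlanPoincareRep}
  \abs{v(x) - \fint_Q v} \leq c_Q \int_Q \frac{\abs{\nabla v(y)}}{\abs{x-y}^{d-1}} \,\dx[y],
\end{equation}
with a dimensionless constant $c_Q$ (so that $c_Q$ is automatically invariant under scaling and translation of $Q$). For a single cube, \eqref{Eq:PlanPoincareRep} is the classical integral representation behind Poincaré's inequality on convex sets, with $c_Q$ a purely dimensional constant (for a cube of side $l$ the constant $(\operatorname{diam}Q)^d/(d\abs{Q})$ reduces to $d^{d/2-1}$). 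For a finite union of overlapping cubes $Q = \bigcup_{i=1}^N Q_i$ I would obtain \eqref{Eq:PlanPoincareRep} by a chaining argument: relabel so that each $Q_i$ with $i \geq 2$ meets some $Q_{j(i)}$ with $j(i) < i$, choose an auxiliary cube $E_i \subset Q_i \cup Q_{j(i)}$ of side comparable to $\min\set{l(Q_i), l(Q_{j(i)})}$ straddling the two pieces (so that $\abs{E_i \cap Q_i}$ and $\abs{E_i \cap Q_{j(i)}}$ are both comparable to $\abs{E_i}$, even when the geometric overlap of the cubes has dimension less than $d$), telescope $v(x) - \fint_{Q_1} v$ along the chain joining the cube containing $x$ to $Q_1$, bound each single-cube contribution by the convex-set estimate and each jump $\fint_{Q_k} v - \fint_{Q_{j(k)}} v$ by passing through $E_k$ and using the single-cube $\Leb^1$-Poincaré inequality, and finally use $\abs{x-y} \leq \operatorname{diam}(Q)$ to insert the kernel $\abs{x-y}^{1-d}$ into every resulting integral. (Equivalently, \eqref{Eq:PlanPoincareRep} is the standard representation formula valid on bounded John domains.) The resulting $c_Q$ depends only on $d$ and on the relative sizes and overlaps of the $Q_i$, not on the scale.

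Granting \eqref{Eq:PlanPoincareRep}, the mixed-growth statement follows by splitting $\nabla v = F_{\nabla v} + G_{\nabla v}$ and setting
\begin{equation*}
  \tilde f(x) := c_Q \int_Q \frac{\abs{F_{\nabla v}(y)}}{\abs{x-y}^{d-1}}\,\dx[y], \qquad
  \tilde g(x) := c_Q \int_Q \frac{\abs{G_{\nabla v}(y)}}{\abs{x-y}^{d-1}}\,\dx[y],
\end{equation*}
so that $\abs{v - \fint_Q v} \leq \tilde f + \tilde g$ a.e.\ in $Q$. After extending $F_{\nabla v}$ and $G_{\nabla v}$ by zero and using $\abs{x-y} \leq \operatorname{diam}(Q)$ for $x,y\in Q$, the functions $\tilde f$ and $\tilde g$ are convolutions of $\abs{F_{\nabla v}}$ resp.\ $\abs{G_{\nabla v}}$ with the kernel $k(z) := \abs{z}^{1-d}\mathds{1}_{\set{\abs{z}\leq\operatorname{diam}(Q)}}$, which satisfies $\norm{k}_{\Leb^1(\R^d)} = \abs{\mathbb{S}^{d-1}}\operatorname{diam}(Q)$. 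Young's inequality, valid for every exponent in $[1,\infty]$ and in particular for both $p$ and $q$, then gives $\norm{\tilde f}_{\Leb^p(Q)} \leq c\,\operatorname{diam}(Q)\norm{F_{\nabla v}}_{\Leb^p(Q)}$ and $\norm{\tilde g}_{\Leb^q(Q)} \leq c\,\operatorname{diam}(Q)\norm{G_{\nabla v}}_{\Leb^q(Q)}$ with $c = c_Q\abs{\mathbb{S}^{d-1}}$. Finally, since $\abs{v - \fint_Q v} \leq \tilde f + \tilde g$ pointwise, \cref{Rem:Properties_rigidity}~(i) upgrades this to an actual decomposition $v - \fint_Q v = F_{v-\fint_Q v} + G_{v-\fint_Q v}$ in $\Leb^p+\Leb^q(Q)$ with $\norm{F_{v-\fint_Q v}}_{\Leb^p(Q)} \leq \norm{\tilde f}_{\Leb^p(Q)}$ and $\norm{G_{v-\fint_Q v}}_{\Leb^q(Q)} \leq \norm{\tilde g}_{\Leb^q(Q)}$, which is the claim; both $c_Q$ and $\abs{\mathbb{S}^{d-1}}$ are scale- and translation-independent, so $c$ is as well.

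I expect the only genuinely technical point to be the chaining step in the proof of \eqref{Eq:PlanPoincareRep} for unions of cubes whose pairwise overlaps may be lower-dimensional, while keeping $c_Q$ scale-invariant; the reduction via Young's inequality and \cref{Rem:Properties_rigidity}~(i) is routine. I also note that in all applications of this proposition---\cref{Lem:ReflectionOfPolynomials,Lem:Estimate_chain_W1} and the proof of \cref{Thm:Extenion_operator}---the set $Q$ is either a single cube or a union of two overlapping Whitney cubes of comparable size, so the dependence of $c_Q$ on the configuration of the $Q_i$ is harmless there.
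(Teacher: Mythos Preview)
Your argument is correct and takes a genuinely different route from the paper's proof. The paper follows \cite[Thm.~2.1]{CDM14} closely: it first extends $v$ by reflection to a slightly larger smooth domain $Q^+$, then performs a case distinction according to whether $\norm{G_{\dif v}}_{\Leb^q} \leq \norm{F_{\dif v}}_{\Leb^p}$ or not; in the nontrivial case it introduces a \emph{harmonic decomposition} by solving $\Delta v_F = \divergence(F_{\dif v}\mathds{1}_{Q^+})$ and $\Delta v_G = \divergence(G_{\dif v}\mathds{1}_{Q^+})$ on $\R^d$, sets $w := v - v_F - v_G$ (harmonic on $Q^+$), and uses the Caccioppoli inequality together with smoothness of $w$ to promote the $\Leb^p$-control of $\dif w$ to $\Leb^q$-control. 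The split is then $F_{v-\fint v} = v_F - \overline{v_F}$ and $G_{v-\fint v} = (w-\bar w) + (v_G - \overline{v_G})$.

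Your approach via the Riesz-potential representation \eqref{Eq:PlanPoincareRep} and Young's convolution inequality is more elementary: it avoids any PDE machinery (no extensions, no harmonic splitting, no Caccioppoli), handles all exponents $1\leq p\leq q\leq\infty$ in one stroke without case distinction, and makes the scaling behaviour transparent. The price is that your constant $c_Q$ for a union of cubes inherits factors like $(\operatorname{diam}Q/l(Q_k))^{d-1}$ from the chaining step, whereas the paper's reduction to a single cube via an argument parallel to \cref{Rem:Properties_rigidity}~(iii) keeps the single-cube constant but accumulates chain-length factors instead; both are scale-invariant and harmless in the Whitney setting where the proposition is actually used. Your observation that the applications only require one or two comparable cubes is apt.
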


\begin{proof} The proof is analogous to \cite[Thm.~2.1]{CDM14}. It suffices to show that the statement holds in the case where $Q$ is a cube. Then, we can extend the statement to finite unions of overlapping cubes as in \cref{Rem:Properties_rigidity} (iii). Note that, throughout the proof, constants may depend on the cube $Q$. A posteriori, a standard scaling argument and translation of the domain shows that the constant scales as presented with $Q$.
\medskip

\stepemph{Step 1 -- Extension}: By standard reflection techniques, we can extend $v$ to some smooth domain $Q^+ \supset\supset Q$, such that $v \in \SobW^{1,p}(Q^+)$ and $\dif v = F_{\dif{v}} + G_{\dif{v}}$ in $\Leb^p+\Leb^q(Q^+,\R^d)$, for suitable extensions of $F_{\dif{v}}$ and $G_{\dif{v}}$ with
\begin{align*}
	\norm{F_{\dif{v}}}_{\Leb^p(Q^+)} &\leq c_1 \norm{F_{\dif{v}}}_{\Leb^p(Q)}, \\ \norm{G_{\dif{v}}}_{\Leb^q(Q^+)} &\leq c_1 \norm{G_{\dif{v}}}_{\Leb^q(Q)}.
\end{align*}
For example the extension presented in \cite[Thm.~5.1]{CDM14} can be adapted to show this.
\medskip

\stepemph{Step 2 -- Case $\norm{G_{\dif{v}}}_{\Leb^q(Q)} \leq \norm{F_{\dif{v}}}_{\Leb^p(Q)}$}: Suppose $\norm{G_{\dif{v}}}_{\Leb^q(Q)} \leq \norm{F_{\dif{v}}}_{\Leb^p(Q)}$. Then, the Poincaré-Wirtinger inequality implies
\begin{equation*}
	\norm{v - \textstyle \fint_Q v}_{\Leb^p(Q)} \leq c_1 \norm{\dif v}_{\Leb^p(Q)} \leq c_2 \left(\norm{F_{\dif{v}}}_{\Leb^p(Q)} + \norm{G_{\dif{v}}}_{\Leb^q(Q)}\right) \leq c_3 \norm{F_{\dif{v}}}_{\Leb^p(Q)}.
\end{equation*}
The calculation shows, that the statement holds with $F_{v - \fint_Q v} := v - \fint_Q v$, $G_{v - \fint_Q v} := 0$.
\medskip

\stepemph{Step 3 -- Case $\norm{G_{\dif{v}}}_{\Leb^q(Q)} \geq \norm{F_{\dif{v}}}_{\Leb^p(Q)}$}: We may now assume $\norm{G_{\dif{v}}}_{\Leb^q(Q)} \geq \norm{F_{\dif{v}}}_{\Leb^p(Q)}$. We denote by $v_F$ and $v_G$ weak solutions to $\Delta v_F = \divergence (F_{\dif{v}}\mathds{1}_{Q^+})$ and $\Delta v_G = \divergence (G_{\dif{v}}\mathds{1}_{Q^+})$ in $\R^d$ as presented in Step 3 of the proof of \cite[Thm.~2.1]{CDM14}. Let $\bar{v}_F := \fint_Q v_F$ and $\bar{v}_G := \fint_Q v_G$. The Poincaré-Wirtinger inequality shows
\begin{align*}
	\norm{v_F - \bar{v}_F}_{\Leb^p(Q)} &\leq c_4 \norm{\dif{v_F}}_{\Leb^p(\R^d)} \leq c_5 \norm{F_{\dif{v}}}_{\Leb^p(Q^+)} \leq c_6 \norm{F_{\dif{v}}}_{\Leb^p(Q)}, \\
	\norm{v_G - \bar{v}_G}_{\Leb^q(Q)} &\leq c_4 \norm{\dif{v_G}}_{\Leb^q(\R^d)} \leq c_6 \norm{G_{\dif{v}}}_{\Leb^q(Q)}.
\end{align*}
The map $w := v - v_F - v_G$ is a harmonic map in $\R^d$ and smooth by Weyl's lemma. Let $\bar{w} := \fint_Q w$. By the Poincare-Wirtinger, Caccioppoli and Sobolev inequalities, we obtain
\begin{align*}
	\norm{w - \bar{w}}_{\Leb^q(Q)} &\leq c_7 \norm{\dif w}_{\Leb^q(Q)} \leq c_8 \norm{\dif w}_{\Leb^p(Q^+)} \\
	&\leq c_9 \left(\norm{F_{\dif{v}}}_{\Leb^p(Q)} + \norm{G_{\dif{v}}}_{\Leb^q(Q)}\right) \leq c_{10} \norm{G_{\dif{v}}}_{\Leb^q(Q)}.
\end{align*}
Finally, note that $\bar{w} + \bar{v}_F + \bar{v}_G = \fint_Q v$. The inequalities shown above now establish the claim with $F_{v - \fint_Q v} := v_F - \bar{v_F}$ and $G_{v - \fint_Q v} := w - \bar{w} + v_G - \bar{v_G}$.
\end{proof}

\clearpage
\printbibliography[heading=bibintoc]

\end{document}